\title{The Dimension Spectrum of conformal Graph Directed Markov Systems}
\address{University of Connecticut, Department of Mathematics}
\address{University of North Texas, Department of Mathematics}
\subjclass[2010]{37D35, 28A80, 11K50, 11J70 (Primary),  37B10, 37C30, 37C40,  (Secondary)}
\author{Vasileios Chousionis}
\author{Dmitriy Leykekhman}
\author{Mariusz Urba\'nski}
\thanks{V.C. is supported by  the Simons Foundation via the project `Analysis and dynamics in Carnot groups', Collaboration grant no.\  521845. D.L.  is supported by the NSF grant no.\ DMS-1522555.   M.U. is supported by NSF grant no.\ DMS-1361677.}
\email{vasileios.chousionis@uconn.edu}
\email{dmitriy.leykekhman@uconn.edu}
\email{urbanski@unt.edu}
\newcommand{\ve}{\varepsilon}
\newcommand{\f}{\phi}
\newcommand{\R}{\mathbb{R}}
\newcommand{\N}{\mathbb{N}}
\newcommand{\C}{\mathbb{C}}
\newcommand{\Z}{\mathbb{Z}}
\renewcommand{\a}{{\alpha}}
\newcommand{\om}{{\omega}}
\newcommand{\cT}{\mathcal{T}}
\newcommand{\cS}{\mathcal{S}}
\newcommand{\cf}{\mathcal{CF}_{\C}}
\newcommand{\cfn}{\mathcal{CF}_{\N}}
\newcommand{\cfi}{\mathcal{CF}_{I}}
\newcommand{\spt}{\operatorname{spt}}
\newcommand{\cH}{\mathcal{H}}
\newcommand{\G}{\mathbb{G}}
\newcommand{\sg}{\sigma}
\newcommand{\1}{\mathbf{1}}
\newcommand{\up}{\upsilon}
\newcommand{\diam}{\operatorname{diam}}
\newcommand{\Rea}{\operatorname{Re}}
\newcommand{\Imm}{\operatorname{Im}}
\def\Int{\text{{\rm Int}}}
\newcommand{\fpli}{F_{\infty}^+}
\def\om{\omega}
\def\du{\bigoplus}
\newcommand{\Heis}{{{\mathbf{Heis}}}}
\def\fg{{\mathfrak g}}
\def\fv{{\mathfrak v}}
 \DeclareMathOperator{\Imag}{Im}
\newcommand{\stm}{\setminus}
\newcommand{\ra}{\rightarrow}
\numberwithin{equation}{section}
\newtheorem{thm}{Theorem}[section]
\newtheorem{lm}[thm]{Lemma}
\newtheorem{corollary}[thm]{Corollary}
\theoremstyle{definition}
\newtheorem{propo}[thm]{Proposition}
\theoremstyle{definition}
\newtheorem{defn}[thm]{Definition}
\theoremstyle{definition}
\theoremstyle{definition}
\newtheorem{rem}[thm]{Remark}
\newtheorem{remark}[thm]{Remark}
\newtheorem{conjecture}[thm]{Conjecture}
\begin{document}
\begin{abstract} In this paper we study the dimension spectrum of general conformal graph directed Markov systems modeled by countable state symbolic subshifts of finite type. We perform a comprehensive study of the dimension spectrum addressing questions regarding its size and topological structure. As a corollary we obtain that the dimension spectrum of infinite conformal iterated function systems is compact and perfect. On the way we revisit the role of the parameter $\theta$ in graph directed Markov systems and we show that new phenomena arise. 

We also establish topological pressure estimates for subsystems in the abstract setting of symbolic dynamics with countable alphabets. These estimates play a crucial role in our proofs regarding the dimension spectrum, and they allow us to study Hausdorff dimension asymptotics for subsystems. 

Finally we narrow our focus to the dimension spectrum of conformal iterated function systems and we prove, among other things, that the iterated function system resulting from the complex continued fractions algorithm has full dimension spectrum.  We thus give a positive answer to the Texan conjecture for complex continued fractions.
\end{abstract}

\maketitle
\tableofcontents
\section{Introduction}
Let $X$ be a compact metric space and let $\cS=\{\f_e:X \ra X\}_{e \in E}$ be a countable collection of uniformly contracting maps. Let $J_\cS$, also frequently denoted by $J_E$, be its {\em limit set}. This set is defined as the the image of a natural projection from the symbol space to $X$. If the alphabet $E$ is finite $J_E$ is well known to be a unique compact set, invariant with respect to $\cS$. The {\em dimension spectrum} of $\cS$:
$$
DS(\cS):=\{\dim_{\cH}(J_{A}): A \subset E\}
$$
is the set of all possible values for the Hausdorff dimension of the subsystems of $\cS$. If $\cS$ consists of finitely many maps then $DS(\cS)$ is a finite set. However when $\cS$ is infinite the structure of $DS(\cS)$ becomes much more complex and intriguing. In particular many interesting questions arise related to the  {\em size} and {\em topological properties} of the dimension spectrum.

As the title suggests we are going to investigate the dimension spectrum of limit sets in the general framework of conformal {\em graph directed Markov systems} (GDMS). We postpone the formal definition of a GDMS to Section \ref{sec:gdms}, and we now only provide a short heuristic description. A GDMS consists of a directed multigraph $(E,V)$ with a countable set of edges $E$ and a finite set of vertices $V$. Each vertex $v \in V$ corresponds to a compact set $X_v$ and  each edge $e \in E$ corresponds to a contracting map between two compact sets $X_v$. An incidence matrix $A:E \times E \ra \{0,1\}$ then essentially determines if a pair of these maps is allowed to be composed. 

Limit sets of infinite graph directed Markov systems form a very broad family of geometric objects, which include limit sets of Kleinian and complex hyperbolic Schottky groups, Apollonian circle packings, self-conformal and self-similar sets. The diversity of these examples justifies our decision to study the dimension spectrum in the unified framework of GDMS. Graph directed systems with a finite alphabet consisting of similarities were introduced by Mauldin and Williams in \cite{MW} and further studied by Edgar and Mauldin in \cite{edm}. Mauldin and the last named author developed a fully fledged theory of Euclidean conformal GDMS with a countable alphabet in \cite{MUbook} stemming from \cite{MU1}. In the recent monograph \cite{CTU}, Tyson together with the first and last named authors extended the theory of conformal GDMS  in the setting of nilpotent stratified Lie groups (Carnot groups) equipped with a sub-Riemannian metric. See also \cite{atnip, kes1, kes2, roy, polur} for recent advances on various aspects of GDMSs.

In \cite{MU1,MUbook,CTU}, and in other relevant works, thermodynamic formalism is heavily used in order to study the limit sets of conformal graph directed Markov systems.  In particular one needs to study the {\em topological pressure function} of the system, which will be defined in Section \ref{sec:gdms}. Under some natural assumptions, the zero of the pressure function corresponds to the Hausdorff dimension of the limit set. It is usually denoted by $h$ and it is called Bowen's parameter as it traces back to the the fundamental work of Rufus Bowen \cite{bowen}. Another parameter of crucial importance for a conformal GDMS is the parameter $\theta$, which is the finiteness threshold of the pressure function, see Definition \ref{thetavariants}. In particular, as we will discuss later, the parameter $\theta$ is related to the dimension spectrum of conformal iterated function systems (IFS). 

In Section \ref{sec:gdms}  we introduce several natural parameters for GDMS, which can be thought as variants of the parameter $\theta$. We provide concrete examples showing that these parameters are distinct and we investigate their relations. Moreover we clarify and correct several misconceptions from \cite{MUbook} related to the role of the parameter $\theta$. In connection to the dimension spectrum, it was proved in \cite{MU1} that if $\cS$ is an infinite conformal IFS satisfying the open set condition then $[0, \theta) \subset DS(\cS).$ Somehow surprisingly we prove that if $\cS$ is a GDMS the situation might be quite different, as there are two new parameters $\theta_3:=\theta_3(\cS)$ and $h_0=h_0(\cS)$, introduced respectively in Definition \ref{thetavariants} and Definition \ref{hzero}, which determine the interval contained in the dimension spectrum.
\begin{thm} 
\label{mainspectintro}Let $\cS=\{\f\}_{e \in E}$ be an infinite finitely irreducible conformal GDMS. For every $t \in (h_0, \theta_3)$ there exists some $F \subset E$ such that $\dim_{\cH} (J_F)=t.$
In other words $(h_0, \theta_3) \subset DS(\cS)$.
\end{thm}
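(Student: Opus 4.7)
My plan is to build, for each fixed $t\in(h_0,\theta_3)$, a subsystem $F\subset E$ with $\dim_{\cH}(J_F)=t$ by a greedy enlargement argument in the spirit of Mauldin and Urba\'nski's proof of the analogous statement for IFSs in \cite{MU1}, now powered by the topological pressure estimates for subsystems announced in the abstract and calibrated to the new parameter $\theta_3$ and the new lower threshold $h_0$. Concretely, I will construct an increasing sequence $F_1\subset F_2\subset\cdots$ of finite subsystems of $E$ such that $\dim_{\cH}(J_{F_n})<t$ for every $n$ and $\dim_{\cH}(J_{F_n})\nearrow t$, and then set $F=\bigcup_n F_n$. The hypothesis $t>h_0$ is precisely what guarantees that the base case $F_1$ with $\dim_{\cH}(J_{F_1})<t$ exists (via Definition \ref{hzero}, since $h_0$ is defined to be the infimal dimension that is approximable from below by finite subsystems).

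For the inductive step, assume $F_n$ is in hand and let $e^{(n)}_1,e^{(n)}_2,\ldots$ enumerate $E\setminus F_n$. I will append the first edge $e=e^{(n)}_j$ for which $\dim_{\cH}(J_{F_n\cup\{e\}})<t$ still holds. The continuity of the pressure function $s\mapsto P_{F_n\cup\{e\}}(s)$, together with the subsystem pressure estimates, ensures both that such an $e$ always exists and that the increment $\dim_{\cH}(J_{F_n\cup\{e\}})-\dim_{\cH}(J_{F_n})$ can be controlled. The condition $t<\theta_3$ enters crucially at this step: it guarantees that the pressures $P_{F_n}(t)$ remain finite and that sufficiently many edges of $E\setminus F_n$ contribute only a small amount at parameter $t$, so that the dimension can be nudged strictly closer to $t$ at each stage.

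Letting $F=\bigcup_n F_n$, monotonicity of Hausdorff dimension along the sequence gives $\dim_{\cH}(J_F)\leq t$. For the reverse inequality, I will argue by contradiction: if the supremum equaled some $t'<t$, then from some stage $N$ onward every remaining edge $e$ would have to satisfy $\dim_{\cH}(J_{F_n\cup\{e\}})\geq t$, forcing a uniform single-edge jump of size at least $t-t'>0$ for infinitely many edges; but such persistent jumps would violate the quantitative subsystem pressure estimates valid throughout the range $t<\theta_3$. Thus $\dim_{\cH}(J_F)=t$, as required, and finite irreducibility of $\cS$ propagates to finite irreducibility of each $F_n$ and of $F$, so that Bowen's formula applies to every subsystem involved.

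The main obstacle is the quantitative, uniform control of the single-edge contribution to the Bowen parameter, used in both the inductive step and the final contradiction. This is exactly what the topological pressure estimates for subsystems are designed to deliver, and it is the role of $\theta_3$---rather than the classical threshold $\theta$ of \cite{MUbook}---that keeps these estimates available in the general GDMS setting. Converting the pressure-level bounds into a genuine continuity/monotonicity statement for $\dim_{\cH}(J_F)$ along the greedy enlargement---only under finite irreducibility and with multiple vertices---is where the technical effort must concentrate.
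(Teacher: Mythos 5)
Your overall strategy---greedy enlargement of a finite, finitely irreducible core $\tilde\Lambda$ using pressure estimates to keep $\dim_{\cH}(J_{F_n})<t$, then taking the union---is essentially the paper's approach (Theorem \ref{mainspect}), and the base case ($t>h_0$ gives a witness $\Lambda$ with $\dim_{\cH}(J_{\tilde\Lambda})<t$) is handled correctly in substance. However, the final step of your argument, establishing that $\dim_{\cH}(J_F)\geq t$ and not strictly less, contains a genuine gap.

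You argue by contradiction that if $\dim_{\cH}(J_F)=t'<t$, then ``from some stage $N$ onward every remaining edge $e$ would have to satisfy $\dim_{\cH}(J_{F_n\cup\{e\}})\geq t$, forcing a uniform single-edge jump $\geq t-t'$ for infinitely many edges.'' This implication does not follow: the greedy construction always finds an edge to append (your own inductive step guarantees this, and for each fixed $n$ the pressure estimate rules out large jumps for all but finitely many $e$), so the hypothesized stalling scenario never occurs, and nothing forces the remaining edges to cause big jumps simply because $\dim_{\cH}(J_F)<t$. The paper instead works as follows: first it shows that $E\setminus F$ must be \emph{infinite}, using $t<\theta_3$ (if $F$ were cofinite then $\dim_{\cH}(J_F)\geq\theta_3>t$ by Definition \ref{thetavariants}, contradicting $\dim_{\cH}(J_F)\leq t$); then it applies Proposition \ref{addone} to the \emph{limit set} $F$ itself (not to some $F_n$) to produce $q\in E\setminus F$ with $\dim_{\cH}(J_{F\cup\{q\}})<t$; since $\dim_{\cH}(J_{F_n\cup\{q\}})\leq\dim_{\cH}(J_{F\cup\{q\}})<t$ for every $n$, the element $q$, which sits in some gap below $k_1$ or between consecutive $k_m,k_{m+1}$, would have been chosen at the corresponding greedy stage, contradicting the minimality of the $k_n$. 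Your plan does not mention applying the pressure estimate to the limit $F$, nor that the crucial role of $\theta_3$ is to guarantee $E\setminus F$ infinite so that Proposition \ref{addone} is applicable at that final step; this is where the argument must be repaired.

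Two smaller points. First, the phrase ``finite irreducibility of $\cS$ propagates to finite irreducibility of each $F_n$ and of $F$'' is incorrect as a general statement; it holds here only because $F_1=\tilde\Lambda$ is deliberately chosen so that $\Lambda$ witnesses finite irreducibility, and every $F_n\supset\tilde\Lambda$ therefore inherits the witness. You should make this explicit, since it is precisely the content of $h_0$. Second, for the greedy minimality argument to close, the enumeration step must be chosen consistently (the paper takes $E=\N$ and always picks the smallest new index above $\max F_n$); appending ``the first edge'' of a possibly changing enumeration of $E\setminus F_n$ does not obviously produce the gap structure the contradiction exploits.
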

Moreover  we prove, see Theorem \ref{sharpexample}, that the lower bound $h_0$ is sharp; that is there exists a GDMS $\cS=\{\f\}_{e \in E}$ such that  every subset $I \subset E$ with $\dim_{\cH}(J_I)>0$ satisfies 
$\dim_{\cH}(J_I) \geq h_0.$ 

We also investigate the topology of the dimension spectrum. In Definition \ref{dspec} we introduce new natural notions of spectra suited to GDMS. These new spectra can be thought as restricted versions of $DS(\cS)$ as they only take into account certain families of subsystems. In Theorems \ref{compactspectrum} and \ref{perfectspectrum} we prove that if $\cS=\{\f_e\}_{e \in E}$ is an infinite and finitely irreducible conformal GDMS, then $DS_{\tilde{\Lambda}}(\cS)$ (see Definition \ref{dspec}) is compact and perfect for any set $\Lambda$ witnessing finite irreducibility for $E$. As a corollary in the case of iterated function systems we obtain the following theorem.
\begin{thm}
 \label{topospectrumintro}
Let $\cS=\{\f_e\}_{e \in E}$ be an infinite conformal iterated function system satisfying the open set condition. Then $DS(\cS)$ is compact and perfect.
 \end{thm}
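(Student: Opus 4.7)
The plan is to deduce the corollary directly from Theorems \ref{compactspectrum} and \ref{perfectspectrum} by casting the IFS as a GDMS. Specifically, we view $\cS=\{\f_e\}_{e\in E}$ satisfying the OSC as a conformal GDMS over the one-vertex graph with the all-ones incidence matrix. The OSC for the IFS transfers verbatim, and since every transition is admissible, the resulting GDMS is finitely irreducible with any singleton $\Lambda=\{e_0\}$ (with $e_0\in E$) serving as a witnessing set. Theorems \ref{compactspectrum} and \ref{perfectspectrum} then yield that $DS_{\widetilde{\{e_0\}}}(\cS)$ is compact and perfect.

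It remains to identify $DS(\cS)$ with $DS_{\widetilde{\{e_0\}}}(\cS)$. The inclusion $DS_{\widetilde{\{e_0\}}}(\cS)\subset DS(\cS)$ is trivial, so we need to show that every $t=\dim_{\cH}(J_F)\in DS(\cS)$ lies in $DS_{\widetilde{\{e_0\}}}(\cS)$. If $e_0\in F$ this is immediate. Otherwise, the strategy is to approximate $t$ by dimensions of subsystems containing $e_0$, using the Mauldin--Urba\'nski finite-approximation property $\dim_{\cH}(J_F)=\sup\{\dim_{\cH}(J_{F'}):F'\subset F,\, F'\text{ finite}\}$ together with the paper's subsystem pressure estimates, which quantify how the Bowen parameter changes when $e_0$ is inserted into a finite approximant. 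Once $t$ is realized as a limit of points of $DS_{\widetilde{\{e_0\}}}(\cS)$, the closedness of that set completes the argument.

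The main obstacle is precisely this last approximation step: naively appending $e_0$ to a finite approximant $F_n\subset F$ can push $\dim_{\cH}(J_{F_n\cup\{e_0\}})$ above $\dim_{\cH}(J_{F_n})$ by a quantity that does not obviously vanish as $F_n\nearrow F$. To compensate, one must simultaneously trim $F_n$, removing elements whose pressure contribution matches that of $\f_{e_0}$, while still ensuring $\dim_{\cH}(J_{F_n\cup\{e_0\}\setminus \text{trim}})\to t$. Executing this balancing uniformly in $n$ is where the quantitative pressure estimates for subsystems developed earlier in the paper do the real work and constitute the technical heart of the corollary.
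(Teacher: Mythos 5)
Your reduction to Theorems \ref{compactspectrum} and \ref{perfectspectrum} is the right idea, and your observation that an IFS is a one-vertex GDMS with the all-ones incidence matrix is correct. However, your choice of witnessing set $\Lambda=\{e_0\}$ creates a gap that the paper avoids, and the gap is not merely technical.

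The key fact you miss is that for an IFS every two-letter word $ij$ is admissible, so the set $\Lambda=\{\emptyset\}$ consisting of the empty word alone already witnesses finite irreducibility: $i\emptyset j = ij \in E_A^*$ for all $i,j\in E$. The set of letters occurring in words of this $\Lambda$ is $\tilde{\Lambda}=\emptyset$, and by Definition \ref{dspec}, $DS_{\tilde{\Lambda}}(\cS)=DS_\emptyset(\cS)=\{\dim_{\cH}(J_F):\emptyset\subset F\subset E\}=DS(\cS)$. With this choice, Theorems \ref{compactspectrum} and \ref{perfectspectrum} apply to $DS_{\tilde{\Lambda}}(\cS)=DS(\cS)$ and the corollary is immediate; this is exactly the observation recorded in Remark \ref{thetains} and is how the paper obtains Theorem \ref{compactspectrumcifs}.

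By contrast, with $\Lambda=\{e_0\}$ you only control $DS_{\{e_0\}}(\cS)=\{\dim_{\cH}(J_F):e_0\in F\subset E\}$, a priori a proper subset of $DS(\cS)$. You then need the identity $DS(\cS)=DS_{\{e_0\}}(\cS)$, which you flag but do not prove. This is not a small residual step: showing that every $\dim_{\cH}(J_F)$ with $e_0\notin F$ is realized by a subsystem containing $e_0$ requires, as you yourself describe, a delicate simultaneous add-and-trim approximation using the subsystem pressure estimates, an argument of essentially the same weight as Theorem \ref{compactspectrum} itself. The paper's choice of $\Lambda$ makes this work unnecessary, so the corollary really is an immediate consequence rather than a new technical theorem. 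Replace $\Lambda=\{e_0\}$ by $\Lambda=\{\emptyset\}$ and the proof closes in one line.
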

In the case of conformal IFS it turns out that $\theta=\theta_3$ and $h_0=0$, therefore as a corollary of Theorems \ref{mainspectintro} and \ref{topospectrumintro} we obtain that $[0,\theta] \in DS(\cS)$. We thus improve  the corresponding result from \cite{MU1}, by including $\theta$ in the dimension spectrum.
 
The proofs of Theorems \ref{mainspectintro}, \ref{compactspectrum} and \ref{perfectspectrum}  depend crucially on Propositions \ref{ghenciu1} and \ref{ghenciu2}, which are special cases of Propositions \ref{ghenciu1sym} and \ref{ghenciu2sym} respectively. Propositions \ref{ghenciu1sym} and \ref{ghenciu2sym} are of independent interest as they provide effective estimates for the topological pressure of subsystems in the abstract setting of symbolic dynamics with countable alphabets. These estimates turned out to be extremely useful for our purposes and we anticipate that can be further exploited. For example we employ Propositions \ref{ghenciu1sym} and \ref{ghenciu2sym} in Section \ref{sec:hein}, where we generalize earlier results from \cite{HU} to the setting of GDMSs.  In particular Theorem \ref{hein} provides estimates for the Hausdorff dimension of the limit set of any finitely irreducible and strongly regular conformal GDMS up to any desired accuracy. Moreover our proof substantially simplifies the proof from \cite{HU}. 
 
As mentioned earlier the theory of graph directed Markov systems has been recently extended to the sub-Riemannian setting of Carnot groups in \cite{CTU}. We record that the content of Sections \ref{sec:gdms}-\ref{sec:dimspect} is valid  for Euclidean as well as Carnot graph directed Markov systems. Nevertheless for simplicity of notation we avoid mentioning anything about the ambient space except in Remark \ref{carnot} where we comment further on the issue and  we give a very brief introduction to Carnot groups and sub-Riemannian conformal maps. 
 
In the last two sections we focus our attention to conformal iterated functions systems. The spectrum $DS(\cS)$ has an interesting and intriguing structure already for IFSs, even the ones composed of similarities. Indeed, apart from being compact, perfect, and containing the interval $[0,\theta)$ (keep in mind that we are now in the realm of IFSs), it may happen to be an interval (then necessarily $DS(\cS)=[0,\dim_{\cH}(J)]$), or it may have many non-degenerate connected components (so intervals) and connected components being singletons. Such examples, even for similarities, can be found in \cite{KZ}. All of this leads us to the following conjecture.
\begin{conjecture}
For every compact and perfect subset $K$ of $[0,+\infty)$ there exists a conformal (we even conjecture one composed of similarities) IFS $\cS$ such that $DS(\cS)=K$. 
\end{conjecture}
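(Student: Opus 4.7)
My plan is a disjoint-gadget construction, which I expect to encounter genuine obstacles reflecting the difficulty of the conjecture. First observe that any singleton subsystem $A=\{e\}$ has $J_A$ equal to the fixed point of $\varphi_e$, so $0\in DS(\cS)$ for any infinite IFS; thus $0\in K$ is a necessary condition. By Theorem \ref{mainspectintro} the inclusion $[0,\theta(\cS)]\subset DS(\cS)$ further constrains $\theta(\cS)$: if $K$ has gaps inside $[0,\max K]$, then $\theta(\cS)$ cannot exceed the position of the first gap. In the degenerate case $K=[0,M]$ the standard IFS with similarity ratios $r_n=n^{-1/M}$ in $\mathbb{R}^d$ for $d>M$ already realizes $DS(\cS)=[0,M]$ via $\theta=M$, Theorem \ref{mainspectintro}, and Bowen's identity $\dim_{\cH}J_\cS=M$.

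For general $K$ with gaps, write $K=[0,\theta_K]\cup K'$, where $\theta_K$ is the length of the maximal initial interval. For each element $q_n$ of a countable dense subset $\{q_n\}\subset K'$, I build a minimal gadget $\cG_n=\{\varphi_{n,1},\varphi_{n,2}\}$ consisting of two similarities with common ratio $2^{-1/q_n}$, placed inside pairwise disjoint balls $B_n\subset\mathbb{R}^d$ with $d>\sup K$, so that $\dim_{\cH}J_{\cG_n}=q_n$ while every proper sub-gadget is a singleton of dimension zero. These gadgets are then combined with an ``engine'' family of similarities producing the interval $[0,\theta_K]$, with OSC verified geometrically. The hope is that, after taking closures (permitted by Theorem \ref{topospectrumintro}), $DS(\cS)$ recovers $K$.

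The heart of the difficulty is two-fold. First, in a bona-fide IFS (as opposed to a GDMS with distinct vertices), cross-gadget compositions $\varphi_{n,i}\circ\varphi_{m,j}$ genuinely contribute to $J_A$ for mixed subsystems $A$; Moran's equation then forces $\dim_{\cH}J_A$ strictly above $\sup_n\dim_{\cH}J_{A\cap\cG_n}$, so the naive supremum formula fails and mixed subsystems can easily produce dimensions in the complement of $K$. Second, a family of gadgets accumulating at $\sup K$ forces $\theta(\cS)\geq\sup K$, which would contradict the hypothesis that $K$ has interior gaps. Together these obstructions suggest that the correct construction must be considerably sparser than the naive one, perhaps using one gadget per connected component of $K'$ together with carefully chosen limiting subsystems to produce isolated accumulation points of $K'$, and controlling the Bowen dimension of every mixed subsystem so that it lands exactly in $K$ presumably requires refinements of the pressure estimates of Propositions \ref{ghenciu1sym} and \ref{ghenciu2sym}, combined with sharpness arguments in the spirit of Theorem \ref{sharpexample}. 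I suspect genuinely new ideas are needed to close the gap for arbitrary compact perfect $K$, consistent with the open status of the conjecture.
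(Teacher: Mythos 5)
This statement is the paper's Conjecture, stated explicitly as an open problem with no proof; the authors immediately point out that it ``has room for many partial results'' and pose several sub-questions. There is therefore no argument in the paper to compare against, and your closing assessment — that a clean gadget construction does not close and that genuinely new ideas are required — is consistent with the paper's own stance.

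Nevertheless, two points in the sketch are off. In the degenerate case $K=[0,M]$ you take $r_n=n^{-1/M}$, which gives $Z_1(t)=\sum_n n^{-t/M}$, finite precisely for $t>M$, so $\theta=M$; but then $Z_1(M)=\sum 1/n=\infty$, hence $P(M)=+\infty$, and since $P$ decreases to $-\infty$ on $(M,\infty)$ there is a unique $h>M$ with $P(h)=0$. The system is strongly regular with $\dim_{\cH}J_\cS=h>M=\theta$, so $DS(\cS)\ne[0,M]$ even if it is an interval. To force $DS=[0,M]$ one needs $h=\theta=M$, i.e.\ a \emph{regular but not strongly regular} system (borderline ratios, say $r_n\asymp(n\log^2 n)^{-1/M}$ normalized so that $Z_1(M)\le 1$), and even then one must verify full spectrum. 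Separately, you note that $0\in K$ is necessary; more is true, and it makes the conjecture as literally worded too strong: for any infinite IFS with $\theta>0$, Theorem \ref{mainspectintro} together with Remark \ref{thetains} gives $[0,\theta]\subset DS(\cS)$ with $\theta>0$, so $DS$ contains a nondegenerate initial interval, whereas a nowhere-dense compact perfect $K$ (e.g.\ the middle-thirds Cantor set) contains no interval at all. Such $K$ could only conceivably arise when $\theta=0$, and even that is far from clear; the conjecture is surely meant with the tacit hypothesis that $K$ contains some $[0,\ve]$.

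Your two structural obstructions, on the other hand, are real and well-diagnosed. In an IFS (as opposed to a GDMS, where the incidence matrix can sever gadgets) cross-compositions cannot be suppressed, and by the supremum formula of Theorem \ref{t1j97} together with the strict monotonicity furnished by Theorem \ref{4310mu}, a mixed finite subsystem drawing letters from two gadgets has Bowen dimension strictly exceeding both gadget dimensions. And any sequence of gadgets whose contraction ratios accumulate at a positive value drives $\theta(\cS)$ up, which via $[0,\theta]\subset DS$ fills the very gaps of $K$ one is trying to preserve. These are precisely why the paper contents itself with sufficient criteria for full spectrum (Section \ref{sec:dimspect}) and structural facts about $DS$ (compactness and perfectness, Theorems \ref{compactspectrum} and \ref{perfectspectrum}) rather than attempting the realization problem.
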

This conjecture has room for many partial results. For example, does there exist an IFS whose dimension spectrum  has a given prescribed finite number of connected components, or does there exist an IFS whose dimension spectrum $DS$ is not uniformly perfect?

Of special significance is the question of when an IFS $\cS$ has full dimension spectrum, that is $DS(\cS)=[0,\dim_{\cH}(J)]$. Kesseb\"ohmer and Zhu  proved in \cite{KZ} that the spectrum is full for the IFS resulting from the real continued fractions algorithm, resolving the so-called Texan conjecture. Recall that any irrational number in $[0,1]$ can be represented as a continued fraction
$$\cfrac{1}{e_1+\cfrac{1}{e_2+\cfrac{1}{e_3+\dots}}},$$
where $e_i \in \N$ for all $i \in \N$. We refer to the book \cite{henbook} of Hensley for an excellent exposition of continued fractions and their connections to number theory, complex analysis, ergodic theory, dynamic processes, analysis of algorithms, and even theoretical physics. We remark that continued fractions from the perspective of dynamical systems have been studied extensively, see e.g. \cite{good, cu, hen1, daj, gmr, nuss, nusscf, MU2, KZ} or the review \cite{henrev}. It is remarkable that the representation by continued fractions can be described by the infinite conformal  IFS
$$\cfn:= \{\f_{n}:[0,1] \ra [0,1]: \f_n(x)=\frac{1}{n+x} \mbox{ for }i \in \N \}.$$
Therefore according the Texan conjecture, resolved positively in \cite{KZ}, it holds that $DS(\cS_{\cfn})=[0,1]$. That is for any $t \in [0,1]$ there exists some $I \subset \N$ such that $\dim_{\cH}(J_{\cfi})=t$ and $J_{\cfi}$  corresponds to the set of irrational numbers whose continued fraction expansion only contains natural numbers from $I$. 

In Section \ref{sec:fullcf} we investigate the dimension spectrum of the IFS resulting from the {\em complex continued fractions algorithm}. A  complex continued fraction algorithm is an algorithm that provides approximations by ratios of Gaussian integers to a given complex number. The origins of complex continued fractions can be traced back to the works of the brothers Adolf Hurwitz \cite{ahur} and Julius Hurwitz \cite{jhur}. Since their pioneering contributions, complex continued fractions  have been studied widely from different viewpoints, indicatively we mention the breakthrough work of A. L. Schmidt in the 70s \cite{schm}. More information can be found in \cite[Chapter 5]{henbook}.

As in the case of real continued fractions, complex continued fractions can be represented via the infinite conformal IFS $$\cf=\{\f_e: \bar{B}(1/2,1/2) \ra \bar{B}(1/2,1/2)\}$$ where $$E=\{m+ni:(m,n) \in \N \times \Z \} \mbox{ and }\f_e(z)=\frac{1}{e+z}.$$
This system was considered in detail in \cite{MU1}, nevertheless several questions regarding the intriguing geometric structure of $J_{\cf}$ remain open. Currently a very active research topic at the interface of dynamical systems and numerical analysis, is to obtain estimates of high accuracy for the Hausdorff dimension of $J_{\cf}$. In \cite{MU1} it was proved that $\dim_{\cH}(J_{\cf}) \leq 1.885$. Priyadashi \cite{pri} established the lower bound $\dim_{\cH}(J_{\cf}) \geq 1.825$. Recently Falk and Nussbaum \cite{nuss, nusscf}  developed a new method in order to obtain very effective estimates for $\dim_{\cH}(J_{\cf})$. In particular their method indicates that $1.85574 \leq \dim_{\cH}(J_{\cf}) \leq 1.85589$, although as the authors mention, see \cite[Remark 3.2]{nuss}, some interval arithmetic is required in order to make the last estimate rigorous. 

In Section \ref{sec:fullcf} we study the dimension spectrum $DS (\cf)$ and we settle the Texan conjecture for complex continued fractions. Our main result there reads as follows.
\begin{figure}
\centering
\includegraphics[scale = 0.45]{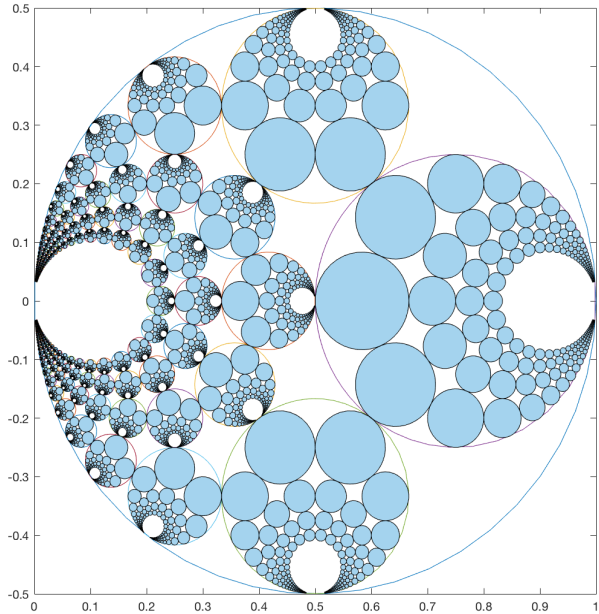}
\caption{An approximation of the limit set of the complex continued fractions IFS after two iterations.}
\label{figccf}
\end{figure}
\begin{thm}
\label{fullspec} The conformal iterated function system associated to the complex continued fractions has full spectrum; that is $$DS (\cf)=[0, \dim_{\cH}(J_{\cf})].$$
\end{thm}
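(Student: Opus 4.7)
The starting point is the direct computation $\theta=\theta(\cf)=1$: since $\|\f_e'\|\asymp|e|^{-2}$ on $\bar B(1/2,1/2)$, and the number of $e=m+ni\in E$ with $|e|\le R$ grows like $R^2$, the series $\sum_{e\in E}\|\f_e'\|^t$ converges precisely when $t>1$. Because $\cf$ is an infinite conformal IFS satisfying the open set condition, and the paper identifies $h_0=0$ and $\theta_3=\theta$ in the IFS case, Theorem \ref{mainspectintro} together with Theorem \ref{topospectrumintro} already delivers $[0,1]\subset DS(\cf)$ and that $DS(\cf)$ is compact (and perfect). Since $DS(\cf)\subset[0,\dim_{\cH}(J_{\cf})]$ trivially and $\dim_{\cH}(J_{\cf})\ge 1.825>1$ by the lower bound of Priyadashi recalled above, what remains is the inclusion $[1,\dim_{\cH}(J_{\cf})]\subset DS(\cf)$, and by compactness of $DS(\cf)$ it is enough to prove $DS(\cf)$ is dense in this interval.

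To produce the dense family I would enumerate $E=\{e_1,e_2,\dots\}$ with $|e_1|\le|e_2|\le\cdots$ and form the finite truncations $F_N:=\{e_1,\dots,e_N\}$. Writing $h_N:=\dim_{\cH}(J_{F_N})$ for the Bowen parameter of $F_N$, that is, the unique zero of the pressure function $P_{F_N}$, standard approximation arguments for conformal IFS give $h_N\nearrow\dim_{\cH}(J_{\cf})$. The decisive quantitative input is an increment bound: once $h_N>\theta=1$, the pressure contribution at parameter $h_N$ of the single adjoined generator $\f_{e_{N+1}}$ is essentially $\|\f_{e_{N+1}}'\|^{h_N}\lesssim|e_{N+1}|^{-2h_N}\to 0$, and Propositions \ref{ghenciu1sym} and \ref{ghenciu2sym} are exactly the effective subsystem pressure estimates needed to promote this into the bound $h_{N+1}-h_N\to 0$.

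Combining monotone convergence $h_N\nearrow\dim_{\cH}(J_{\cf})$ with vanishing successive gaps, the countable set $\{h_N:N\in\N\}\subset DS(\cf)$ is dense in $[h_{N_0},\dim_{\cH}(J_{\cf})]$ for every index $N_0$. Choosing $N_0$ so that $h_{N_0}$ lies arbitrarily close to $1$ from above makes $\{h_N\}$ dense in $(1,\dim_{\cH}(J_{\cf})]$; together with the already secured inclusion $[0,1]\subset DS(\cf)$ and the closedness of $DS(\cf)$, this forces $DS(\cf)=[0,\dim_{\cH}(J_{\cf})]$.

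The hardest step is the quantitative increment bound $h_{N+1}-h_N\to 0$ above the threshold $\theta=1$. Translating the small pressure contribution of a newly adjoined generator into a small displacement of the Bowen zero demands a lower bound on $-P_{F_N}'(h_N)$ that is uniform in $N$; securing such uniformity as $h_N\nearrow\dim_{\cH}(J_{\cf})$, and in particular confirming that the slope of the pressure function does not degenerate in the limit, is where the full strength of the subsystem pressure estimates of Propositions \ref{ghenciu1sym}--\ref{ghenciu2sym}, combined with the explicit $|e|^{-2}$ decay of contractions specific to $\cf$, must do the genuine work.
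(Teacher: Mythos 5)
Your argument breaks down at the density step, and the gap is fatal rather than technical. You correctly record that $\theta(\cf)=1$, that $[0,1]\subset DS(\cf)$, and that $DS(\cf)$ is compact; it is also plausible (via Proposition~\ref{ifspe} plus a uniform lower bound on $-P_{F_N}'$ near $h$) that the Bowen parameters $h_N$ of the initial truncations $F_N=\{e_1,\dots,e_N\}$ satisfy $h_N\nearrow h:=\dim_{\cH}(J_{\cf})$ with $h_{N+1}-h_N\to 0$. But a strictly increasing sequence converging to $h$ with vanishing gaps is \emph{not} dense in any interval $[h_{N_0},h]$: by monotonicity its only accumulation point is $h$, so its closure is exactly $\{h_N:N\ge N_0\}\cup\{h\}$ (compare $h_N=1-1/N$). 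Thus even after invoking compactness of $DS(\cf)$ you would only have exhibited $[0,1]\cup\{h_N\}_N\cup\{h\}\subset DS(\cf)$, a set of measure zero above $1$, and nothing forces $DS(\cf)$ to contain the interval $[1,h]$. More generally, any construction that only produces a monotone sequence of subsystems cannot realize uncountably many dimensions, so the family of initial segments is structurally the wrong tool.

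The missing idea is that for \emph{each} target $t\in(1,h)$ one must exhibit a subset $F_t\subset E$ (typically infinite and not an initial segment) with $\dim_{\cH}(J_{F_t})=t$, and this requires a criterion that prevents the greedy construction of $F_t$ from overshooting $t$. That is precisely what the paper's use of the Kesseb\"ohmer--Zhu criterion accomplishes (Theorem~\ref{t3fs3}, packaged as Proposition~\ref{keypropoint} and Corollary~\ref{c1fs6}): one verifies that whenever a finite $F$ has $P_F(t)>0$, the positive replacement $F_\infty^+$ still has $P_{F_\infty^+}(t)\ge 0$, which in the natural ordering reduces to the tail comparison $\sum_{n>k}\|D\f_{e_n}\|_\infty^t\ge K^{2t}\|D\f_{e_k}\|_\infty^t$. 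The genuine work in the paper is then to verify this comparison for the specific grid $E=\N\times i\Z$: Proposition~\ref{outofnumbox} handles all $e$ outside a fixed box (here the explicit $|e|^{-2}$ decay and the monotonicity Lemmas~\ref{amonoton}--\ref{bmonoton} are used), a computer check handles the finitely many remaining $e$, and a bootstrapping over nested subsystems $\cS_2$, etc., together with rigorous numerics for lower bounds on their dimensions, covers the range down to $\theta=1$. Propositions~\ref{ghenciu1sym}--\ref{ghenciu2sym}, which you lean on, do supply the $h_{N+1}-h_N\to 0$ estimate, but that is a much weaker statement than the no-overshoot criterion and cannot by itself turn a sequence into an interval.
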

Our proof strategy is inspired by the one of Kesseb\"ohmer and Zhu from \cite{KZ}. In particular we develop further on \cite[Theorem 2.2]{KZ}, which we restate in Theorem \ref{t3fs3}, and derive several crucial consequences, 
see e.g. Corollary \ref{keypropoint}. Among several key new ideas in our proof is the introduction of a natural order on the grid $E$, see Definition \ref{fullspec}, as well as a bootstrapping argument involving the dimension spectrum of certain subsystems of $\cf$. We also record that our proof is technically more involved than the one for real continued fractions, demands subtler estimates, and, as another new feature, it is also heavily computer assisted. For example we use numerics in order to obtain rigorous estimates for the Hausdorff dimension of certain subsystems of $\cf$ which play important role in the proof of Theorem \ref{fullspec}.  This is a rather interesting novelty because it shows that estimates of Hausdorff dimension of limit sets using numerical analysis, as in \cite{pri,nuss, nusscf,jp1,jp2}, can be employed in order to obtain theoretical results such as Theorem~\ref{fullspec}.


The paper is organized as follows. In Section \ref{sec:preses} we lay down the necessary background from symbolic dynamics and we prove various estimates for the topological pressure of subsystems. In Section \ref{sec:gdms} we introduce all the relevant concepts related to graph directed Markov systems and we introduce and study new natural parameters which can be realized as variants of the parameter $\theta$. In Section \ref{sec:gdmsspec} we introduce new dimension spectra for GDMS and study their size and topological properties. In Section \ref{sec:hein}  we provide an effective tool for calculating the Hausdorff dimension of the limit set of any finitely irreducible and strongly regular conformal GDMS with arbitrarily high accuracy. We thus generalize the main result of \cite{HU} to the setting of GDMSs and we simultaneously provide a substantially simpler proof.  In Section \ref{sec:dimspect} we narrow our focus to the dimension spectrum of general conformal iterated function systems. The machinery developed in Section \ref{sec:dimspect} is used, among other tools, in Section \ref{sec:fullcf}  to prove that the dimension spectrum of complex continued fractions is full.

\section{Pressure estimates for countable alphabet subsets of finite type}\label{sec:preses}
In this section we introduce all the relevant concepts from symbolic dynamics with countable alphabets and we establish qualitative bounds for the topological pressure of H\"older functions.
 
Let $\mathbb{N}=\{1, 2, \ldots \}$  be  the set of all positive
integers  and let $E$ be  a  countable  set, either finite or infinite,
which we will call {\it alphabet}.  Let
$$
\sg: E^\mathbb{N} \ra E^\mathbb{N}
$$
be the  shift map,  which  is given by the
formula
$$
\sg\left( (\om_n)^\infty_{n=1}  \right) =  \left( (\om_{n+1})^\infty_{n=1}  \right).
$$
Note that the shift map simply discards the first coordinate. We now proceed with some standard notation from symbolic dynamics.  For every finite word $\om \in E^*:=\cup_{n=0}^\infty E^n$,  $|\om|$ will denote its {\em length}, that is the unique integer $n \geq 0$ such that $\om \in E^n$. As a standard convention we let $E^0=\{\emptyset\}$. If $\om, \upsilon \in
E^\mathbb{N}$, $\tau \in E^*$ and $n \geq 1$, we set
$$
\aligned
\om |_n&=\om_1\ldots \om_n\in E^n, \\
\tau\om&=(\tau_1,\dots,\tau_{|\tau|},\om_1,\dots),\\
\omega\wedge\upsilon&=\mbox{longest
initial block common to both $\om$ and $\tau$}.
\endaligned
$$
Note that $\omega\wedge\upsilon  \in
E^{\mathbb N}\cup E^*$.

Let the matrix $A:E \times E \to \{0,1\}$ and set
$$
E^\mathbb{N}_A
:=\{\om \in E^\mathbb{N}:  \,\, A_{\om_i\om_{i+1}}=1  \,\, \mbox{for
  all}\,\,   i \in \N
\}.
$$
The words in $E^\mathbb{N}_A$ will be called {\it $A$-admissible}. In the same way the set of {\em finite} admissible words is defined as
$$
E^n_A
:=\{w \in E^\mathbb{N}:  \,\, A_{\om_i\om_{i+1}}=1  \,\, \mbox{for
  all}\,\,  1\leq i \leq
n-1\}, \qquad n \in \N,
$$
and
$$
E^*_A:=\bigcup_{n=0}^\infty E^n_A.
$$
For every  $\om \in E^*_A$, its corresponding {\em cylinder}\index{cylinder} is
$$
[\om]:=\{\tau \in E^\mathbb{N}_A:\,\, \tau_{|_{|\om|}}=\om \}.
$$

For  $\alpha > 0$, we consider the metrics $d_\alpha$ on
$E^{\mathbb N}$ by setting
\begin{equation*}
d_\alpha(\om,\tau) ={\rm e}^{-\alpha|\om\wedge\tau|}.
\end{equation*}
It follows easily that  the  metrics $d_\alpha$ induce the same topology.  
If no metric is specifically mentioned, $E^\N$ will be treated as a topological space with the topology defined by  $d_1$.  Note that  $E^\mathbb{N}_A$ is  a closed subset of
$E^\mathbb{N}$, invariant under the shift map $\sg: E^\mathbb{N}\ra E^\mathbb{N}$.

The matrix $A:E \times E \to \{0,1\}$  is called {\it irreducible} 
if there
exists $\Lambda \subset E_A^*$ such that for all $i,j\in E$
there exists $\om\in \Lambda$ for which $i\om j\in E_A^*$. If there exists a finite set $\Lambda$ with the previous property, the matrix $A$ will be called {\it finitely irreducible}. \index{finitely irreducible matrix}
\index{matrix!finitely irreducible}
If in addition there exists a finite set $\Lambda \subset E_A^*$ consisting of words of the same lengths such that for all $i,j \in E$ there exists $\om \in \Lambda$ such that $i\om j \in E_A^\ast$, then the matrix $A$ is called  {\it finitely primitive}. 

Given a set $F\subset E$ we  put
$$
F^{\mathbb N}
:=\{\om \in E^{\mathbb N}: \, \om_i\in F
\, \mbox{for all } \,  i \in \N\},
$$
and
$$
F_A^{n}:=E^{n}_A \cap F^n
=\{\om \in F^{n}:  \,\, A_{\om_i\om_{i+1}}=1  \,\, \mbox{for
  all}\,\,  1\leq i \leq
n-1\},
$$
where $A:E \times E \to \{0,1\}$ is a matrix. Slightly abusing notation, the set $F \subset E$ will be called {\em irreducible} (with respect to the matrix $A$) if there exists a set $\Lambda \subset F_A^\ast$ such that for all $i,j\in F$
there exists $\om\in \Lambda$ for which $i\om j\in F_A^*$. If $\Lambda \subset F_A^*$ is a set witnessing irreducibilty for $F$ we will denote by $\tilde{\Lambda}$ the set of all letters from $F$ appearing in the words of $\Lambda$. Formally
\begin{equation}
\label{lambdatilde}
\tilde{\Lambda}=\{e \in F: \om_i=e \mbox{ for some } \om \in \Lambda, i=1, \dots, |\om|\}.
\end{equation}
We stress that from now on, until otherwise noted, $A$ will denote a fixed finitely irreducible matrix.


Given a function $f:F_A^\mathbb{N}\to
\mathbb{R}$, the {\it n-th partition function} with respect to $F$ and $f$ is defined as
$$
Z_n(F,f)=\sum_{\om \in F_A^n}\exp \left(\sup_{\tau\in [\om]_F}\sum_{j=0}^{n-1}f(\sg^j(\tau))\right),
$$
where $[\om]_F=\{\tau\in F_A^\mathbb{N}:\tau|_{|\om|}=\om\}$. 

The  following lemma is crucial for the definition of topological pressure which will follow shortly. For its proof see e.g. \cite[Lemma 6.3]{CTU}
\begin{lm}\label{subadd2}
The sequence $(\log Z_n(F,f))_{n=1}^\infty$ is subadditive, i.e.
$$\log Z_{m+n}(F,f) \leq \log Z_n(F,f)+\log Z_m(F,f)$$
for all $m,n \in \N.$
\end{lm}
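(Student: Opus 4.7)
The plan is to prove subadditivity via the standard Birkhoff sum decomposition: write any admissible word of length $m+n$ as a concatenation of an admissible $n$-word and an admissible $m$-word, split the ergodic sum accordingly, and use that ``sup of a sum is at most sum of sups'' before dropping the admissibility constraint to factor the double sum.

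First I would pick an arbitrary $\om \in F_A^{m+n}$ and write $\om = \alpha\beta$ with $\alpha = \om|_n \in F_A^n$ and $\beta = (\om_{n+1},\ldots,\om_{m+n}) \in F_A^m$ (we do have $A_{\alpha_n,\beta_1}=1$, but this constraint will only be used to note that not every pair $(\alpha,\beta)$ actually arises). For any $\tau \in [\om]_F$, the Birkhoff sum splits as
\[
\sum_{j=0}^{m+n-1} f(\sigma^j\tau) = \sum_{j=0}^{n-1} f(\sigma^j\tau) + \sum_{k=0}^{m-1} f(\sigma^k(\sigma^n\tau)).
\]
Since $\tau \in [\om]_F \subset [\alpha]_F$ and $\sigma^n\tau \in [\beta]_F$, taking the supremum over $\tau \in [\om]_F$ and using that the supremum of a sum is at most the sum of the suprema, I obtain
\[
\sup_{\tau \in [\om]_F}\sum_{j=0}^{m+n-1} f(\sigma^j\tau) \leq \sup_{\tau \in [\alpha]_F}\sum_{j=0}^{n-1} f(\sigma^j\tau) + \sup_{\tau' \in [\beta]_F}\sum_{k=0}^{m-1} f(\sigma^k\tau').
\]

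Exponentiating and summing over $\om \in F_A^{m+n}$, and then enlarging the range of summation by dropping the constraint that $(\alpha,\beta)$ must concatenate admissibly (that is, summing over \emph{all} pairs $(\alpha,\beta) \in F_A^n \times F_A^m$), I get
\[
Z_{m+n}(F,f) \leq \sum_{\alpha \in F_A^n}\sum_{\beta \in F_A^m} \exp\!\bigl(\sup_{\tau\in[\alpha]_F} S_n f(\tau)\bigr)\exp\!\bigl(\sup_{\tau'\in[\beta]_F} S_m f(\tau')\bigr) = Z_n(F,f)\,Z_m(F,f).
\]
Taking logarithms yields the desired inequality.

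There is essentially no obstacle here; the only point that deserves a moment of care is the legitimacy of enlarging the summation range from admissible concatenations to the full product $F_A^n \times F_A^m$, which is harmless because every summand is nonnegative (indeed positive). I would flag this step explicitly in the write-up but otherwise the argument is routine.
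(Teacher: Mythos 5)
Your proof is correct and is the standard subadditivity argument for partition functions in countable-alphabet thermodynamic formalism: decompose each admissible $(m+n)$-word into its length-$n$ prefix and length-$m$ suffix, split the Birkhoff sum along the shift $\sigma^n$, bound the supremum over $[\om]_F$ by the product of suprema over the larger cylinders $[\alpha]_F$ and $[\beta]_F$, and then enlarge the (injectively indexed) sum to the full product using nonnegativity of the summands. The paper does not include its own proof here — it defers to \cite[Lemma~6.3]{CTU} — but your argument is precisely the one that reference uses, and your explicit flagging of the enlargement step is exactly where the care belongs.
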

It is well known that if $(a_n)_{n=1}^\infty$ is subadditive sequence, then $\lim_{n\to \infty}\frac{a_n}{n}$
exists and is equal to $\inf_{n\ge 1}(a_n/n)$. Therefore the following definition makes sense.
\begin{defn}
\label{toppressym} Let $F \subset E$ and a function $f:F_A^\mathbb{N}\to
\mathbb{R}$. The {\em topological pressure} of $f$ with respect to the shift map
\index{shift map} $\sg:F_A^\mathbb{N}\to F_A^\mathbb{N}$ is defined to be
\begin{equation*}
P_F^\sg(f):=\lim_{n\to\infty} \frac{1}{n} \log Z_n(F,f)
       = \inf\left\{ \frac{1}{n} \log Z_n(F,f)\right\}.
\end{equation*}
If $F=E$ we suppress the subscript $F$ and write simply $P^\sg(f)$  for $P_E^\sg(f)$ and $Z_n(f)$ for $Z_n(E,f)$.
\end{defn}
Topological pressure is a key concept in symbolic dynamics with countable alphabet. For a concise but rigorous exposition see \cite[Chapter 6]{CTU}; a more extensive treatment can be found in \cite{MUbook}. 

Our goal in this section is to obtain estimates of $P^\sg_{F \cup \{a\}}(f)$ in terms of $P^{\sg}_{F}$ and $\sup_{\om \in [a]} f(\om)$ in the case when both $E$ and $F$ are finitely irreducible.
We start by stating an important distortion lemma for H\"older continuous functions. A function $f:E_A^\N\to\R$ is  H\"older continuous
with exponent $\a >0$ if
$$
V_\a(f):=\sup_{n\ge 1}\{V_{\a,n}(f)\}<\infty,
$$
where
$$
V_{\a,n}(f)=\sup\{|f(\om)-f(\tau)|e^{\a(n-1)}:\om,\tau\in E_A^\N
\text{ and } |\om\wedge \tau|\ge n\}.
$$
For functions  $f:E_A^\N\to\R$ we will also use the standard notation 
$$S_n f =\sum_{j=0}^{n-1} f \circ \sigma^{j}, \, n\in \N.$$
The proof of the following lemma can be found in \cite[Lemma 6.25]{CTU}. 
\begin{lm}
\label{distsymn}
Let $E$ be a finitely irreducible set. If $g:E_A^\N\to\C$ such that $V_\a(g)<\infty$, then for all $n\ge 1$, for all
$\om,\tau\in E_A^\N$, and all $\rho\in
E_A^n$ with $A_{\rho_n\om_1}=A_{\rho_n\tau_1}=1$ we have
$$
\big|S_ng(\rho\om)-S_ng(\rho\tau)\big|\le \frac{V_\a(g)}{e^\a-1}d_\a(\om,\tau).
$$
\end{lm}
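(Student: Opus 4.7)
The plan is to expand the left-hand side into a sum of pointwise differences along the orbit and bound each summand via the H\"older seminorm. Writing
$$S_n g(\rho\om) - S_n g(\rho\tau) = \sum_{j=0}^{n-1} \bigl(g(\sigma^j(\rho\om)) - g(\sigma^j(\rho\tau))\bigr),$$
the first step is to locate the common initial block of the two arguments. For each $j\in\{0,\ldots,n-1\}$, both $\sigma^j(\rho\om)$ and $\sigma^j(\rho\tau)$ begin with the word $\rho_{j+1}\ldots\rho_n$ of length $n-j$, and only then branch into $\om$ and $\tau$; consequently
$$|\sigma^j(\rho\om)\wedge\sigma^j(\rho\tau)| \ge (n-j) + |\om\wedge\tau|.$$
The admissibility hypotheses $A_{\rho_n\om_1}=A_{\rho_n\tau_1}=1$ enter precisely here to guarantee that $\rho\om$ and $\rho\tau$ lie in $E_A^{\N}$, so that the H\"older seminorm can legitimately be evaluated on them.

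Second, I would feed this lower bound on the agreement length into the definition of $V_\a(g)$. Since $V_{\a,k}(g)\le V_\a(g)$ for every $k\ge 1$, the H\"older hypothesis yields the pointwise estimate $|g(\om')-g(\tau')|\le V_\a(g)\,e^{-\a(|\om'\wedge\tau'|-1)}$ for arbitrary admissible $\om',\tau'$. Applying this with $\om'=\sigma^j(\rho\om)$ and $\tau'=\sigma^j(\rho\tau)$ and using the previous prefix-length bound gives
$$|g(\sigma^j(\rho\om)) - g(\sigma^j(\rho\tau))| \le V_\a(g)\, e^{-\a((n-j)+|\om\wedge\tau|-1)}$$
for each $j$.

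The remainder is pure bookkeeping. Factoring the constant $e^{-\a|\om\wedge\tau|}=d_\a(\om,\tau)$ out of the summands and reindexing $k = n-j$ reduces the total sum to a finite geometric progression in $e^{-\a}$; passing to the infinite tail yields
$$|S_n g(\rho\om) - S_n g(\rho\tau)| \le V_\a(g)\, d_\a(\om,\tau)\sum_{k\ge 1} e^{-\a k} = \frac{V_\a(g)}{e^\a-1}\, d_\a(\om,\tau),$$
up to the normalizing convention used for $V_{\a,n}$. The only subtlety is keeping the index count for the shifted common prefix correct; once this is in place the argument reduces to a geometric sum. The finite-irreducibility hypothesis on $E$ plays no active role in this estimate -- it is simply the standing assumption from earlier in the section -- and the entire argument rests only on the H\"older bound together with the combinatorics of the shift.
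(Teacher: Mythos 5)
Your overall strategy — expand the Birkhoff sum, locate the common prefix of $\sigma^j(\rho\om)$ and $\sigma^j(\rho\tau)$, invoke the H\"older seminorm termwise, and sum a geometric series — is exactly the standard argument, and the identification $|\sigma^j(\rho\om)\wedge\sigma^j(\rho\tau)| = (n-j)+|\om\wedge\tau|$ and the role of the admissibility conditions are correctly handled. You are also right that finite irreducibility plays no role here.

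However, the final display contains an arithmetic slip precisely at the point you flag as ``the normalizing convention used for $V_{\a,n}$,'' and it is not innocuous. Your termwise bound is $V_\a(g)\,e^{-\a((n-j)+|\om\wedge\tau|-1)}$. After factoring out $d_\a(\om,\tau)=e^{-\a|\om\wedge\tau|}$, the remaining factor is $e^{-\a((n-j)-1)}=e^{-\a(k-1)}$ with $k=n-j$ running from $1$ to $n$, so the geometric sum is $\sum_{\ell\ge 0}e^{-\a\ell}=\frac{1}{1-e^{-\a}}=\frac{e^{\a}}{e^{\a}-1}$, not $\sum_{k\ge 1}e^{-\a k}=\frac{1}{e^{\a}-1}$. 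In other words, the $-1$ in the definition of $V_{\a,n}$ shifts the series by one term, and your argument, done carefully, yields the constant $\frac{e^{\a}}{e^{\a}-1}$, which is the classical Mauldin--Urba\'nski bound (and that of \cite[Lemma 6.25]{CTU}). The constant $\frac{1}{e^{\a}-1}$ printed in the statement is in fact too small to be true: taking $n=1$, a potential $g$ depending only on the first two coordinates, and $\om,\tau$ with $\om_1\ne\tau_1$ gives $|g(\rho\om)-g(\rho\tau)|\le V_\a(g)\,d_\a(\om,\tau)$ with near-equality achievable, which contradicts $\frac{V_\a(g)}{e^{\a}-1}d_\a(\om,\tau)$ as soon as $\a>\log 2$. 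So the slip in your final display happens to reproduce a typo in the lemma's statement; you should instead carry the extra $e^{\a}$ through and state the bound as $\frac{V_\a(g)}{1-e^{-\a}}\,d_\a(\om,\tau)$.
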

For any $g: E_A^\N \ra \R$ we define
$$B(g):=\left\{ M>0: |S_ng(\rho\om)-S_ng(\rho\tau)| \leq M, \mbox{ for all } n\in \N, \,
\om,\tau\in E_A^\N,\,\rho\in E_A^n \right\}$$
and we set
\begin{equation}
\label{L(g)}
L(g):=\inf B(g).
\end{equation}
Note that if that if $g$ H\"older then Lemma \ref{distsymn}  implies that $L(g)<\infty$.

We now proceed to a key technical lemma of combinatorial flavor which will allow us to obtain the desired pressure estimates. For any finite set $\Lambda \subset E^\ast$ and any $f: E_A^\N \ra \R$ we denote
$$p_\Lambda:=\max_{\lambda \in \Lambda}  |\lambda|$$
and
\begin{equation}
\label{klf}
\kappa_{\Lambda}(f)=\min_{\lambda \in \Lambda}\left\{ \inf_{[\lambda]} e^{S_{|\lambda|}f}\right\}.
\end{equation}

\begin{lm}
\label{mainlemmasym} Let $E$ be a finitely irreducible infinite countable set. Let $F\subset E$ be a finitely irreducible subset of $E$ and let $\Lambda$ be a nonempty set witnessing finite irreducibility for $F$. Then for every $a \in E$ and every $\a$-H\"older function $f: E_A^\N \ra \R$,
$$Z_{n}(F \cup \{a\},f) \leq e^{2 L(f)}\sum_{j=0}^n {n \choose j}\, \left(\frac{\sharp \Lambda \, \exp(\sup f|_{[a]}+L(f))}{ \kappa_{\Lambda}(f)} \right)^{n-j}\, \sum_{k=n}^{j+p_{\Lambda}(n-j)} Z_{k}(F,f),$$
for every $n \in \N$.
\end{lm}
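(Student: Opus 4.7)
The plan is to define, for each admissible word $\omega \in (F \cup \{a\})_A^n$, an inflated word $\omega' \in F_A^{k(\omega)}$ obtained by substituting a single word from $\Lambda$ for each occurrence of the letter $a$ in $\omega$ (one $\Lambda$-insertion per $a$-letter, even when several $a$'s are adjacent). I would first organize the sum defining $Z_n(F \cup \{a\}, f)$ by fixing the subset $I \subset \{1, \ldots, n\}$ of positions occupied by the $n-j$ letters equal to $a$; there are $\binom{n}{j}$ such subsets, which produces the binomial factor in the target inequality. For each $\omega$ with $a$'s at the positions of $I$, I decompose $\omega = u_0 \, a \, u_1 \, a \cdots a \, u_{n-j}$ with each $u_r \in F_A^{|u_r|}$ (possibly empty) and $\sum_r |u_r| = j$. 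Choosing connectors $\lambda^{(1)}, \ldots, \lambda^{(n-j)} \in \Lambda$ and substituting yields $\omega' = u_0 \lambda^{(1)} u_1 \cdots \lambda^{(n-j)} u_{n-j}$ of length $k = j + \sum_i |\lambda^{(i)}| \in [n,\, j+(n-j) p_\Lambda]$, producing exactly the range of the inner sum $\sum_{k=n}^{j+(n-j)p_\Lambda} Z_k(F,f)$. The number of $\lambda^{(\cdot)}$-choices is at most $(\sharp \Lambda)^{n-j}$, and at least one admissible choice exists for each $\omega$ because $\Lambda$ witnesses finite irreducibility of $F$.

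Next I would translate the $f$-sums from cylinders in $(F\cup\{a\})_A^\N$ to cylinders in $F_A^\N$. Each $a$ at position $i$ of $\omega$ contributes a term $f(\sigma^{i-1}(\omega\tau))$ to $S_n f(\omega\tau)$; since this shifted word begins with $a$, the contribution is bounded by $\sup f|_{[a]}$, yielding the factor $\exp(\sup f|_{[a]})^{n-j}$. The compensating denominator $\kappa_\Lambda(f)^{n-j}$ appears because the inserted $\lambda^{(i)}$ contributes at least $\inf_{[\lambda^{(i)}]} \exp(S_{|\lambda^{(i)}|} f) \geq \kappa_\Lambda(f)$ to $\exp(\sup_{[\omega']_F} S_k f)$; dividing by this lower bound once per insertion prevents the $F$-side from being artificially inflated by material that has no counterpart on the $(F \cup \{a\})$-side. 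The distortion constant $L(f)$ from \eqref{L(g)} enters globally via the inequality $\sup_{[\rho]} S_m f \leq \inf_{[\rho]} S_m f + L(f)$, applied once to transfer $\sup_{[\omega]_{F\cup\{a\}}} S_n f$ to an $f$-value at a specific tail and once to re-absorb that point-evaluation into $\sup_{[\omega']_F} S_k f$ after inflation; this supplies the prefactor $e^{2L(f)}$. A further application of the distortion bound at each $a$-position, to align the tail of $\sigma^{i-1}(\omega\tau)$ with the corresponding tail after inflation, gives the extra $e^{L(f)}$ bundled into the exponent $\sup f|_{[a]} + L(f)$.

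Combining the above, summing the per-word estimate over $\omega$ with fixed $I$ and over the $(\sharp \Lambda)^{n-j}$ choices of connectors $\lambda^{(\cdot)}$, one obtains an upper bound in terms of partial sums of $Z_k(F,f)$ for $k \in [n,\, j+(n-j)p_\Lambda]$; summing over $I$ supplies the factor $\binom{n}{j}$, and summing over $j$ yields the claim. The main difficulty I expect is the bookkeeping for the inflation map $\phi:(\omega,\lambda^{(\cdot)}) \mapsto \omega'$ at the boundaries of $\omega$ and when $a$'s are adjacent: one must argue that the chained $\lambda^{(i)}$'s remain $F_A$-admissible (using irreducibility of $F$ and the fact that each $\lambda \in \Lambda$ is itself an $F_A$-word), and that potential over-counting of preimages of a given $\omega'$ is absorbed harmlessly into the $(\sharp \Lambda)^{n-j}$ factor. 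A secondary subtle point is the precise accounting of the three distinct uses of $L(f)$ (two in the prefactor, one per $a$), which must be calibrated so that the overall overhead is exactly $e^{2L(f)}$ together with $e^{L(f)}$ per $a$, and not larger.
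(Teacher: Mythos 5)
Your plan follows essentially the same route as the paper's proof: split $Z_n(F\cup\{a\},f)$ according to the number $n-j$ and positions $I$ of the occurrences of $a$, inflate each such word by inserting a connecting word from $\Lambda$ in place of each $a$, and control the overcount via a fiber estimate. The observation that distinct preimages of a fixed inflated word $\bar\omega$ with the same position set $I$ must carry distinct connector vectors --- since the block lengths $|\rho_i|$ are pinned down by $I$ --- is exactly the paper's argument for \eqref{key}; it gives at most $(\sharp\Lambda)^{n-j}$ preimages per position set, there are $\binom{n}{j}$ position sets, and the lengths of the inflated words range over $[n,\,j+p_\Lambda(n-j)]$, just as you observe.

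What would not close if you carried out the plan as written is the distortion ledger. You spend one $L(f)$ per $a$-position ``to align the tail of $\sigma^{i-1}(\omega\tau)$ with the corresponding tail after inflation'', but the $a$'s have no counterpart after inflation --- the whole point of the substitution is that they disappear --- and their contributions are bounded directly by $\sup f|_{[a]}$ with no distortion at all, as in \eqref{c21}. The distortion bound is instead applied to each of the $n-j+1$ blocks $\rho_i$: for $\upsilon\in[\bar\omega]$, Lemma \ref{distsymn} gives $S_{|\rho_i|}f(\sigma^\ell(\upsilon))\ge\sup S_{|\rho_i|}f|_{[\rho_i]}-L(f)$ for the appropriate shift $\ell$, which is \eqref{correstsym3}; combined with the lower bound $\log\kappa_\Lambda(f)$ on the inserted connectors' Birkhoff sums, this produces \eqref{c212} at a total cost of $(n-j+1)L(f)$. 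You also allocate an $L(f)$ to ``transfer $\sup_{[\omega]}S_nf$ to an $f$-value at a specific tail'', which is unnecessary: the estimate \eqref{c21} is pointwise for every $\tau\in[\omega]$ and therefore passes to the supremum for free. Finally you omit the one $L(f)$ the paper does need in \eqref{restrcapf}, namely $\sup S_{|\bar\omega|}f|_{[\bar\omega]}\le\sup S_{|\bar\omega|}f|_{[\bar\omega]_F}+L(f)$, which restricts from the cylinder in $E_A^\N$ to the $F$-cylinder because $Z_k(F,f)$ is defined over $[\cdot]_F$. With the correct allocation --- $(n-j+1)L(f)$ for the $\rho$-blocks, one $L(f)$ for the $E$-to-$F$ restriction, and nothing for the $a$'s or for passing to a tail --- the total $(n-j+2)L(f)$ splits as $(n-j)L(f)$ into the per-$a$ factor and $2L(f)$ into the prefactor, exactly as the statement requires. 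Relocate the $L(f)$'s in this way and your plan reproduces the paper's proof.
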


\begin{proof}
We set
\begin{equation}
\label{fjn}
F^n_j=\{\om \in  (F \cup \{a\})_A^n: \mbox{ the letter $e$ appears }n-j\mbox{ times in }\om\}.
\end{equation}
Therefore 
\begin{equation}
\label{zkl}
Z_{n}(F \cup \{a\},f) \leq \sum_{j=0}^n \sum_{\om \in F^n_j} \exp( \sup S_n f |_{[\om]}).
\end{equation}
Observe that if $\om \in F^n_j$ then it is of the form
\begin{equation}
\label{omega}
\om=\rho_1 a \rho_2 \dots\rho_{n-j} a \rho_{n-j+1},
\end{equation}
where $\rho_i \in F_A^\ast \cup \{\emptyset\}$ for $i=1, \dots, n-j+1$. Since $\Lambda \subset F_A^\ast$ is finite,
$$\Lambda=\{\lambda_1,\dots, \lambda_{\sharp \Lambda}\}.$$ For any two words $\om, \upsilon \in F^{\ast}_A$ we define
$$[\om,\upsilon]=\min\{i\in\{1,\dots,\sharp \Lambda\}:\om\lambda_i\upsilon \in F_A^\ast\}.$$
We then define a map
$$g: (F \cup \{a\})_A^{\ast} \ra F_A^\ast$$
 as follows.  If $\om \in(F \cup \{a\})_A^{\ast}$ then there exists some $n,j \in \N$ such that $\om \in F^n_j$. In that case, as we discussed earlier, $\om$ is as in \eqref{omega}. We then let
$$g(\om):=\bar{\om}:=\rho_1 \alpha_1 \rho_2 \dots\rho_{n-j} \alpha_{n-j} \rho_{n-j+1}$$
where $\alpha_i=\lambda_{[\rho_i,\rho_{i+1}]}$ for $i=1,\dots,n-j$.

We now make the following key observation. The map $g: F^n_{j} \ra F_A^\ast$ is at most 
\begin{equation}
\label{key}
(\sharp \Lambda)^{n-j} {n \choose j}\mbox{ to }1.
\end{equation}
To prove our claim we fix some word $\bar{\om} \in g(F^n_j)$ and we set
$$F^n_{j}(k_1,\dots,k_{n-j})=\{\om \in F^n_{j}:\mbox{ the letter $a$ appears on the spots }k_1,\dots,k_{n-j}\},$$
where $1\leq k_i\leq n$. We will now establish an upper bound for the number of elements in the set
$$A^n_j(k_1,\dots,k_{n-j}):=(g|_{F_j^n})^{-1}(\bar{\om}) \cap F^n_{j}(k_1,\dots,k_{n-j}).$$
Let $\om,\om' \in A^n_j(k_1,\dots,k_{n-j}), \om \neq \om'$,
$$\om=\rho_1a\rho_2a \dots \rho_{n-j}a\rho_{n-j+1}$$
and
$$\om'=\rho'_1a\rho'_2a \dots \rho'_{n-j}a\rho'_{n-j+1}.$$
Observe that for all $i=1,\dots,n-j$
\begin{equation}
\label{rosamelength}
|\rho_i|=|\rho'_i|
\end{equation}
because $|\om|=|\om'|$ and $a$ occupies exactly the same spots in $\om$ and $\om'$. Moreover
since $\om \neq \om'$ there exists some $i_0 \in \{1,\dots,n-j+1\}$ such that 
\begin{equation}
\label{roneq}
\rho_{i_0} \neq \rho'_{i_0}.
\end{equation}
Observe that necessarily
\begin{equation}
\label{lambdavektor}
(\lambda_{[\rho_1,\rho_2]},\lambda_{[\rho_2,\rho_1]},\dots, \lambda_{[\rho_{n-j},\rho_{n-j+1}]})\neq (\lambda_{[\rho'_1,\rho'_2]},\lambda_{[\rho'_2,\rho'_1]},\dots, \lambda_{[\rho'_{n-j},\rho'_{n-j+1}]}).
\end{equation}
Because otherwise we would get that
\begin{align*}
g(\om)&=\rho_1\lambda_{[\rho_1,\rho_2]}\rho_2 \dots \rho_{n-j}\lambda_{[\rho_{n-j},\rho_{n-j+1}]}\rho_{n-j+1}\\
&=\rho'_1\lambda_{[\rho_1,\rho_2]}\rho'_2 \dots \rho_{n-j}\lambda_{[\rho_{n-j},\rho_{n-j+1}]}\rho'_{n-j+1}=g(\om'),
\end{align*}
which  combined with \eqref{rosamelength} implies that $\rho_i=\rho'_i$ for all $i=1,\dots,n-j$. But this is impossible by \eqref{roneq}. Therefore \eqref{lambdavektor} holds and it implies that
\begin{equation}
\label{keyest}
\sharp A^n_j(k_1,\dots,k_{n-j}) \leq (\sharp \Lambda)^{n-j}.
\end{equation}
Note that 
$$F^n_{j}=\bigcup_{\substack{(k_1,\dots,k_{n-j}):\\1\leq k_1<k_2<\dots<k_{n-j}\leq n }} A^n_j(k_1,\dots,k_{n-j}),$$
and there exist at most ${n \choose j}$ sets $A^n_j(k_1,\dots,k_{n-j})$. Hence \eqref{key} follows by \eqref{keyest}.

For simplicity we let $\kappa:=\kappa_{\Lambda}(f)$ and $L=L(f)$. The next step in the proof of Lemma \ref{mainlemma} is to show that for every $\om \in F^n_j$,
\begin{equation}
\label{claim2}
\sup S_n f|_{[\om]} \leq (n-j)(\sup f|_{[a]}+L-\log \kappa)+\sup S_{|\bar{\om}|}f|_{[\bar{\om}]_F}+2L.
\end{equation}

Let $\om=\rho_1 a \rho_2 \dots \rho_{n-j} a \rho_{n-j+1}$ where as we observed earlier some of the $\rho_i$'s might be empty words. If $\rho$ is an empty word we make the convention that $\sup S_{|\rho|}f|_{[\rho]}=0$. We will first prove that
for every $\om \in F^n_j$,
\begin{equation}
\label{c21}
\sup S_n f |_{[\om]} \leq (n-j)\sup f|_{[a]}+\sum_{i=1}^{n-j+1} \sup S_{|\rho_i|}f|_{[\rho_i]}.
\end{equation}
To see \eqref{c21}, take $\tau \in [\om]$. Then
\begin{equation*}
\begin{split}\sum_{l=0}^{n-1} f(\sg^l(\tau))=&\sum_{l=0}^{|\rho_1|-1} f \circ \sg^l(\tau)+f(\sg^{|\rho_1|}(\tau))+\sum_{l=|\rho_1|+1}^{|\rho_1|+|\rho_2|} f \circ \sg^l(\tau) \\
&+f(\sg^{|\rho_1|+|\rho_2|+1}(\tau))+\dots+\sum_{l=|\rho_1|+\dots+|\rho_{n-j}|+(n-j)}^{|\rho_1|+\dots+|\rho_{n-j}|+|\rho_{n-j+1}|+(n-j)-1} f \circ \sg^l(\om).
\end{split}
\end{equation*}
Since $\om=\rho_1 a \rho_2 \dots \rho_{n-j} a \rho_{n-j+1}$ we deduce that
$$\sum_{l=0}^{n-1} f(\sg^j(\tau)) \leq (n-j)\sup f|_{[a]}+\sum_{i=1}^{n-j+1} \sup S_{|\rho_i|}f|_{[\rho_i]},$$
and \eqref{c21} follows.

Recall that
$$g(\om)=\bar{\om}=\rho_1\lambda_{[\rho_1,\rho_2]}\rho_2 \dots \rho_{n-j}\lambda_{[\rho_{n-j},\rho_{n-j+1}]}\rho_{n-j+1}.$$ We will now prove that 
\begin{equation}
\begin{split}
\label{c212}
\sum_{i=1}^{n-j+1} \sup S_{|\rho_i|}f|_{[\rho_i]} \leq \sup S_{|\bar{\om}|}f|_{[\bar{\om}]}+(n-j)(L-\log \kappa)+L.
\end{split}
\end{equation}
Take $\upsilon \in [\bar{\om}]$. Then,
\begin{equation*}
\begin{split}\sum_{l=0}^{|\om|-1} f(\sg^l(\up))=&\sum_{l=0}^{|\rho_1|-1} f \circ \sg^l(\up)+\sum_{l=|\rho_1|}^{|\rho_1|+|\lambda_{[\rho_1,\rho_2]}|-1} f \circ \sg^l(\up)+\sum_{l=|\rho_1|+|\lambda_{[\rho_1,\rho_2]}|}^{|\rho_1|+|\lambda_{[\rho_1,\rho_2]}|+|\rho_2|-1} f \circ \sg^l(\up) \\
&+\sum_{l=|\rho_1|+|\lambda_{[\rho_1,\rho_2]}|+|\rho_2|}^{|\rho_1|+|\lambda_{[\rho_1,\rho_2]}|+|\rho_2|+\lambda_{[\rho_2,\rho_3]}-1} f \circ \sg^l(\up) \\
&\quad+\dots+\sum_{l=|\rho_1|+|\lambda_{[\rho_1,\rho_2]}|+\dots+|\rho_{n-j}| +|\lambda_{[\rho_{n-j},\rho_{n-j+1}]}|}^{|\rho_1|+|\lambda_{[\rho_1,\rho_2]}|+\dots+|\rho_{n-j}| +|\lambda_{[\rho_{n-j},\rho_{n-j+1}]}|+|\rho_{n-j+1}|-1} f \circ \sg^l(\up). 
\end{split}
\end{equation*}
Hence 
\begin{equation}
\begin{split}
\label{correstsym1}\sum_{l=0}^{n-1} f(\sg^l(\up))&= S_{|\rho_1|}f(\up)+ S_{|\lambda_{[\rho_1,\rho_2]}|}f(\sg^{|\rho_1|}(\up))\\
&\quad+S_{|\rho_2|}f(\sg^{|\rho_1|+|\lambda_{[\rho_1,\rho_2]}|}(\up))+S_{|\lambda_{[\rho_2,\rho_3]}|}f(\sg^{|\rho_1|+|\lambda_{[\rho_1,\rho_2]}|+|\rho_2|} (\up)) \\
&\qquad+\dots+S_{|\rho_{n-j+1}|}f(\sg^{|\rho_1|+|\lambda_{[\rho_1,\rho_2]}|+\dots+|\rho_{n-j}| +|\lambda_{[\rho_{n-j},\rho_{n-j+1}]}|}(\up)).
\end{split}
\end{equation}
By the definition of $\kappa$ we deduce that
\begin{equation}
\label{correstsym2}
S_{|\lambda_{[\rho_i,\rho_{i+1}]}|}f(\sg^{|\rho_1|+\dots |\lambda_{[\rho_i-1,\rho_i]}|+|\rho_i|} (\up)) \geq \log \kappa
\end{equation}
for all $i=1,\dots,n-j$. By Lemma \ref{distsymn} we also deduce that
\begin{equation}
\label{correstsym3}
S_{|\rho_{i}|}f(\sg^{|\rho_1|+\dots+|\rho_{i-1}| +|\lambda_{[\rho_{i-1},\rho_{i}]}|}(\up)) \geq \sup S_{|\rho_i|}f|_{[\rho_i]}-L
\end{equation}
for all $i=1,\dots,n-j+1$. Therefore by \eqref{correstsym1}, \eqref{correstsym2} and \eqref{correstsym3}, we deduce that
$$\sup S_{|\bar{\om}|}f|_{[\bar{\om}]} \geq \sum_{i=1}^{n-j+1} \sup S_{|\rho_i|}f |_{[\rho_i]} +(n-j)(\log \kappa-L)-L,$$
and \eqref{c212} follows. 

Observe that since $\bar{\om} \in F$, Lemma \ref{c212} implies
\begin{equation}
\label{restrcapf}
\sup S_{|\bar{\om}|}f|_{[\bar{\om}]} \leq \sup S_{|\bar{\om}|}f|_{[\bar{\om}]_F}+L.
\end{equation}
Therefore \eqref{claim2} follows by \eqref{c21}, \eqref{c212} and \eqref{restrcapf}.

By \eqref{zkl} and \eqref{claim2},
\begin{equation}
\begin{split}
\label{spec3}
&Z_n(F \cup \{a\},f) \\
&\quad\quad\leq \sum_{j=0}^n \sum_{\om \in F^n_j} \exp \left((n-j)(\sup f|_{[a]}+L-\log \kappa)+\sup S_{|\bar{\omega}|}f|_{[\bar{\om}]_F} +2L\right).
\end{split}
\end{equation}
If $\om \in F^n_j$ then
$$n \leq |\bar{\om}|\leq j +p_{\Lambda} (n-j),$$
therefore \eqref{key} implies that
\begin{equation}
\label{klemmalast}
\sum_{\om \in F^n_j} \exp(\sup S_{|\bar{\om}|}f|_{[\bar{\om}]_F}) \leq (\sharp \Lambda)^{n-j}\, {n \choose j} \,\sum_{k=n}^{j+p_{\Lambda}(n-j)} Z_{k}(F,f).
\end{equation}
The proof of the lemma follows by \eqref{zkl}, \eqref{spec3} and \eqref{klemmalast}.
\end{proof}

Using Lemma \ref{mainlemmasym} we prove two Propositions which give useful qualitative bounds for $P^{\sg}_{F \cup \{a\}}(f)$ in terms of $P^\sg_{F}(f)$ and $\sup f |_{[a]}$.

\begin{propo}
\label{ghenciu1sym}
Let $E$ be a finitely irreducible infinite countable set. Let $F\subset E$ be a finitely irreducible subset of $E$ and let $\Lambda$ be a nonempty set witnessing finite irreducibility for $F$. Then for every $a \in E$ and every $\a$-H\"older function $f: E_A^\N \ra \R$ such that $P^\sg_{F}(f)<0$,
$$e^{P^\sg_{F \cup \{a\}}(f)} \leq e^{P^\sg_{F}(f)}+ \frac{\sharp \Lambda \, e^{L(f)}}{ \kappa_\Lambda(f)}\,e^{\sup f|_{[a]}}.$$
\end{propo}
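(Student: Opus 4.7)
The plan is to derive the inequality directly from Lemma \ref{mainlemmasym} by converting the tail of partition functions on $F$ into a convergent geometric series, collapsing the binomial sum, and then pushing a small loss to zero. Write $P := P^\sg_F(f)$ and
$$C := \frac{\sharp\Lambda\, e^{\sup f|_{[a]} + L(f)}}{\kappa_\Lambda(f)},$$
so the claim is equivalent to $e^{P^\sg_{F \cup \{a\}}(f)} \leq e^{P} + C$. Because $P < 0$, fix any $\epsilon \in (0, -P)$ and, using $\frac{1}{n}\log Z_n(F,f) \to P$ from Definition \ref{toppressym}, pick $N = N(\epsilon)$ large enough that $Z_n(F,f) \leq e^{n(P+\epsilon)}$ for every $n \geq N$.

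For each $n \geq N$, every index $k$ appearing in the inner sum of Lemma \ref{mainlemmasym} satisfies $k \geq n \geq N$, so the geometric tail bound
$$\sum_{k=n}^{j + p_\Lambda(n-j)} Z_k(F,f) \leq \sum_{k=n}^{\infty} e^{k(P+\epsilon)} = \frac{e^{n(P+\epsilon)}}{1 - e^{P+\epsilon}}$$
holds uniformly in $j$. Substituting into Lemma \ref{mainlemmasym} and using $\sum_{j=0}^n \binom{n}{j} C^{n-j} = (1+C)^n$ yields
$$Z_n(F \cup \{a\}, f) \leq \frac{e^{2L(f)}}{1 - e^{P+\epsilon}}\, \bigl(e^{P+\epsilon}(1+C)\bigr)^n.$$

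Taking $\frac{1}{n}\log$ and letting $n \to \infty$ gives $P^\sg_{F \cup \{a\}}(f) \leq (P + \epsilon) + \log(1 + C)$; exponentiating and sending $\epsilon \to 0^+$ yields $e^{P^\sg_{F \cup \{a\}}(f)} \leq e^{P}(1 + C) = e^P + e^P C$. Since $P < 0$ forces $e^P < 1$, the slightly stronger bound $e^P + e^P C \leq e^P + C$ holds, which is precisely the inequality claimed. The only real subtlety is matching the range $[n, j + p_\Lambda(n-j)]$ in the inner sum with the regime in which the uniform estimate $Z_k(F,f) \leq e^{k(P+\epsilon)}$ is available; this is handled by starting from $n \geq N$. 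The prefactor $e^{2L(f)}(1 - e^{P+\epsilon})^{-1}$ is then harmless because it disappears under the $\frac{1}{n}\log$ normalization.
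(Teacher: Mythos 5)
Your proof is correct and follows essentially the same route as the paper: apply Lemma \ref{mainlemmasym}, fix $\ve\in(0,-P)$ so that $Z_n(F,f)\le e^{n(P+\ve)}$ for large $n$, collapse the binomial sum, pass to the limit in $n$, then let $\ve\to 0$. The one difference is in how the inner sum $\sum_{k=n}^{j+p_\Lambda(n-j)}Z_k(F,f)$ is controlled: the paper bounds it by (number of terms) $\times$ (largest term) $\le p_\Lambda n\, e^{(P+\ve)n}$ and then relaxes $e^{(P+\ve)n}\le e^{(P+\ve)j}$ to absorb it into the $j$-th binomial slot, arriving at $\bigl(C+e^{P+\ve}\bigr)^n$; you instead dominate it by the full geometric tail $\frac{e^{n(P+\ve)}}{1-e^{P+\ve}}$, keep the factor $e^{n(P+\ve)}$ outside the binomial, and obtain $\bigl(e^{P+\ve}(1+C)\bigr)^n$. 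This actually gives the slightly sharper intermediate bound $e^{P^\sg_{F\cup\{a\}}(f)}\le e^{P}(1+C)=e^P+e^P C$, which, as you correctly observe, implies the stated $e^P+C$ because $e^P<1$. So your argument is sound and in fact proves a marginally stronger inequality before relaxing at the end.
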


\begin{proof} As in the previous lemma we let $\kappa:=\kappa_{\Lambda}(f)$ and $L=L(f)$.  Let $\ve \in (0, |P^\sg_F(f)|)$. By the definition of the pressure function we know that there exists some $N_0 \in \N$ such that for all $n \geq N_0$,
$$Z_{n}(F,f) \leq e^{(P^\sg_F(f) +\ve)n}.$$ 
Therefore applying Lemma \ref{mainlemmasym} for $n \geq N_0$ and using that $P^\sg_F(f)+\ve <0$ we obtain that
\begin{align*}
Z_{n}(F \cup \{a\},f) & \leq e^{2 L}\sum_{j=0}^n {n \choose j}\, \left(\frac{\sharp \Lambda \,\exp(\sup f |_{[a]}+L)}{ \kappa} \right)^{n-j}\, \sum_{k=n}^{j+p_{\Lambda}(n-j)}  e^{(P^{\sg}_F(f) +\ve)k}\\
&\leq e^{2 L} p_{\Lambda} n \sum_{j=0}^n  {n \choose j}\, \left(\frac{\sharp \Lambda \,\exp(\sup f |_{[a]}+L)}{ \kappa} \right)^{n-j} e^{(P^\sg_F(f) +\ve)n} \\
&\leq  e^{2 L} p_{\Lambda} n \sum_{j=0}^n  {n \choose j}\, \left(\frac{\sharp \Lambda \,\exp(\sup f |_{[a]}+L)}{ \kappa} \right)^{n-j} e^{(P^\sg_F(f) +\ve)j}\\
&=e^{2L} p_{\Lambda} n  \left( \frac{\sharp \Lambda \,\exp(\sup f |_{[a]}+L)}{ \kappa}  +e^{(P^\sg_F(t) +\ve)}\right)^n.
\end{align*}
Taking $n$-th roots and letting $n \ra \infty$ we get,
$$e^{P^\sg_{F \cup \{a\}}(f)} \leq e^{P^{\sg}_F(f)+\ve} + \frac{\sharp \Lambda \, \exp(\sup f |_{[a]}+L)}{ \kappa} .$$
The proof now follows because $\ve$ can be taken arbitrarily small.
\end{proof}

\begin{propo}
\label{ghenciu2sym}
Let $E$ be a finitely irreducible infinite countable set. Let $F\subset E$ be a finitely irreducible subset of $E$ and let $\Lambda$ be a nonempty set witnessing finite irreducibility for $F$. Then for every $a \in E$ and every $\a$-H\"older function $f: E_A^\N \ra \R$ such that $P^\sg_{F}(f) \geq 0$,
$$e^{P^{\sg}_{F \cup \{a\}}(t)} \leq e^{P^\sg_{F}(f)}+ \frac{\sharp \Lambda \, e^{L(f)}}{ \kappa_\Lambda(f)}\, e^{p_{\Lambda}P^\sg_{F}(f)}\,e^{\sup f|_{[a]}}.$$
\end{propo}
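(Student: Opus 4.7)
The plan is to mimic the argument used for Proposition \ref{ghenciu1sym}, starting from the master estimate of Lemma \ref{mainlemmasym}, but to handle the sum $\sum_{k=n}^{j+p_{\Lambda}(n-j)} Z_k(F,f)$ differently because now $P^\sg_F(f)\ge 0$, which makes the bound $e^{(P^\sg_F(f)+\ve)k}$ \emph{increasing} in $k$ rather than decreasing. As before, fix $\ve>0$ and choose $N_0$ so that $Z_k(F,f)\le e^{(P^\sg_F(f)+\ve)k}$ for all $k\ge N_0$, and write $\kappa:=\kappa_\Lambda(f)$ and $L:=L(f)$ for brevity.

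The main step is then to bound, for $n\ge N_0$,
$$
\sum_{k=n}^{j+p_\Lambda(n-j)} Z_k(F,f)\le p_\Lambda\, n\, e^{(P^\sg_F(f)+\ve)(j+p_\Lambda(n-j))},
$$
using that the number of summands is at most $(p_\Lambda-1)(n-j)+1\le p_\Lambda n$ and that the largest summand is attained at $k=j+p_\Lambda(n-j)$. Substituting into Lemma \ref{mainlemmasym} and separating the $j$ and $n-j$ contributions in the exponent yields
$$
Z_n(F\cup\{a\},f)\le e^{2L}\,p_\Lambda\, n\sum_{j=0}^{n}\binom{n}{j}\left(\frac{\sharp\Lambda\,\exp(\sup f|_{[a]}+L)}{\kappa}\,e^{p_\Lambda(P^\sg_F(f)+\ve)}\right)^{n-j} e^{(P^\sg_F(f)+\ve)j},
$$
which is a binomial expansion and therefore equals
$$
e^{2L}\,p_\Lambda\, n\left(e^{P^\sg_F(f)+\ve}+\frac{\sharp\Lambda\,\exp(\sup f|_{[a]}+L)}{\kappa}\,e^{p_\Lambda(P^\sg_F(f)+\ve)}\right)^{n}.
$$

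Taking $n$-th roots and letting $n\to\infty$ kills the polynomial prefactor $e^{2L}p_\Lambda n$, yielding
$$
e^{P^\sg_{F\cup\{a\}}(f)}\le e^{P^\sg_F(f)+\ve}+\frac{\sharp\Lambda\,e^{L}}{\kappa}\,e^{p_\Lambda(P^\sg_F(f)+\ve)}\,e^{\sup f|_{[a]}},
$$
and sending $\ve\to 0$ gives the claimed inequality. There is essentially no obstacle beyond keeping track of where the extra factor $e^{p_\Lambda P^\sg_F(f)}$ enters: it is forced by the fact that the $k$-sum must now be bounded by its \emph{last} term $k=j+p_\Lambda(n-j)$, and the $p_\Lambda(n-j)$ piece of that last term is the one which, after factoring out of the binomial, produces the $e^{p_\Lambda P^\sg_F(f)}$ correction absent in Proposition \ref{ghenciu1sym}.
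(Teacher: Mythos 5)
Your proof is correct and follows essentially the same route as the paper's: both start from Lemma \ref{mainlemmasym}, exploit that for $P^\sg_F(f)\ge 0$ the inner sum is dominated by its last index $k=j+p_\Lambda(n-j)$, collapse the $j$-sum by the binomial theorem, and take $n$-th roots. The only difference is presentational: the paper first multiplies through by $e^{-p_\Lambda n(P^\sg_F(f)+\ve)}$ and reindexes via $l=p_\Lambda n-k$ before recognizing the binomial, whereas you bound the inner sum directly by $p_\Lambda n\,e^{(P^\sg_F(f)+\ve)(j+p_\Lambda(n-j))}$ and split the exponent into the $j$- and $(n-j)$-parts, which is a slightly cleaner bookkeeping of the same computation.
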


\begin{proof} Let $\ve>0$ and let $N_0$ large enough  such that for all $k \geq N_0$,
$$Z_{k}(F,f) \leq e^{(P^\sg_F(f) +\ve)k}.$$ 
Therefore by Lemma \ref{mainlemma} for $n \geq N_0$ we have that
\begin{align*}
&e^{-p_{\Lambda} n(P^\sg_F(f)+\ve)}Z_{n}(F \cup \{a\},f)\\
 &\quad\leq   e^{2 L}\sum_{j=0}^n {n \choose j}\, \left(\frac{\sharp \Lambda \, \exp(\sup f |_{[a]}+L)}{ \kappa} \right)^{n-j}\, \sum_{k=n}^{j+p_{\Lambda}(n-j)} e^{-p_{\Lambda} n(P^\sg_F(f)+\ve)}e^{(P^\sg_F(f) +\ve)k} \\
&\quad\leq e^{2 L} \sum_{j=0}^n {n \choose j} \left(\frac{\sharp \Lambda \, \exp(\sup f |_{[a]}+L)}{ \kappa} \right)^{n-j}\ \sum_{k=n}^{j+p_{\Lambda}(n-j)} e^{-\ve (p_{\Lambda} n-k)} e^{-P^\sg_F(f)(p_{\Lambda} n-k)}.
\end{align*}
Now note that for $k=n,\dots,j+p_{\Lambda}(n-j),$ 
$$p_{\Lambda}n-k \geq p_{\Lambda}n-j-p_{\Lambda}(n-j) =(p_\Lambda-1) j.$$
Hence
\begin{align*}
e^{-p_{\Lambda} n(P^\sg_F(f)+\ve)}&Z_{n}(F \cup \{a\},f)\\
&\leq e^{2L}  \sum_{j=0}^n {n \choose j}  \left(\frac{\sharp \Lambda \, \exp(\sup f |_{[a]}+L)}{ \kappa} \right)^{(n-j)}  e^{-P^\sg_F(f)(p_{\Lambda} -1)j} \sum_{l=j(p_{\Lambda} -1)}^{n(p_{\Lambda} -1)} e^{-\ve l}\\
&\leq e^{2 L}p_{\Lambda} n \sum_{j=0}^n {n \choose j} \left(\frac{\sharp \Lambda \, \exp(\sup f |_{[a]}+L)}{ \kappa} \right)^{(n-j)}   e^{-P^\sg_F(f)(p_{\Lambda} -1)j}  e^{-\ve j(p_{\Lambda} -1) }\\
&\leq e^{2 L} p_{\Lambda} n \left( \frac{\sharp \Lambda \, \exp(\sup f |_{[a]}+L)}{ \kappa} + e^{-P^\sg_F(f)(p_{\Lambda} -1)}\right)^n.
\end{align*}
Taking $n$-th roots and letting $n \ra \infty$ we get,
$$e^{P^\sg_{F \cup \{a\}}(f)} \leq e^{p_{\Lambda}(P^\sg_F(f)+\ve)} \left(\frac{\sharp \Lambda \, \exp(\sup f |_{[a]}+L)}{ \kappa} +e^{-P^\sg_F(f)(p_{\Lambda} -1)} \right).$$
The proof now follows because $\ve$ can be taken arbitrarily small.
\end{proof}

As an immediate corollary of Proposition \ref{ghenciu1sym} and \ref{ghenciu2sym} we have the following estimate.
\begin{corollary}
\label{ghenciucombsym}
Let $E$ be a finitely irreducible infinite countable set. Let $F\subset E$ be a finitely irreducible subset of $E$ and let $\Lambda$ be a nonempty set witnessing finite irreducibility for $F$. Then for every $a \in E$ and every $\a$-H\"older function $f: E_A^\N \ra \R$,
$$e^{P^{\sg}_{F \cup \{a\}}(f)} \leq e^{P^\sg_{F}(f)}+ \frac{\sharp \Lambda \, e^{L(f)}}{ \kappa_\Lambda(f)}\,\max\{1,e^{p_{\Lambda}P^\sg_{F}(f)}\} \,e^{\sup f|_{[a]}} \,    .$$
\end{corollary}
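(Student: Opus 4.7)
The plan is a straightforward case split on the sign of $P^\sg_F(f)$, after which the statement follows as an immediate consequence of Propositions \ref{ghenciu1sym} and \ref{ghenciu2sym}. Since the right-hand side of the corollary contains the factor $\max\{1, e^{p_\Lambda P^\sg_F(f)}\}$, the natural dichotomy is precisely between $P^\sg_F(f)<0$ (where $\max\{1,e^{p_\Lambda P^\sg_F(f)}\}=1$) and $P^\sg_F(f)\geq 0$ (where $\max\{1,e^{p_\Lambda P^\sg_F(f)}\}=e^{p_\Lambda P^\sg_F(f)}$).

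First I would treat the case $P^\sg_F(f)<0$. Here Proposition \ref{ghenciu1sym} directly yields
\[
e^{P^\sg_{F\cup\{a\}}(f)}\leq e^{P^\sg_F(f)}+\frac{\sharp\Lambda\, e^{L(f)}}{\kappa_\Lambda(f)}e^{\sup f|_{[a]}},
\]
and since $\max\{1,e^{p_\Lambda P^\sg_F(f)}\}\geq 1$, the right-hand side is bounded by the expression in the corollary. Next I would treat the case $P^\sg_F(f)\geq 0$. Here Proposition \ref{ghenciu2sym} applies and gives
\[
e^{P^\sg_{F\cup\{a\}}(f)}\leq e^{P^\sg_F(f)}+\frac{\sharp\Lambda\, e^{L(f)}}{\kappa_\Lambda(f)}\,e^{p_\Lambda P^\sg_F(f)}\,e^{\sup f|_{[a]}},
\]
which is exactly the claimed bound (since in this regime the max equals $e^{p_\Lambda P^\sg_F(f)}$).

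There is essentially no obstacle: the corollary merely unifies the two regimes of the preceding propositions into a single statement by inserting the max, which is tight in each regime. The only thing worth double-checking is that no hypothesis (finite irreducibility of $E$ and $F$, nonemptiness of $\Lambda$, Hölder continuity of $f$) has been silently dropped when moving between the two propositions; since both propositions assume exactly the same set of hypotheses as the corollary, the combination is clean.
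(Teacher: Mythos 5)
Your argument is correct and is exactly the argument the paper has in mind: the corollary is stated as an immediate consequence of Propositions \ref{ghenciu1sym} and \ref{ghenciu2sym}, obtained by the case split on the sign of $P^\sg_F(f)$ that you describe.
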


\section{Revisiting $\theta$ parameters of Graph Directed Markov Systems}\label{sec:gdms}
The goal of this section is to clarify the role of $\theta$-parameters in the setting of Graph Directed Markov Systems. A {\it graph directed Markov system} (GDMS) \index{GDMS}
$$
\cS= \big\{ V,E,A, t,i, \{X_v\}_{v \in V}, \{\f_e\}_{e\in E} \big\}
$$
consists of
\begin{itemize}
\item a directed multigraph $(E,V)$ with a countable set of edges $E$, frequently referred to also as alphabet, and a finite set of vertices $V$,

\item an incidence matrix $A:E \times E \ra \{0,1\}$,

\item two functions $i,t:E\ra V$ such that $t(a)=i(b)$ whenever $A_{ab}=1$,

\item a family of non-empty compact metric spaces $\{X_v\}_{v\in V}$,

\item a number $s$, $0<s<1$, and

\item  a family of injective contractions $$ \{\phi_e:X_{t(e)}\to X_{i(e)}\}_{e\in E}$$ such that every $\phi_e,\, e\in E,$ has Lipschitz constant no larger than $s$.
\end{itemize}

We will usually use the more economical notation $\cS=\{\f_e\}_{e \in E}$ for a GDMS. Moreover we will  assume that for every $v \in V$ there exist $e,e' \in E$ such that $t(e)=v$ and $i(e')=v$. 

A GDMS $\cS=\big\{ V,E,A, t,i, \{X_v\}_{v \in V}, \{\f_e\}_{e\in E} \big\}$ is said to be {\it finitely irreducible} if its associated incidence matrix $A$ is finitely irreducible. Notice that if $\cS$ is a finite irreducible GDMS then it is finitely irreducible.

For $\om \in E^*_A$ we consider the map coded by $\om$:
\begin{equation}\label{phi-om}
\phi_\om=\phi_{\om_1}\circ\cdots\circ\phi_{\om_n}:X_{t(\om_n)}\to
X_{i(\om_1)} \qquad \mbox{if $\om\in E^n_A$.}
\end{equation}
For the sake of convenience we will write $t(\om) = t(\om_n)$ and $i(\om)=i(\om_1)$ for
$\om$ as in \eqref{phi-om}.

For $\om \in E^{\mathbb N}_A$, the sets
$\{\f_{\om|_n}(X_{t(\om_n)})\}_{n=1}^\infty$ form a descending
sequence of non-empty compact sets and therefore have nonempty intersection.
Since
$$
\diam(\f_{\om|_n}(X_{t(\om_n)})) \le s^n\diam(X_{t(\om_n)})\le s^n\max\{\diam(X_v):v\in V\}
$$
for every $n\in\N$, we conclude that the intersection
$$
\bigcap_{n\in  \N}\f_{\om|_n}(X_{t(\om_n)})
$$
is a singleton and we denote its only element by $\pi(\om)$.
In this way we define the coding map \index{coding map}
\begin{equation}
\label{picoding}
\pi:E^{\mathbb N}_A\to \du_{v\in V}X_v,
\end{equation}
the latter being a disjoint union of the sets $X_v$, $v\in V$.
The set
$$
J=J_\cS:=\pi(E^{\mathbb N}_A)
$$
will be called the {\it limit set} \index{limit set} (or {\it attractor}) \index{attractor} of the GDMS $\cS$.

We will be interested in conformal GDMS. 
\begin{defn}
\label{gdmsdef}\label{Carnot-conformal-GDMS}
A graph directed Markov system is called {\it conformal} if the following conditions are satisfied.
\begin{itemize}
\item[(i)] For every vertex $v\in V$, $X_v$ is a compact connected
subset of a fixed ambient space and $X_v=\overline{\Int(X_v)}$.
\item[(ii)] ({\it Open set condition} or {\it OSC}). \index{open set condition} For all $a,b\in
E$, $a\ne b$,
$$
\phi_a(\Int(X_{t(a)})) \cap \phi_b(\Int(X_{t(b)}))= \emptyset.
$$
\item[(iii)] For every vertex $v\in V$ there exists an open connected
set $W_v\spt X_v$ such that for every $e\in E$ with $t(e)=v$, the map
$\f_e$ extends to a conformal diffeomorphism of $W_v$ into $W_{i(e)}$.
\end{itemize}
\end{defn}

\begin{remark}
\label{ifsosc}
In the particular case when $V$ is a singleton and for every $e_1,e_2 \in E$, $A_{e_1e_2}=1$ if and only if $t(e_1)=i(e_2)$, the GDMS is called an {\it iterated function system} (IFS). In particular when we write that $\cS=\{ \f_e\}_{e \in E}$ is a conformal IFS, according to Definition \ref{gdmsdef}, we will assume that $\cS$ satisfies the open set condition.
\end{remark}

\begin{remark}
\label{carnot}
At this point some clarifications are needed. If the ambient space is Euclidean, either $\R^n$ or $\C$, then the term {\it conformal diffeomorphism} corresponds to its standard meaning. Recently in the monograph \cite{CTU}, the first and third author together with Tyson extended the framework of conformal graph directed Markov systems in the sub-Riemannian setting of Carnot groups. A {\it Carnot group}  is a connected and simply
connected nilpotent Lie group $\G$ whose Lie algebra $\fg$ admits a
stratification
\begin{equation}\label{fg}
\fg = \fv_1 \oplus \cdots \oplus \fv_\iota
\end{equation}
into vector subspaces satisfying the commutation rules
\begin{equation}\label{commutators}
[\fv_1,\fv_i]=\fv_{i+1}
\end{equation}
for $1\le i<\iota$ and $[\fv_1,\fv_{\iota}]=(0)$. In particular, the
full Lie algebra is generated via iterated Lie brackets of elements of
the lowest layer $\fv_1$ of the stratification. For a very concise introduction to Carnot groups see e.g. \cite[Chapter 1]{CTU}, a far more extensive source is \cite{BLU}. The simplest example of a  nonabelian Carnot group is the first (complex) Heisenberg group $\Heis$. The underlying space for  $\Heis$ is $\R^3$, which we also view as $\C\times\R$. We endow $\C\times\R$ with the group law
$$
(z;t) \ast (z';t')=(z+z';t+t'+2\Imag (z\overline{z'})),
$$
where we denote elements of $\Heis$ by either $(z;t) \in \C\times\R$ or $(x,y;t) \in \R^3$. A good reference on Heisenberg groups is \cite{CDPT}.  The Heisenberg group is the lowest-dimensional example in a class of groups, the Iwasawa groups, which are the nilpotent components in the Iwasawa decomposition of real rank one simple Lie groups. The one-point
compactifications of these groups, equipped with suitable
sub-Riemannian metrics, arise as boundaries at infinity of the
classical rank one symmetric spaces. For more details see \cite[Chapter 2]{CTU} and the references therein.

Returning to Definition \ref{Carnot-conformal-GDMS}, in the case when the ambient space is a fixed Carnot group $\G$,  a homeomorphism $f:\Omega \to \Omega'$ between domains in  $\G$ is said to be {\it conformal} if
\begin{equation*}
\lim_{r \to 0} \frac{\sup \{ d_{cc}(f(p),f(q)) \, : \, d_{cc}(p,q) = r \}}{\inf \{ d_{cc}(f(p),f(q')) \, : \, d_{cc}(p,q') = r \}} = 1
\end{equation*}
for all $p \in \Omega$. Here $d_{cc}$ denotes the Carnot-Carath\'eodory path metric in $\G$. We also remark that if $f:\Omega \ra \Omega'$ is a conformal map between domains in $\R^n$ or $\C$ we will denote its usual derivative at a point $p \in \Omega$ by $D f (p)$. In that case  $\|Df(p)\|$ will denote the norm of $D f (p)$. If on the other hand $f: \Omega \ra \Omega'$ is a Carnot conformal map and $p \in \Omega$, then 
\begin{equation*}
\|Df(p)\| = \lim_{q\to p} \frac{d(f(p),f(q))}{d(p,q)},
\end{equation*}
and it is usually referred to as  the {\em maximal stretching factor of $f$ at $p$}. As in the Euclidean case the quantity
$\|Df(\cdot)\|$ satisfies the Leibniz rule \index{Leibniz rule}
\begin{equation}\label{leibniz}
\|D(f\circ g)(p)\| = \|Df(g(p))\|\,\|Dg(p)\|.
\end{equation}
For the proof see \cite[Chapter 3]{CTU}.
\end{remark}

We also select a family $S_v, v \in V,$ of pairwise disjoint compact sets such that
$X_v \subset \Int(S_v) \subset S_v \subset W_v$. We set
\begin{equation*}
X := \bigcup_{v\in V} X_v \text{ and }S:= \bigcup_{v\in V} S_v.
\end{equation*}

Controlling the distortion of conformal maps is essential for developing the thermodynamic formalism for GDMS. We will use the following lemma repeatedly. For the proof in the Euclidean case see  \cite[Section 4.1]{MUbook}, for the proof in the case of Carnot GDMS see \cite[Lemma 4.9]{CTU}.

\begin{lm}[Bounded Distortion Property]\label{bdp} \index{bounded distortion property}
Let $S=\{\phi_e\}_{e\in E}$ be a  conformal GDMS. There exists a constant $K \geq 1$ so that $$
\biggl|\frac{\|D\f_\om(p)\|}{\|D\f_\om(q)\|}-1\biggr|\le Kd(p,q)
$$
and
$$
K^{-1}\le\frac{\|D\f_\om(p)\|}{\|D\f_\om(q)\|}\le K
$$
for every $\om\in E_A^*$ and every pair of points $p,q\in S_{t(\om)}$.
\end{lm}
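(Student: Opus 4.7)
The plan is to convert the multiplicative distortion into an additive one by passing to logarithms, exploit the Leibniz rule \eqref{leibniz} to split $\log\|D\phi_\omega\|$ into contributions from the generators, and then absorb the accumulated errors through a geometric series driven by the uniform contraction $\|D\phi_e\|\le s$.

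First, I would establish a single-generator Koebe-type estimate: there is a constant $L>0$, independent of $e\in E$, such that
\begin{equation*}
\bigl|\log\|D\phi_e(p)\|-\log\|D\phi_e(q)\|\bigr|\le L\, d(p,q) \qquad\text{for all } p,q\in S_{t(e)}.
\end{equation*}
This is where condition (iii) of Definition \ref{gdmsdef} does its work: each $\phi_e$ extends to a conformal diffeomorphism of the open set $W_{t(e)}$, which contains $S_{t(e)}$ compactly. In the Euclidean case the estimate is immediate from Liouville's theorem in dimension $\ge 2$ (so $\phi_e$ is M\"obius) or from the Koebe distortion theorem in the complex-analytic case; in the Carnot setting one invokes the Capogna--Cowling $C^\infty$-regularity of $1$-quasiconformal maps together with \eqref{leibniz}. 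Uniformity of $L$ over the infinite alphabet follows because $V$ is finite, the geometric data $(S_v,W_v)$ are fixed, and $\|D\phi_e\|\le s$ on all of $S_{t(e)}$.

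Next, for $\omega=\omega_1\cdots\omega_n\in E_A^n$ and $p,q\in S_{t(\omega)}$, the Leibniz rule \eqref{leibniz} yields
\begin{equation*}
\log\|D\phi_\omega(p)\|=\sum_{k=1}^{n}\log\bigl\|D\phi_{\omega_k}\bigl(\phi_{\omega_{k+1}\cdots\omega_n}(p)\bigr)\bigr\|,
\end{equation*}
with the convention that the $k=n$ summand is evaluated at $p$ itself. Subtracting the analogous expression at $q$ and applying the Step 1 bound termwise, together with the fact that $\phi_{\omega_{k+1}\cdots\omega_n}$ is $s^{n-k}$-Lipschitz, gives
\begin{equation*}
\bigl|\log\|D\phi_\omega(p)\|-\log\|D\phi_\omega(q)\|\bigr|\le L\sum_{k=1}^{n}s^{n-k}\,d(p,q)\le \frac{L}{1-s}\,d(p,q),
\end{equation*}
uniformly in $\omega\in E_A^*$.

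To finish, I would exponentiate. Since $V$ is finite and each $S_v$ is compact, $d(p,q)\le\max_{v\in V}\diam(S_v)=:D<\infty$, so the exponent $\frac{L}{1-s}d(p,q)$ is uniformly bounded by $C_0:=\frac{LD}{1-s}$. Combining $e^{-C_0}\le\|D\phi_\omega(p)\|/\|D\phi_\omega(q)\|\le e^{C_0}$ with the elementary inequality $|e^x-1|\le e^{|x|}|x|$ produces both claimed inequalities with $K:=\max\bigl(e^{C_0},\tfrac{L}{1-s}e^{C_0}\bigr)$. The main obstacle is the uniform single-generator Lipschitz estimate of Step 1: in the Euclidean setting it is essentially classical function theory, but in the Carnot category it requires the non-trivial input that $1$-quasiconformal maps between Carnot domains are smooth, without which $\|D\phi_e(\cdot)\|$ need not even vary continuously with the base point.
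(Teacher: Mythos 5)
The paper does not prove this lemma; it refers the reader to \cite[Section 4.1]{MUbook} for the Euclidean case and \cite[Lemma 4.9]{CTU} for the Carnot case, and your outline is precisely the argument those references use: a uniform single-generator Koebe-type Lipschitz bound on $\log\|D\phi_e\|$, telescoping via the Leibniz rule \eqref{leibniz} and the geometric series driven by the contraction ratio $s$, then exponentiation. So the approach matches; the only worthwhile comments concern the phrasing of Step~1.

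Liouville's rigidity theorem holds in ambient dimension $\ge 3$, not $\ge 2$; in dimension $2$ conformal maps are merely holomorphic (or antiholomorphic) and one needs the Koebe distortion theorem, which you mention but present as an alternative rather than as the $\dim=2$ case. More substantively, the uniformity of $L$ over the infinite alphabet does not follow merely from finiteness of $V$ and the contraction bound $\|D\phi_e\|\le s$. The decisive quantitative input is the uniform separation $\delta:=\min_{v\in V}\dist(S_v,\partial W_v)>0$: since each $\phi_e$ extends to a conformal diffeomorphism on all of $W_{t(e)}$, the singular point of its M\"obius extension (in $\dim\ge 3$), or the boundary of its domain of univalence (in $\dim 2$), lies at distance at least $\delta$ from $S_{t(e)}$, and the Koebe/Liouville estimate then bounds $|\nabla\log\|D\phi_e\||$ on $S_{t(e)}$ by a constant depending only on $\delta$, hence uniformly in $e$. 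Your phrase ``the geometric data $(S_v,W_v)$ are fixed'' gestures at this, but it is the positive distance $\delta$ that actually tames the infinite alphabet and should be named explicitly.
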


For $\om \in E^*_A$ we set
$$\|D \f_\om\|_\infty := \|D \f_\om\|_{S_{t(\om)}}.$$

Lemma \ref{bdp} and \eqref{leibniz} easily imply
that if $\om \in E_A^\ast$ and $\om=\tau \upsilon$ for some $\tau, \upsilon \in E_A^\ast$, then
\begin{equation}\label{quasi-multiplicativity1}
K^{-1} \|D\f_{\tau}\|_\infty \, \|D\f_{\upsilon}\|_\infty \le
\|D\f_\om\|_\infty \le \|D\f_{\tau}\|_\infty \,
\|D\f_{\upsilon}\|_\infty.
\end{equation}

Let $\mathcal{S}=\{\f_e\}_{e\in E}$  be a finitely irreducible  conformal GDMS. For $t\ge 0$, $n \in \N$ and $F \subset E$ let
\begin{equation}\label{1_2017_12_20}
Z_{n}(F,t) = \sum_{\om\in F^n_A} \|D\phi_\om\|^t_\infty.
\end{equation}
When $F=E$, we just write $Z_n(t)$ instead of $Z_n(E,t)$. By \eqref{quasi-multiplicativity1} we easily  see that
\begin{equation}
\label{zmn}
Z_{m+n}(t)\le Z_m(t)Z_n(t),
\end{equation}
and consequently, the
sequence $(\log Z_n(t))_{n=1}^\infty$ is subadditive. Thus, the limit
$$
\lim_{n \to  \infty}  \frac{ \log Z_n(t)}{n}
$$
exists and equals $\inf_{n \in \N} (\log Z_n(t) / n)$. The value of
the limit is denoted by $P(t)$ or, if we want to be more precise, by
$P_E(t)$ or $P_\mathcal{S}(t)$. It is called the of the system $\mathcal{S}$ evaluated at the parameter $t$. 
  
Recall that a notion of topological pressure has already appeared in Section \ref{sec:preses}, Defintion \ref{toppressym}, in the context of symbolic dynamics. The next lemma whose proof can be found in \cite[Proposition 3.1.4]{MUbook}  establishes the natural relation between the two notions. Before stating it we introduce an important potential. Let $\zeta: E^\mathbb{N}_A \to \mathbb{R}$ defined by the formula
\begin{equation}\label{1MU_2014_09_10}
\zeta(\om):= \log\|D\phi_{\om_1}(\pi(\sg(\om))\|,
\end{equation}
where the coding map $\pi$ was defined in \eqref{picoding}. 
\begin{lm}\label{l1j85}
For $t \geq 0$ the function $t \zeta:E^\mathbb{N}_A \to
\mathbb{R}$ is H\"older continuous and $P^\sg(t\zeta)=P(t)$.
\end{lm}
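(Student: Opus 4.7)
The plan splits into two pieces: Hölder continuity of $t\zeta$, and identification of the two pressures. Both will be controlled by the Bounded Distortion Property (Lemma \ref{bdp}) together with the Leibniz rule \eqref{leibniz}.

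First I would establish Hölder continuity of $\zeta$ (so multiplication by $t\ge 0$ yields Hölder continuity of $t\zeta$). Fix $\om,\tau\in E_A^{\mathbb N}$ with $|\om\wedge\tau|\ge n\ge 1$. Then $\om_1=\tau_1$, and the tails $\sg(\om),\sg(\tau)$ agree in their first $n-1$ letters, so both $\pi(\sg(\om))$ and $\pi(\sg(\tau))$ lie in $\phi_{\om_2\om_3\dots\om_n}(X_{t(\om_n)})$, a set of diameter at most $s^{n-1}\max_{v\in V}\diam(X_v)$. Applying Lemma \ref{bdp} to $\phi_{\om_1}$ at these two points gives
$$\left|\frac{\|D\phi_{\om_1}(\pi(\sg(\om)))\|}{\|D\phi_{\om_1}(\pi(\sg(\tau)))\|}-1\right|\le Ks^{n-1}\max_v\diam(X_v).$$
Using $|\log(1+x)|\le 2|x|$ for $|x|$ small yields $|\zeta(\om)-\zeta(\tau)|\le C s^{n-1}$, which is exactly the Hölder condition with $e^{-\alpha}=s$ (for some positive $\alpha=-\log s$). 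Multiplying by $t$ preserves this.

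Second, I would compare the two partition functions $Z_n(t)$ from \eqref{1_2017_12_20} and $Z_n(E,t\zeta)$ from Definition \ref{toppressym}. For any $\tau\in [\om]$ with $\om\in E_A^n$, the iterated chain rule \eqref{leibniz} applied to $\phi_\om=\phi_{\tau_1}\circ\dots\circ\phi_{\tau_n}$ at $\pi(\sg^n(\tau))$, combined with the identities $\phi_{\tau_{j+1}}(\pi(\sg^{j+1}(\tau)))=\pi(\sg^j(\tau))$, gives the telescoping identity
$$S_n\zeta(\tau)=\sum_{j=0}^{n-1}\log\|D\phi_{\tau_{j+1}}(\pi(\sg^{j+1}(\tau)))\|=\log\|D\phi_\om(\pi(\sg^n(\tau)))\|.$$
Thus $\exp(S_n(t\zeta)(\tau))=\|D\phi_\om(\pi(\sg^n(\tau)))\|^t$. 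Since $\pi(\sg^n(\tau))\in X_{t(\om_n)}\subset S_{t(\om_n)}$, another application of the Bounded Distortion Property (in the form $K^{-1}\|D\phi_\om\|_\infty\le \|D\phi_\om(p)\|\le \|D\phi_\om\|_\infty$ for all $p\in S_{t(\om)}$) yields
$$K^{-t}\|D\phi_\om\|_\infty^t\le \sup_{\tau\in[\om]}\exp(S_n(t\zeta)(\tau))\le \|D\phi_\om\|_\infty^t.$$

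Summing over $\om\in E_A^n$ gives $K^{-t}Z_n(t)\le Z_n(E,t\zeta)\le Z_n(t)$. Taking $n^{-1}\log$ and letting $n\to\infty$, the multiplicative constant $K^{-t}$ disappears in the limit, and the definitions of $P(t)$ and $P^\sg(t\zeta)$ force them to coincide. No step of the plan looks genuinely delicate since both assertions reduce to the Bounded Distortion Property; the only place one has to be mildly careful is ensuring that $\pi(\sg^n(\tau))$ really lies in the distortion domain $S_{t(\om_n)}$, which is guaranteed by the inclusion $X_v\subset \Int(S_v)$ arranged right before Lemma \ref{bdp}.
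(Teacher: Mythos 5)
Your proof is correct and follows the same route that the paper relies on (the paper simply cites \cite[Proposition~3.1.4]{MUbook}, whose argument is exactly the combination of the telescoping identity $S_n\zeta(\tau)=\log\|D\phi_{\tau|_n}(\pi(\sigma^n\tau))\|$ coming from the Leibniz rule with the Bounded Distortion Property to squeeze $Z_n(E,t\zeta)$ between $K^{-t}Z_n(t)$ and $Z_n(t)$). The only cosmetic remark: for small $n$ the quantity $Ks^{n-1}\max_v\diam X_v$ need not be small, so to get $V_\alpha(\zeta)<\infty$ one should use the two-sided BDP bound $K^{-1}\le\|D\phi_{\omega_1}(p)\|/\|D\phi_{\omega_1}(q)\|\le K$ to obtain the uniform bound $|\zeta(\om)-\zeta(\tau)|\le\log K$ for the finitely many small $n$, and reserve the $|\log(1+x)|\lesssim|x|$ estimate for the large-$n$ regime where the exponential weight $e^{\alpha(n-1)}$ must be beaten; this is standard and does not affect the correctness of your argument.
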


The following well known proposition gathers some fundamental properties of topological pressure in the setting of finitely irreducible conformal GDMS.
\begin{propo}\label{p2j85}
Let $\cS$ be a finitely irreducible conformal GDMS. Then the
following conclusions hold.
\begin{enumerate}[label=(\roman*)]
\item \label{z1pfin}  $\{t\ge 0:Z_1(t)< +\infty\}=\{t\ge 0:P(t)< +\infty\}$.
\item \label{presscontconv} The topological pressure $P$ is
strictly  decreasing  on $[0,+\infty)$ with $P(t) \to -\infty$ as
$t\to+\infty$. Moreover, the function
$P$ is convex and  continuous on  $\overline{\{t\ge 0:Z_1(t)< +\infty\}}$.
\item \label{p0infty} $P(0)=+\infty$  if and only if $E$  is infinite.
\end{enumerate}
\end{propo}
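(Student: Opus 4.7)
The plan is to prove the three items in sequence, relying on subadditivity \eqref{zmn}, the bounded distortion property (Lemma \ref{bdp}), and finite irreducibility of $A$.

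For part \ref{z1pfin}, the easy direction is immediate from subadditivity: $P(t) = \inf_{n \ge 1} n^{-1}\log Z_n(t) \le \log Z_1(t)$, so $Z_1(t)<+\infty$ forces $P(t)<+\infty$. The converse I would prove contrapositively. Assuming $Z_1(t)=+\infty$, the goal is to produce a single $n_0$ with $Z_{n_0}(t)=+\infty$, which then forces $P(t)=+\infty$ via the infimum formula. Fix a finite witness set $\Lambda \subset E_A^*$ for finite irreducibility, set $p = \max_{\lambda\in\Lambda}|\lambda|$, and choose an auxiliary letter $e_0 \in E$ together with a fixed infinite admissible continuation $e_0\rho_1\rho_2\cdots$ (which exists since every vertex has an outgoing edge). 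For each $e\in E$ pick $\lambda_e \in \Lambda$ with $e\lambda_e e_0 \in E_A^*$, and set
\[\tau_e := e\,\lambda_e\,e_0\,\rho_1\cdots\rho_{p - |\lambda_e|},\]
so every $\tau_e$ is admissible of common length $n_0 := p+2$ and begins with $e$; the $\tau_e$ are therefore pairwise distinct. Quasi-multiplicativity \eqref{quasi-multiplicativity1}, combined with the fact that $\Lambda$ is finite and that only finitely many padding lengths occur, yields a uniform constant $c=c(\Lambda,e_0,K,n_0)>0$ with $\|D\phi_{\tau_e}\|_\infty \ge c\|D\phi_e\|_\infty$ for every $e$. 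Consequently $Z_{n_0}(t)\ge c^t Z_1(t) = +\infty$.

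Part \ref{presscontconv} combines three standard facts. Strict monotonicity: for $0\le t<t'$, the uniform contraction $\|D\phi_\om\|_\infty \le s^n$ on $E_A^n$ gives $Z_n(t') \le s^{n(t'-t)}Z_n(t)$, hence $P(t') \le P(t) + (t'-t)\log s$, and $\log s<0$ yields both strict decrease and, once $P$ is finite somewhere, $P(t)\to-\infty$ as $t\to\infty$. Convexity: H\"older's inequality applied to the defining sum gives $Z_n(\lambda t_1+(1-\lambda)t_2) \le Z_n(t_1)^\lambda Z_n(t_2)^{1-\lambda}$, and convexity passes to $P$ through $n^{-1}\log$ and the limit $n\to\infty$. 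Continuity on the interior of the finiteness set is automatic for convex functions; continuity at the boundary point of $\overline{\{Z_1<+\infty\}}$ follows from monotone convergence $Z_n(t) = \lim_{t'\downarrow t}Z_n(t')$ combined with the infimum representation $P(t) = \inf_n n^{-1}\log Z_n(t)$.

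Part \ref{p0infty} is then a one-line consequence of \ref{z1pfin}: $Z_1(0)$ equals the cardinality of $E$, which is finite precisely when $E$ is. The principal technical obstacle in the whole argument is the converse direction of \ref{z1pfin}: one must arrange the padding so that all $\tau_e$ share a common length while the distortion ratio $\|D\phi_{\tau_e}\|_\infty/\|D\phi_e\|_\infty$ remains bounded below uniformly in $e$, and this is precisely where bounded distortion and the finiteness of $\Lambda$ are indispensable.
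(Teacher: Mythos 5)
Your construction of the padded words $\tau_e$ is sound and the inequality $Z_{n_0}(t)\ge c^tZ_1(t)$ does hold for $n_0=p+2$; the flaw is the next step. It is \emph{not} true that exhibiting a single $n_0$ with $Z_{n_0}(t)=+\infty$ ``forces $P(t)=+\infty$ via the infimum formula.'' That formula, $P(t)=\inf_n n^{-1}\log Z_n(t)$, is an infimum over all $n$, and an infimum does not become $+\infty$ just because one of its entries does: it remains finite as long as some other $Z_m(t)$ is finite. By the subadditivity $Z_{m+n}(t)\le Z_m(t)Z_n(t)$ the set $\{n: Z_n(t)<\infty\}$ is closed under addition, so at the level of abstract subadditive sequences nothing you have written rules out $Z_{n_0}(t)=\infty$ coexisting with some finite $Z_m(t)$ and a finite $P(t)$. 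What you must actually show is that $Z_n(t)=+\infty$ for \emph{every} $n$ (or at least for all $n$ in a tail), so that the defining limit diverges. Your construction supplies this with a small patch: for $1\le n\le n_0$ take the length-$n$ prefixes of the $\tau_e$, which are still pairwise distinct and satisfy $\|D\f_{\tau_e|_n}\|_\infty\ge\|D\f_{\tau_e}\|_\infty\ge c\|D\f_e\|_\infty$ since deleting trailing letters removes contracting factors, and for $n>n_0$ extend $\tau_e$ further along the fixed continuation $\rho_1\rho_2\cdots$; finiteness of $\Lambda$ again gives a constant $c_n>0$ and $Z_n(t)\ge c_n^tZ_1(t)=+\infty$. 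The idea is correct, but the deduction as written stops one inference short and the gap is real.

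There is also a gap in the continuity claim in part (ii) at the endpoint $\theta$ of $\overline{\{Z_1<\infty\}}$. You argue from the monotone convergence $Z_n(t)\uparrow Z_n(\theta)$ as $t\downarrow\theta$ together with the infimum representation, but the infimum of a family of functions each converging monotonically from below can still drop in the limit, so this line of reasoning only re-derives $\lim_{t\downarrow\theta}P(t)\le P(\theta)$, which is already immediate from the monotonicity of $P$. The nontrivial half of right-continuity, $\lim_{t\downarrow\theta}P(t)\ge P(\theta)$, is not addressed, and convexity alone does not give it either: a decreasing convex function on $[\theta,\infty)$ may perfectly well jump up at its left endpoint. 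Closing this genuinely requires a quantitative input from finite irreducibility, namely a quasi-multiplicativity estimate controlling how fast $n^{-1}\log Z_n(t)$ approaches $P(t)$ uniformly near $\theta$, which your argument does not supply. Part (iii) is fine as a corollary of (i).
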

The proof of \ref{z1pfin}  follows by \cite[Proposition 4.1]{CTU}. The  proofs of \ref{presscontconv} and \ref{p0infty} follow easily by the definition of topological pressure and they are omitted. 
\begin{defn}
\label{bowen}
Let $\cS=\{\f_e\}_{e \in E}$ be a finitely irreducible conformal GDMS. The number
$$
h = h({\mathcal{S}}) := \inf\{t\geq 0:  P(t)\leq 0\}
$$
is called {\it Bowen's parameter} \index{Bowen's parameter} of the  system $\mathcal{S}$.
\end{defn}

Note that if $\cS$ is a finitely irreducible conformal GDMS, then $
P(h)\le 0.
$
This follows easily by Proposition \ref{p2j85}.

\begin{defn}
\label{reguraldef}
A finitely irreducible conformal GDMS $\cS$ is:
\begin{enumerate}[label=(\roman*)]
\item {\it regular}  if $P(h)=0$,
\item {\it strongly regular} if there exists $t \geq 0$ such that $0< P(t) <+\infty$.
\end{enumerate}
\end{defn}

\begin{remark}\label{f1j87}
If $\mathcal{S}$ is a finitely irreducible conformal GDMS,
then $\theta \leq h$. If $\mathcal{S}$ is
strongly regular then $\theta< h$.
\end{remark}

We record the following fundamental theorem from thermodynamic formalism which relates Bowen's parameter to the Hausdorff dimension of dynamical systems. For the proof see \cite[Theorem 7.21]{CTU}.

\begin{thm}\label{t1j97}
If $\mathcal{S}$ is a finitely irreducible Carnot conformal GDMS, then
$$
h (\mathcal{S})
= \dim_{\mathcal{H}}(J_\mathcal{S})
= \sup \{\dim_\cH(J_F):  \, F \subset E \, \mbox{finite} \, \}.
$$
\end{thm}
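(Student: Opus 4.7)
The plan is to establish the theorem as the chain
$$\sup_{F\subset E \text{ finite}} \dim_{\cH}(J_F) \;\le\; \dim_{\cH}(J_\cS) \;\le\; h(\cS) \;\le\; \sup_{F\subset E \text{ finite}} \dim_{\cH}(J_F),$$
in which the first inequality is immediate from the inclusion $J_F\subset J_\cS$. The two non-trivial inequalities correspond to a classical covering upper bound and to an approximation-by-finite-subsystems lower bound.

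For $\dim_{\cH}(J_\cS)\le h(\cS)$, I would fix any $t>h(\cS)$, so that $P(t)<0$ by Proposition \ref{p2j85}(ii), and hence $Z_n(t)\to 0$ exponentially in $n$. The cylinder images $\{\f_\om(X_{t(\om)}):\om\in E^n_A\}$ cover $J_\cS$ and, by the bounded distortion Lemma \ref{bdp} combined with the mean value inequality, have diameters at most $K\|D\f_\om\|_\infty\max_v\diam(X_v)\le Ks^n\max_v\diam(X_v)$, which tends uniformly to zero. Summing $t$-th powers of these diameters yields $\cH^t(J_\cS)\le C_t\, Z_n(t)\to 0$, hence $\dim_{\cH}(J_\cS)\le t$, and letting $t\downarrow h(\cS)$ closes this direction.

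For the reverse inequality $h(\cS)\le\sup_F\dim_{\cH}(J_F)$, I would use two ingredients: the classical Bowen formula $\dim_{\cH}(J_F)=h(F)$ for \emph{finite} irreducible conformal GDMS (Mauldin--Williams in the Euclidean case, its Carnot counterpart in \cite{CTU}), and an approximation statement
$$\sup\{h(F):F\subset E \text{ finite and irreducible}\}=h(\cS).$$
For the approximation, fix $t<h(\cS)$; then $P(t)>0$, so for some large $n$ the partition function satisfies $Z_n(t)\ge e^{nP(t)/2}$. Select a finite pool $E_0\subset E^n_A$ capturing at least half of this sum, and let $F=F_0\cup\tilde\Lambda$, where $F_0$ is the set of letters appearing in $E_0$ and $\tilde\Lambda$ is the letter set of a finite-irreducibility witness $\Lambda$ for $E$, as in \eqref{lambdatilde}. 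Then $F$ is finite and irreducible. Splicing $k$ words from $E_0$ via connectors drawn from $\Lambda$ yields admissible words of length $kn+O(k)$ in $F^*_A$, and applying the quasi-multiplicativity \eqref{quasi-multiplicativity1} gives $Z_{kn+O(k)}(F,t)\ge C^k(\tfrac12 Z_n(t))^k$, whence $P_F(t)>0$ and thus $h(F)=\dim_{\cH}(J_F)>t$.

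The hardest step is the concatenation argument just outlined: one must transfer the exponential growth of $Z_n(t)$ at the level of the full alphabet $E$ down to a \emph{finite} alphabet $F$ while preserving $A$-admissibility. Finite irreducibility of $\cS$ is exactly the hypothesis that makes this possible, as the bounded-length connectors supplied by $\Lambda$ allow arbitrary $k$-tuples of words from $E_0$ to be joined into a single admissible word in $F^*_A$, and the resulting derivative distortion is controlled by the constant $K$ of Lemma \ref{bdp} to within a subexponential factor per unit length. Combined with the covering upper bound and the Bowen formula for finite systems, this completes the proof of the three-way equality.
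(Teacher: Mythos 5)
Your strategy matches the one taken in \cite[Theorem 7.21]{CTU}, the reference to which the paper defers for this result: a covering argument with the cylinder images $\{\f_\om(X_{t(\om)})\}_{\om\in E^n_A}$ gives $\dim_{\cH}(J_\cS)\le h(\cS)$, and the lower bound $h(\cS)\le\sup_F\dim_{\cH}(J_F)$ follows from the Bowen formula for finite irreducible systems together with the pressure approximation $\sup_F P_F(t)=P(t)$.

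The place your sketch should be tightened is the splicing step proving that approximation. Two issues are glossed over. First, the spliced words $\om^{(1)}\lambda^{(1)}\cdots\om^{(k)}$ have lengths ranging over $[kn, kn+(k-1)p_\Lambda]$, so ``$Z_{kn+O(k)}(F,t)\ge C^k(\tfrac12 Z_n(t))^k$'' is not a bound on a single partition function; one must pigeonhole over the at most $(k-1)p_\Lambda+1$ possible lengths to produce a bound on $Z_m(F,t)$ for one $m$. Second, even after fixing a canonical connector for each ordered pair of letters, the assignment from $k$-tuples in $E_0^k$ to spliced words need not be injective (two tuples can produce the same word when the connectors involved have different lengths), so the multiplicity of each target word must be bounded. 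A crude bound such as $p_\Lambda^{k-1}$ suffices here, because this exponential-in-$k$ loss has fixed rate and is dominated by $(Z_n(t)/2)^k \ge e^{knP(t)/2}/2^k$ once $n$ is taken large enough; this ``at most $N$-to-one'' accounting is precisely the kind carried out in the proof of Lemma \ref{mainlemmasym} of this paper. With these two points supplied, the subadditivity of $(\log Z_m(F,t))_m$ (Lemma \ref{subadd2}) makes $P_F(t)$ a genuine limit, so a positive lower bound along your constructed subsequence does force $P_F(t)>0$, as you claim.
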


The following lemma is a generalization of Lemma~3.19 from \cite{MU1} and a corrected version of Lemma 4.3.3 from \cite{MUbook}.

\begin{lm}
\label{433mu}
Let $\cS$ be a finitely irreducible Carnot conformal GDMS. The following conditions are equivalent.
\begin{enumerate}[label=(\roman*)]
\item \label{z1fin} $Z_1(t)<\infty$.
\item \label{z11cof}  There exists a finitely irreducible  cofinite subsystem $\cS_F$ of $\cS$ such that $Z_1(F,t)<\infty$.
\item \label{z1evcof}  For every  finitely irreducible  cofinite subsystem $\cS_F$ of $\cS$ it holds that $Z_1(F,t)<\infty$.
\item \label{pfin} $P(t)<\infty$.
\item \label{p1cof}  There exists a finitely irreducible cofinite subsystem $\cS_F$ of $\cS$ such that $P_{F}(t)<\infty$.
\item \label{myst}  For every  finitely irreducible cofinite subsystem $\cS_F$ of $\cS$ it holds that $P_{F}(t)<\infty$.
\end{enumerate}
\end{lm}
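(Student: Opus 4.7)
The strategy is to organize the six conditions into two vertical groups, namely $\{(i),(ii),(iii)\}$ dealing with the first partition function, and $\{(iv),(v),(vi)\}$ dealing with the pressure, and then bridge the two groups via Proposition~\ref{p2j85}\ref{z1pfin}. Concretely, I would first observe that each finitely irreducible cofinite subsystem $\cS_F$ is itself a finitely irreducible conformal GDMS in the sense of Definition~\ref{gdmsdef}, so Proposition~\ref{p2j85}\ref{z1pfin} applies to $\cS_F$ and gives $Z_1(F,t)<\infty \iff P_F(t)<\infty$. Applying this once to $F=E$ and once to an arbitrary finitely irreducible cofinite $F$ immediately yields the three horizontal equivalences $(i)\iff(iv)$, $(ii)\iff(v)$, and $(iii)\iff(vi)$. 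Hence it suffices to prove the cycle $(i)\Rightarrow(iii)\Rightarrow(ii)\Rightarrow(i)$.

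For $(i)\Rightarrow(iii)$, fix any finitely irreducible cofinite $F\subset E$. Since $F_A^1\subset E_A^1$, we have
$$
Z_1(F,t)=\sum_{e\in F}\|D\phi_e\|_\infty^t\le\sum_{e\in E}\|D\phi_e\|_\infty^t=Z_1(t)<\infty.
$$
For $(iii)\Rightarrow(ii)$, it suffices to exhibit one finitely irreducible cofinite subsystem; but $F=E$ itself is cofinite in $E$, and it is finitely irreducible by the standing hypothesis on $\cS$, so $(iii)$ applied to $F=E$ gives $(ii)$. For $(ii)\Rightarrow(i)$, pick a finitely irreducible cofinite $F$ with $Z_1(F,t)<\infty$. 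Then $E\setminus F$ is finite, each $\|D\phi_e\|_\infty$ is finite (indeed bounded by $s^t\max_v\diam(X_v)^t$ times a constant, or simply because each $\phi_e$ is Lipschitz on a compact set), and therefore
$$
Z_1(t)=Z_1(F,t)+\sum_{e\in E\setminus F}\|D\phi_e\|_\infty^t<\infty,
$$
the right-hand side being a finite sum of finite quantities.

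There is no substantive obstacle in this argument; the content of the lemma is really the combination of Proposition~\ref{p2j85}\ref{z1pfin} with the trivial additive splitting of $Z_1(t)$ along a finite set. The only subtle point worth flagging is the observation that a finitely irreducible cofinite subsystem of a conformal GDMS inherits all the structural hypotheses of Definition~\ref{gdmsdef} (open set condition, conformality, connectedness of each $X_v$, bounded distortion), so Proposition~\ref{p2j85} can indeed be invoked for $\cS_F$ to close the horizontal equivalences; this is where any correction relative to \cite{MUbook}*{Lemma 4.3.3} is likely to live.
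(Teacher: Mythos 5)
Your proof is correct and takes essentially the same route as the paper: the core ingredients are Proposition~\ref{p2j85}\ref{z1pfin} applied both to $\cS$ and to each finitely irreducible cofinite subsystem $\cS_F$, together with the additive splitting of $Z_1(t)$ along the finite complement $E\setminus F$. The only difference is bookkeeping --- you establish the three horizontal equivalences $(i)\Leftrightarrow(iv)$, $(ii)\Leftrightarrow(v)$, $(iii)\Leftrightarrow(vi)$ at once and then close a single cycle $(i)\Rightarrow(iii)\Rightarrow(ii)\Rightarrow(i)$, whereas the paper closes two parallel cycles joined by the single bridge $(i)\Leftrightarrow(iv)$ and invokes the subsystem instance of Proposition~\ref{p2j85}\ref{z1pfin} only inside the step $(v)\Rightarrow(iv)$.
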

\begin{proof} The equivalence of \ref{z1fin} and \ref{pfin} is a restatement of  Proposition \ref{p2j85} \ref{z1pfin}. The implications \ref{z1evcof} $\implies$ \ref{z11cof}  and \ref{myst} $\implies$ \ref{p1cof}  are trivial. The implication  \ref{z1fin} $\implies$ \ref{z1evcof} is trivial because for all $F \subset E$, it holds that $Z_1(F,t) \leq Z_1(t)$. For the implication \ref{z11cof} $\implies$  \ref{z1fin}, assume by contradiction that \ref{z11cof} holds and $Z_1(t)=\infty$. Since $F$ is cofinite,
$$\sum_{e \in E \stm F} \|D \f_e\|_\infty ^t<\infty,$$
hence $\sum_{e \in  F} \|D \f_e\|_\infty^ t=\infty,$ and we have reached another contradiction.

The implication \ref{pfin} $\implies$  \ref{myst} follows because for all $F \subset E$ we have that $P_F(t) \leq P(t)$. Hence we only have to prove that \ref{p1cof} $\implies$ \ref{pfin}.  Suppose on the contrary that $P(t)=\infty$ and there exists some cofinite and finitely irreducible set $F\subset E$ such that $P_F(t)<\infty$. Then as before $Z_1(t)=\infty$ and thus $Z_1(F,t)=\infty$. Since $F$ is finitely irreducible the equivalence \ref{z1fin} and \ref{pfin} implies that $P_F(t)=\infty$ and we have reached a contradiction. The proof of the lemma is complete.  
\end{proof}
\begin{remark} Note that the equivalence \ref{z1fin} $\iff$ \ref{z11cof} $\iff$ \ref{z1evcof} in Lemma \ref{433mu} holds even when the cofinite sets $F$ are not finitely irreducible.
\end{remark}

We are now ready to introduce the main objects of study of this section. If $\cS=\{\f_e\}_{e \in E}$ is a GDMS and $F \subset E$ we will denote  by $J_F$ and $h_F$ the limit set and Bowen's parameter of the subsystem $\cS_F=\{\f_e\}_{e \in F}$.

\begin{defn}
\label{thetavariants}
Let $\cS=\{\f_e\}_{e \in E}$ be a finitely irreducible conformal GDMS. We define 
\begin{enumerate}[label=(\roman*)]
\item $\theta:=\theta(\cS)=\inf \{t \geq 0: P(t)<+\infty\}$,
\item $\theta_1:=\theta_1(\cS)=\inf \{\dim_{\cH}(J_A):A \subset E \mbox{ is cofinite}\}$,
\item $\theta_2:=\theta_2(\cS)=\inf \{\dim_{\cH}(J_A):A\subset E \mbox{ is cofinite and irreducible}\}$,
\item  $\theta_3:=\theta_3(\cS)=\inf \{\dim_{\cH}(J_A):A\subset E \mbox{ is cofinite and finitely irreducible}\}$.
\end{enumerate}
If $\Phi \subset E$ we also let
\begin{equation}
\label{theta1la}
\theta_1(\Phi):=\theta_1(\cS, \Phi)=\inf \{\dim_{\cH}(J_A):A \mbox{ is cofinite and }\Phi \subset A\}.
\end{equation}
\end{defn}
We remind the reader that if $\Lambda \subset E_A^{\ast}$, the set of all letters appearing in words of $\Lambda$ is denoted by $\tilde{\Lambda}$, see also \eqref{lambdatilde}.
\begin{thm} 
\label{thetaorderthm}
 Let $\cS=\{\f_e\}_{e \in E}$ be a finitely irreducible conformal GDMS. Then
\begin{enumerate}[label=(\roman*)]
\item \label{thetaiorder} $ \theta_1 \leq \theta_2 \leq \theta_3$,
\item \label{thetaorder} $\theta_1 \leq \theta  \leq \theta_3$,
\item\label{theta3equi} $\theta_3=\inf \{\theta_1(\tilde{\Lambda}): \Lambda \mbox{ witnesses finite irreducibility for some cofinite subset of }E\}$.
\end{enumerate}
\end{thm}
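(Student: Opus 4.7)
The plan is to address the three parts in the order given. Part \ref{thetaiorder} is immediate: every cofinite finitely irreducible set is cofinite irreducible, and every cofinite irreducible set is cofinite, so the infima in items (ii)--(iv) of Definition~\ref{thetavariants} are taken over progressively smaller classes of subsystems, yielding $\theta_1 \leq \theta_2 \leq \theta_3$ with no further work.

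For Part \ref{thetaorder}, I would prove the two inequalities by separate arguments. For $\theta_1 \leq \theta$, fix any $t > \theta$. Since $P(t) < +\infty$ implies $Z_1(t) = \sum_{e \in E}\|D\f_e\|_\infty^t < +\infty$ by Proposition~\ref{p2j85}\ref{z1pfin}, I can choose a finite $B \subset E$ such that the cofinite complement $F := E\setminus B$ satisfies $Z_1(F,t) < 1$. The submultiplicativity $Z_n(F,t) \leq Z_1(F,t)^n$, immediate from \eqref{quasi-multiplicativity1}, then forces $Z_n(F,t) \to 0$. Covering $J_F$ by the cylinder images $\f_\om(X_{t(\om_n)})$, $\om\in F_A^n$, whose diameters are controlled by $\|D\f_\om\|_\infty$ via Lemma~\ref{bdp}, one obtains $\cH^t(J_F) = 0$, whence $\dim_{\cH}(J_F) \leq t$ and therefore $\theta_1 \leq t$. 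Letting $t\searrow \theta$ finishes this direction. For $\theta \leq \theta_3$, take any cofinite finitely irreducible $F \subset E$. Theorem~\ref{t1j97} gives $\dim_{\cH}(J_F) = h_F$, while Lemma~\ref{433mu} (the equivalence \ref{pfin} $\iff$ \ref{myst}) ensures $P(t) = +\infty \Rightarrow P_F(t) = +\infty$, so every $t < \theta$ satisfies $P_F(t) > 0$ and hence $h_F \geq t$. Letting $t \nearrow \theta$ yields $h_F \geq \theta$, and the infimum over $F$ gives $\theta_3 \geq \theta$.

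For Part \ref{theta3equi}, denote the right-hand side by $\theta_3'$. The inequality $\theta_3' \leq \theta_3$ is easy: for any cofinite finitely irreducible $F$ with a witness $\Lambda_F \subset F_A^*$ one has $\tilde\Lambda_F \subset F$, so $F$ itself is admissible in the infimum defining $\theta_1(\tilde\Lambda_F)$, giving $\theta_1(\tilde\Lambda_F) \leq \dim_{\cH}(J_F)$, and taking infima over $F$ produces $\theta_3' \leq \theta_3$. The reverse $\theta_3 \leq \theta_3'$ is the crux. Given $\Lambda$ witnessing finite irreducibility for some cofinite $F$, and an arbitrary cofinite $A \supset \tilde\Lambda$, I would pass to $A' := A \cap F$. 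Then $E \setminus A' = (E\setminus A) \cup (E\setminus F)$ is finite, so $A'$ is cofinite; moreover, $A'$ is finitely irreducible with the \emph{same} witness $\Lambda$, because for any $i,j\in A'\subset F$ the word $\om \in \Lambda$ with $i\om j \in F_A^*$ uses letters only from $\tilde\Lambda \subset A \cap F = A'$, so $i\om j \in (A')_A^*$. Hence $\dim_{\cH}(J_{A'}) \geq \theta_3$ by the definition of $\theta_3$, and the inclusion $J_{A'}\subset J_A$ gives $\dim_{\cH}(J_A) \geq \theta_3$; taking the infimum over $A$ yields $\theta_1(\tilde\Lambda) \geq \theta_3$, and then over $\Lambda$ concludes.

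The only substantive step is the intersection trick $A' := A\cap F$ in Part~\ref{theta3equi}: a generic cofinite $A\supset\tilde\Lambda$ need not itself be finitely irreducible, and the argument hinges on manufacturing inside it a cofinite finitely irreducible subsystem whose limit set sits inside $J_A$, thereby bridging $\theta_1(\tilde\Lambda)$ and $\theta_3$. Once this is in hand, the rest is bookkeeping plus three invocations of previously established results (Lemma~\ref{bdp}, Lemma~\ref{433mu}, and Theorem~\ref{t1j97}).
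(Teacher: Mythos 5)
Your proof is correct, and in two places it takes a genuinely different and arguably cleaner route than the paper.

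For Part~\ref{thetaorder}, the direction $\theta_1\le\theta$ is handled in the paper by showing $P_{E\setminus T}(t)<0$ for suitable cofinite $E\setminus T$, concluding $h_{E\setminus T}\le t$, and then invoking the inequality $h_{\mathcal T}\ge\dim_{\cH}(J_{\mathcal T})$ (eq.~\eqref{hupbound}), which holds for arbitrary, not necessarily finitely irreducible, conformal GDMS and requires re-inspecting the proof of the Bowen-type formula. You instead go straight to a Hausdorff-measure covering argument: $Z_1(F,t)<1$ plus submultiplicativity forces $Z_n(F,t)\to 0$, and the bounded distortion estimate turns this into $\cH^t(J_F)=0$. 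Both are valid; yours is more self-contained since it bypasses the side remark about non-irreducible subsystems. The $\theta\le\theta_3$ half and Part~\ref{thetaiorder} coincide with the paper's argument.

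For Part~\ref{theta3equi}, the direction $\theta_3'\le\theta_3$ matches the paper. For the converse $\theta_3\le\theta_3'$, the paper argues by contradiction and, in the displayed chain, asserts $\inf\{\dim_{\cH}(J_A):\tilde\Lambda\subset A, A\text{ cofinite}\}\ge\dim_{\cH}(J_F)$. Since $F$ itself is one of the $A$'s in that set, the infimum is in fact $\le\dim_{\cH}(J_F)$, so this step is backwards as written; what one actually needs is that \emph{each} admissible $A$ satisfies $\dim_{\cH}(J_A)\ge\theta_3$. Your intersection trick $A':=A\cap F$ supplies exactly this: $A'$ is cofinite, $\tilde\Lambda\subset A'$, and $\Lambda$ still witnesses finite irreducibility for $A'$ (all its letters lie in $\tilde\Lambda\subset A'$ and the $A$-admissibility is inherited from $F$), so $\dim_{\cH}(J_A)\ge\dim_{\cH}(J_{A'})\ge\theta_3$ by definition of $\theta_3$, and passing to infima gives $\theta_1(\tilde\Lambda)\ge\theta_3$ and then $\theta_3'\ge\theta_3$. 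This is a legitimate fix of a genuine lacuna in the paper's proof of that inequality, and it is the real content of the argument.
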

\begin{proof} We only need to prove \ref{thetaorder} and \ref{theta3equi} because \ref{thetaiorder} is obvious. We will first prove that $\theta_1 \leq \theta$. To this end let $t>\theta$. Then by Lemma \ref{433mu}
$$\sum_{e \in E} \|D \f_e\|^t_\infty<\infty.$$
Therefore there exists some finite set $M \subset E$ such that
$$\sum_{e \in E\stm M} \|D \f_e\|^t_\infty<1.$$
Hence for every finite $T$ such that $M \subset T \subset E$,  we have that
$$Z_1 (E \stm T) \leq Z_1(E \stm M, t)<1.$$
Therefore
$$P_{E\stm T}(t) \leq \log Z_1(E\stm T, t)<0,$$
which implies that $t \geq h_{E \stm T}$. Thus
$$\inf \{h_F: F\mbox{  cofinite subset of } E\} \leq \theta.$$
Inspecting the first part of the proof of \cite[Theorem 7.21]{CTU} we deduce that for any conformal GDMS $\cT$, not necessarily finitely irreducible, we have that 
\begin{equation}
\label{hupbound}
h_{\cT} \geq \dim_{\cH} J_{\cT}.
\end{equation}
In particular
$$\theta_1 \leq \inf \{h_F: F\mbox{  cofinite subset of } E\} \leq \theta.$$

We will now prove that $\theta \leq \theta_3$. We start by proving that if $F \subset E$ is cofinite and finitely irreducible then
\begin{equation}
\label{thetaf}
\theta=\theta(\cS_F).
\end{equation}
By Lemma \ref{433mu},
$$\theta=\inf \{t \geq 0: \sum_{e \in E} \|D \f_e\|^t_\infty<\infty\}$$
and
$$\theta(\cS_F)=\inf \{t \geq 0: \sum_{e \in F} \|D \f_e\|^t_\infty<\infty\}.$$
Hence $\theta(\cS_F) \leq \theta$. For the converse inequality suppose on the contrary that $\theta > \theta(\cS_F)$. Therefore there exists some $\lambda>0$ such that $\theta> \lambda> \theta(\cS_F)$. In particular $P(\lambda)=\infty$ and $P_F(\lambda)<\infty$. Since $F$ is cofinite and finitely irreducible, Lemma \ref{433mu} implies that $P(\lambda)<\infty$, but this is a contradiction. So \eqref{thetaf} has been established.

If $F \subset E$ is cofinite and finitely irreducible then by \cite[Theorem 7.21]{CTU}, Lemma \ref{433mu} and \eqref{thetaf},
\begin{equation*}
\begin{split}
\dim_{\cH}(J_{F})=h_{F}&=\inf \{t \geq 0: P_{F}(t) \leq 0\} \\
& \geq \inf \{t \geq 0: P_{F}(t)  <\infty\}\\
&=\inf \{t \geq 0: Z_1(F,t)<\infty\}=\theta(\cS_F)=\theta.
\end{split}
\end{equation*}
Hence $\theta \leq \theta_3$ and the proof of \ref{thetaorder} is complete.

We will now prove \ref{theta3equi}. Let
$$\theta_3'=\inf \{\theta_1(\tilde{\Lambda}): \Lambda \mbox{ witness finite irreducibility for some cofinite subset of }E\}.$$
We will first show that  $\theta_3\leq \theta_3'$. Suppose by way of contradiction that  there exists some $\lambda \in (\theta_3',\theta_3)$. So there exists a finite set $\Lambda \subset E_A^\ast$ and a cofinite set $F \subset E$ such that $\Lambda$ witnesses finite irreducibility for $F$ and $\theta_1(\tilde{\Lambda})<\lambda$. This means that
\begin{equation*}
\begin{split}
\lambda&>\inf \{\dim_{\cH}(J_A): \tilde{\Lambda} \subset A \mbox{ and }A\mbox{ is cofinite} \} \\
&\geq \dim_{\cH} (J_F) \\
&\geq \inf \{\dim_{\cH}(J_{B}): B \mbox{ is cofinite and finitely irreducible}\}\\
&=\theta_3,
\end{split}
\end{equation*}
which is a contradiction.

We will finally show that  $\theta'_3\leq \theta_3$. Take $t>\theta_3$. Then there exists $A \subset E$ cofinite and finitely irreducible such that 
$\dim_{\cH} (J_A)<t$. Since $A$ is finitely irreducible there exists a finite set $\Lambda \subset A$ such that $\Lambda$ witnesses finite irreducibility for $A$. Hence
$$\theta_3' \leq \theta_1(\Lambda) \leq \dim_{\cH} (J_A)<t.$$
Since $t$ was arbitrary we deduce that indeed  $\theta'_3\leq \theta_3$ and the proof is complete.
\end{proof}


We will now prove that Theorem \ref{thetaorderthm} is sharp. We stress that this phenomenon appears only in the framework of  graph directed Markov systems, since in the case of iterated function systems it is known \cite[Theorem 3.23]{MU1} that 
$$\theta=\theta_1=\theta_2=\theta_3.$$ 
\begin{thm}
\label{sharptheta}
There exist finitely irreducible conformal GDMS's  $\cS_i, i=1,\dots, 5,$ such that
\begin{enumerate}[label=(\roman*)]
\item \label{th1lessth2}
 $\theta_1(\cS_1) < \theta_2(\cS_1).$
  \item   \label{th1lessth}
 $\theta_1(\cS_2) < \theta(\cS_2).$
  \item  \label{thlessth2}
$\theta(\cS_3) < \theta_2(\cS_3).$
\item  \label{th2lessth3}
 $\theta_2(\cS_4) < \theta_3(\cS_4).$
\item \label{th2lessth}
  $\theta_2(\cS_5) < \theta(\cS_5).$
 \end{enumerate}
 \end{thm}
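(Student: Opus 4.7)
The plan is to prove each of the five separations by explicit construction of a conformal GDMS with an infinite alphabet of similarities laid out on a small (two- or three-vertex) directed multigraph. In each case contraction ratios of the infinite ``loop'' families at each vertex will be prescribed from elementary power-series weights (for instance $r_n = c n^{-\alpha}$ or $r_n = c e^{-\alpha n}$), so that Bowen dimensions of the sub-IFSs at each vertex and the finiteness threshold of $Z_1(t) = \sum_e \|D\phi_e\|^t_\infty$ can be read off directly from convergence of the series. A small, carefully chosen finite set of transition edges will then be used to tune whether a given cofinite subsystem is irreducible, finitely irreducible, or neither: removing or keeping these transition edges is what drives a wedge between the various $\theta$-parameters, and the entire mechanism is unavailable in IFSs (where Theorem~3.23 of \cite{MU1} collapses the four parameters).

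For \ref{th1lessth2}, \ref{th1lessth}, \ref{thlessth2}, and \ref{th2lessth} I would use a two-vertex template with disjoint infinite loop families $\{a_n\}$ at $v_1$ and $\{b_n\}$ at $v_2$, together with a finite transition set making the full system finitely irreducible. Dropping a single transition edge produces a cofinite $A \subset E$ that is no longer irreducible, and its limit set decomposes, up to finitely many ``hybrid'' pieces, essentially into $J_{\{a_n\}} \cup J_{\{b_n\}}$, of dimension $\max(\dim_{\cH} J_{\{a_n\}}, \dim_{\cH} J_{\{b_n\}})$ computable by Bowen's formula (Theorem~\ref{t1j97}). Tuning the loop contractions against the transition structure then places $\theta_1$ below the dimension forced by any irreducible configuration for \ref{th1lessth2}; below the convergence threshold $\theta$ of $Z_1$ for \ref{th1lessth}; and symmetrically for \ref{thlessth2} the loop contractions are chosen to make $\theta$ small (fast tail decay) while every cofinite irreducible subsystem still retains enough structure at a vertex with larger Bowen dimension. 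A parallel tuning delivers \ref{th2lessth}, with cofinite irreducible pieces admitting smaller Bowen dimension than the divergence threshold of the full pressure series.

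The genuinely delicate case is \ref{th2lessth3}, which separates irreducibility from finite irreducibility. Here the incidence matrix cannot be vertex-driven; instead I would design $A : E \times E \to \{0,1\}$ so that the shortest $A$-admissible word connecting two generic letters $e_m$ and $e_n$ has length growing with $\min(m,n)$. Concretely this can be encoded by routing compositions through an auxiliary vertex structure so that the admissible successors of $e_m$ all have index bounded by a function of $m$. Such a system admits an infinite witness set $\Lambda \subset E_A^\ast$ but no finite one, since any finite $\Lambda$ can only bridge letters of index below a fixed threshold. A cofinite subset $A$ inheriting this pattern has Bowen dimension controlled by the vertex sub-IFSs, which can be made strictly smaller than the infimum $\theta_3$ taken over cofinite finitely irreducible subsystems; the latter are forced to contain enough finite-witnessable transitions to inflate their dimension. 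The principal obstacle is verifying this absence of a finite irreducibility witness, which must be carried out by a direct combinatorial argument from the incidence construction. Once this is in place, all dimension calculations reduce via Bowen's formula and Proposition~\ref{p2j85}\ref{z1pfin} to convergence analysis of the explicitly prescribed contraction series.
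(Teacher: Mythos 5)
Your overall plan---small graph templates, power-law or exponential contraction ratios, and a finite set of ``connector'' letters whose removal breaks irreducibility---is indeed the right mechanism, and parts \ref{th1lessth2}--\ref{thlessth2} are morally what the paper does. Two substantive issues, though.

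First, for \ref{th1lessth2}--\ref{thlessth2} you propose arranging that when the transition letters are removed the limit set ``decomposes, up to finitely many hybrid pieces, essentially into $J_{\{a_n\}}\cup J_{\{b_n\}}$'' and computing its dimension via Bowen's formula on each piece. The paper's construction sidesteps this entirely by making the cofinite set $I = E\setminus\{a,b\}$ have $I_A^\N = \emptyset$ (every word in $I$ eventually forces a banned transition), so $\dim_{\cH} J_I = 0$ and $\theta_1 = 0$ with no decomposition argument needed. Your route could be made to work, but it demands extra tuning to dominate the ``hybrid'' contributions, and you have not verified that the cofinite subsystem you analyze is the one achieving (or approaching) the infimum in the definition of $\theta_1$.

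Second, and more seriously, your treatment of \ref{th2lessth3} has a genuine logical gap. You write that you would design $A$ so that the shortest $A$-admissible word connecting $e_m$ and $e_n$ grows with $\min(m,n)$, and then observe that ``such a system admits an infinite witness set $\Lambda\subset E_A^\ast$ but no finite one.'' If that is a property of $E$ itself, then $E$ is not finitely irreducible, which contradicts the hypothesis of the theorem you are proving (all five $\cS_i$ must be finitely irreducible conformal GDMS). The correct threading of the needle---and what the paper does---is to keep a pair $\{a,b\}$ of letters that make the full $E$ finitely irreducible, while simultaneously endowing the remaining alphabet $\N\cup(-\N)$ with a chain/crossing structure ($A_{m,m-1}$, $A_{n,-n}$, $A_{-n,n}$) so that the cofinite sets $F_q = \{j\in\Z : |j|\ge q\}$ (which drop $a,b$) are irreducible but \emph{not} finitely irreducible; one then shows $\dim_{\cH} J_{F_q}\to 0$ (giving $\theta_2=0$) while every cofinite finitely irreducible subset must contain $\{a,b\}$ (giving $\theta_3\ge \dim_{\cH} J_{\{a,b\}}>0$). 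Your proposal never states that the connecting-word-length-grows property must hold only on the cofinite complement of the finite witness, and as written it contradicts the standing finite-irreducibility assumption. Part \ref{th2lessth} is also not really addressed: the paper's trick there is a one-vertex bidirectional chain where $Z_1$ has threshold $1$ but $Z_2$ over the tail $F(q)$ involves $\sum (n(n+1))^{-t}$ and hence gives $\theta_2\le 1/2$, which is a specific second-level pressure estimate your sketch does not anticipate.
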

 \begin{proof} We begin with the proof of \ref{th1lessth2}. Let $\{a,b\} \in \R \stm \Z$ be a pair of distinct elements. Let
$$E= \N \cup \{-\N\} \cup \{a, b \}$$
 and consider any conformal GDMS $\cS=\{V,E,A,t,i, \{X_v\}_{v \in V}, \{\f_e\}_{e \in E} \}$ where the matrix $A$ is defined by
 \begin{equation*}
 \begin{split}
 A_{ma}&=1, \forall m \in -\N,\\
 A_{bm}&=1, \forall m \in -\N,\\
 A_{an}&=1, \forall n \in \N,\\
 A_{nb}&=1, \forall n \in \N,
 \end{split}
 \quad\quad\quad\quad\quad
\begin{split}
 A_{ab}&=1, \\
 A_{ba}&=1, \\
 A_{aa}&=1,\\
 A_{bb}&=1, 
 \end{split}
 \end{equation*}
 and $A_{ej}=0$ for all other $e,j \in E$. See also Figure \ref{fig1}. \begin{figure}
\centering
\includegraphics[scale = 0.37]{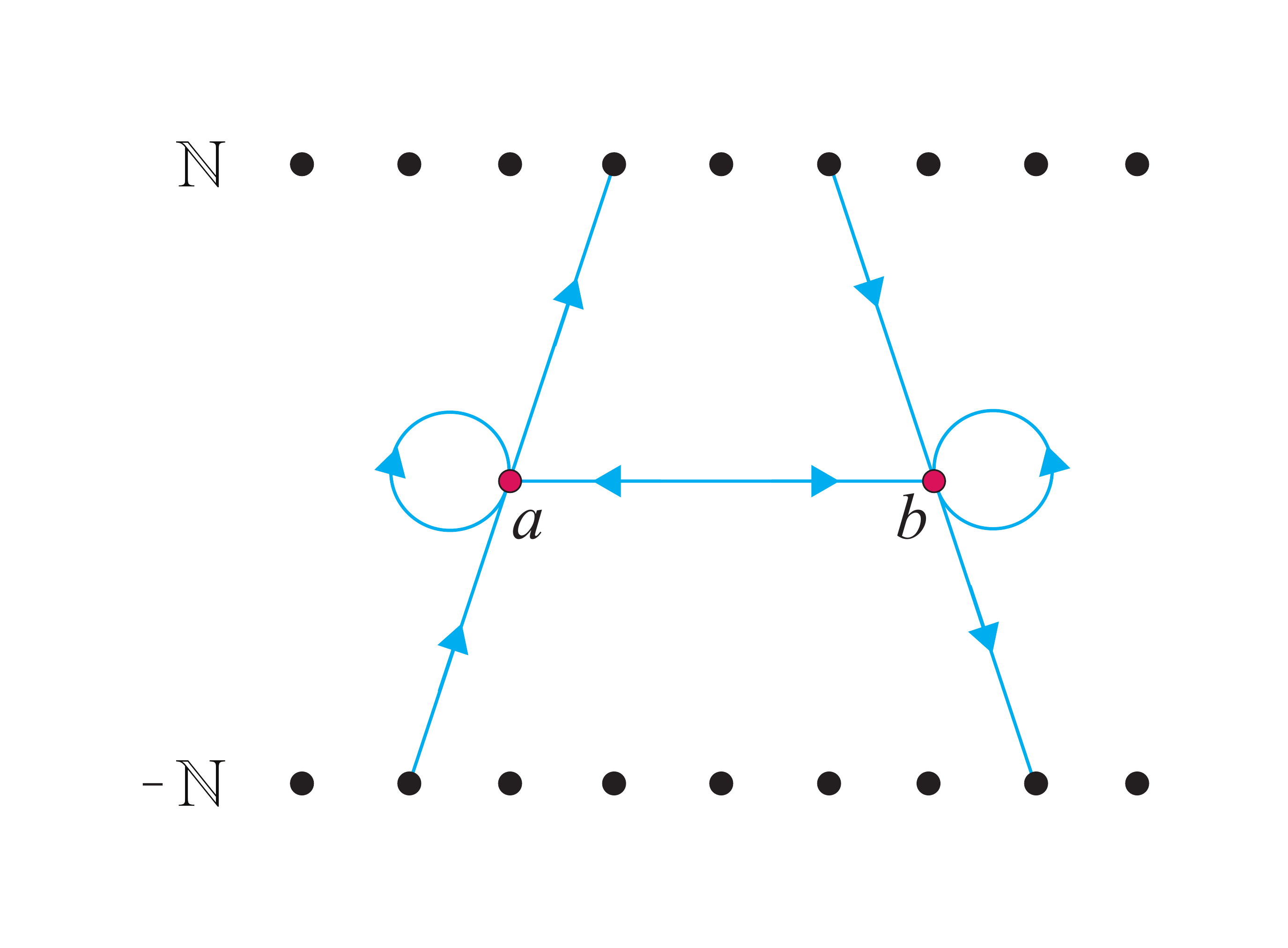}
\caption{The arrows between any two points $e,j \in E$ signify that $A_{ej}=1$.}
\label{fig1}
\end{figure} Note that the systems $\cS$ are finitely irreducible and the set $\{a,b\}$ witnesses finite irreducibility for $E$. Observe also that if $F \subset E$ is irreducible then $\{a,b\} \subset F$, therefore 
$$\theta_2 \geq \dim_{\cH} (J_F) \geq \dim_{\cH} (J_{\{a,b\}})>0.$$
On the other hand it is evident that $\theta_1=0$. Just  consider the cofinite set $I=E \stm \{a,b\}$ and notice that $I_A^{\N}=\emptyset$.

For the proof of \ref{th1lessth} it suffices to consider any system $\cS$ as in the previous example, consisting of similiarities such that for all $n \in \N,$
\begin{equation*}
\|D \f_{n}\|_{\infty}=\|D \f_{-n}\|_\infty=n^{-2},
\end{equation*}
and
\begin{equation*}
\|D \f_{a}\|_{\infty}=\|D \f_{b}\|_\infty=1/2.
\end{equation*}
Since this is just a special case of the examples considered in \ref{th1lessth2} we have that $\theta_1=0$. On the other hand
$$Z_1(t)=2 \left ((1/2)^t +\sum_{n \in \N}n^{-2t} \right),$$
and by Lemma \ref{433mu} we deduce that $\theta=1/2$.

The family introduced in \ref{th1lessth2} also contains examples which establish \ref{thlessth2}. In this case we consider similarity maps such that
 for all $n \in \N,$
\begin{equation*}
\|D \f_{n}\|_{\infty}=\|D \f_{-n}\|_\infty=e^{-n},
\end{equation*}
and
\begin{equation*}
\|D \f_{a}\|_{\infty}=\|D \f_{b}\|_\infty=1/2.
\end{equation*}
Hence
$$Z_1(t)=2 \left ((1/2)^t +\sum_{n \in \N}e^{-tn} \right),$$
and and by Lemma \ref{433mu} we deduce that $\theta=0$. Now recalling the proof of \ref{th1lessth2} we have that $\theta_2 \geq \dim_{\cH} (J_{\{a,b\}})>0$, and the proof of \ref{thlessth2} is complete.

We now move to the proof of \ref{th2lessth3}. Let the alphabet $E$ be as in the previous examples. We now consider conformal GDMS 
$\cS=\{V,E,A,t,i, \{X_v\}_{v \in V}, \{\f_e\}_{e \in E} \}$ where the matrix $A$ is defined by
 \begin{equation*}
 \begin{split}
 A_{ma}&=1, \forall m \in -\N,\\
 A_{bm}&=1, \forall m \in -\N,\\
 A_{an}&=1, \forall n \in \N,\\
 A_{nb}&=1, \forall n \in \N,
 \end{split}
 \quad\quad 
 \begin{split}
 A_{m,m-1}&=1, \forall m \in \Z \stm \{0,1\}\\
 A_{n,-n}&=1, \forall n \in \N,\\
 A_{-n,n}&=1, \forall n \in \N,\\
 \end{split}
 \quad\quad 
\begin{split}
 A_{ab}&=1, \\
 A_{ba}&=1, \\
 A_{aa}&=1,\\
 A_{bb}&=1, 
 \end{split}
 \end{equation*}
 and $A_{ej}=0$ for all other $e,j \in E$.  See also Figure \ref{fig2}. \begin{figure}
\centering
\includegraphics[scale = 0.365]{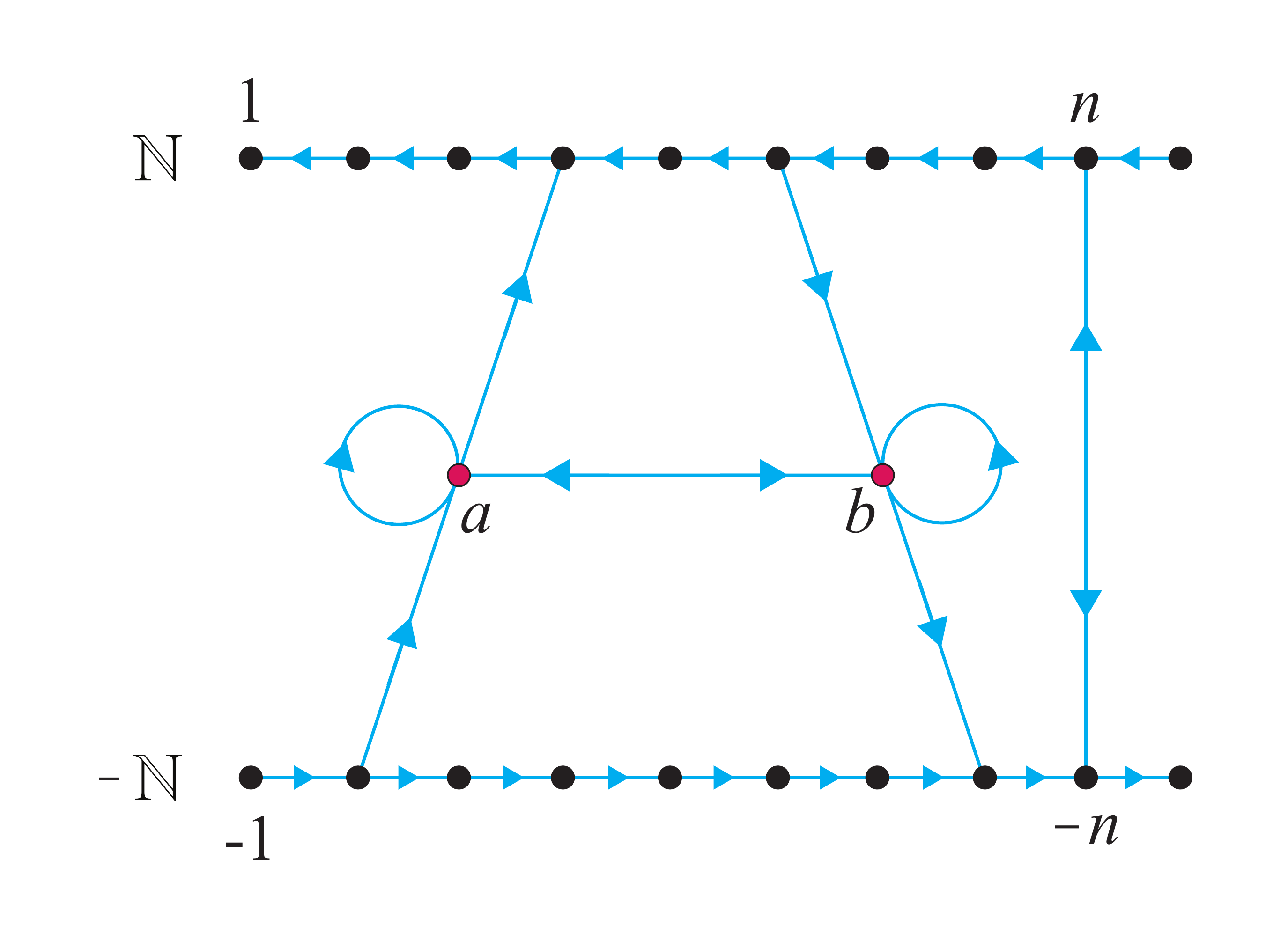}
\caption{}
\label{fig2}
\end{figure} As before we consider similarity maps such that
 for all $n \in \N,$
\begin{equation*}
\|D \f_{n}\|_{\infty}=\|D \f_{-n}\|_\infty=e^{-n},
\end{equation*}
and
\begin{equation*}
\|D \f_{a}\|_{\infty}=\|D \f_{b}\|_\infty=1/2.
\end{equation*}
The system $\cS$ is finitely irreducible and  if $F$ is any finitely irreducible subset of $E$ then $\{a,b\} \subset F$ and
$$\theta_3 \geq \dim_{\cH} (J_F) \geq \dim_{\cH} (J_{\{a,b\}})>0.$$
We now consider the cofinite sets
$$F_q=\{j \in \Z: |j| \geq q\}$$
for $i \in \N$. Note that the sets $F_q$ are irreducible and moreover
$$Z_{1}(F_q,t)=2 \sum_{j=q}^{\infty} e^{-t j}=2 e^{-t q}(1-e^{-t})^{-1}.$$
Hence for all $t>0$ there exists some $q(t) \in \N$ such that
$Z_1(F_{q(t)},t)<1$. Therefore for all $t>0$,
$$P_{F_{q(t)}}(t) \leq \log Z_1(F_{q(t)},t)<0,$$
and
$$ h_{F_{q(t)}}\leq t.$$
Therefore  
$$\theta_2 \leq \dim_{\cH} (J_{F_{q(t)}}) \leq t.$$
Since $t>0$ was arbitrary we deduce that $\theta_2=0$. 

For the proof of \ref{th2lessth3} we let $a \in \R \stm \N$ and we consider the alphabet $E=\{n \in \N: n \geq 2\} \cup \{a\}$.  Let  
$\cS=\{V,E,A,t,i, \{X_v\}_{v \in V}, \{\f_e\}_{e \in E} \}$ be a conformal GDMS  where the matrix $A$ is defined by 
 \begin{equation*}
 \begin{split}
 A_{m,m+1}&=1, \forall m \geq 2,\\
 A_{m+1,m}&=1, \forall m \geq 2,\\
 A_{am}&=1, \forall m \geq 2,\\
 A_{ma}&=1, \forall m \geq 2,\\
 A_{aa}&=1,
 \end{split}
 \end{equation*}
 and $A_{ej}=0$ for all other $e,j \in E$. See also Figure \ref{fig3}. \begin{figure}
\centering
\includegraphics[scale = 0.4]{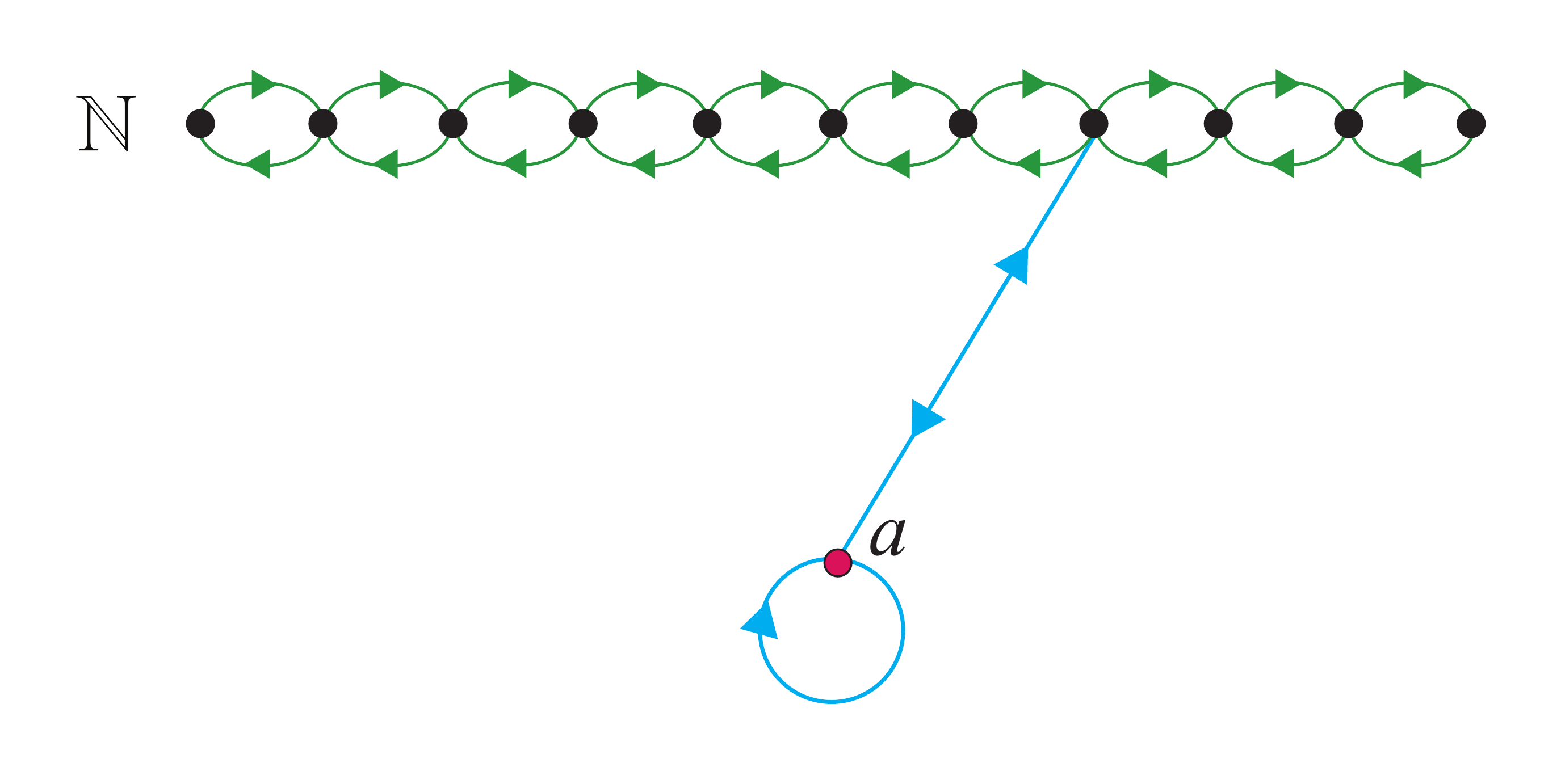}
\caption{}
\label{fig3}
\end{figure}The system $\cS$  consists  of similarities
 such that for all $m \in \N, m \geq 2,$
\begin{equation*}
\|D \f_{m}\|_{\infty}=m^{-1}
\mbox{ and }
\|D \f_{a}\|_{\infty}=1/2.
\end{equation*}
Observe that $\cS$ is finitely irreducible and 
$$Z_1(t)= 2^{-t}+\sum_{m \geq 2} m^{-t}.$$
Hence $\theta=1$. On the other hand for $q \geq 2$ consider the irreducible sets
$$F(q)=\{m \in \N: m \geq q \}.$$
Then  $\om \in {F(q)}^2_A$ if and only if $\om=(n,n+1)$ or if $\om=(m+1,m)$ for some $m,n \geq q$. Note that using \eqref{quasi-multiplicativity1} we get that
$$Z_2 (F(q), t) =\sum_{\om \in {F(q)}^2_A} \|D \f_{\om}\|_\infty^t \leq 2  \sum_{n \geq q} (n(n+1))^{-t}.$$
Hence if  $t>1/2$ there exists some $q \in \N$ large enough such that
$Z_2 (F(q),t) <1$. Therefore
$$P(t) \leq\frac{ \log Z_2 (F(q),t)}{2} <0,$$
and consequently $h_{F(q)} \leq t.$ Thus by \eqref{hupbound}
$$\theta_2 \leq \dim_{\cH} (J_{F(q)}) \leq  h_{F(q)} \leq t.$$
Since $t>1/2$ was chosen arbitrarily we deduce that $\theta_2\leq 1/2$ and the proof is complete.
\end{proof}

The following theorem is the corrected version of Lemma 4.3.10 from \cite{MUbook}. It extends some results from Section~3 of \cite{MU1} in the setting of GDMS.

\begin{thm}
\label{4310mu}
Let $\mathcal{S}=\{\f_e\}_{e \in E}$ be a finitely irreducible conformal GDMS. Then the following conditions are equivalent.
\begin{enumerate}[label=(\roman*)]
\item \label{ctu(i)} $\cS$ is strongly regular.
\item \label{ctu(ii)} $h({\cS})>\theta({\cS})$.
\item \label{ctu(iii)}  There exists a proper cofinite and finitely irreducible subsystem $\cS' \subset \cS$ such that $h({\cS'})<h({\cS})$.
\item \label{ctu(iv)}  For every proper and finitely irreducible subsystem $\cS' \subset \cS$ it holds that $h({\cS'})<h({\cS})$.
\end{enumerate}
\end{thm}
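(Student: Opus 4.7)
The proof proceeds through the cycle (i) $\Leftrightarrow$ (ii), (iii) $\Rightarrow$ (ii), (iv) $\Rightarrow$ (iii), and (i) $\Rightarrow$ (iv). The first three implications are short; the main work is (i) $\Rightarrow$ (iv).

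The equivalence (i) $\Leftrightarrow$ (ii) is immediate from Proposition \ref{p2j85}: given $0 < P(t_0) < +\infty$, finiteness forces $t_0 \geq \theta$, while strict decrease of $P$ combined with $P(h) \leq 0$ forces $t_0 < h$, so $\theta < h$; conversely any $t \in (\theta, h)$ gives $P(t) \in (0, +\infty)$. For (iii) $\Rightarrow$ (ii), if $\cS_F$ is proper cofinite and finitely irreducible with $h_F < h$, then Theorem \ref{t1j97} and Theorem \ref{thetaorderthm}\ref{thetaorder} yield $\theta \leq \theta_3 \leq \dim_{\cH}(J_F) = h_F < h$. For (iv) $\Rightarrow$ (iii), pick a finite witness $\Lambda$ for $\cS$; since $E$ is infinite and $\tilde{\Lambda}$ finite, any $a \in E \setminus \tilde{\Lambda}$ makes $\cS_{E \setminus \{a\}}$ a proper cofinite finitely irreducible subsystem (with the same witness $\Lambda$), and (iv) provides the strict inequality.

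The main step (i) $\Rightarrow$ (iv) goes by contradiction. Assume $\cS$ is strongly regular, so $P(h) = 0$, and let $\cS' = \cS_{E'}$ be a proper finitely irreducible subsystem with, contrary to the desired conclusion, $h_{E'} = h$. Monotonicity gives $P_{E'}(h) \leq 0$, while strict decrease of $P_{E'}$ rules out $P_{E'}(h) < 0$ (which would force $h_{E'} < h$), so $P_{E'}(h) = 0$. Since $\theta_{E'} \leq \theta < h = h_{E'}$, the subsystem $\cS'$ is itself strongly regular. Invoking the thermodynamic formalism of \cite[Chapter 2]{MUbook} and \cite[Chapter 6]{CTU}, both $\cS$ and $\cS'$ admit unique $\sigma$-invariant ergodic equilibrium states $\mu_h$ on $E_A^{\mathbb{N}}$ and $\nu_h$ on $(E')_A^{\mathbb{N}}$ for the potential $h\zeta$, and finite irreducibility of $\cS$ forces $\mu_h$ to have full support.

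Viewing $\nu_h$ as an invariant probability measure on $E_A^{\mathbb{N}}$ supported in the proper closed invariant set $(E')_A^{\mathbb{N}}$, the variational identity
\[
h_{\nu_h}(\sigma) + h \int \zeta \, d\nu_h \;=\; P_{E'}(h) \;=\; 0 \;=\; P(h)
\]
realizes $\nu_h$ as an equilibrium state for $\cS$ as well, so uniqueness forces $\nu_h = \mu_h$. But $E_A^{\mathbb{N}} \setminus (E')_A^{\mathbb{N}}$ is open and contains the cylinder $[a]$ for any $a \in E \setminus E'$, so full support yields $\mu_h((E')_A^{\mathbb{N}}) < 1$; ergodicity then forces $\mu_h((E')_A^{\mathbb{N}}) = 0$, contradicting $\nu_h((E')_A^{\mathbb{N}}) = 1$. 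The chief difficulty is thus packaging uniqueness, full support, and the variational principle for strongly regular finitely irreducible conformal GDMSs; an alternative route via Corollary \ref{ghenciucombsym} would try to show directly that adjoining edges strictly increases the pressure at $h$, but controlling the constants $L(f)$ and $\kappa_\Lambda(f)$ uniformly as letters are added is delicate and less transparent than the rigidity argument above.
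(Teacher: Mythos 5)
Your overall structure (the cycle of implications, the contradiction setup for (i)$\Rightarrow$(iv)) matches the paper, but your proof of (i)$\Rightarrow$(iv) takes a genuinely different route. The paper argues through the Gibbs state machinery: it invokes the conformal measures $\tilde m_h, \tilde m'_h$, proves the extension $\tilde\nu_h$ of $\tilde\mu'_h$ is absolutely continuous with respect to $\tilde\mu_h$ (via the estimate $\tilde m_h([\om]) \approx \|D\f_\om\|^h_\infty$), then laboriously verifies $\sigma$-invariance and ergodicity of $\tilde\nu_h$, and finally applies uniqueness of the invariant Gibbs state. You instead go through the variational principle: establish that $\cS'$ is strongly regular, produce equilibrium states for both systems, and read off $\nu_h=\mu_h$ from uniqueness of the equilibrium state and the identity $h_{\nu_h}(\sigma)+h\int\zeta\,d\nu_h=P_{E'}(h)=0=P(h)$. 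This is conceptually cleaner and collapses the paper's two-case split into a single argument. The trade-off is that it relies on Theorem~\ref{thm-conformal-invariant}(v) (uniqueness of equilibrium state), which requires $\int\zeta\,d\tilde\mu_h>-\infty$, whereas the paper's Gibbs-state route avoids this hypothesis; you do not note this finiteness condition, though it does hold because $h>\theta$.

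Two points need repair. First, you assert without verification that $\nu_h$, extended by $\nu_h(B):=\nu_h(B\cap (E')_A^\N)$, is a $\sigma$-invariant measure on $E_A^\N$; this is not automatic (indeed $\sigma^{-1}((E')_A^\N)\supsetneq (E')_A^\N$), and checking it requires the observation $\sigma^{-1}(B\cap(E')_A^\N)\cap(E')_A^\N=(\sigma')^{-1}(B\cap(E')_A^\N)$, which the paper spells out. Without invariance the extended $\nu_h$ is not an admissible competitor in the variational principle, so this is a real (if easily filled) gap. Second, your closing sentence ``ergodicity then forces $\mu_h((E')_A^\N)=0$'' is incorrect as written: $(E')_A^\N$ is only forward-invariant, not $\sigma^{-1}$-invariant, so ergodicity does not directly apply. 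Fortunately this step is superfluous: once $\nu_h=\mu_h$, you already have the contradiction $1=\nu_h((E')_A^\N)=\mu_h((E')_A^\N)<1$, or, more simply still (and as the paper does), $0=\nu_h([a])=\mu_h([a])>0$ for any $a\in E\setminus E'$.
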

\begin{proof} The implications \ref{ctu(iv)}$\Rightarrow$\ref{ctu(iii)} and \ref{ctu(ii)}$\Rightarrow$\ref{ctu(i)} are immediate. In order to prove the implication \ref{ctu(iii)}$\Rightarrow$\ref{ctu(ii)} suppose by way of contradiction that $h(\cS)=\theta(\cS)$. Let $\cS'$ be a cofinite and finitely irreducible subsystem of $\cS$ such that $h({\cS'})<h({\cS})$. By Theorem \ref{thetaorderthm} \ref{thetaorder} and Theorem \ref{t1j97} we deduce that $$h({\cS'}) \geq \theta_3({\cS}) \geq \theta (\cS).$$ Hence by our assumption $h(\cS') \geq \theta(\cS)=h(\cS),$
which contradicts \ref{ctu(iii)}.

For the remaining implication \ref{ctu(i)}$\Rightarrow$\ref{ctu(iv)} let $E' \subsetneq E$ be finitely irreducible and consider the corresponding proper subsystem of $\cS$,  $\cS'=\{\f_e\}_{e \in E'}$. If $\cS'$ is not regular then by Proposition \ref{p2j85} we deduce that  $P_{\cS'}(\theta({\cS'}))<0$. Since $\cS$ is strongly regular, recalling Remark \ref{f1j87}, there exists some $\alpha \in (\theta(\cS), h(\cS))$. Therefore since $\alpha > \theta(\cS) \geq \theta({\cS'})$ and the pressure function is strictly decreasing 
we deduce that $P_{\cS'}(\alpha)<0$. Thus by the definition of the parameter $h({\cS'})$, we get that $h({\cS'}) \leq \alpha <h(\cS)$ and we are done in the case when $\cS'$ is  not regular.

Now by way of contradiction assume that $\cS'$ is regular and
$$h(\cS)=h({\cS'}):=h.$$
By Theorem \cite[Theorem 7.4]{CTU}  there exist unique measures $\tilde\mu_h$ on $E_A^\N$ and $\tilde\mu'_h$ on ${E'_A}^\N$,  which are  ergodic and shift invariant with respect to $\sigma:E_A^\N \ra E_A^\N$ and $\sigma':{E'_A}^\N\ra {E'_A}^\N$  respectively. Moreover,  by \cite[Proposition 7.19]{CTU} we have that
\begin{equation}
\label{confequiv}
\tilde m_h \ll \tilde\mu_h \ll \tilde m_h \text{ and }\tilde m'_h \ll \tilde\mu'_h \ll \tilde m'_h,
\end{equation}
where $\tilde m_h, \tilde m'_h$ stand for the $h$-conformal measures corresponding to $\cS$ and $\cS'$ respectively. We record that for every  $\om \in {E_A}^\ast$,
\begin{equation}\label{preequicyl}\tilde  m_h([\om]_A)  \approx \|D \f_\om\|^h_\infty,\end{equation}
and for every $\om \in {E'_A}^\ast$
\begin{equation}\label{preequicyl1}\tilde  m'_h([\om]'_A) \approx \|D \f_\om\|^h_\infty,\end{equation}
where $[\om]'_A=[\om]_A \cap {E'_A}^\N$. These estimates are the only things that we will need from conformal measures, for their definition as well as the proof of \eqref{preequicyl} see \cite[Section 7]{CTU}. Note that \eqref{confequiv}, \eqref{preequicyl} and \eqref{preequicyl1} imply that for every $\om \in {E'_A}^\ast$,
\begin{equation}\label{equicyl}
\tilde\mu_h([\om]_A)\approx\tilde\mu'_h([\om]'_A).
\end{equation}
Now in the obvious way we can extend $\tilde\mu'_h$ to a Borel measure in $E_A^\N$, defined by
$$\tilde\nu_h(B):=\tilde\mu'_h(B \cap {E'_A}^\N)$$
for Borel sets $B \subset E_A^\N$. We will first prove that
$$\tilde\nu_h \ll \tilde\mu_h.$$ 
It suffices to show that if $B$ is a Borel subset of $E_A^\N$ such that $B \subset {E'_A}^\N$ and $\tilde\mu_h(B)=0$ then $\tilde\nu_h(B)=0$. Let $\ve>0$ and let $U \subset E_A^\N$ be an open set such that $U \supset B$ and $\tilde\mu_h(U)<\ve$. Since the collection of cylinders forms a countable base for the topology of $E_A^\N$  there exists a countable set $I \subset {E_A}^\ast$ consisting of mutually incomparable words such that $$U=\bigcup_{\om \in I} [\om]_A.$$ 
Hence $U'=\bigcup_{\om \in I} [\om]_A \cap {E'_A}^\N=\bigcup_{\om \in I} [\om]'_A$ and by \eqref{equicyl},
\begin{equation*}
\begin{split}
\tilde\nu_h(B) \leq \tilde\nu_h(U')&=\sum_{\om \in I}\nu_h ([\om]'_A)= \sum_{\om \in I}\tilde\mu'_h ([\om]'_A) \\
&\approx \sum_{\om \in I}\tilde\mu_h ([\om]_A)=\tilde\mu_h (U)<\ve.
\end{split}
\end{equation*}
Since $\ve>0$ was arbitrary, we deduce that $\tilde\nu_h(B)=0$ and the proof is complete.

We will now show that $\tilde\nu_h$ is shift invariant with respect to $\sigma:E_A^\N \ra E_A^\N$. First notice that
\begin{equation*}
\begin{split}
\tilde\nu_h(\sigma^{-1}({E'_A}^\N))&=\tilde\nu_h\left(\bigcup_{j \in E}\{j\om: \om \in {E'_A}^\N\}\right)\\
&=\tilde\mu'_h \left(\bigcup_{j \in E}\{j\om: \om \in {E'_A}^\N\} \cap {E'_A}^\N\right)\\
&=\tilde\mu'_h({E'_A}^\N)=\tilde\nu_h({E'_A}^\N).
\end{split}
\end{equation*}
Let $F$ be a Borel  subset of $E_A^\N$. Then
\begin{equation}
\label{shifinv1}
\begin{split}
\tilde\nu_h(\sigma^{-1}(F))&=\tilde\nu_h(\sigma^{-1}(F)\cap {E'_A}^\N)=\tilde\nu_h(\sigma^{-1}(F) \cap \sigma^{-1}({E'_A}^\N))\\
&=\tilde\mu'_h(\sigma^{-1}(F \cap {E'_A}^\N)\cap {E'_A}^\N).
\end{split}
\end{equation}
We will now show that
\begin{equation}
\label{shifinv2}
\sigma^{-1}(F \cap {E'_A}^\N)\cap {E'_A}^\N=\sigma'^{-1}(F \cap {E'_A}^\N).
\end{equation}
Recall that $\sigma'$ stands for the shift map in ${E'_A}^\N$. We have,
\begin{equation*}
\begin{split}
\sigma^{-1}(F \cap {E'_A}^\N)\cap {E'_A}^\N&=\bigcup_{j \in E}\{j\om: \om \in F \cap {E'_A}^\N\}\cap {E'_A}^\N\\
&=\bigcup_{j \in E'}\{j\om: \om \in F \cap {E'_A}^\N\}\cap {E'_A}^\N\\
&=\sigma'^{-1}(F \cap {E'_A}^\N),
\end{split}
\end{equation*}
and \eqref{shifinv2} follows. Now using \eqref{shifinv1}, \eqref{shifinv2} and the $\sigma'$-invariance of $\tilde \mu'_h$ we get,
\begin{equation*}
\begin{split}
\tilde\nu_h(\sigma^{-1}(F))&=\tilde\mu'_h(\sigma'^{-1}(F \cap {E'_A}^\N))\\
&=\tilde \mu'_h(F \cap {E'_A}^\N)\\
&=\tilde\nu_h(F).
\end{split}
\end{equation*}
Hence $\tilde\nu_h$ is $\sg$-invariant.

We will now show that $\tilde\nu_h$ is ergodic with respect to $\sigma$. By way of contradiction suppose that there exists some Borel subset $F$ of $E_A^\N$ such that $\sigma^{-1}(F)=F$ and $0<\tilde \nu_h (F)<1$. Let
$$F_1=F \cap {E'_A}^\N \text{ and }F_2=F \setminus F_1.$$
Since $\sigma'^{-1}(F_1) \subset \sigma^{-1}(F_1) \subset F_1 \cup F_2$ and $\sigma'^{-1}(F_1) \cap F_2=\emptyset$ we deduce that
\begin{equation}
\label{shifergo}
\sigma'^{-1}(F_1) \subset F_1.
\end{equation}
Moreover,
\begin{equation*}
\begin{split}
F_1 \subset F= \sigma^{-1}(F) \subset \bigcup_{j \in E'} \{jf: f \in F\} \cup \bigcup_{j \in E \stm E'} \{jf: f \in F\}.
\end{split}
\end{equation*}
Therefore
$$F_1 \subset \bigcup_{j \in E'} \{jf: f \in F_1\} \cap {E'_A}^\N= \sigma'^{-1}(F_1),$$
which combined with \eqref{shifergo} implies that
\begin{equation}
\label{shifergo1}
F_1=\sigma'^{-1}(F_1).
\end{equation}
Since $\tilde\mu'_h$ is ergodic with respect to $\sigma'$ we deduce that either $\tilde\mu'_h(F_1)=0$ or $\tilde\mu'_h(F_1)=1.$ Therefore, since $\tilde\nu_h(F)=\tilde\mu_h(F_1)$,
$$\tilde\nu_h(F)=0 \text{ or }\tilde\nu_h(F)=1,$$
and we have reached a contradiction. Thus $\tilde\nu_h$ is ergodic with respect to $\sigma$.

Hence we have shown that there exist two probability Borel measures on $E_A^\N$, $\tilde \mu_h$ and $\tilde\nu_h$, which are shift invariant and ergodic with respect to $\sigma$ and they are absolutely continuous with respect to $\tilde m_h$. Now Theorem \cite[Theorem 7.4]{CTU} implies that
\begin{equation}
\label{equivnumu}
\tilde \mu_h\equiv\tilde\nu_h.
\end{equation}
If $j \in E \stm E'$, then $\tilde\nu_h([j])=0$. On the other hand, since $\tilde\mu_h$ is equivalent to $\tilde m_h$, by \eqref{preequicyl}  we deduce that $\tilde\mu_h([j])>0$. Therefore \eqref{equivnumu} cannot hold and we have reached a contradiction. The proof of the theorem is complete.
\end{proof}

 \section{Dimension spectrum of conformal graph directed Markov systems}\label{sec:gdmsspec}
In this section we investigate several properties of the dimension spectrum of graph directed Markov systems. We start by defining the notion of dimension spectrum and some related concepts.
\begin{defn}
\label{dspec} Let $\cS=\{\f_e\}_{e \in E}$ be a finitely irreducible conformal GDMS. The {\em dimension spectrum} of $\cS$ is
$$DS (\cS):=\{\dim_{\cH}(J_F): F \subset E \}.$$
If $L \subset E$ we set
$$DS_{L} (\cS):=\{\dim_{\cH}(J_F): L \subset F \subset E \},$$
and we define
\begin{enumerate}[label=(\roman*)]
\item $DS_1(\cS)=\bigcup \{DS_{\tilde{\Lambda}}(\cS): \Lambda \mbox{ witnesses finite irreducibility for }E\},$
\item $DS_2(\cS)=\{\dim_{\cH}(J_F): F \subset E, F \mbox{ is finitely irreducible}\},$
\item $DS_3(\cS)=\{\dim_{\cH}(J_F): F \subset E, F \mbox{ is irreducible}\}$.
\end{enumerate}
Note that if $L=\emptyset$ then $DS_{L} (\cS)=DS(\cS)$.

 \end{defn}
\begin{defn}
\label{hzero}
Let $\cS=\{\f_e\}_{e \in E}$ be a finitely irreducible conformal GDMS. We let 
$$h_0:=\inf \{\dim_{\cH} (J_{\tilde{\Lambda}}): \Lambda \mbox{ witnesses finite irreducibility for }E\}.$$
\end{defn}

The pressure estimates obtained in Section \ref{sec:preses} will play a crucial role in the following. We now record their translation  to the setting of finitely irreducible conformal GDMS. Recall that for any finite $\Lambda \subset E_A^\ast$ we denote
$$p_\Lambda:=\max \{ |\lambda|: \lambda \in \Lambda\}.$$
Now let $t \geq 0$ and recalling \eqref{1MU_2014_09_10} we consider the potential 
$$f(\om):=t\zeta (\om)=t\log \|D \f_{\om_1}(\pi(\sg(\om)))\|.$$ For $\om \in E^\N_A$,
\begin{equation*}\begin{split}
S_n  f (\om)&=t\sum_{j=0}^{n-1} \log \|D \f_{\om_{j+1}}(\pi(\sg^{j+1}(\om)))\|\\
&=t\log\left(\|D \f_{\om_{1}}(\pi(\sg^{1}(\om)))\|\,\|D \f_{\om_{2}}(\pi(\sg^{2}(\om)))\|\cdots \|D \f_{\om_{n}}(\pi(\sg^{n}(\om)))\| \right).
\end{split}\end{equation*}
By \eqref{leibniz} and the fact that  $\pi(\sg^k(\om))= \f_{\om_{k+1}}\circ \dots \circ \f_{\om_n}(\pi(\sg^n(\om)))$ for $1 \leq k \leq n$ we deduce that
\begin{equation}
\label{logleib}
S_n  f (\om)=\log \|D \f_{\om|_n}(\pi(\sg^n(\om)))\|^t.
\end{equation}
Therefore by \eqref{logleib} and Lemma \eqref{bdp} we deduce that
$$K^{-t} \min\{\|D \f_\lambda\|^t_\infty: \lambda \in \Lambda\} \leq\kappa_{\Lambda}(f) \leq \min\{\|D \f_\lambda\|^t_\infty: \lambda \in \Lambda\},$$
where $\kappa_{\Lambda}(f)$ was defined in \eqref{klf}. For the rest of this section we will let
$$\kappa_{\Lambda}=\min\{\|D \f_\lambda\|_\infty: \lambda \in \Lambda\}.$$
Using \eqref{logleib} and Proposition \ref{bdp} we get that for all $\om,\tau\in E_A^\N$, and  $\rho\in
E_A^n, n \in \N$,
\begin{equation*}
\label{diffpot}
\big|S_nf(\rho\om)-S_nf(\rho\tau)\big|=t|\log \|D \f_{\rho}(\pi(\om))\|-\log \|D \f_{\rho}(\pi(\tau))\|| \leq \log K^t.
\end{equation*}
Therefore recalling \eqref{L(g)} we deduce that $L(f) \leq \log K^t$. It also follows that the function $f$ is $\log(1/s)$-H\"older, see e.g.\cite[Lemma 4.22]{MUbook} or \cite[Lemma 4.16]{CTU}, therefore we can directly apply Lemma \ref{mainlemmasym} and Propositions \ref{ghenciu1sym} and \ref{ghenciu2sym} to our setting. For the convenience of the reader we provide the statements in the setting of conformal GDMS.
 \begin{lm}
\label{mainlemma} Let $\cS=\{\f_e\}_{e \in E}$ be a finitely irreducible conformal GDMS. Let $F\subset E$ be a finitely irreducible subset of $E$ and let $\Lambda$ be a nonempty set witnessing finite irreducibility for $F$. Then for every $e \in E$ and every $t>0$
$$Z_{n}(F \cup \{e\},t) \leq K^{2 t}\sum_{j=0}^n \sum_{k=n}^{j+p_{\Lambda}(n-j)} \, {n \choose j} \left( \frac{K \|D \f_e\|_\infty}{\kappa_{\Lambda}}\right)^{t(n-j)} (\sharp \Lambda)^{n-j}  Z_{k}(F,t),$$
for every $n \in \N$.
\end{lm}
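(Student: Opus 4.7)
The plan is to apply the abstract symbolic estimate of Lemma \ref{mainlemmasym} to the specific potential $f = t\zeta$, where $\zeta$ is the geometric potential defined in \eqref{1MU_2014_09_10}, and then translate all quantities appearing in that lemma into their geometric counterparts using the Bounded Distortion Property.

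First, I would collect the four ingredients needed to invoke Lemma \ref{mainlemmasym}. By \eqref{logleib} together with the discussion preceding the statement, the potential $f = t\zeta$ is $\log(1/s)$-H\"older continuous on $E_A^\N$, so Lemma \ref{mainlemmasym} applies. The quantities entering the right-hand side of that lemma translate as follows: taking $n=1$ in \eqref{logleib} yields $e^{f(\om)}=\|D\f_{\om_1}(\pi(\sg\om))\|^t$, so $\sup f|_{[e]} \leq t\log \|D\f_e\|_\infty$; the estimate proved in the excerpt gives $L(f) \leq t\log K$, hence $e^{L(f)} \leq K^t$ and $e^{2L(f)}\leq K^{2t}$; and the explicit lower bound $\kappa_\Lambda(f)\geq K^{-t}\kappa_\Lambda^t$ was also recorded just before the statement.

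Next I would compare the symbolic and geometric partition functions. By \eqref{logleib}, for each $\om\in F_A^n$,
\[
\sup_{\tau\in [\om]_F} \exp(S_nf(\tau))=\sup_{\tau\in [\om]_F}\|D\f_\om(\pi(\sg^n\tau))\|^t,
\]
and since $\pi(\sg^n\tau)\in X_{t(\om)}\subset S_{t(\om)}$, this supremum is bounded above by $\|D\f_\om\|_\infty^t$. Summing over $\om$ gives $Z_n(F,f)\leq Z_n(F,t)$ for any $F\subset E$ and, applied to the set $F\cup\{e\}$, Lemma \ref{bdp} shows conversely $Z_n(F\cup\{e\},t)\leq K^t Z_n(F\cup\{e\},f)$. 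Combining these comparisons with the bounds on $L(f)$, $\kappa_\Lambda(f)$ and $\sup f|_{[e]}$ listed above, and substituting into the conclusion of Lemma \ref{mainlemmasym}, yields
\[
Z_n(F\cup\{e\},t)\leq K^{2t}\sum_{j=0}^n {n\choose j}\left(\frac{\sharp\Lambda\, K\,\|D\f_e\|_\infty}{\kappa_\Lambda}\right)^{t(n-j)}\!\!\!\!\sum_{k=n}^{j+p_\Lambda(n-j)} Z_k(F,t),
\]
which is (after separating the $(\sharp\Lambda)^{n-j}$ factor from the $t(n-j)$-th power) exactly the claimed inequality.

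The main (and essentially only) obstacle is keeping track of the $K$ factors picked up at each comparison step, and making sure that the factors of $\sharp \Lambda$ are peeled off from inside the $t$-th power before they are combined with the $K$, $\|D\f_e\|_\infty$ and $\kappa_\Lambda$ factors; no genuinely new ideas are needed beyond those already developed in Section \ref{sec:preses} and the Bounded Distortion Property.
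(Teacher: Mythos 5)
Your plan is to apply the symbolic Lemma \ref{mainlemmasym} to the potential $f=t\zeta$ and translate, which is indeed what the paper's text seems to invite (``we can directly apply Lemma \ref{mainlemmasym} \ldots to our setting''), but the literal substitution has two genuine gaps and does not reproduce the stated constants. First, your conversion $Z_n(F\cup\{e\},t)\leq K^t Z_n(F\cup\{e\},t\zeta)$ tacitly requires that every $\om\in(F\cup\{e\})_A^n$ have a nonempty cylinder $[\om]_{F\cup\{e\}}$ in $(F\cup\{e\})_A^\N$. This can fail when $e\notin F$ and $e$ has no $A$-admissible successor inside $F\cup\{e\}$: such a dead-end word contributes $\|D\f_\om\|_\infty^t>0$ to the geometric $Z_n(F\cup\{e\},t)$ but is weighted by $\exp(\sup\emptyset)=0$ in the symbolic $Z_n(F\cup\{e\},t\zeta)$, so the claimed inequality can be false. (Finite irreducibility of $F$ guarantees nonempty $F$-cylinders and hence the direction $Z_k(F,t\zeta)\leq Z_k(F,t)$ is fine, but it says nothing about $F\cup\{e\}$.) Second, even granting the conversion, the $K$-factors from your own substitutions do not add up to the lemma's bound: $e^{2L(f)}\leq K^{2t}$ in front, $e^{L(f)}\leq K^t$ and $\kappa_\Lambda(f)^{-1}\leq K^t\kappa_\Lambda^{-t}$ inside the bracket (producing $(K^2\|D\f_e\|_\infty/\kappa_\Lambda)^{t(n-j)}$), plus one more $K^t$ from the left-hand conversion. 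That gives $K^{3t}$ outside and $K^2$ inside, not the asserted $K^{2t}$ outside and $K$ inside, so ``yields \ldots exactly the claimed inequality'' is not substantiated.

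The paper's actual proof of Lemma \ref{mainlemma} sidesteps both issues by replaying the combinatorial argument of Lemma \ref{mainlemmasym} natively in the geometric setting: it decomposes $\om\in F^n_j$ as $\rho_1 e\rho_2\cdots e\rho_{n-j+1}$, uses the same replacement map $g(\om)=\bar\om=\rho_1\lambda_{[\rho_1,\rho_2]}\rho_2\cdots$, proves the identical $(\sharp\Lambda)^{n-j}{n\choose j}$-to-$1$ multiplicity count, and then bounds $\|D\f_\om\|_\infty$ against $\|D\f_{\bar\om}\|_\infty$ directly via the sub- and quasi-multiplicativity \eqref{quasi-multiplicativity1} of $\|D\f_\cdot\|_\infty$. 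Because $\|D\f_\om\|_\infty$ is defined for every admissible finite word independently of whether it extends to infinity, the dead-end issue never arises, and the distortion constants enter only where quasi-multiplicativity is invoked. If you want to keep the ``translate the symbolic lemma'' strategy, you should not pass through $Z_n(F\cup\{e\},t\zeta)$ at all; rather, mimic the symbolic proof step by step, replacing $\sup_{[\om]}S_nf$ by $\log\|D\f_\om\|_\infty^t$ and Lemma \ref{distsymn} by Lemma \ref{bdp}, which is precisely what the paper does.
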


\begin{propo}
\label{ghenciu1}
Let $\cS=\{\f_e\}_{e \in E}$ be a finitely irreducible conformal GDMS. Let $F\subset E$ be a finitely irreducible subset of $E$ and let $\Lambda$ be a nonempty set witnessing finite irreducibility for $F$. Then for every $e \in E$ and every $t>\dim_{\cH} (J_F)$,
$$e^{P_{F \cup \{e\}}(t)} \leq e^{P_{F}(t)}+ \sharp \Lambda (K \kappa_{\Lambda}^{-1})^t \, \|D \f_e\|_{\infty}^t.$$
\end{propo}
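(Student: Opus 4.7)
The plan is to derive Proposition \ref{ghenciu1} as a direct geometric specialization of Proposition \ref{ghenciu1sym}, applied to the potential $f := t\zeta$ from \eqref{1MU_2014_09_10}. The intervening paragraphs between Lemma \ref{mainlemma} and Proposition \ref{ghenciu1} have already worked out the dictionary relating the symbolic quantities appearing in \ref{ghenciu1sym} to the geometric ones; the proof essentially consists of verifying the hypothesis and plugging in.

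First, I would check the hypothesis $P^\sg_F(f) < 0$. By Lemma \ref{l1j85} we have $P^\sg_F(t\zeta) = P_F(t)$. Theorem \ref{t1j97} gives $h_F = \dim_{\cH}(J_F)$, so the assumption $t > \dim_{\cH}(J_F)$ means $t > h_F$. By Lemma \ref{433mu}, $t > h_F \geq \theta(\cS_F)$ ensures $P_F(t) < +\infty$, so the strict monotonicity clause of Proposition \ref{p2j85}\ref{presscontconv}, together with $P_F(h_F) \leq 0$, yields $P_F(t) < P_F(h_F) \leq 0$.

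Next, I would translate the three symbolic quantities appearing in Proposition \ref{ghenciu1sym} into the geometric language, using formula \eqref{logleib} and the Bounded Distortion Property (Lemma \ref{bdp}). This is precisely the content of the discussion right before the statement: $e^{L(f)} \leq K^t$, the factor $\kappa_\Lambda(f)$ satisfies $\kappa_\Lambda(f) \geq K^{-t}\kappa_\Lambda^t$ with $\kappa_\Lambda = \min_{\lambda\in\Lambda}\|D\f_\lambda\|_\infty$, and $e^{\sup f|_{[e]}} = \|D\f_e\|_\infty^t$ since for $\tau\in[e]$, $S_1 f(\tau) = t\log\|D\f_e(\pi(\sg\tau))\|$ whose supremum is $t\log\|D\f_e\|_\infty$.

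Substituting these bounds into the estimate of Proposition \ref{ghenciu1sym} gives
\[
e^{P_{F\cup\{e\}}(t)} \;\leq\; e^{P_F(t)} + \sharp \Lambda \,(K\kappa_\Lambda^{-1})^t\,\|D\f_e\|_\infty^t,
\]
which is the desired conclusion. There is no substantive obstacle here: all of the combinatorial heavy lifting was accomplished in Lemma \ref{mainlemmasym} and Proposition \ref{ghenciu1sym}, and the only care required is bookkeeping the distortion constants and verifying that the negativity hypothesis on the symbolic pressure is guaranteed by $t > \dim_{\cH}(J_F)$.
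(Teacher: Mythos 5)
Your strategy is exactly the one the paper suggests in the paragraph preceding the statement: verify that $f=t\zeta$ is H\"older, confirm $P^\sg_F(f)=P_F(t)<0$, and then plug the geometric translations of $L(f)$, $\kappa_\Lambda(f)$ and $\sup f|_{[e]}$ into Proposition \ref{ghenciu1sym}. Your check of the hypothesis is also correct: $t>\dim_{\cH}(J_F)=h_F\geq\theta(\cS_F)$ gives $P_F(t)<+\infty$ by Lemma \ref{433mu}, and the strict monotonicity in Proposition \ref{p2j85}\ref{presscontconv} together with $P_F(h_F)\leq 0$ yields $P_F(t)<0$.

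The one place you slip is the bookkeeping of the distortion constant at the end. From the dictionary you quote, $e^{L(f)}\leq K^t$ and $\kappa_\Lambda(f)\geq K^{-t}\kappa_\Lambda^t$, so
\[
\frac{\sharp\Lambda\,e^{L(f)}}{\kappa_\Lambda(f)}\,e^{\sup f|_{[e]}}
\;\leq\;\sharp\Lambda\cdot K^t\cdot\frac{K^t}{\kappa_\Lambda^t}\cdot\|D\f_e\|_\infty^t
\;=\;\sharp\Lambda\,(K^2\kappa_\Lambda^{-1})^t\,\|D\f_e\|_\infty^t,
\]
which is a factor $K^t$ worse than the claimed bound. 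The abstract Proposition \ref{ghenciu1sym} pays for distortion twice, once via $e^{L(f)}$ and once via the lower bound on $\kappa_\Lambda(f)$, while in the geometric setting these two uses of the Bounded Distortion Property are not independent. To recover the sharper constant $\sharp\Lambda(K\kappa_\Lambda^{-1})^t$ stated in Proposition \ref{ghenciu1}, the argument should be run directly from Lemma \ref{mainlemma}, whose inner factor is already $(K\|D\f_e\|_\infty/\kappa_\Lambda)^{t(n-j)}$ (the extra $K^{2t}$ prefactor there is harmless, as it disappears under the $n$-th root in the limit). With that replacement the proof is otherwise identical to the computation in Proposition \ref{ghenciu1sym}: fix $\ve\in(0,|P_F(t)|)$, bound $Z_k(F,t)\leq e^{(P_F(t)+\ve)k}$ for $k$ large, collapse the inner sum using $P_F(t)+\ve<0$, apply the binomial theorem, take $n$-th roots, and let $\ve\to 0$. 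None of the theoretical uses of the proposition in the paper depend on the difference between $K^t$ and $K^{2t}$, but since you were asked to reproduce the stated constant this discrepancy should be addressed.
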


\begin{propo}
\label{ghenciu2}
Let $\cS=\{\f_e\}_{e \in E}$ be a conformal GDMS. Let $F\subset E$ be a finitely irreducible subset of $E$ and let $\Lambda$ be a nonempty set witnessing finite irreducibility for $F$. Then for every $e \in E$ and every $t$ such that $P_F(t) \geq 0$,
$$e^{P_{F \cup \{e\}}(t)} \leq e^{P_{F}(t)}+ \sharp \Lambda (K \kappa_{\Lambda}^{-1})^t \,  e^{p_{\Lambda}P_{F}(t)} \|D \f_e\|_{\infty}^t .$$
\end{propo}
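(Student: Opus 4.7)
The plan is to obtain this as the translation of Proposition \ref{ghenciu2sym} to the GDMS setting, via the H\"older potential $f = t\zeta : E^\N_A \to \R$ defined in \eqref{1MU_2014_09_10}. By Lemma \ref{l1j85}, $P^\sg_{F'}(t\zeta) = P_{F'}(t)$ for every subsystem indexed by a suitable $F' \subset E$, so under the hypothesis $P_F(t) \geq 0$ I may invoke Proposition \ref{ghenciu2sym} applied to $f$ and to the pair $(F,e)$. This reduces the task to estimating the three ingredients $e^{L(f)}$, $1/\kappa_\Lambda(f)$ and $e^{\sup f|_{[e]}}$ that appear on the right-hand side of the symbolic inequality in terms of the geometric quantities $K$, $\kappa_\Lambda$, and $\|D\f_e\|_\infty$.

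All three estimates follow by unwinding the formula $S_n f(\om) = \log \|D\f_{\om|_n}(\pi(\sg^n \om))\|^t$ from \eqref{logleib} together with the bounded distortion property of Lemma \ref{bdp}, which is exactly the calculation carried out at the beginning of this section. For the first, Lemma \ref{bdp} gives $|S_n f(\rho\om) - S_n f(\rho\tau)| \leq t \log K$ uniformly, so $L(f) \leq t\log K$ and hence $e^{L(f)} \leq K^t$. For the second, for each $\lambda \in \Lambda$ and each $\om \in [\lambda]$, bounded distortion yields $e^{S_{|\lambda|}f(\om)} = \|D\f_\lambda(\pi(\sg^{|\lambda|}\om))\|^t \geq K^{-t}\|D\f_\lambda\|_\infty^t$, so $\kappa_\Lambda(f) \geq K^{-t}\kappa_\Lambda^t$. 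For the third, $e^{\sup f|_{[e]}} = \sup_{\om \in [e]}\|D\f_e(\pi(\sg\om))\|^t \leq \|D\f_e\|_\infty^t$ directly from the definition of $\|D\f_e\|_\infty$.

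Finally, I substitute these bounds into the inequality delivered by Proposition \ref{ghenciu2sym}, namely
$$e^{P^\sg_{F \cup \{e\}}(f)} \leq e^{P^\sg_F(f)} + \frac{\sharp \Lambda\, e^{L(f)}}{\kappa_\Lambda(f)}\, e^{p_\Lambda P^\sg_F(f)}\, e^{\sup f|_{[e]}},$$
and use Lemma \ref{l1j85} once more to convert pressures of $f$ into $P_F(t)$ and $P_{F\cup\{e\}}(t)$; combining $e^{L(f)}$ and $1/\kappa_\Lambda(f)$ produces the single factor $(K\kappa_\Lambda^{-1})^t$ appearing in the statement. There is no serious obstacle: the combinatorial heart of the argument was already carried out at the symbolic level in Lemma \ref{mainlemmasym} and Proposition \ref{ghenciu2sym}, and the only thing that must be verified here is that the bounded distortion constants line up, which is precisely the content of Lemma \ref{bdp} and \eqref{logleib}.
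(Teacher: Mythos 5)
Your approach is exactly the one the paper takes: the text immediately preceding Lemma~\ref{mainlemma} says that after computing $L(f)\le t\log K$, $K^{-t}\kappa_\Lambda^t \le \kappa_\Lambda(f)\le\kappa_\Lambda^t$ and checking that $f=t\zeta$ is H\"older, ``we can directly apply Lemma~\ref{mainlemmasym} and Propositions~\ref{ghenciu1sym} and~\ref{ghenciu2sym} to our setting.'' Your three ingredient bounds are all correct.

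However, the last step of your argument does not actually produce the constant stated in Proposition~\ref{ghenciu2}. You correctly record $e^{L(f)}\le K^t$ and $\kappa_\Lambda(f)\ge K^{-t}\kappa_\Lambda^t$ (equivalently $1/\kappa_\Lambda(f)\le K^t\kappa_\Lambda^{-t}$), and then assert that ``combining $e^{L(f)}$ and $1/\kappa_\Lambda(f)$ produces the single factor $(K\kappa_\Lambda^{-1})^t$.'' But the product of your own two bounds is $K^t\cdot K^t\kappa_\Lambda^{-t}=(K^2\kappa_\Lambda^{-1})^t$, not $(K\kappa_\Lambda^{-1})^t$. So as written, the argument proves the inequality only with $K^2$ in place of $K$, which is a genuinely weaker upper bound (since $K\ge 1$). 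To recover the sharper stated constant one has to track the bounded-distortion losses directly in the GDMS setting, i.e.\ prove the GDMS version Lemma~\ref{mainlemma} by hand (comparing $\|D\f_\om\|_\infty$ with $\|D\f_{\bar\om}\|_\infty$ via the quasi-multiplicativity~\eqref{quasi-multiplicativity1}) rather than substituting worst-case bounds for $e^{L(f)}$ and $1/\kappa_\Lambda(f)$ separately into the symbolic estimate; passing through the symbolic constants costs one extra factor of $K^t$ that the direct comparison avoids. The discrepancy is harmless for every downstream use of the proposition (the precise power of $K$ never matters in Section~\ref{sec:gdmsspec} or beyond), but you should either correct the arithmetic to acknowledge that your route yields $(K^2\kappa_\Lambda^{-1})^t$, or supply the direct GDMS estimate if you want the factor $K^t$ as stated.
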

As an immediate corollary of Proposition \ref{ghenciu1} and \ref{ghenciu2} we have the following estimate.
\begin{corollary}
\label{ghenciucomb}
Let $\cS=\{\f_e\}_{e \in E}$ be a conformal GDMS. Let $F\subset E$ be a finitely irreducible subset of $E$ and let $\Lambda$ be a nonempty set witnessing finite irreducibility for $F$. Then for every $e \in E$ and every $t \geq 0$
$$e^{P_{F \cup \{e\}}(t)} \leq e^{P_{F}(t)}+ \sharp \Lambda (K \kappa_{\Lambda}^{-1})^t \, \max \{1, e^{p_{\Lambda}P_{F}(t)}\} \, \|D \f_e\|_{\infty}^t .$$
\end{corollary}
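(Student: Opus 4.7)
The statement is flagged by the authors as an ``immediate corollary'' of Propositions \ref{ghenciu1} and \ref{ghenciu2}, so my plan is to verify that the two propositions together cover all $t\ge 0$ and that the bounds they provide agree with the expression involving $\max\{1, e^{p_{\Lambda}P_{F}(t)}\}$.

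The plan is to split into two cases according to the sign of $P_F(t)$. In the first case, assume $P_F(t)<0$. By Proposition \ref{p2j85}\ref{presscontconv} the map $t\mapsto P_F(t)$ is strictly decreasing on $[0,+\infty)$, and by Theorem \ref{t1j97} we have $h_F=\dim_{\cH}(J_F)$ whenever $F$ is finitely irreducible; since $P_F(h_F)\le 0$ (see the remark following Definition \ref{bowen}) and $P_F$ is strictly decreasing, $P_F(t)<0$ forces $t>h_F=\dim_{\cH}(J_F)$. Hence Proposition \ref{ghenciu1} applies and yields
\begin{equation*}
e^{P_{F \cup \{e\}}(t)} \leq e^{P_{F}(t)}+ \sharp \Lambda (K \kappa_{\Lambda}^{-1})^t \, \|D \f_e\|_{\infty}^t.
\end{equation*}
Since $P_F(t)<0$ gives $e^{p_{\Lambda}P_{F}(t)}<1$, we have $\max\{1, e^{p_{\Lambda}P_{F}(t)}\}=1$, so this matches the claimed bound.

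In the second case, assume $P_F(t)\ge 0$. Then Proposition \ref{ghenciu2} applies directly and provides
\begin{equation*}
e^{P_{F \cup \{e\}}(t)} \leq e^{P_{F}(t)}+ \sharp \Lambda (K \kappa_{\Lambda}^{-1})^t \,  e^{p_{\Lambda}P_{F}(t)} \|D \f_e\|_{\infty}^t.
\end{equation*}
Here $e^{p_{\Lambda}P_{F}(t)}\ge 1$, so $\max\{1, e^{p_{\Lambda}P_{F}(t)}\}=e^{p_{\Lambda}P_{F}(t)}$, which again matches. Since the two cases exhaust $t\ge 0$ and the unified bound is simply the larger of the two coefficients appearing in front of $\|D\f_e\|_\infty^t$, the corollary follows.

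There is essentially no technical obstacle: the only substantive point to check is that Proposition \ref{ghenciu1}, whose hypothesis is phrased as $t>\dim_{\cH}(J_F)$ rather than $P_F(t)<0$, is indeed applicable in the regime $P_F(t)<0$, and this is where Theorem \ref{t1j97} combined with the strict monotonicity of the pressure is used. Everything else is a tautological comparison of the $\max$ against the exponent's sign.
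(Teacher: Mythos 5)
Your overall structure mirrors the paper's intended derivation: split on the sign of $P_F(t)$, apply Proposition \ref{ghenciu1} in one half and Proposition \ref{ghenciu2} in the other, and observe that the coefficient $\max\{1,e^{p_\Lambda P_F(t)}\}$ collapses to the right factor in each regime. The $P_F(t)\ge 0$ half is fine.

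In the $P_F(t)<0$ half, however, the deduction ``$P_F(t)<0$ forces $t>h_F$'' does not follow from the cited facts $P_F(h_F)\le0$ and strict monotonicity of the pressure --- those only give $t\ge h_F$. The missing case is $t=h_F$ with $P_F(h_F)<0$, which occurs precisely when $\cS_F$ is irregular; in fact your claimed implication is equivalent to $P_F(h_F)=0$, i.e.\ to the regularity of $\cS_F$, and this is not among the hypotheses (since $F$ may be infinite, $\cS_F$ need not be regular: take $F$ with $Z_1(F,\theta_F)<1$, so that $h_F=\theta_F$ and $P_F(h_F)<0$). At such a $t$ neither the hypothesis $t>\dim_{\cH}(J_F)$ of Proposition \ref{ghenciu1} nor $P_F(t)\ge0$ of Proposition \ref{ghenciu2} is met, so the two GDMS propositions as literally stated leave this $t$ uncovered. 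The remedy is to go one level back and invoke the symbolic Proposition \ref{ghenciu1sym}, whose hypothesis is exactly $P^\sg_F(f)<0$ (equivalently $P_F(t)<0$ via Lemma \ref{l1j85}) rather than $t>\dim_{\cH}(J_F)$; with that the case split $P_F(t)<0$ versus $P_F(t)\ge 0$ is genuinely exhaustive and the corollary follows.
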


In the case of IFS one can obtain estimates as in Corollary \ref{ghenciucomb} using a more straightforward argument. The following proposition essentially appears in  \cite[Lemma 2.1]{KZ}, nevertheless we include the short proof for convenience of the reader.

\begin{propo}
\label{ifspe}
Let $\cS=\{\f_e\}_{e \in E}$ be a conformal IFS. Let $F \subset E$ and $e \in E \stm F$. Then for all $t >0$,
\begin{equation}
\label{presest}
e^{P_{F}(t)} +K^{-t} \|D \f_e\|^t_\infty\leq e^{P_{F \cup \{e\}}(t)} \leq e^{P_{F}(t)} +K^{t} \|D \f_e\|^t_\infty.
\end{equation}
\end{propo}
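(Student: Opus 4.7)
My plan is to prove both inequalities via a block decomposition of words in $(F \cup \{e\})^n$ according to the positions of the letter $e$, supplemented by uniform control of $Z_n(F,t)$ coming from bounded distortion. A preliminary observation is that inequality \eqref{quasi-multiplicativity1} raised to the $t$-th power and summed over words shows that $\{Z_n(F,t)\}_n$ is submultiplicative while $\{K^t Z_n(F,t)\}_n$ is supermultiplicative; Fekete's lemma applied to both sequences gives the uniform bounds
\[
g_F^n \le Z_n(F,t) \le K^t g_F^n, \qquad n \ge 1,
\]
where I write $g_F := e^{P_F(t)}$ and $d := \|D\f_e\|^t_\infty$. Any $\om \in (F \cup \{e\})^n$ containing exactly $k$ letters $e$ factors uniquely as $\om = \rho_0 e \rho_1 e \cdots e \rho_k$ with $\rho_i \in F^{m_i}$, $m_i \ge 0$, $\sum_i m_i = n-k$; the $\binom{n}{k}$ placements of the $e$'s correspond bijectively to the compositions $(m_0,\ldots,m_k)$ of $n-k$ into $k+1$ non-negative parts.

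For the upper bound, I would apply the pure Leibniz estimate $\|D\f_\om\|^t_\infty \le d^k \prod_i \|D\f_{\rho_i}\|^t_\infty$ in this factorization, together with the uniform upper bound $Z_{m_i}(F,t) \le K^t g_F^{m_i}$, to get
\[
Z_n(F \cup \{e\}, t) \le \sum_{k=0}^n d^k \sum_{m_0+\cdots+m_k=n-k} \prod_i Z_{m_i}(F,t) \le K^t \sum_{k=0}^n \binom{n}{k}(K^t d)^k g_F^{n-k} = K^t(g_F + K^t d)^n.
\]
Taking $n$-th roots and letting $n \to \infty$ delivers $e^{P_{F \cup \{e\}}(t)} \le g_F + K^t d$.

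For the lower bound, the plan is to apply the supermultiplicative side of \eqref{quasi-multiplicativity1} to the same factorization, together with the lower uniform estimate $Z_{m_i}(F,t) \ge g_F^{m_i}$, to arrive at an inequality of the form
\[
Z_n(F \cup \{e\}, t) \ge C_n \sum_{k=0}^n \binom{n}{k}(K^{-t} d)^k g_F^{n-k} = C_n (g_F + K^{-t} d)^n
\]
with $C_n^{1/n} \to 1$, from which the lower bound follows after taking $n$-th roots. The hard part will be a sharp accounting of the number $c_\om$ of concatenations used in the supermultiplicative estimate $\|D\f_\om\|^t_\infty \ge K^{-c_\om t} d^k \prod_i \|D\f_{\rho_i}\|^t_\infty$: treating each $e$ as its own piece gives $c_\om \le 2k$ and yields only the weaker bound $(g_F + K^{-2t} d)^n$. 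Attaining the sharp exponent $K^{-t}$ per occurrence of $e$ requires aggregating each $e$ together with an adjacent (possibly empty) $F$-block into a single super-block, so that the distortion penalty across all placements is $K^{-kt}$, and then verifying that the resulting combinatorial sum over $k$ and over compositions of $n-k$ still collapses to $(g_F + K^{-t} d)^n$ up to a subexponential prefactor, completing the proof.
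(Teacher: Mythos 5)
Your upper bound is correct and matches the paper's argument: decompose $\om$ by the positions of the $j$ occurrences of $e$, apply the right-hand side of \eqref{quasi-multiplicativity1} to peel them off, and the left-hand side (once, across the $j$ remaining cuts) to reassemble the residual $F$-blocks into a single $F$-word. The uniform two-sided bound $g_F^n\le Z_n(F,t)\le K^tg_F^n$ you extract via Fekete is a tidy alternative to the paper's handling of the correction terms through the vanishing sequence $\gamma_n$. The gap is in the lower bound. Aggregating each $e$ with an adjacent $F$-block into the super-block $(e\rho_i)$ does reduce the cut count in $\rho_0\cdot(e\rho_1)\cdots(e\rho_j)$ to $j$, costing only $K^{-jt}$ there; but after summing you are left with $K^{-jt}\,Z_{m_0}(F,t)\prod_{i=1}^j\sum_{\rho_i\in F^{m_i}}\|D\f_{e\rho_i}\|^t_\infty$, and to extract $\|D\f_e\|^t_\infty Z_{m_i}(F,t)$ from each factor you are forced to apply bounded distortion once more \emph{inside} each super-block, $\|D\f_{e\rho_i}\|_\infty\ge K^{-1}\|D\f_e\|_\infty\|D\f_{\rho_i}\|_\infty$, which costs another $K^{-t}$ per block. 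The total penalty is $K^{-2jt}$ again, so the sum collapses to $(g_F+K^{-2t}d)^n$, not the advertised $(g_F+K^{-t}d)^n$. The step you flag as ``the hard part'' to be verified is precisely where the constant does not come out.

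It is only fair to add that the paper's own proof is equally terse at exactly this point: it asserts $Z_n(F\cup\{e\},t)\ge\sum_{j}{n\choose j}K^{-tj}\|D\f_e\|^{jt}_\infty Z_{n-j}(F,t)$ with the remark ``As before,'' and the obvious symmetric decomposition likewise produces $K^{-2tj}$ rather than $K^{-tj}$. So you have put your finger on a real subtlety in the statement, but your plan as written does not resolve it; you should not claim the $(g_F+K^{-t}d)^n$ collapse without a genuine argument for where the extra factor of $K^{jt}$ comes from.
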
 

\begin{proof}
Note that for all $n \in \N$,
\begin{equation*}
\begin{split}Z_n(F \cup \{e\},t) &\leq \sum_{j=0}^n {n \choose j} (K \|D \f_e\|_\infty)^{jt} Z_{n-j}(F,t) \\
&= \sum_{j=0}^n {n \choose j} (K \|D \f_e\|_\infty)^{jt} e^{P_F(t)(n-j)} e^{n \frac{n-j}{n} \ve_{n-j}},
\end{split}
\end{equation*}
where for $m \in \N$,
$$\ve_{m}:= \frac{1}{m} \log Z_m (F, t)-P_F(t).$$
Since $\ve_m \ra 0$ as $m \ra \infty$, it follows easily that
$$\gamma_m:= \max_{1 \leq j \leq m} \frac{m-j}{m} \ve_{m-j} \ra 0,$$
as $m \ra \infty$. Hence 
$$Z_n(F \cup \{e\},t) \leq e^{n \gamma_n} \left( K^t \|D \f_e\|^t_\infty +e^{P_F(t)}\right)^n,$$
and consequently
$$e^{P_{F \cup \{e\}}(t)} \leq e^{P_{F}(t)} +K^{t} \|D \f_e\|^t_\infty.$$

Therefore we only have to prove the left hand part of \eqref{presest}. As before,
\begin{equation*}
Z_n(F \cup \{e\},t) \geq 
\sum_{j=0}^n {n \choose j} K^{-tj} \|D \f_e\|_\infty^{jt} Z_{n-j}(F,t).
\end{equation*}
Since $P_{F}(t)= \inf_{m \in \N} \frac{\log Z_m(F,t)}{m},$ we have that $Z_{n-j} (F,t) \geq e^{(n-j) P_F(t)}.$
So
$$Z_n(F \cup \{e\},t) \geq 
\sum_{j=0}^n {n \choose j} K^{-tj} \|D \f_e\|_\infty^{jt} e^{(n-j) P_F(t)}=(K^{-t} \|D \f_e\|_\infty^{t}+e^{P_F(t)})^n,$$
and consequently
$$e^{P_{F \cup \{e\}}(t)} \geq e^{P_{F}(t)} +K^{-t} \|D \f_e\|^t_\infty.$$
The proof is complete.
\end{proof}
Using Lemma \ref{mainlemma} we can prove the following proposition which will be used to derive information about the size of the dimension spectrum of GDMSs.
\begin{propo}
\label{addone}
Let $\cS=\{\f_e\}_{e \in E}$ be a finitely irreducible conformal GDMS. Let $F \subset E$ such that
\begin{enumerate}[label=(\roman*)]
\item $E \stm F$ is infinite,
\item $F$ is finitely irreducible.
\end{enumerate}
Then for all $\ve>0$ and for all, except finitely many, $e \in E \stm F$,
$$\dim_{\cH}(J_{F \cup \{e\}})<\dim_{\cH} (J_{F})+\ve.$$
\end{propo}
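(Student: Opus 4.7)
The strategy is a direct application of Proposition~\ref{ghenciu1}, combined with uniform tail-decay of the quantities $\|D\f_e\|_\infty$. Fix $\ve > 0$ and set $t := h_F + \ve/2$; because $F$ is finitely irreducible, Theorem~\ref{t1j97} gives $h_F = \dim_{\cH}(J_F)$, and strict monotonicity of pressure (Proposition~\ref{p2j85}\ref{presscontconv}) together with $P_F(h_F) \leq 0$ yields $P_F(t) < 0$. Fix any $\Lambda \subset F_A^\ast$ witnessing the finite irreducibility of $F$. Then Proposition~\ref{ghenciu1} gives, for every $e \in E$,
\[
e^{P_{F \cup \{e\}}(t)} \leq e^{P_F(t)} + C\,\|D\f_e\|_\infty^{t},
\qquad
C := \sharp\Lambda \cdot (K/\kappa_\Lambda)^t.
\]

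The aim is to force the right-hand side to be at most $1$, i.e., to achieve $C\|D\f_e\|_\infty^{t} \leq \delta := 1 - e^{P_F(t)} > 0$. Such a bound would give $P_{F \cup \{e\}}(t) \leq 0$, hence $h_{F \cup \{e\}} \leq t$, and then by the general inequality~\eqref{hupbound} (valid for any GDMS),
\[
\dim_{\cH}(J_{F \cup \{e\}}) \leq h_{F \cup \{e\}} \leq t = h_F + \ve/2 < h_F + \ve.
\]
So it only remains to show that all but finitely many $e \in E \setminus F$ satisfy $\|D\f_e\|_\infty \leq (\delta/C)^{1/t}$.

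This is the only conceptually nontrivial step, because such uniform tail-decay of the contractions is not in the hypotheses and must be extracted from the setup. The argument goes as follows: Theorem~\ref{t1j97} identifies $h_E = \dim_{\cH}(J_E)$, which is bounded by the (finite) Hausdorff dimension of the ambient space and is therefore finite. By Remark~\ref{f1j87}, $\theta(\cS) \leq h_E < \infty$, so choosing any $s > \theta(\cS)$, the series $\sum_{e \in E} \|D\f_e\|_\infty^{s}$ converges by Lemma~\ref{433mu}\ref{z1fin}. In particular $\{e \in E : \|D\f_e\|_\infty > \eta\}$ is finite for every $\eta > 0$. Applying this with $\eta := (\delta/C)^{1/t}$ leaves only finitely many $e \in E \setminus F$ for which $C\|D\f_e\|_\infty^{t} > \delta$, and for all remaining $e$ the chain of inequalities above delivers the conclusion.
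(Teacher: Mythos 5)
Your proof is correct. The core ideas are the same as the paper's: bound $P_{F\cup\{e\}}(t)$ by $P_F(t)$ plus a term proportional to $\|D\f_e\|_\infty^t$, observe that $P_F(t)<0$ for $t>h_F$, and use tail decay of $\|D\f_e\|_\infty$ to kill off all but finitely many $e$. The organizational difference is that you invoke Proposition~\ref{ghenciu1} directly, whereas the paper goes back to Lemma~\ref{mainlemma}: it first shows by a small contradiction argument that $Z_j(F,h_F+\ve)<\alpha^j$ for some $\alpha\in(0,1)$ and all large $j$, substitutes this into the combinatorial bound of Lemma~\ref{mainlemma}, collapses the binomial sum to get $Z_n(F\cup\{e\},h_F+\ve)\lesssim n\bigl(\sharp\Lambda(K\|D\f_e\|_\infty/\kappa_\Lambda)^{h_F+\ve}+\alpha\bigr)^n$, and then demands the base be $<1$. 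Since Proposition~\ref{ghenciu1} is itself proved this way, your route is a legitimate and slightly cleaner repackaging of the same computation.

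One further point worth noting: where the paper cites an external lemma (\cite[Lemma~5.17]{CTU}) to get $\|D\f_e\|_\infty\to 0$, you derive the same fact internally from Remark~\ref{f1j87} (so $\theta\le h<\infty$) and Lemma~\ref{433mu} (so $Z_1(s)=\sum_e\|D\f_e\|_\infty^s<\infty$ for $s>\theta$), which forces $\{e:\|D\f_e\|_\infty>\eta\}$ to be finite for every $\eta>0$. This is self-contained within the paper and is a nice touch; both routes are equivalent in content. Your concluding appeal to \eqref{hupbound} is also the right call, since it applies without needing to argue that $F\cup\{e\}$ is itself finitely irreducible.
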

\begin{proof}
Fix some $\ve>0$ and let $h=\dim_{\cH}(J_{F})$. By Theorem \ref{t1j97} $h=h_{F}$ where $h_F$ is Bowen's parameter of the subsystem $\cS_F$. By Proposition \ref{p2j85} we then deduce that $P_{F}(h+\ve)<0$. We are going to show that there exists some $\alpha \in (0,1)$ and $j_0 \geq 1$ such that
\begin{equation}
\label{eqn1}
Z_{j} (F,h+\ve)<\alpha^j, \mbox{ for all }j \geq j_0.
\end{equation}
By way of contradiction assume that \eqref{eqn1} fails. Then for all $\alpha \in (0,1)$ there exists a sequence $(j_m)_{m \in \N}$ such that
$$Z_{j_m} (F,h+\ve)\geq\alpha^{j_m}.$$
Therefore for all $\alpha \in (0,1)$
\begin{align*}
P_F(h+\ve)&=\lim_{n \ra \infty}\frac{1}{n} \log Z_{n}(F,h+\ve)\\
& = \lim_{m \ra \infty} \frac{1}{j_m} \log Z_{j_m}(F,h+\ve)\\
& \geq  \limsup_{m \ra \infty} \frac{1}{j_m} \log \alpha^{j_m}=\log \alpha.
\end{align*}
Hence we have shown that $P_F(h+\ve) \geq 0$ which is a contradiction. Thus \eqref{eqn1} holds.

Let $e \in E \stm F$ and let $\Lambda$ be a set witnessing finite irreducibility for $F$. Assuming that $n>j_0$ we have by \eqref{eqn1} that
$$ Z_{k}(F,h+\ve)<\alpha^k$$
for $k=n, \dots, j+p_{\Lambda}(n-j)$. Since $\alpha \in (0,1)$ and $n \geq j$ we have that $\alpha^n \leq \alpha^j$ and $j+p_{\Lambda} (n-j) \leq p_{\Lambda}n$. Therefore 
by Lemma \ref{mainlemma},
\begin{equation}
\label{est1}
\begin{split}
Z_{n}(F \cup \{e\},h+\ve)
&  \leq K^{2(h+\ve)} p_{\Lambda} \,n \, \sum_{j=0}^n \, {n \choose j} \left( \frac{K \|D \f_e\|_\infty}{\kappa_{\Lambda}}\right)^{(h+\ve)(n-j)} (\sharp \Lambda)^{n-j} \,\alpha^j \\
&= K^{2(h+\ve)} p_{\Lambda} \, n \left(  \sharp \Lambda\left( \frac{K \|D \f_e\|_\infty}{\kappa_{\Lambda}}\right)^{h+\ve} +\alpha\right)^n.
\end{split}
\end{equation}
Since $E \stm F$ is infinite, \cite[Lemma 5.17]{CTU} implies that for all, except finitely many, $e \in E \stm F$ 
$$\sharp \Lambda\left( \frac{K \|D \f_e\|_\infty}{\kappa_{\Lambda}}\right)^{h+\ve} +\alpha<1.$$
Therefore by \eqref{est1} we conclude that for such $e$ there exists some $N_0 \in \N$ such that 
$$Z_{n}(F \cup \{e\},h+\ve)<1$$
for all $n \geq N_0$. Therefore 
$$P_{F \cup \{e\}}(h+\ve)=\lim_{n \ra \infty} \frac{1}{n} \log Z_{n}(F \cup \{e\},h+\ve)<0.$$
Hence $$\dim_{\cH} (J_{F \cup \{e\}})<h+\ve.$$
The proof of Proposition \ref{addone} is complete.
\end{proof}
We are now in place to prove Theorem \ref{mainspectintro} which we restate for the convenience of the reader. Recall that the parameters $\theta_3$ and $h_0$ were introduced respectively in Definitions \ref{thetavariants} and \ref{hzero}. 
\begin{thm} 
\label{mainspect}Let $\cS=\{\f\}_{e \in E}$ be an infinite finitely irreducible conformal GDMS. For every $t \in (h_0, \theta_3)$ there exists $F \subset E$ such that
$$\dim_{\cH} (J_F)=t.$$
In other words $(h_0, \theta_3) \subset DS(\cS)$.
\end{thm}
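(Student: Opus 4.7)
The plan is to construct $F$ by a greedy procedure starting from a carefully chosen base set and then scanning the remaining letters one at a time.  Fix $t \in (h_0,\theta_3)$.  By Definition \ref{hzero} there exists a finite set $\Lambda \subset E_A^*$ witnessing finite irreducibility for $E$ with $\dim_{\cH}(J_{\tilde{\Lambda}}) < t$.  Set $F_0 := \tilde{\Lambda}$.  The crucial structural remark is that for every $F$ with $F_0 \subset F \subset E$ the same $\Lambda$ continues to witness finite irreducibility for $F$: we have $\Lambda \subset F_A^*$ because every letter of $\Lambda$ lies in $\tilde{\Lambda}\subset F$, and for each pair $i,j \in F \subset E$ the connector $\om\in \Lambda$ provided by $E$ satisfies $i\om j \in F_A^*$ since all its letters already lie in $F$.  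Hence every intermediate set in the construction will be finitely irreducible \emph{via the same} $\Lambda$, so the constants $\sharp\Lambda$, $\kappa_\Lambda$, $p_\Lambda$, $K$ appearing in Proposition \ref{ghenciu1} are uniform throughout.

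Enumerate $E \setminus F_0 = \{e_1,e_2,\ldots\}$ and define inductively
\[
F_n := \begin{cases} F_{n-1}\cup\{e_n\}, & \text{if } \dim_{\cH}(J_{F_{n-1}\cup\{e_n\}}) \le t, \\ F_{n-1}, & \text{otherwise,}\end{cases}
\]
and let $F := \bigcup_n F_n$.  Every finite subset of $F$ is eventually contained in some $F_n$, so Theorem \ref{t1j97} yields $\dim_{\cH}(J_F) = h(\cS_F) = \sup_n \dim_{\cH}(J_{F_n}) \le t$.

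Assume, for contradiction, that $\dim_{\cH}(J_F) = t - \eta$ for some $\eta > 0$.  If only finitely many letters were rejected, then $F$ is cofinite in $E$ and still finitely irreducible via $\Lambda$, so Definition \ref{thetavariants} gives $\dim_{\cH}(J_F) \ge \theta_3 > t$, a contradiction.  Suppose instead that infinitely many rejections $n_1 < n_2 < \cdots$ occur.  For each $k$ we have $\dim_{\cH}(J_{F_{n_k-1}}) \le t-\eta$, so $P_{F_{n_k-1}}(t) < 0$, while $\dim_{\cH}(J_{F_{n_k-1}\cup\{e_{n_k}\}}) > t$, so $e^{P_{F_{n_k-1}\cup\{e_{n_k}\}}(t)} > 1$.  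Applying Proposition \ref{ghenciu1} to $F_{n_k-1}$ with the fixed witness $\Lambda$ and using monotonicity of pressure $P_{F_{n_k-1}}(t) \le P_F(t)$ yields
\[
1 < e^{P_{F_{n_k-1}\cup\{e_{n_k}\}}(t)} \le e^{P_F(t)} + C \, \|D\f_{e_{n_k}}\|_\infty^{t},
\]
where $C := \sharp\Lambda\,(K\kappa_\Lambda^{-1})^{t}$ is independent of $k$ and $e^{P_F(t)} < 1$ since $t > h(\cS_F) = \dim_{\cH}(J_F)$.  Therefore $\|D\f_{e_{n_k}}\|_\infty \ge \bigl((1-e^{P_F(t)})/C\bigr)^{1/t}$ for every $k$, contradicting the fact—already invoked in the proof of Proposition \ref{addone}—that only finitely many $e \in E$ can have $\|D\f_e\|_\infty$ above any positive threshold.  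Hence $\dim_{\cH}(J_F) = t$.

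The main obstacle is securing the uniformity of the pressure comparison: the constant in Proposition \ref{ghenciu1} depends on the witnessing set $\Lambda$, so if each intermediate $F_{n_k-1}$ required its own (possibly much larger) witness, the rejected letters could not be forced to carry uniformly large derivatives and the lower-bound argument would collapse.  The choice $F_0 = \tilde{\Lambda}$, together with the inheritance observation that a witness for the ambient system $E$ remains a witness for every $F \supset \tilde{\Lambda}$, is precisely what makes a single $\Lambda$ serve all $F_{n_k-1}$ simultaneously and reduces the proof to the clean dichotomy between finitely many and infinitely many rejections.
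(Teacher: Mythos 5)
Your proof is correct, and it is a genuinely different argument from the one in the paper, even though both start from the same base set $F_0=\tilde\Lambda$ chosen via Definition \ref{hzero} and both conclude $\dim_{\cH}(J_F)\le t$ via Theorem \ref{t1j97}. The paper's construction adds, at each stage, the \emph{minimal} eligible letter (after reordering so that $E=\N$ and $F_1$ is an initial block), and derives the contradiction $\dim_{\cH}(J_{F_t})<t$ by invoking Proposition \ref{addone} to produce a letter $q$ whose existence violates the minimality of some $k_m$. Your greedy construction instead admits \emph{every} letter that keeps the dimension at most $t$, and your contradiction is quantitative rather than order-theoretic: once $\dim_{\cH}(J_F)=t-\eta<t$, the pressure gap $1-e^{P_F(t)}>0$ fed into Proposition \ref{ghenciu1} (with the single inherited witness $\Lambda$, so the constant $C=\sharp\Lambda\,(K\kappa_\Lambda^{-1})^t$ is uniform) forces every rejected letter $e_{n_k}$ to satisfy $\|D\f_{e_{n_k}}\|_\infty\ge\bigl((1-e^{P_F(t)})/C\bigr)^{1/t}$, contradicting the decay of derivative norms (used in the paper via \cite[Lemma 5.17]{CTU}), while the alternative of only finitely many rejections is excluded by the definition of $\theta_3$. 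The main thing your route buys is independence from the choice of enumeration of $E\setminus F_0$: the minimality argument in the paper quietly assumes $q>\max F_1$ and requires careful ordering bookkeeping, whereas your dichotomy and the uniform lower bound go through for any enumeration. Your explicit isolation of the inheritance observation (that $\Lambda$ keeps witnessing finite irreducibility for every $F\supset\tilde\Lambda$) is also used implicitly in the paper, but making it the pivot of the argument is a cleaner way to see why $F_0=\tilde\Lambda$ is the right base point.
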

\begin{proof} Let $t \in (h_0,\theta_3)$. Recalling Definition \ref{hzero} there exists some $\Lambda \subset E_A^{\ast}$ which witnesses finite irreducibility for $E$ and $$\dim_{\cH} (J_{F_1}) \in [h_0, t),$$
where $F_1=\tilde{\Lambda}$. Without loss of generality we can assume that $E=\N$. 
Proposition \ref{addone} allows us to  choose the minimal $k_1 \in \N$ such that
\begin{enumerate}[label=(\roman*)]
\item $k_1> \max F_1$,
\item $\dim_{\cH}(J_{F_1 \cup \{k_1\}})<t$.
\end{enumerate}
We set $F_2:=F_1 \cup \{k_1\}$ and we proceed inductively to obtain a sequence of finite subsets of $E$, which we label as $\{F_n\}_{n \in \N}$, such that
$$\dim_{\cH}(J_{F_n})<t.$$
We now set $F_t=\cup_{n=1}^\infty F_n$.
Notice that $F_t$ is infinite and it is finitely irreducible because $F_1 \subset F_t$. Hence by Theorem \ref{t1j97},
\begin{equation}
\label{spethe1}
\dim_{\cH}(J_{F_t})= \sup_{n \in \N} \{\dim_{\cH}(J_{F_n})\} \leq t.
\end{equation}
Now notice that $\N \setminus F_t$ is infinite. Because  if $F_t$ is cofinite just by the definition of $\theta_3$ we have that
$$\dim_\cH (J_{F_t}) \geq \theta_3>t,$$
and this contradicts \eqref{spethe1}. Now if $\dim_{\cH}(J_{F_t})=t$ we are done. If not, since $\N \setminus F_t$ is infinite, we can apply Proposition \ref{addone} once more in order to find some $q \in \N \stm F_t$ such that
\begin{equation}
\label{dimft}
\dim_\cH (J_{F_t \cup \{q\}})<t.
\end{equation}
Notice that either $q<k_1$ or $q \in (k_m, k_{m+1})$ for some $m \in \N$. But this is not possible because \eqref{dimft} implies that $\dim_\cH (J_{F_{n} \cup \{q\}})<t$ for all $n \in \N$ and if such $q$ existed it would contradict the minimality of $k_{1}$ or $k_{n+1}$, depending on the location of $q$. Therefore we have reached a contradiction and the proof of Theorem \ref{mainspect} is complete.
\end{proof}

Theorem \ref{mainspect} generalizes results from \cite{MU1} and \cite{CTU} in the setting of GDMS. More specifically it was proved in \cite{MU1} and \cite{CTU} that if $\cS$ is an infinite conformal IFS then $[0,\theta) \in DS(\cS)$. While $\theta=\theta_3$ when $\cS$ is a conformal IFS, it is remarkable that the lower bound $h_0$ for the spectrum of a conformal GDMS is sharp. This is proved in our next theorem.

\begin{thm}
\label{sharpexample}
There exists an infinite finitely irreducible conformal GDMS $\cS=\{\f_e\}_{e \in E}$ with the property that every subset $I \subset E$ such that $\dim_{\cH}(J_I)>0$ satifies 
$$\dim_{\cH}(J_I) \geq h_0.$$
\end{thm}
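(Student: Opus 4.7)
The plan is to exhibit a concrete finitely irreducible conformal GDMS modeled on the construction in Theorem~\ref{sharptheta}\ref{th1lessth2}. Take the alphabet $E=\N\cup(-\N)\cup\{a,b\}$ equipped with the same incidence matrix used there: negatives are followed only by $a$, positives only by $b$, while $a$ may be followed by any positive integer, by $a$, or by $b$, and $b$ by any negative integer, by $a$, or by $b$. Choose $\f_a$ and $\f_b$ to be similarities with contraction ratio $1/2$ on a common compact interval with disjoint images realizing the open set condition, so that $\{\f_a,\f_b\}$ is a standard self-similar IFS and $\dim_\cH(J_{\{a,b\}})=1>0$. The remaining maps $\f_n,\f_{-n}$ are chosen arbitrarily subject to Definition~\ref{gdmsdef}.

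The first step is to identify $h_0$. I claim $h_0=\dim_\cH(J_{\{a,b\}})$. For the upper bound, $\Lambda=\{a,b,ab,ba\}$ witnesses finite irreducibility for $E$, since direct inspection of the matrix shows that every pair $(i,j)\in E\times E$ admits a connecting word in $\Lambda$ (e.g.\ the pair $(n_1,n_2)\in\N\times\N$ is connected by $ba$, the pair $(-m_1,-m_2)$ by $ab$, and so on), and $\tilde\Lambda=\{a,b\}$. For the lower bound, given any $\Lambda'$ witnessing finite irreducibility and a pair $(n_1,n_2)\in\N\times\N$, the first letter of a connecting word is forced to be $b$ (the only $c\in E$ with $A_{n_1 c}=1$) and the last letter is forced to be $a$ (only $A_{a n_2}=1$). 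Hence $\{a,b\}\subset\tilde{\Lambda'}$, and monotonicity of Hausdorff dimension gives $\dim_\cH(J_{\tilde{\Lambda'}})\geq\dim_\cH(J_{\{a,b\}})$.

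The main step is to show that any $I\subset E$ with $\{a,b\}\not\subset I$ satisfies $\dim_\cH(J_I)=0$. This reduces to a combinatorial analysis of the admissible infinite sequences in $I_A^\N$: if $a,b\notin I$ then no letter in $I$ has a successor in $I$ at all, so $I_A^\N=\emptyset$; if $a\in I$ but $b\notin I$, then elements of $\N\cap I$ are dead ends (their only successor $b$ is absent), elements of $-\N\cap I$ can only go to $a$, and $a$ has the sole continuable successor $a$ itself, so the only infinite admissible words are $aaa\cdots$ or $(-m)\,aaa\cdots$ for some $-m\in -\N\cap I$, producing a countable $J_I$; the case $b\in I$, $a\notin I$ is symmetric. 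Contrapositively, $\dim_\cH(J_I)>0$ forces $\{a,b\}\subset I$, and then $J_{\{a,b\}}\subset J_I$ yields $\dim_\cH(J_I)\geq\dim_\cH(J_{\{a,b\}})=h_0$ as required.

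The main obstacle is ensuring that the combinatorial case analysis in the last step is exhaustive: one has to iterate the ``dead-end'' observation until only those letters whose successor itself admits an infinite continuation remain, and then verify that the resulting sequence space is at most countable. Everything else reduces to direct inspection of the transition matrix and an appeal to monotonicity of Hausdorff dimension; no pressure estimates from Section~\ref{sec:preses} are needed.
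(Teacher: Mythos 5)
Your proposal is correct and follows essentially the same route as the paper: the same GDMS from Theorem~\ref{sharptheta}\ref{th1lessth2} (with its bipartite $a$/$b$ bridge structure), the identification $h_0=\dim_\cH(J_{\{a,b\}})$, the combinatorial observation that any $I$ with $\{a,b\}\not\subset I$ yields an at most countable $I_A^\N$ (hence $\dim_\cH(J_I)=0$), and the monotonicity step when $\{a,b\}\subset I$. The only substantive difference is that you spell out both inequalities in $h_0=\dim_\cH(J_{\{a,b\}})$ and, in doing so, correctly use $\Lambda=\{a,b,ab,ba\}$ (rather than $\{a,b\}$, which on its own does not connect $\N$ to $\N$ or $-\N$ to $-\N$); the paper treats this as ``immediate,'' so your added care is a small improvement in rigor rather than a new approach.
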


\begin{proof} It is enough to consider the conformal GDMS used to prove \ref{th1lessth2} of Theorem \ref{sharptheta}, see also Figure \ref{fig1}. It is immediate that
$$h_0=\dim_{\cH} (J_{\{a,b\}})>0.$$
We will now show that if $I \subset E$ and $\{a,b\} \nsubseteq I$ then
\begin{equation}
\label{dim0}
\dim_{\cH}(J_I)=0.
\end{equation} As we noted in the proof of Theorem \ref{sharptheta} \ref{th1lessth2}, if $I$ does not contain $a$ or $b$ then $I_A^\N=\emptyset$. Hence without loss of generality we can assume that $a \in I$ and $b \notin I$. Note that for every $\om \in I_A^\N$ we have that $\om_i \in -\N \cup \{a\}$ for all $i \in \N$. This is simply because if $\om_i \in \N$ for some $i \in \N$ then $\om_{i+1}=b$, which is impossible. Moreover if $i \geq 2$ then $\om_i=a$, because if there exists some $i \geq 2$ such that $\om_i \in -\N$ then $\om_{i-1}=b$. Hence 
$$I_A^\N=\{\om \in (-\N \cup \{a\})^\N: \om_i=a \mbox{ for all }i \geq 2\},$$
which is countable. Therefore $\dim_{\cH}(J_I)=0$ and \eqref{dim0} has been proven. The proof of Theorem \ref{sharpexample} is complete.
\end{proof}

 We will now investigate topological properties of the dimension spectrum of a GDMS.
 \begin{thm}
 \label{compactspectrum}
 Let $\cS=\{\f_e\}_{e \in E}$ be an infinite and finitely irreducible conformal GDMS.  If $\Lambda$ is a set witnessing finite irreducibility for $E$ then $DS_{\tilde{\Lambda}}(\cS)$ is compact.
 \end{thm}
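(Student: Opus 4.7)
The plan is to prove closedness of $DS_{\tilde{\Lambda}}(\cS)$ (which is already bounded in $[0,\dim_\cH(J_\cS)]$) by a greedy exhaustion argument modeled on the proof of Theorem \ref{mainspect}. Take $(t_n)\subset DS_{\tilde{\Lambda}}(\cS)$ with $t_n\to t$; I aim to produce $F$ with $\tilde{\Lambda}\subset F\subset E$ and $\dim_\cH(J_F)=t$. The endpoint values $t=\dim_\cH(J_{\tilde{\Lambda}})$ and $t=\dim_\cH(J_E)$ are witnessed by $F=\tilde{\Lambda}$ and $F=E$ respectively, so assume $\dim_\cH(J_{\tilde{\Lambda}})<t<\dim_\cH(J_E)$.

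The construction: enumerate $E\setminus\tilde{\Lambda}=\{e_1,e_2,\ldots\}$ and set $F^{(0)}=\tilde{\Lambda}$. At step $k\ge 1$, declare $F^{(k)}=F^{(k-1)}\cup\{e_k\}$ if $\dim_\cH(J_{F^{(k-1)}\cup\{e_k\}})\le t$, otherwise $F^{(k)}=F^{(k-1)}$. Put $F=\bigcup_k F^{(k)}$. Because $\tilde{\Lambda}\subset F$, $F$ is finitely irreducible, and since every finite $G\subset F$ sits inside some $F^{(k)}$, Theorem \ref{t1j97} applied to $\cS_F$ yields
\begin{equation*}
\dim_\cH(J_F)=\sup\{\dim_\cH(J_G):G\subset F\text{ finite}\}\le t.
\end{equation*}

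The heart of the argument is the reverse inequality. Suppose $\dim_\cH(J_F)<t$, and set $\varepsilon=(t-\dim_\cH(J_F))/2>0$. In the \emph{principal case}, where $E\setminus F$ is infinite, Proposition \ref{addone} supplies infinitely many $e\in E\setminus F$ for which $\dim_\cH(J_{F\cup\{e\}})<\dim_\cH(J_F)+\varepsilon<t$. Pick such an $e=e_{k_0}$; then $F^{(k_0-1)}\cup\{e_{k_0}\}\subset F\cup\{e_{k_0}\}$ and monotonicity of dimension in subsystems force $\dim_\cH(J_{F^{(k_0-1)}\cup\{e_{k_0}\}})<t$, so the greedy rule would have added $e_{k_0}$ at step $k_0$, contradicting $e_{k_0}\notin F$.

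The main obstacle is the \emph{corner case} where $E\setminus F$ is finite, making $F$ cofinite, because then Proposition \ref{addone} does not apply. I would resolve this by exploiting the witnessing sequence $(F_n)$ realizing $t_n$. By finiteness of $E\setminus F$, pigeonhole allows passage to a subsequence along which $F_n\cap(E\setminus F)=R^\ast$ is constant. If $R^\ast=\emptyset$, then $F_n\subset F$ forces $t_n\le\dim_\cH(J_F)$, whence $\dim_\cH(J_F)\ge t$, contradicting the assumption. If $R^\ast\ne\emptyset$, the subsequence $(F_n)$ lies in the cofinite finitely irreducible subsystem $\cS_{F\cup R^\ast}$ and contains $\tilde{\Lambda}\cup R^\ast$, reducing the compactness statement to the same claim for $\cS_{F\cup R^\ast}$ with enlarged witness $\tilde{\Lambda}\cup R^\ast$ and strictly smaller complement $E\setminus(F\cup R^\ast)\subsetneq E\setminus F$. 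An induction on $|E\setminus F|$ then closes the argument; making the bookkeeping of this iteration watertight is the delicate step of the proof.
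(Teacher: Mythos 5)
Your greedy construction is a genuinely different route from the paper's, and the principal case ($E\setminus F$ infinite) is correct: Proposition \ref{addone} plus monotonicity of dimension under inclusion (Theorem \ref{t1j97}) forces the greedy rule to have admitted some $e_{k_0}$, a contradiction. The subcase $R^\ast=\emptyset$ of the corner case is also fine. The gap is precisely where you flag it, and it is more than bookkeeping: the proposed induction on $|E\setminus F|$ is not well-founded. After passing to $\cS'=\cS_{F\cup R^\ast}$ with enlarged witness $L'=\tilde{\Lambda}\cup R^\ast$, the quantity the induction would need to control is $|E'\setminus F'|$, where $E'=F\cup R^\ast$ and $F'$ is the greedy set produced inside $\cS'$ from $L'$. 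But $E'\setminus F'\subset (F\cup R^\ast)\setminus(\tilde{\Lambda}\cup R^\ast)=F\setminus\tilde{\Lambda}$, which is disjoint from $E\setminus F$, so there is no containment between $E'\setminus F'$ and $E\setminus F$ and no a priori bound on $|E'\setminus F'|$. The set you point to as strictly decreasing, $E\setminus(F\cup R^\ast)$, is the complement of the \emph{new alphabet} in $E$, which is irrelevant to the induction. Because each greedy step now compares dimensions of sets containing $R^\ast$, the new greedy set $F'$ bears no monotone relation to $F$, and nothing prevents the corner case from recurring with an arbitrary finite complement; the recursion may never reach a base case.

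The paper's proof sidesteps this by a different construction plus a quantitative pressure argument. When $\alpha>\theta_3$, it builds $F$ out of the letters that appear in infinitely many $E_n$, refining along a nested family of index sets $G_1\supset G_2\supset\cdots$ (a diagonal argument); the inequality $\dim_\cH(J_F)\le\alpha$ is then immediate from Theorem \ref{t1j97}, while $\dim_\cH(J_F)\ge\alpha$ is proved by contradiction using Corollary \ref{ghenciucomb}: for $\alpha'\in(\dim_\cH(J_F),\alpha)$ one shows $E_{j_l}\subset(F\cap\{1,\dots,k_l\})\cup\{k_l+1,k_l+2,\dots\}$ for a cofinal set of indices, and summing the pressure increments over the tail $\{k_l+1,\dots\}$ forces $\dim_\cH(J_{E_{j_l}})\le\alpha'$ eventually, contradicting $\dim_\cH(J_{E_{j_l}})\to\alpha$. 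The case $\alpha\le\theta_3$ is handled separately via Theorem \ref{mainspect} and another explicit construction. So the missing ingredient in your proposal is exactly this quantitative control on the pressure of cofinite sets near the limit value; a purely combinatorial or order-theoretic greedy reduction does not seem to close the cofinite case.
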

 \begin{proof} Without loss of generality we can identify $E$ with $\N$. Let $\{E_n\}_{n \in \N}$ be a sequence of subsets of $E$ such that $\tilde{\Lambda} \subset E_n$ for all $n \in \N$ and 
 \begin{equation}
 \label{limhen}\lim_{n \ra \infty} \dim_{\cH}(J_{E_n})=\alpha.
 \end{equation}
 It suffices to show that there exists some  set $F$ such that $\tilde{\Lambda} \subset F \subset E$ and
 \begin{equation}
\label{dimjfeqa1}
 \dim_{\cH}(J_F)=\alpha.
 \end{equation}
 We will first consider the case when $\alpha>\theta_3$. 
 
We first assume that no $e \in E$ appears in infinitely many sets $E_n$. Observe that this is possible only if the set $\Lambda$ is empty. 
Recalling Definition \ref{thetavariants}, for every $m \in \N$  there exists a cofinite set $C_m \subset E$ such that
\begin{equation}
\label{dimhcm}
\dim_{\cH}(J_{C_m}) \leq \theta_3 +\frac{1}{m}.
\end{equation}
Since $C_m$ is cofinite there exists some $j_m \in \N$ such that
\begin{equation}
\label{jcmtails}
\{j_m, j_m+1,\dots\} \subset C_m.
\end{equation}
Now notice that since  no $e \in E$ appears in infinitely many $E_n$, if $k \in \{1,\dots,j_m-1\}$ there exist finitely many  sets $E_n$ such that $k \in E_n$. Hence there exist finitely many sets sets $E_n$ such that
$$E_n \cap \{1,\dots,j_m-1\} \neq \emptyset.$$
Therefore there exists some $n(m) \in \N$ such that
\begin{equation}
\label{entails}
E_{n} \subset \{j_m, j_m+1,\dots\}
\end{equation}
for all $n \geq n(m)$. Combining \eqref{dimhcm}, \eqref{jcmtails} and \eqref{entails} we conclude that there exists a strictly increasing sequence of natural numbers
$$n(1)<n(2)<\dots<n(m)<n(m+1)<\dots$$
such that
$$\dim_{\cH}(E_{n(m)}) \leq \theta_3 +\frac{1}{m}.$$
Letting $m \ra \infty$ and using \eqref{limhen} we obtain that 
$$\alpha=\lim_{m \ra \infty} \dim_{\cH}(E_{n(m)}) \leq \theta_3 <\alpha,$$
which is a contradiction.

Therefore we know that there exist natural numbers which appear in infinitely many $E_n$. Let $\theta_3 <\alpha'<\alpha$. We  set
\begin{equation}
\label{k1def}
k_1=\min \{e \in \N: e \in E_n \mbox{ for infinitely many }n \in \N\},
\end{equation}
and we define
$$G_1= \{n \in \N: k_1 \in E_n\}$$
and 
$$G_1^k=\{n \in G_1:k \in E_n\},$$
for $k \in \N$. We will now show that there exist $k \in \N \stm \{k_1\}$ such that $G_1^k$ is infinite. Suppose on the contrary that for all $k \in \N \stm \{k_1\}$ the sets $G^k_1$ are finite. Then note that
\begin{equation}
\label{limtrun0}
\lim_{\substack{n \ra \infty\\ n \in G_1}} \min \{ E_n \stm \{k_1\}\}=+\infty.
\end{equation}
In order to prove  \eqref{limtrun0} assume by way of contradiction that there exists some $m_0 \in \N$ and some strictly increasing sequence $(n_j)_{j \in \N}, n_j \in G_1,$ such that
\begin{equation*}
\lim_{j \ra \infty} \min \{ E_{n_j} \stm \{k_1\}\}=m_0.
\end{equation*} 
This implies that there exists some $j_0$ such that for all $j \geq j_0$,
$$\min \{ E_n \stm \{k_1\}\}=m_0.$$
In particular $m_0 \in E_{n_j} \stm \{k_1\}$ and $n_j \in G_1$ for all $j \geq j_0$. Hence $G_1^{m_0}$ is infinite and this contradicts the assumption that for all $k \in \N \stm \{k_1\}$ the sets $G^k_1$ are finite. 

Since $\alpha' >\theta_3$, Lemma \ref{433mu} and Theorem \ref{thetaorderthm} \ref{thetaorder}  imply that for all $n \in \N$,
$$Z_1(E_n, \alpha') \leq Z_1(E, \alpha')<\infty.$$
Note that
$$Z_1(E_n, \alpha')=\|D \f_{k_1}\|_\infty^{\alpha'}+\sum_{j \in E_n \stm \{k_1\}} \|D  \f_j\|_\infty^{\alpha'}<\infty,$$
so \eqref{limtrun0} implies that for $n \in G_1$ large enough $Z_1(E_n, \alpha')\leq 1$. Therefore for such $n$,
$$P_{E_n}(\alpha') \leq \log Z_1 (E_n, \alpha')<0,$$
and consequently
$$\dim_{\cH} (J_{E_n}) \leq \alpha'.$$ So,
$$\alpha=\lim_{\substack{n \ra \infty\\ n \in G_1}} \dim_{\cH} (J_{E_n}) \leq \alpha'<\alpha$$
and we have a reached a contradiction. Hence there exist $k \in \N \stm \{k_1\}$ such that the sets $G^{k}_{1}$ are infinite. 

Let
$$k_2=\min \{k \in \N: k \in E_n \stm \{k_1\} \mbox{ for infinitely many }n \in G_1\},$$
and
$$G_2:=G^{k_2}_1.$$
Observe that $k_2>k_1$, because $k_2$ appears in infinitely many $E_n$ and recalling \eqref{k1def} $k_1$ is the minimal integer with that property.
Continuing inductively we obtain a collection of strictly increasing natural numbers $\{k_i\}_{i=1}^p$ (where it might happen that $p=+\infty$) and a corresponding family of infinite sets $\{G_i\}_{i=1}^p$ such that for all $i=1, \dots, p-1$
$$G_{i+1}=\{n \in G_i: k_{i+1} \in E_n\},$$
and
$$k_{i+1}=\min \{k \in \N: k \in E_n \stm \{k_1, \dots, k_i\} \mbox{ for infinitely many }n \in G_i\},$$

Let $F= \{k_i\}_{i=1}^p \cup \tilde{\Lambda}$. We will prove that
\begin{equation}
\label{dimjfeqa}
\dim_{\cH} (J_F) = \alpha.
\end{equation} 
We will first show that 
\begin{equation}
\label{dimjfleqa}
\dim_{\cH} (J_F) \leq \alpha.
\end{equation}
For any finite $l \leq p$ let
$F_l=\{k_i\}_{i=1}^l \cup \tilde{\Lambda}$. Since
$$G_1 \supset G_{2} \supset \dots \supset G_l \supset \dots,$$
we deduce that for every finite $l \leq p$ and every $j \in G_l $ we have that $F_l \subset E_j$. 
Hence for all finite $l \leq p$ there exists a strictly increasing sequence $\{j^l_m\}_{m \in \N}$ such that 
$$F_l \subset E_{j^l_m}$$
for all $m \in \N$. So for all finite $l \leq p$,
$$\dim_{\cH}(J_{F_l}) \leq \lim_{m \ra \infty}\dim_{\cH}(J_{E_{j^l_m}})=\alpha.$$
Therefore Theorem \ref{t1j97} implies that,$$\dim_{\cH} (J_F)= \sup_{l \leq p}  \{\dim_{\cH} (J_{F_l})\} \leq \alpha.$$
Thus \eqref{dimjfleqa} has been proven. 

In order to complete the proof of \eqref{dimjfeqa}, suppose by way of contradiction that 
\begin{equation}
\label{dimjflessa}
\dim_{\cH} (J_F) < \alpha.
\end{equation}
Choose $\alpha'$ such that 
$$\max\{\dim_{\cH}(J_F), \theta_3\} < \alpha' <\alpha.$$
We will first assume  that $p=\infty$. 
Let $l \in \N$ and  observe that if $\rho \in \{1,2,\dots, k_l\} \stm \{k_1,k_2,\dots ,k_l\}$ then it appears in finitely many $E_j$ for $j \in G_l$. To see this suppose by way of contradiction that there exists some $\rho \in \{1,2,\dots, k_l\} \stm \{k_1,k_2,\dots ,k_l\}$ such that the set 
$$\sharp \{j \in G_l: \rho \in E_j\}=+\infty.$$
Recall that $k_l$ is the smallest integer which appears in $E_j \stm \{k_1,\dots, k_{l-1}\}$ for infinitely many $j \in G_{l-1}$. Since $G_l \subset G_{l-1}$, $\rho$ appears in infinitely many $E_j$ for $j \in G_{l-1}$. Moreover since $\rho \not\in \{k_1,k_2,\dots, k_l\}$, we deduce that $\rho \in E_j \stm \{k_1,\dots, k_{l-1}\}$ for infinitely many $j \in G_{l-1}$. But this contradicts the minimality of $k_l$. 
 
 Hence for every $l \in \N$ there exists some $N_l$  such that for every $j \in G_l \cap [N_l,\infty)$,
\begin{equation*}
\begin{split}
E_j \stm (F \cap \{1,\dots,k_l\}) \subset E_j \stm \{k_1,\dots, k_{l}\}  \subset \{k_l+1,k_l+2,\dots\}. \\
\end{split}
\end{equation*}
Since $\{k_l\}_{l \in \N}$ is strictly increasing and $p=+\infty$, for $l$ large enough $k_l \geq \max \tilde{\Lambda}$. Therefore there exists some $l_0 \in \N$ and a strictly increasing sequence $\{j_l \}_{l \geq l_0}$ such that 
\begin{equation}
\label{incluejl}
E_{j_l} \subset (F \cap \{1,\dots,k_l\}) \cup \{k_l+1,k_l+2,\dots\}:=T_l.
\end{equation}
Since $k_l \geq \max \tilde{\Lambda}$ for all $l \geq l_0$ we deduce that $\tilde{\Lambda} \subset F \cap \{1,\dots,k_l\}$. Thus for every $A \subset E$, the set
$$F \cap \{1,\dots,k_l\} \cup A$$
is finitely irreducible.
Hence by Corollary \ref{ghenciucomb},
\begin{equation*}
\begin{split}e^{P_{T_l}(\alpha')} &\leq e^{P_{T_l \stm \{k_l+1\}}(\alpha')}+\sharp \Lambda (K \kappa_{\Lambda}^{-1})^{\alpha'} \, \max\{1, e^{p_{\Lambda} P_{T_l}(\alpha')} \} \, \|D \f_{k_l+1}\|_{\infty}^{\alpha'} \\
&\leq  e^{P_{T_l \stm \{k_l+1\}}(\alpha')}+\sharp \Lambda (K \kappa_{\Lambda}^{-1})^{\alpha'} \, \max\{1, e^{p_{\Lambda} P_{E}(\alpha')} \} \, \|D \f_{k_l+1}\|_{\infty}^{\alpha'}.
\end{split}
\end{equation*}
Since $\alpha'>\theta_3 \geq \theta$, we have that 
$$\max\{1, e^{p_{\Lambda} P_{E}(\alpha')} \}:=c_{\Lambda}<\infty.$$Continuing inductively we obtain that for every $l \geq l_0$
\begin{equation}
\label{ghenciuspec}
\begin{split}
e^{P_{T_l}(\alpha')} &\leq e^{P_{F \cap \{1,\dots,k_l\}}(\alpha')}+c_{\Lambda} \,\sharp \Lambda (K \kappa_{\Lambda}^{-1})^{\alpha'} \, \sum_{j=k_l+1}^\infty\|D \f_{j}\|_{\infty}^{\alpha'} \\
&\leq e^{P_{F }(\alpha')}+c_{\Lambda} \,\sharp \Lambda (K \kappa_{\Lambda}^{-1})^{\alpha'} \, \sum_{j=k_l+1}^\infty\|D \f_{j}\|_{\infty}^{\alpha'} .
\end{split}
\end{equation}
Since $\alpha'>\theta$, and $E=\N$ is finitely irreducible Lemma \ref{433mu} implies that 
\begin{equation}
\label{z1aprfin}
Z_1(\alpha')=\sum_{j \in \N}\|D \f_{j}\|_{\infty}^{\alpha'}<\infty.
\end{equation}
Moreover since $\alpha'>\dim_{\cH} (J_F)$ and $F$ is finitely irreducible 
\begin{equation}
\label{lfaprfin}
e^{P_F(\alpha')} <1.
\end{equation}
Since $k_l \ra \infty$, combining \eqref{incluejl}, \eqref{ghenciuspec}, \eqref{z1aprfin} and \eqref{lfaprfin} we deduce that for $l$ large enough $e^{P_{E_{j_l}}(\alpha')}<1,$ and consequently
$$\dim_{\cH} (J_{E_{j_l}}) \leq \alpha'.$$
Therefore
$$\alpha=\lim_{l \ra \infty}\dim_{\cH} (J_{E_{j_l}}) \leq \alpha'<\alpha$$
and we have reached a contradiction. Hence we proved \eqref{dimjfeqa} in the case when $p=+\infty$.

We now consider the case when $p<+\infty$. In that case the sets 
$$G_p^k=\{j \in G_{p-1}:k \in E_j\}$$
are finite for every $k \in \N \stm \{k_1,k_2,\dots,k_p\}$. Therefore
\begin{equation*}
\lim_{\substack{j \ra \infty\\ j \in G_p}} \min \{ E_j \stm \{k_1,k_2,\dots,k_p\}\}=+\infty.
\end{equation*}
The proof is identical to the proof of \eqref{limtrun0} and it is omitted. Since $F=\{k_i\}_{i=1}^p \cup \tilde{\Lambda}$ we also have that
\begin{equation}
\label{fincaseliminfty}
\lim_{\substack{j \ra \infty\\ j \in G_p}} \min \{ E_j \stm F\}=+\infty.
\end{equation}
Hence there exists a strictly increasing sequence $\{b_j\}_{j \in G_p}$ such that for all $j \in G_p$,
\begin{equation}
\label{incluej}
E_{j}  \subset F \cup \{b_j,b_j+1,\dots\}:=B_j.
\end{equation}
Using \eqref{incluej} and employing Corollary \ref{ghenciucomb} as in \eqref{ghenciuspec} we deduce that for all $j \in G_p$,
\begin{equation}
\label{ghenciuspecfin}
\begin{split}
e^{P_{E_j}(\alpha')} \leq e^{P_{B_j}(\alpha')} \leq e^{P_{F }(\alpha')}+c_{\Lambda} \,\sharp \Lambda (K \kappa_{\Lambda}^{-1})^{\alpha'} \, \sum_{i=b_j}^\infty\|D \f_{i}\|_{\infty}^{\alpha'} .
\end{split}
\end{equation}
Exactly as in the case of $p=+\infty$, \eqref{ghenciuspecfin} implies that for all $j \in G_p$ large enough
$$\dim_{\cH} (J_{E_{j}}) \leq \alpha'.$$
Which leads to a contradiction because
$$\alpha=\lim_{\substack{j \ra \infty\\ j\in G_p}}\dim_{\cH} (J_{E_{j}}) \leq \alpha'<\alpha.$$
We have thus proved \eqref{dimjfeqa1} in the case when $\alpha>\theta_3$

Now consider the case when $\dim_{\cH} (J_{\tilde{\Lambda}}) \geq \theta_3$. In this case, since always $\alpha \geq \dim_{\cH} (J_{\tilde{\Lambda}})$, we only have to consider two cases: $\alpha=\theta_3$ and $\alpha >\theta_3$. If  $\alpha=\theta_3$ then $\alpha=\dim_{\cH} (J_{\tilde{\Lambda}})$ and \eqref{dimjfeqa1} follows trivially by choosing $F= \tilde{\Lambda}$. The case $\alpha>\theta_3$ has been already settled without any extra assumption on $\dim_{\cH} (J_{\tilde{\Lambda}})$

Hence we only have to consider the case when $\dim_{\cH} (J_{\tilde{\Lambda}}) < \theta_3$. If $\alpha \in [\dim_{\cH} (J_{\tilde{\Lambda}}), \theta_3)$,   Theorem \ref{mainspect} implies that there exists some $F \supset \tilde{\Lambda}$ such that $$\dim_{\cH}(J_F)=\alpha,$$ hence \eqref{dimjfeqa1} follows.  If $\alpha>\theta_3$, \eqref{dimjfeqa1} follows by \eqref{dimjfeqa}. Therefore in order to finish the proof of \eqref{dimjfeqa1}, and thus the proof of the theorem, we are left with examining the case when $\dim_{\cH} (J_{\tilde{\Lambda}}) < \theta_3$ and $\alpha=\theta_3$. If $\dim_{\cH} (J_{\cS})=\theta_3$ we are done, so suppose that $\dim_{\cH} (J_{\cS})>\theta_3$. It suffices to construct a set $F \subset \N$ such that $\tilde{\Lambda} \subset F$ and
$$\dim_{\cH} (J_F)=\theta_3.$$
Let $n_1 \in \N$ be the smallest integer such that $$\dim_{\cH} (J_{\tilde{\Lambda} \cup \{1,\dots,n_1\}})< \theta_3$$
and $$\dim_{\cH} (J_{\tilde{\Lambda} \cup \{1,\dots,n_1, n_1+1\}}) \geq \theta_3.$$ If $\dim_{\cH} (J_{\tilde{\Lambda} \cup \{1,\dots,n_1,n_1+1\}})= \theta_3$, we are done therefore we can assume that $$\dim_{\cH} (J_{\tilde{\Lambda} \cup \{1,\dots,n_1,n_1+1\}})>\theta_3.$$ By Proposition \ref{addone} we know that there exists some $n_2>n_1+1$ such that
$$\dim_{\cH} (J_{\tilde{\Lambda} \cup \{1,\dots,n_1,n_2\}})< \theta_3.$$
By Theorem \ref{thetaorderthm} \ref{theta3equi} we know that $\theta_1(\tilde{\Lambda}) \geq \theta_3$. Recalling \eqref{theta1la} we deduce that 
$$\dim_{\cH} (J_{\tilde{\Lambda} \cup \{m, m+1,\dots\}}) \geq \theta_1(\tilde{\Lambda}) \geq \theta_3$$
for all $m \in \N$. In particular,
$$\dim_{\cH} (J_{\tilde{\Lambda} \cup \{1,\dots,n_1,n_2,n_2+1,\dots\}}) \geq \theta_3.$$
If $\dim_{\cH} (J_{\tilde{\Lambda} \cup \{1,\dots,n_1,n_2,n_2+1,\dots\}}) = \theta_3,$ we can take $F=\tilde{\Lambda} \cup \{1,\dots,n_1,n_2,n_2+1,\dots\}$ and we are done. Otherwise we choose the smallest integer $n_3>n_2$ such that
$$\dim_{\cH} (J_{\tilde{\Lambda} \cup \{1,\dots,n_1,n_2,\dots,n_3\}}) < \theta_3,$$
and $\dim_{\cH} (J_{\tilde{\Lambda} \cup \{1,\dots,n_1,n_2,\dots,n_3,n_3+1\}}) \geq \theta_3.$ Exactly as before we can assume that 
$$\dim_{\cH} (J_{\tilde{\Lambda} \cup \{1,\dots,n_1,n_2,\dots,n_3,n_3+1\}}) > \theta_3.$$ 
We continue the process inductively. For $k \geq 0$ let
$$I_k=(n_{2k}, n_{2_k}+1, \dots, n_{2k+1}) $$
where  $n_0=1$. If there exists some $l \in \N$ such that
$$\dim_{\cH} (J_{\tilde{\Lambda} \cup  I_0 \cup I_1 \dots \cup I_l \cup \{n_{2l+1}+1\}}) = \theta_3$$
the process trivially terminates because we can choose 
$$F=\tilde{\Lambda} \cup  I_0 \cup I_1 \cup \dots \cup I_l \cup \{n_{2l+1}+1\}.$$
Hence we can assume that there exists an increasing sequence of natural numbers $\{n_k\}_{k\in \N}$ such that for all $k\in \N$
\begin{equation}
\label{fdef1}
\dim_{\cH} (J_{\tilde{\Lambda} \cup I_0 \cup I_1 \cup\dots \cup I_k}) < \theta_3
\end{equation}
and
\begin{equation}
\label{fdef2}
\dim_{\cH} (J_{\tilde{\Lambda} \cup I_0 \cup I_1 \cup \dots \cup I_k \cup \{n_{2k+1}+1\}}) > \theta_3.
\end{equation}
 We define
$$F=\tilde{\Lambda} \cup \bigcup_{k=0}^\infty I_k.$$
We will show that $\dim_{\cH} (J_F)=\theta_3.$

First note that \eqref{fdef1} and Theorem \ref{t1j97} imply that 
\begin{equation}
\label{jfleq}
\dim_{\cH} (J_F) \leq \theta_3.
\end{equation}  
Let
$$F_k=\tilde{\Lambda} \cup \bigcup_{j=0}^k I_j.$$
Note that by \eqref{fdef1},
$$\dim_{\cH} (J_{F_k})<\theta_3,$$
and \eqref{fdef2} implies that for all $k \in \N$
$$P_{F_k \cup \{n_{2k+1}+1\}} (\theta_3) \geq 0.
$$
Hence by Proposition \ref{ghenciu1}, 
\begin{equation*}
\begin{split}
1 \leq e^{P_{F_k \cup \{n_{2k+1}+1\}}(\theta_3)} &\leq e^{P_{F_k}(\theta_3)}+ \sharp \Lambda (K \kappa_{\Lambda}^{-1})^{\theta_3} \, \|D \f_{n_{2k+1}+1}\|_{\infty}^{\theta_3}\\
&\leq e^{P_{F}(\theta_3)}+ \sharp \Lambda (K \kappa_{\Lambda}^{-1})^{\theta_3} \, \|D \f_{n_{2k+1}+1}\|_{\infty}^{\theta_3}.
\end{split}
\end{equation*}
Hence
$$e^{P_{F}(\theta_3)} \geq 1-\sharp \Lambda (K \kappa_{\Lambda}^{-1})^{\theta_3} \, \|D \f_{n_{2k+1}+1}\|_{\infty}^{\theta_3}.$$
Since $\{n_k\}_{k \in \N}$ is a strictly increasing sequence, \cite[Lemma  5.17]{CTU} implies that $$\|D \f_{n_{2k+1}+1}\|_{\infty}^{\theta_3} \ra 0\mbox{ as }k \ra \infty,$$ therefore $e^{P_{F}(\theta_3)} \geq 1$. Hence $P_{F}(\theta_3) \geq 0$ and consequently,
\begin{equation}
\label{jfgeq}\dim_{\cH} (J_F) \geq \theta_3.
\end{equation}
Combining \eqref{jfleq} and \eqref{jfgeq} we deduce that $\dim_{\cH} (J_F) = \theta_3$. The proof of the theorem is complete.
\end{proof}

\begin{remark}
\label{thetains}  Let $\cS=\{\f_e\}_{e \in E}$ be an infinite and finitely irreducible conformal GDMS. Notice that by the last part of the proof of Theorem \ref{compactspectrum} if $h_0<\theta_3$  then $\theta_3 \in DS(\cS)$. Therefore in that case we get that $(h_0,\theta_3] \in DS(\cS)$. Notice also that if  $\cS=\{\f_e\}_{e \in E}$ is an infinite conformal IFS then $h_0=0$, $\theta_3=\theta$ and if $\Lambda=\emptyset$, $DS_{\tilde{\Lambda}}(\cS)=DS(\cS)$. Hence Theorems \ref{mainspect} and \ref{compactspectrum} imply that $[0, \theta] \subset DS(\cS)$ improving the corresponding result from \cite{MU1} which only guarantees that $[0,\theta) \in DS(\cS)$.
\end{remark}

\begin{thm}
 \label{perfectspectrum}
 Let $\cS=\{\f_e\}_{e \in E}$ be an infinite and finitely irreducible conformal GDMS.  If $\Lambda$ is a set witnessing finite irreducibility for $E$ then $DS_{\tilde{\Lambda}}(\cS)$ is perfect.
 \end{thm}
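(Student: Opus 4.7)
Since compactness of $DS_{\tilde\Lambda}(\cS)$ has already been settled by Theorem~\ref{compactspectrum}, it suffices to show that every $\alpha\in DS_{\tilde\Lambda}(\cS)$ is an accumulation point. Write $\alpha=\dim_\cH(J_{F_\alpha})$ with $\tilde\Lambda\subset F_\alpha\subset E$. I will split according to whether $\alpha$ is attained on some finite subsystem that contains $\tilde\Lambda$, approaching $\alpha$ from below or from above accordingly, and in each case extract a strictly monotone subsequence of dimensions to obtain pairwise distinct values.

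\emph{Approach from below.} If no finite $G$ with $\tilde\Lambda\subset G\subset F_\alpha$ satisfies $\dim_\cH(J_G)=\alpha$, then $F_\alpha$ must be infinite. Enumerating $F_\alpha\setminus\tilde\Lambda=\{f_n\}_{n\in\N}$ I set $G_n:=\tilde\Lambda\cup\{f_1,\dots,f_n\}$. Each $G_n$ is finite, contains $\tilde\Lambda$ (hence is finitely irreducible and lies in $DS_{\tilde\Lambda}(\cS)$), and satisfies $\dim_\cH(J_{G_n})<\alpha$ by hypothesis. Theorem~\ref{t1j97} applied to $\cS_{F_\alpha}$ gives $\dim_\cH(J_{G_n})\nearrow\alpha$, and a strictly increasing subsequence of these dimensions yields the desired pairwise distinct values accumulating at $\alpha$ from below.

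\emph{Approach from above.} If some finite $G\supset\tilde\Lambda$ realises $\dim_\cH(J_G)=\alpha>0$, then $\cS_G$ is strongly regular since $h_G=\alpha>0=\theta_G$ (every finite system has $\theta=0$). Enumerating $E\setminus G=\{e_n\}_{n\in\N}$ with $\|D\f_{e_n}\|_\infty\to 0$, which is possible because Lemma~\ref{433mu} gives $\sum_e\|D\f_e\|_\infty^t<\infty$ for all $t>\theta(\cS)$, I set $G_n:=G\cup\{e_n\}$; this is again finite and strongly regular, so Theorem~\ref{4310mu} applied to the proper inclusion $\cS_G\subset\cS_{G_n}$ forces $\dim_\cH(J_{G_n})>\alpha$. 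Proposition~\ref{ghenciu1}, applied at any fixed $t>\alpha$, then gives
\begin{equation*}
e^{P_{G_n}(t)}\le e^{P_G(t)}+\sharp\Lambda\,(K\kappa_\Lambda^{-1})^t\,\|D\f_{e_n}\|_\infty^t\xrightarrow[n\to\infty]{}e^{P_G(t)}<1,
\end{equation*}
so $\dim_\cH(J_{G_n})\le t$ for all large $n$. Letting $t\downarrow\alpha$ shows $\dim_\cH(J_{G_n})\searrow\alpha$, and a strictly decreasing subsequence supplies the required pairwise distinct values converging to $\alpha$ from above.

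The residual and genuinely delicate case is $\alpha=0$ attained on a finite witness, for then $\cS_G$ is not strongly regular and Theorem~\ref{4310mu} cannot force strict increase under extension. Here $\dim_\cH(J_{\tilde\Lambda})=0$, and the repair is to rerun the bootstrapping construction from the proof of Theorem~\ref{mainspect} with initial set $F_1=\tilde\Lambda$, which produces for every $t\in(0,\theta_3)$ a set $F_t\supset\tilde\Lambda$ with $\dim_\cH(J_{F_t})=t$; hence $(0,\theta_3)\subset DS_{\tilde\Lambda}(\cS)$ accumulates $0$ from above whenever $\theta_3>0$. The fully degenerate subcase $\theta_3=0$ must be handled by a direct argument using the definition of $\theta_3$ together with Proposition~\ref{addone} to absorb $\tilde\Lambda$ into cofinite subsystems whose dimensions decay to zero. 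Guaranteeing \emph{distinctness} of the approximating dimensions, rather than mere convergence, is the core obstacle throughout, and it is precisely the strict inequality in Theorem~\ref{4310mu} and the quantitative pressure bound of Proposition~\ref{ghenciu1} that do this work.
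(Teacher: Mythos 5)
Your overall strategy — approximate $\alpha$ from below when the witness is infinite, from above when it is finite, using Theorem~\ref{t1j97} for the monotone limit, Theorem~\ref{4310mu} for strict inequality, and Proposition~\ref{ghenciu1} plus decay of $\|D\f_{e_n}\|_\infty$ for the limit from above — is the same as the paper's, and the two core steps are sound. However, your ``residual and genuinely delicate case'' $\alpha=0$ does not exist, and the reason you think it does reveals a misreading of Theorem~\ref{4310mu}. The implication \ref{ctu(i)}$\Rightarrow$\ref{ctu(iv)} asks for strong regularity of the \emph{ambient} system $\cS$, not of the proper subsystem $\cS'$. In the approach from above you apply it to the inclusion $\cS_G\subset\cS_{G\cup\{e_n\}}$, so what you need is strong regularity of $\cS_{G\cup\{e_n\}}$, which holds because $G\cup\{e_n\}$ is finite and finitely irreducible (the paper cites \cite[Proposition 7.11]{CTU} for precisely this fact). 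Whether or not $\cS_G$ itself is strongly regular is irrelevant, and in particular the argument goes through unchanged when $\dim_\cH(J_G)=0$. So the finite-witness case should be handled uniformly, and the bootstrapping detour via Theorem~\ref{mainspect} — and the admitted gap for the subcase $\theta_3=0$ — can simply be deleted.

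A second, smaller structural point: your dichotomy hinges on \emph{whether $\alpha$ happens to be attained by a finite subsystem}, which is what lets you assert $\dim_\cH(J_{G_n})<\alpha$ ``by hypothesis'' in the approach from below. The paper instead fixes one witness $F\supset\tilde\Lambda$ and splits only on whether $F$ is finite or infinite; in the infinite case it \emph{proves}, rather than assumes, that $\dim_\cH(J_{F_n})<\dim_\cH(J_F)$ for every $n$, by a short contradiction: if $\dim_\cH(J_{F_{n_0}})=\dim_\cH(J_F)$, then taking $m$ large enough so that $F_{n_0+l_0}\subsetneq F_m$ (with $l_0=\max\tilde\Lambda$), the finite irreducible system $\cS_{F_m}$ is strongly regular, and Theorem~\ref{4310mu} forces the strict inequality $\dim_\cH(J_{F_{n_0+l_0}})<\dim_\cH(J_{F_m})$, contradicting $\dim_\cH(J_{F_{n_0+l_0}})=\dim_\cH(J_{F_m})=\alpha$. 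This makes the proof self-contained and avoids the three-way case analysis. I would rework your write-up along those lines; once you correct the direction in which Theorem~\ref{4310mu} is applied, the case $\alpha=0$ requires no special treatment and the argument collapses to the paper's two clean cases.
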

 
 \begin{proof}  Without loss of generality we can assume that $E= \N$. Let $F \subset E$ such that $\tilde{\Lambda} \subset F$. Then $F$ is finitely irreducible. Therefore if $F$ is infinite then Theorem \ref{t1j97} implies that 
$$\dim_{\cH} (J_F)=\lim_{n \ra \infty} \dim_{\cH} (J_{F \cap \{1,\dots,n\}}).$$
For $n \in \N$ we denote $F_n= F \cap \{1,\dots,n\}$ and we let $l_0=\max \tilde{\Lambda}$. Note that if $n \geq l_0$ then the sets $F_n$ are (finitely) irreducible. Note also that for every $n \in \N$, 
\begin{equation}
\label{notisol}
\dim_{\cH} (J_{F_n})< \dim_{\cH} (J_F).
\end{equation}
In order to prove \eqref{notisol}, suppose by way of contradiction that there exists some $n_0 \in \N$ such that $$\dim_{\cH} (J_{F_{n_0}})= \dim_{\cH} (J_F).$$ Then for all $n \geq n_0$, we have that $\dim_{\cH} (J_{F_n})= \dim_{\cH} (J_F).$ Let $m=2 n_0+l_0$. In that case  $\cS_{F_m}$ is strongly regular since $F_m$ is finite, see e.g. \cite[Proposition 7.11]{CTU}. Moreover $\tilde{\Lambda} \subset F_m$ hence $F_m$  is irreducible and by Theorem \ref{4310mu} we deduce that $\dim_{\cH} (J_G)<\dim_{\cH} (J_{F_m})$ for every irreducible  $G \subsetneq F_m$ . Nevertheless $F_{n_0+l_0}$ is an irreducible subset of $F_m$ and it also contains $F_{n_0}$, therefore $\dim_{\cH} (J_{F_{{n_0+l_0}}})=\dim_{\cH} (J_{F_m})$. Thus we have reached a contradiction and \eqref{notisol} holds. Therefore if $F$ is infinite, $\dim_{\cH} (J_F)$ is not an isolated point in $DS _{\tilde{\Lambda}}(\cS)$.

Now assume that $F\subset \N$ is finite. Recall that $\tilde{\Lambda} \subset F$, hence $F$ is irreducible. Exactly as in the previous case, invoking Theorem \ref{4310mu} we have that 
$$\dim_{\cH} (J_{F \cup \{n\}})> \dim_{\cH} (J_F).$$
We will now show that 
$$\dim_{\cH} (J_F)=\lim_{n \ra \infty} \dim_{\cH} (J_{F \cup \{n\}}).$$
Let $t>\dim_{\cH} (J_F)$. Proposition \ref{ghenciu1} implies that for all $n \in \N$,
$$e^{P_{F \cup \{n\}}(t)} \leq e^{P_{F}(t)}+ \sharp \Lambda (K \kappa_{\Lambda}^{-1})^t \, \|D \f_n\|_{\infty}^t.$$
Hence by \cite[Lemma 5.17]{CTU} there exists some $n_t \in \N$ such that for all $n \geq n_t$,
$$e^{P_{F \cup \{n\}}(t)}<1.$$
Therefore for such $n$, $P_{F \cup \{n\}}(t) \leq 0$ and thus $t \geq \dim_{\cH} (J_{F \cup \{n\}})$. Hence $\dim_{\cH} (J_F)$ is not  an isolated point in $DS _{\Lambda}(\cS)$ and the proof is complete.
 \end{proof}
 Notice that Theorem \ref{topospectrumintro} is an immediate corollary of Theorems \ref{compactspectrum} and \ref{perfectspectrum} in the case of conformal iterated function systems. We restate it here for completeness.
\begin{thm}
 \label{compactspectrumcifs}
If $\cS=\{\f_n\}_{n \in \N}$ is an infinite CIFS then $DS(\cS)$ is compact and perfect. \end{thm}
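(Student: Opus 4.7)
The plan is to obtain Theorem~\ref{compactspectrumcifs} by specializing Theorems~\ref{compactspectrum} and~\ref{perfectspectrum} to the IFS setting, with an appropriate choice of the auxiliary set $\Lambda$ witnessing finite irreducibility. The crucial simplification is that in a conformal IFS the vertex set $V$ is a singleton, so $t(e_1) = i(e_2)$ holds automatically for every ordered pair $e_1, e_2 \in E$; equivalently $A_{ab} = 1$ for all $a, b \in E$. Hence any two letters of $E$ compose directly, and the combinatorial obstructions that complicate the general GDMS case disappear.

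Building on this, I would take $\Lambda := \{\emptyset\}$, the nonempty subset of $E_A^*$ consisting solely of the empty word. Given any $i, j \in E$, choosing $\om = \emptyset \in \Lambda$ gives $i \om j = ij \in E_A^*$ since $A_{ij}=1$, so $\Lambda$ is indeed a (finite) set witnessing finite irreducibility of $E$. Because no letters of $E$ appear in the words of $\Lambda$, the associated set $\tilde{\Lambda}$ from \eqref{lambdatilde} is empty. According to Definition~\ref{dspec}, if $L = \emptyset$ then $DS_L(\cS) = DS(\cS)$, and therefore
$$DS_{\tilde{\Lambda}}(\cS) = DS_{\emptyset}(\cS) = DS(\cS).$$

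Finally I would apply Theorems~\ref{compactspectrum} and~\ref{perfectspectrum} with this choice of $\Lambda$: by hypothesis $\cS$ is infinite, and as observed above it is (trivially) finitely irreducible, so both theorems apply and yield that $DS_{\tilde{\Lambda}}(\cS)$ is simultaneously compact and perfect. Combined with the identification $DS_{\tilde{\Lambda}}(\cS) = DS(\cS)$, this gives the conclusion. There is no substantive obstacle to overcome at this stage: the analytic and combinatorial content sits entirely inside the abstract GDMS theorems already established, and the remaining work is simply to verify that $\Lambda = \{\emptyset\}$ is a legitimate witness of finite irreducibility in the IFS framework, which we have done above.
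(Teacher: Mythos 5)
Your argument is correct and is precisely the paper's (unwritten) argument: the paper declares this theorem an immediate corollary of Theorems~\ref{compactspectrum} and~\ref{perfectspectrum} applied with a witness $\Lambda$ for which $\tilde{\Lambda}=\emptyset$. You are right to take $\Lambda=\{\emptyset\}$, the singleton containing the empty word, rather than the literal empty set that Remark~\ref{thetains} writes -- the hypotheses of Propositions~\ref{ghenciu1sym} and~\ref{ghenciu2sym} require $\Lambda$ nonempty -- which is a small point the paper glosses over.
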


While $DS_{\tilde{\Lambda}}(\cS)$ is always compact if $\Lambda$ witnesses finite irreducibility for $E$, this is not the case for $DS_1(\cS)$. This is the content of our next theorem.
 
\begin{thm} There exists a finitely irreducible infinite conformal GDMS $\cS=\{\f_e\}_{e \in E}$ such that $DS_1(\cS)$ is not compact.
\end{thm}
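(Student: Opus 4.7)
The plan is to construct the required GDMS by splitting the ``switch'' letter $b$ in the GDMS used to prove \ref{th1lessth2} of Theorem \ref{sharptheta} into a countable family $\{b_n\}_{n\in\N}$ with decreasing contraction ratios. Concretely, I will take a single vertex $V=\{v\}$ with $X_v=[0,1]$, alphabet $E=\N\cup(-\N)\cup\{a\}\cup\{b_n:n\in\N\}$, and declare the nonzero entries of the incidence matrix to be $A_{ma}=A_{an}=A_{aa}=1$ (for $m\in-\N$ and $n\in\N$) and $A_{nb_k}=A_{b_km}=A_{ab_k}=A_{b_ka}=A_{b_kb_l}=1$ (for $n\in\N$, $m\in-\N$, $k,l\in\N$), all remaining entries of $A$ being $0$. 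The maps $\f_e$ will be affine contractions of $[0,1]$ with pairwise disjoint images (so OSC holds), satisfying $\|D\f_a\|_\infty=1/2$ and $\|D\f_{b_k}\|_\infty=2^{-k}$; the derivative norms of the $\N$- and $-\N$-maps are chosen summable in some parameter so that the pressure function is finite somewhere.

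The first step is a direct verification that, for every $k\in\N$, the set $\Lambda_k:=\{\emptyset,a,b_k,ab_k,b_ka\}$ witnesses finite irreducibility of $E$ (so $\tilde\Lambda_k=\{a,b_k\}$); the connectors $nb_kan'$, $mab_km'$, $nb_km$, $man$, $nb_ka$, $ab_km$, $b_lan$, and $mab_l$ handle every pair of letters not already permitted by $A$. In particular $\cS$ is finitely irreducible, and $DS_{\tilde\Lambda_k}(\cS)\subset DS_1(\cS)$ for every $k$. Since $A$ restricted to $\{a,b_k\}\times\{a,b_k\}$ is identically $1$, the subsystem $\cS_{\{a,b_k\}}$ is a full $2$-symbol similarity IFS satisfying OSC, so $s_k:=\dim_\cH(J_{\{a,b_k\}})$ is the Moran exponent determined by $(1/2)^{s_k}+(2^{-k})^{s_k}=1$; an elementary asymptotic argument yields $s_k>0$ for every $k$ and $s_k\to 0$ as $k\to\infty$. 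Thus the sequence $(s_k)$ lies in $DS_1(\cS)$ and accumulates at $0$.

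The key (and essentially only nontrivial) step will be to show that $0\notin DS_1(\cS)$. Given any $\Lambda$ witnessing finite irreducibility of $E$, apply the definition to the pair $(n,n')\in\N\times\N$ to obtain $\om\in\Lambda$ with $n\om n'\in E^*_A$. Inspecting the matrix, the only letters $\ell\in E$ satisfying $A_{n\ell}=1$ are $\{b_k:k\in\N\}$, and the only letter satisfying $A_{\ell n'}=1$ is $a$; hence $\om_1=b_{k_0}$ for some $k_0$ and $\om_{|\om|}=a$, forcing $\{a,b_{k_0}\}\subset\tilde\Lambda$. By monotonicity of Hausdorff dimension, $\dim_\cH(J_F)\ge\dim_\cH(J_{\{a,b_{k_0}\}})=s_{k_0}>0$ for every $F\supset\tilde\Lambda$, so $0\notin DS_{\tilde\Lambda}(\cS)$; as $\Lambda$ was arbitrary, $0\notin DS_1(\cS)$. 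Combined with the accumulation established above, $DS_1(\cS)$ is not closed in $\R$, and in particular not compact. The difficulty lies precisely in this uniform lower bound; it works because the restricted transition structure of $E$ forces every witnessing $\Lambda$ to ``spend'' one copy of $a$ together with at least one $b_k$ in order to bridge the $(n,n')$-pairs.
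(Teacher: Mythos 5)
Your approach is correct in structure and genuinely different from the paper's. The paper constructs a GDMS on the alphabet $\{a,b,c\}\cup\N$ for which the missing accumulation point of $DS_1(\cS)$ is an \emph{interior} value, namely $\dim_{\cH}(J_{\{a,b\}})$: any witness $\Lambda$ either has $\{a,b\}\subset\tilde\Lambda$ (forcing strictly larger dimension by Theorem \ref{4310mu}) or not (forcing strictly smaller dimension by a careful choice of contraction ratios), while the accumulation $\dim_{\cH}(J_{\{a,b,n\}})\to\dim_{\cH}(J_{\{a,b\}})$ is established with the pressure estimate Proposition \ref{ghenciu1}. You instead split the switching letter into a countable family $\{b_k\}$ whose contraction ratios decay to zero, so the excluded accumulation point becomes $0$ and the accumulation $s_k\to 0$ drops out of an explicit two-letter Moran equation, with no pressure machinery needed at all. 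The two proofs thus trade off in opposite directions: yours is shorter and more elementary, the paper's illustrates that the ``hole'' in $DS_1(\cS)$ need not sit at the bottom of the spectrum. Your verification that every witnessing $\Lambda$ for $E$ is forced, by the pair $(n,n')\in\N\times\N$, to carry both $a$ and some $b_{k_0}$ in $\tilde\Lambda$ is the correct uniform lower bound and is sound.

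There is one concrete error you should repair. With $\|D\f_a\|_\infty=1/2$ and $\|D\f_{b_k}\|_\infty=2^{-k}$ we already have $1/2+\sum_{k\ge1}2^{-k}=3/2>1$ before the $\N$- and $(-\N)$-indexed maps are added, so the images of the $\f_e$ cannot be pairwise disjoint inside $[0,1]$ and OSC fails for the system as written. The fix is harmless: take, say, $\|D\f_{b_k}\|_\infty=c\,2^{-k}$ with $c>0$ small enough that the total sum of all the ratios is strictly less than $1$. The Moran equation becomes $(1/2)^{s_k}+(c\,2^{-k})^{s_k}=1$, which still has a unique root $s_k>0$ with $s_k\to 0$ as $k\to\infty$, and the rest of your argument is unchanged.
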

\begin{proof} Let $a,b,c \in \R \stm \N$ be distinct and set
$$E=\{a,b,c\} \cup \N.$$ Consider any conformal GDMS $\cS=\{V,E,A,t,i, \{X_v\}_{v \in V}, \{\f_e\}_{e \in E} \}$ where the matrix $A$ is defined by
 \begin{equation}
 \label{matrixforcount}
\begin{split}
 A_{aa}&=1, \\
 A_{bb}&=1, \\
 A_{cc}&=0,\\
 \end{split}
 \quad\quad
 \begin{split}
 A_{ab}&=1, \\
 A_{ba}&=1, \\
 A_{ca}&=0, \\
 \end{split} \quad\quad
 \begin{split}
 A_{ac}&=0, \\
 A_{bc}&=0, \\
 A_{cb}&=0, \\
 \end{split}
 \end{equation}
 and $A_{en}=A_{ne}=1$ for $e \in E$ and $n \in \N$. Note that the last condition implies that for every $n \in \N$ the sets $\{n\}$ witness finite irreducibility for $E$. See also Figure \ref{fig4}.
 \begin{figure}
\centering
\includegraphics[scale = 0.4]{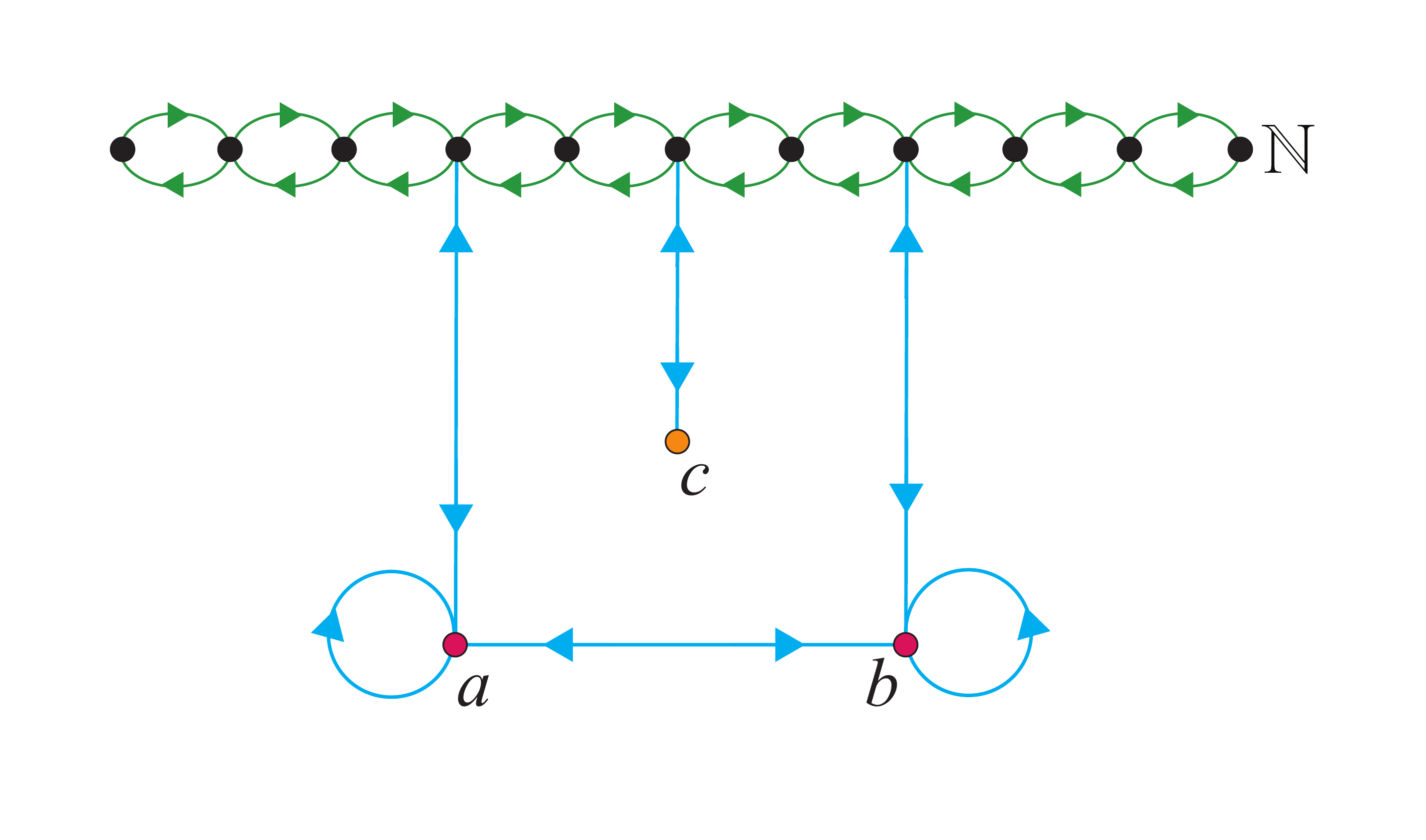}
\caption{}
\label{fig4}
\end{figure}
 
 The maps $\f_a, \f_b, \f_c$ are similarities such that 
 $$\|D \f_c\|_\infty<\|D \f_a\|_\infty=\|D \f_b\|_\infty<1$$
 and
 $$s:=\dim_{\cH}( J_{\{a,c\}})=\dim_{\cH}( J_{\{b,c\}})<\dim_{\cH} (J_{\{a,b\}}).$$
By \cite[Corollary 7.22]{CTU} if $t \in (s, \dim_{\cH}( J_{\{a,b\}}))$,
\begin{equation}
\label{dfcdfa}
\|D \f_c \|_\infty^t+\|D \f_a\|_\infty^t<1.
\end{equation}
Hence we can let $\{\f_n\}_{n \in \N}$ be any sequence of similarities such that
\begin{equation}
\label{dfmdfcdfa}
\sum_{n \in \N} \|D \f_n\|^t_\infty+\|D \f_c \|_\infty^t+\|D \f_a\|_\infty^t <1.
\end{equation}

If $\Lambda$ witness finite irreducibility for $E$ then $\tilde{\Lambda} \cap \N \neq \emptyset$. This is easy to because if $\tilde{\Lambda} \subset \{a,b,c\}$ then by the way the matrix $A$ was defined, see \eqref{matrixforcount}, there is no $ \om \in \Lambda$ such that $a\om c \in E_A^{\ast}$. Now let $\Lambda$ be any set witnessing finite irreducibility for $E$. We distinguish two cases. First assume that $\{a,b\} \nsubseteq \tilde{\Lambda}$.
Observe that by \eqref{dfmdfcdfa} and \cite[Corollary 7.20]{CTU},
$$\dim_{\cH} (J_{\{a,c\} \cup \N})\leq t <\dim_{\cH}( J_{\{a,b\}}),$$
and in the same manner (since $\|D \f_a\|_\infty=\|D \f_b\|_\infty$)
$$\dim_{\cH} (J_{\{b,c\} \cup \N})<\dim_{\cH} (J_{\{a,b\}}).$$
Therefore if $\tilde{\Lambda} \subset F$ then $\dim_{\cH}(J_F)<\dim_{\cH}( J_{\{a,b\}})$.

We now consider the case when $\{a,b\} \subset \tilde{\Lambda}$.
Moreover as in the proof of Theorem \ref{perfectspectrum}, since $\{a,b\}$ is irreducible, an application of Theorem \ref{4310mu} implies that for every $n \in \N$,
$$\dim_{\cH} (J_{\{a,b\}})<\dim_{\cH} (J_{\{a,b,n\}}).$$ Therefore if $\tilde{\Lambda} \subset F$ then $\dim_{\cH}(J_F)>\dim_{\cH}( J_{\{a,b\}})$. Hence we conclude that 
\begin{equation}
\label{notinspec}
\dim_{\cH} (J_{\{a,b\}}) \notin DS_1(\cS).
\end{equation}

Notice that $\Phi=\{a\}$ witness irreducibility for $\{a,b\}$, hence Proposition \ref{ghenciu1} implies that for $t> \dim_{\cH} (J_{\{a,b\}})$,
$$e^{P_{\{a,b,n\}}(t)} \leq e^{P_{\{a,b\}}(t)}+  (K \kappa_{\Phi}^{-1})^t \, \|D \f_n\|_{\infty}^t.$$
Since $\|D \f_n\|_\infty \ra 0$ as $n \ra \infty$, there exists some $n_0 \in \N$ such that for every $n \geq n_0$,
$$e^{P_{\{a,b,n\}}(t)} <1.$$
Therefore for $n \geq n_0$ we have that $\dim_{\cH} (J_{\{a,b,n\}})<t$. Hence
$$\lim_{n \ra \infty}\dim_{\cH} (J_{\{a,b,n\}})=\dim_{\cH} (J_{\{a,b\}}), $$
that is 
\begin{equation}
\label{closureofspec}
\dim_{\cH} (J_{\{a,b\}}) \in \overline{ DS_1 (\cS)}.
\end{equation}
Combining \eqref{notinspec} and \eqref{closureofspec} we deduce that $DS_1 (\cS)$ is not compact. The proof is complete.
\end{proof}

\section{Hausdorff dimension asymptotics of subsystems}\label{sec:hein}

Before proving the main result in this section we need to introduce some fundamental concepts of thermodynamic formalism. If $f:E_A^\N\to\R$ is a continuous function, then following \cite{MUbook} (see also the references therein), a Borel probability
measure $\tilde{m}$ on $E_A^\N$ is called a {\it Gibbs state}  for $f$ if there exist constants $Q_g\ge 1$ and $P_{\tilde{m}}\in\R$ such that for every $\om\in E_A^*$ and every
$\tau\in [\om]$
\begin{equation*}
Q_g^{-1}\le \frac{\tilde{m}([\om])}{\exp\left(S_{|\om|}f(\tau)-P_{\tilde{m}}|\om|\right)}
\le Q_g.
\end{equation*}
If additionally $\tilde{m}$ is shift-invariant, then $\tilde{m}$ is called an
{\it invariant Gibbs state}.

Let $\cS=\{\f_e\}_{e \in E}$ be a finitely irreducible conformal GDMS and fix some $t \in \R$ such that $Z_1(t)<\infty$, where, we recall, $Z_1(t)$ comes from \eqref{1_2017_12_20}. 
According to the theory developed in \cite[Chapter 2]{MUbook} and in \cite[Chapter 4]{CTU} the operator
\begin{equation}\label{1j89}
\mathcal{L}_t g(\om)= \sum_{i:\, A_{i \om_1}=1}
g(i \om)\| D\phi_i(\pi (\om))\|^t, \qquad \mbox{for $\om
  \in E^\mathbb{N}_A$.}
\end{equation}
is bounded in $C_b(E^\mathbb{N}_A)$, the Banach space of all real-valued bounded continuous functions on $E_A^\N$ endowed with the supremum norm $\|\cdot\|_\infty$.  We denote by $$\mathcal{L}_t^*: C_b^*(E^\mathbb{N}_A)\to C_b^*(E^\mathbb{N}_A)$$ 
 the dual operator for $\mathcal{L}_t$. 

We now state the following theorem comprising several results from \cite{CTU} and \cite{MUbook}. Recall that the function $\zeta:E_A^\N\ra \R$ was defined in \eqref{1MU_2014_09_10}. 

\begin{thm}\label{thm-conformal-invariant}
Suppose that $\cS=\{\f_e\}_{e \in E}$ is a finitely irreducible conformal GDMS such that $Z_1(t)<+\infty$. Then
\begin{enumerate}[label=(\roman*)]

\item There exists a unique Borel probability eigenmeasure $\tilde{m}_t$ of the conjugate Perron--Frobenius operator $\mathcal{L}_t^*$ and the corresponding eigenvalue is equal to $e^{P(t)}$.

\item The eigenmeasure $\tilde{m}_t$ is a Gibbs state for $t\zeta$.

\item The function $t\zeta:E_A^\N\to\R$ has a unique $\sigma$-invariant Gibbs state $\tilde{\mu}_t$.

\item The measure $\tilde{\mu}_t$ is ergodic, equivalent to $\tilde{m}_t$ and $\log(d\tilde\mu_t/d\tilde m_t)$ is uniformly bounded.

\item If $\int\zeta\,d\tilde{\mu}_t>-\infty$, then the $\sigma$--invariant Gibbs state $\tilde{\mu}_t$ is the unique equilibrium state for the potential $t\zeta$.

\item The Gibbs state $\tilde{\mu}_t$ is ergodic, and in case the system $\cS$ is finitely primitive, it is completely ergodic.
\end{enumerate}
\end{thm}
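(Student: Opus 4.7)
The plan is to invoke the general thermodynamic formalism for countable alphabet shifts adapted to conformal GDMSs. Since the statement is explicitly billed as a compilation of results from \cite{MUbook} and \cite{CTU}, the proof amounts to quoting the appropriate theorems from those monographs; I will sketch the conceptual chain that underlies them.

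First, the hypothesis $Z_1(t) < +\infty$ together with Lemma \ref{bdp} and \eqref{quasi-multiplicativity1} ensures that $\mathcal{L}_t$ is a bounded operator on $C_b(E^\N_A)$ and that $\|\mathcal{L}_t^n \mathbf{1}\|_\infty$ is comparable to $Z_n(t)$. A Schauder--Tychonoff fixed point argument applied to the normalized dual map $\nu \mapsto \mathcal{L}_t^*\nu / (\mathcal{L}_t^*\nu)(\mathbf{1})$, acting on the weak-$*$ compact convex set of Borel probability measures, produces an eigenmeasure $\tilde{m}_t$; the eigenvalue is identified with $e^{P(t)}$ via the subadditivity relation \eqref{zmn}, giving (i). For (ii), one reads off the Gibbs bounds by writing $\tilde{m}_t([\om]) = e^{-P(t)|\om|}\int \|D\f_\om(\pi(\tau))\|^t \, d\tilde{m}_t(\tau)$ on admissible cylinders and applying Bounded Distortion. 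Uniqueness of the eigenmeasure in (i) then follows because any two Gibbs states for the same H\"older potential are uniformly comparable on cylinders, and finite irreducibility promotes this comparability into equality through a standard Perron--Frobenius argument.

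For (iii) and (iv), the invariant Gibbs state $\tilde\mu_t$ is constructed as a weak-$*$ cluster point of the Ces\`aro averages $\frac{1}{n}\sum_{k=0}^{n-1} e^{-kP(t)}\, (\mathcal{L}_t^k \mathbf{1})\, \tilde{m}_t$; finite irreducibility guarantees that $\mathcal{L}_t^N \mathbf{1}$ is bounded above and below for some $N \in \N$, which forces the density $d\tilde\mu_t/d\tilde m_t$ to be the corresponding fixed point of $\mathcal{L}_t/e^{P(t)}$ and to be uniformly bounded away from $0$ and $\infty$. Ergodicity follows from the Gibbs property by a short Hopf-type argument, and uniqueness among $\sigma$-invariant Gibbs states follows from ergodicity plus equivalence with $\tilde m_t$. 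For (v), the integrability condition $\int \zeta \, d\tilde\mu_t > -\infty$ is precisely what enables the variational principle together with Rokhlin's formula to identify $\tilde\mu_t$ as the unique equilibrium state of $t\zeta$. Finally, complete ergodicity in (vi) under finite primitivity is extracted from a spectral gap for $\mathcal{L}_t$ on a suitable H\"older space, established through a Doeblin--Fortet--Ionescu-Tulcea--Marinescu type inequality.

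The substantive obstacle, and the reason the full arguments fill entire chapters of \cite{MUbook} and \cite{CTU}, is that the symbolic space $E^\N_A$ is \emph{not compact}: producing an eigenmeasure and controlling the iterates of the transfer operator both require replacing compactness with the combination of $Z_1(t) < \infty$ and the bounded distortion property. Since the full arguments are carried out in detail in the references, my "proof" will simply cite \cite[Chapter 2]{MUbook} and \cite[Chapter 4]{CTU} for parts (i)--(iv) and (vi), and the corresponding variational principle sections of the same monographs for (v).
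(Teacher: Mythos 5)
Your approach matches the paper's: the theorem is explicitly introduced there as a compilation of results from \cite{MUbook} and \cite{CTU}, and no proof is given beyond that citation, which is exactly what you do. Your conceptual sketch of the Schauder--Tychonoff eigenmeasure construction, the Ces\`aro-average invariant density, and the spectral-gap route to complete ergodicity is an accurate summary of the machinery in those references, but it is supplementary rather than a divergence from the paper.
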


Finally recall that the {\it characteristic Lyapunov exponent} of a Borel probability $\sg$--invariant measure $\mu$ on $E_A^\N$ with respect to 
the conformal GDMS $\cS$, is defined as
$$
\chi_\mu(\sigma):=-\int_{E_A^\N}\zeta d\mu > 0.
$$

We shall now prove a theorem providing an effective tool for calculating the Hausdorff dimension of the limit set of any finitely irreducible and strongly regular conformal GDMS up to any desired accuracy. Indeed, it was proven in 
\cite{MUbook} and \cite{CTU} that
\begin{equation}\label{2_2017_12_20}
\dim_{\cH}(J_{\cS})=\sup\{\dim_{\cH} (J_{F})\},
\end{equation}
where the supremum is taken over all finite subsets $F$ of $E$. We will now give an explicit estimate for  $\dim_{\cH} (J_{\cS})-\dim_{\cH} (J_{F})$. This results generalizes the one for IFSs from \cite{HU} to the setting of GDMSs simultaneously giving a substantially simplified proof. We would also like to mention that nowadays there are a bunch of good and quite fast algorithms to calculate the Hausdorff dimension of the limit set of finitely irreducible conformal GDMS with high accuracy, for example \cite{mcmullen, jp1,jp2, nusscf, nuss}. Although some of these papers use a different language, their results can be easily translated to speak about GDMSs.
\begin{thm}
\label{hein}
Let $\cS=\{\f_e\}_{e \in E}$ be a finitely irreducible and strongly regular conformal GDMS  and let $\Lambda \subset E$ be a set witnessing finite irreducibility for $E$. If $F_0\subset E$ is a finite set containing $\tilde{\Lambda}$ such that $h_{F_0} \geq \theta_{\cS}$, then for all finite sets $F \supset F_0$ , we have that
$$
\dim_{\cH} (J_{\cS})-\dim_{\cH} (J_{F}) \leq  \frac{ \sharp \Lambda (K \kappa_{\Lambda}^{-1})^{h_F}}{\chi_{\tilde{\mu}_h}} \sum_{E \stm F} \, \|D \f_e\|^{h_F}_{\infty}.
$$
\end{thm}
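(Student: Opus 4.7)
The plan is, writing $h:=h_\cS=\dim_\cH J_\cS$ and $h_F:=\dim_\cH J_F$ (via Theorem~\ref{t1j97}), to split the argument into two independent pieces: a convexity / tangent-line argument that reduces the bound on $h-h_F$ to a bound on $P_E(h_F)$, and a multi-letter generalization of Proposition~\ref{ghenciu2} that supplies the latter bound. As a preliminary setup, observe that because $F$ is finite with $\tilde\Lambda\subset F_0\subset F$, the set $F$ is finitely irreducible with the same witness $\Lambda$, so the finite subsystem $\cS_F$ is regular and $P_F(h_F)=0$. Strong regularity of $\cS$ gives both $P_E(h)=0$ and $h>\theta_\cS$, putting $h$ in the interior of the effective domain of the convex function $P_E$; by Theorem~\ref{thm-conformal-invariant}, $\tilde\mu_h$ is the unique equilibrium state for $h\zeta$ and $P_E'(h)=\int\zeta\,d\tilde\mu_h=-\chi_{\tilde\mu_h}<0$. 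The hypothesis $h_F\geq h_{F_0}\geq\theta_\cS$ combined with Lemma~\ref{433mu} also ensures that $\sum_{e\in E\setminus F}\|D\f_e\|_\infty^{h_F}<\infty$.

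The first piece is then immediate from convexity of $P_E$: the supporting-hyperplane inequality at $s=h_F\leq h$ gives
$$P_E(h_F)\geq P_E(h)+P_E'(h)(h_F-h)=\chi_{\tilde\mu_h}(h-h_F),$$
which rearranges to $h-h_F\leq P_E(h_F)/\chi_{\tilde\mu_h}$.

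For the second piece I will prove the following multi-letter analog of Proposition~\ref{ghenciu2}: whenever $F\subset E$ is finitely irreducible through $\Lambda$ with $\tilde\Lambda\subset F$, and $t\geq 0$ satisfies $P_F(t)\geq 0$ and $Z_1(E,t)<\infty$,
$$e^{P_E(t)}\leq e^{P_F(t)}+\sharp\Lambda\,(K\kappa_\Lambda^{-1})^t\,e^{p_\Lambda P_F(t)}\sum_{e\in E\setminus F}\|D\f_e\|_\infty^t.$$
The proof will mirror that of Lemma~\ref{mainlemma} and Proposition~\ref{ghenciu2sym}, with the single distinguished letter $a$ replaced by the entire set $E\setminus F$. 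Any $\om\in E_A^n$ with exactly $n-j$ letters in $E\setminus F$ decomposes as $\om=\rho_1 b_1\rho_2\cdots b_{n-j}\rho_{n-j+1}$ with $b_i\in E\setminus F$ and $\rho_i\in F_A^\ast\cup\{\emptyset\}$, and for each fixed tuple $(b_1,\ldots,b_{n-j})\in(E\setminus F)^{n-j}$ the many-to-one map $g$ of Lemma~\ref{mainlemma} still realises every $\bar\om\in F_A^\ast$ with multiplicity at most $\binom{n}{j}(\sharp\Lambda)^{n-j}$. Summing over the tuples replaces the single factor $\|D\f_a\|_\infty^{t(n-j)}$ of Lemma~\ref{mainlemma} by $\bigl(\sum_{e\in E\setminus F}\|D\f_e\|_\infty^t\bigr)^{n-j}$, and the $n$-th root computation of Proposition~\ref{ghenciu2sym} then produces the stated inequality. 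Evaluating at $t=h_F$ (so $P_F(h_F)=0$) and using $\log(1+x)\leq x$ yields
$$P_E(h_F)\leq \sharp\Lambda\,(K\kappa_\Lambda^{-1})^{h_F}\sum_{e\in E\setminus F}\|D\f_e\|_\infty^{h_F},$$
which, combined with the convexity estimate, is exactly the theorem.

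The main obstacle is the combinatorial adaptation of Proposition~\ref{ghenciu2sym} to a whole block of added letters at once; everything else is bookkeeping. An alternative strategy would be to iterate Proposition~\ref{ghenciu2} one letter at a time along an enumeration of $E\setminus F$ and pass to the limit (using $P_{F_n}(t)\to P_E(t)$ as $F_n\uparrow E$), but the cumulative factor $e^{p_\Lambda P_{F_n}(h_F)}$ grows with $n$ and yields a strictly weaker constant, so the direct multi-letter argument seems cleanest.
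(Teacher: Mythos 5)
Your proposal is correct and follows the same two-piece strategy as the paper (a convexity bound on $h-h_F$ in terms of $P_E(h_F)$, then an upper bound on $P_E(h_F)$ via the pressure-perturbation machinery of Section~\ref{sec:preses}), but it is actually \emph{more} careful than the paper at the crucial step. The paper passes from $e^{P(h_F)}-e^{P_F(h_F)}$ to the sum $\sum_{e\in E\setminus F}\|D\f_e\|_\infty^{h_F}$ by invoking Corollary~\ref{ghenciucomb}, which is a one-letter estimate; the intended iteration along an enumeration of $E\setminus F$ does not literally produce the stated constant $\max\{1,e^{p_\Lambda P_F(h_F)}\}=1$, because each step picks up a factor $e^{p_\Lambda P_{F^{(m)}}(h_F)}$ at the intermediate set $F^{(m)}$, and these factors grow toward $e^{p_\Lambda P_E(h_F)}$. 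Your observation of this issue is exactly right, and your proposed fix --- a multi-letter extension of Lemma~\ref{mainlemma}/Proposition~\ref{ghenciu2} obtained by summing over tuples $(b_1,\dots,b_{n-j})\in(E\setminus F)^{n-j}$, which replaces the factor $\|D\f_a\|_\infty^{t(n-j)}$ by $\bigl(\sum_{e\in E\setminus F}\|D\f_e\|_\infty^t\bigr)^{n-j}$ and keeps all distortion and counting constants at the \emph{base} set $F$ --- is precisely what is needed and is correct. The only cosmetic difference from the paper on the other piece is that the paper integrates $\frac{d}{dt}e^{P(t)}$ over $[h_F,h]$ using $e^{P(t)}\ge 1$ and monotonicity of $P'$ to reach $e^{P(h_F)}-1\ge\chi_{\tilde\mu_h}(h-h_F)$, whereas you use the supporting-hyperplane inequality $P_E(h_F)\ge P_E(h)+P_E'(h)(h_F-h)$ and then $\log(1+x)\le x$; the two chains are equivalent for the final bound. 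One small precision: the identity $P_E'(h)=-\chi_{\tilde\mu_h}$ is not literally a clause of Theorem~\ref{thm-conformal-invariant} --- it is \cite[Proposition~2.6.13]{MUbook} --- and the finiteness $\sum_{e\in E\setminus F}\|D\f_e\|_\infty^{h_F}<\infty$ needs $h_F>\theta_\cS$ rather than $\ge$ (when $h_F=\theta_\cS$ the right side may be $+\infty$ and the inequality is vacuous), but neither affects the validity of the argument.
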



\begin{proof} First notice that by Theorems \ref{t1j97} and \ref{4310mu} (since $\cS$ is strongly regular) always exist sets $F_0 \subset E$ such that $h_{F_0} \geq \theta$. Let $F \supset F_0$. Since convex functions are a.e. differentiable Proposition \ref{p2j85} implies that
\begin{equation}
\label{ftcpres}
e^{P(h_F)}-1=e^{P(h_F)}-e^{P(h)}=\int_h^{h_F} P'(t)e^{P(t)} dt=-\int_{h_F}^h P'(t)e^{P(t)} dt.
\end{equation}
Since $h_F \geq \theta$, we have that $P(t) \in [0,+\infty]$ for $t \in [h_F, h]$. Hence $e^{P(t)} \geq 1$ for such $t$. Recalling  Proposition \ref{p2j85} the pressure function is convex on $(\theta, +\infty)$ hence $P'$ is non-decreasing on $[h_F, h]$. Therefore 
$P'(t) \leq P'(h)$
for $t \in [h_F,h]$. Thus by \eqref{ftcpres},
\begin{equation}
\label{ftcpres1}e^{P(h_F)}-1=\int_{h_F}^h (-P'(t))e^{P(t)} dt \geq -P'(h)(h-h_F).
\end{equation}
It follows from \cite[Proposition 2.6.13]{MUbook} (for $f= \zeta$ and $\psi=0$) that $$P'(t)=-\chi_{\tilde{\mu}_t}$$
for $t \in [h_F, h]$. Hence \eqref{ftcpres1} can be rewritten as 
\begin{equation}
\label{ftcpres1}e^{P(h_F)}-1 \geq \chi_{\tilde{\mu}_h}(h-h_F).
\end{equation}
By \cite[Proposition 7.12]{CTU}, since $F$ is finite and irreducible we deduce that $\cS_F$ is strongly regular. In particular this implies that $P_F(h_F)=0$. By Corollary \ref{ghenciucomb} we then deduce that
\begin{equation}
\begin{split}
\label{hein1}e^{P(h_F)}-1&=e^{P(h_F)}-e^{P_F(h_F)} \\
&\leq   \sharp \Lambda (K \kappa_{\Lambda}^{-1})^{h_F} \, \max \{1, e^{p_{\Lambda}P_{F}(h_F)}\} \, \sum_{e \in E \stm F} \|D \f_e\|_{\infty}^{h_F} \\
&= \sharp \Lambda (K \kappa_{\Lambda}^{-1})^{h_F} \,  \sum_{e \in E \stm F} \|D \f_e\|_{\infty}^{h_F}.
\end{split}
\end{equation}
Now the proof follows by \eqref{ftcpres1} and \eqref{hein1}.
\end{proof}

\begin{rem} Note that under the assumptions of Theorem \ref{hein} we get  that for all finite sets $F \supset F_0$,
$$\dim_{\cH} (J_{\cS})-\dim_{\cH} (J_{F}) \leq  \frac{ \sharp \Lambda (K \kappa_{\Lambda}^{-1})^{\theta}}{\chi_{\tilde{\mu}_h}} \sum_{E \stm F} \, \|D \f_e\|^{h_0}_{\infty},$$
where $h_0=\dim_{\cH} (J_{F_0})$.
\end{rem}

\section{Dimension spectrum of conformal iterated functions systems}
\label{sec:dimspect}
In this section we only deal with iterated function systems, hence we will assume that $\cS=\{\f_e\}_{e \in E}$ is a conformal IFS and $E$ is a countable infinite index set. In this and the next section we will be primarily preoccupied with the question of when the spectrum $DS(\cS)$ is full, i.e. equal to $[0,h(\cS)]$.  We would like to repeat that although always (see \cite{MU1} and \cite{CTU}) $DS(\cS)\supset [0,\theta(\cS)]$ there exist IFSs, see e.g. \cite[Example~6.4]{MU2}, whose dimension spectrum  $DS$ is {\em not} full. For this reason we will also consider other relevant properties of $DS(\cS)$ that we will introduce shortly in Definition \ref{d1fs2}.

\begin{defn}
\label{d1fs2}
Let $\cS=\{\f_e\}_{e \in E}$ be a conformal IFS. We say that:
\begin{enumerate}[label=(\roman*)]
\item $\cS$ is of {\em full spectrum} if $DS(\cS)=[0,h(\cS)]$.
\item $\cS$ is of {\em cofinite full spectrum} if there exists a finite set $G \subset E$ such that $\cS_{E\stm G}$ is of full spectrum.
\item \label{SCoFS}$\cS$ is of {\em strong cofinite full spectrum} if there exists a finite set $G \subset E$ and an enumeration of $E \stm G$, say $E \stm G=\{e_i\}_{i=1}^\infty$, such that for every $n \geq 1$ the subsystem $\{\f_{e_i}\}_{i \geq n}$ is of full spectrum.
\item $\cS$ is of {\em strong full spectrum} if condition \ref{SCoFS} is satisfied with $G=\{\emptyset\}$.
\end{enumerate}
\end{defn}

Somewhat informally note that if $\cS$ is of strong full spectrum then after some enumeration each tail of $\cS$ is of full spectrum. Also observe that if $\cS$ is not strongly regular and of full spectrum, then $\cS$ is of strong full spectrum. To see this let $\cS' \subset \cS$ be a cofinite subsystem of $\cS$. Note that in that case $\theta(\cS')=h(\cS')$. If not, then there exists some $t \in (\theta(\cS'), h(\cS'))$. By Lemma \ref{433mu} we deduce that $\theta(\cS')=\theta(\cS)$.
Recall that since $\cS$ is not strongly regular Theorem \ref{4310mu} \ref{ctu(ii)} implies that $\theta(\cS)=h(\cS)$, therefore  $h(\cS')>h(\cS)$ and this is a contradiction. Hence we have shown that $\theta(\cS')=h(\cS')$ for all cofinite subsystems of $\cS$, and using Theorem \ref{mainspect} we deduce that $\cS$ is of strong full spectrum.


We stress that from now on we will endow the countable alphabet $E$ with an order of type $\N$. If $E=\{e_i\}_{i \in \N}$ is an enumeration of $E$ with respect to that order we denote
$$E_m=\{e_m,e_{m+1},e_{m+2}, \dots\}$$
and
$$I(m)=\{e_1,\dots,e_m\}$$
for $m \in \N$. Moreover if $e \in E$ then $|e|$ denotes the unique natural number such that $e=e_{|e|}$, hence $|e|$ is simply the order of $e$ with respect to the enumeration of $E$. 

\begin{defn} \label{d2fs3} Let $\cS=\{\f_e\}_{e \in E}$ be a conformal IFS.  If $F \subset E$ is nonempty and finite, then the {\em positive replacement} of $F$ is
$$F_\infty^+=(F \stm \max (F)) \cup E_{|\max(F)|+1}.$$
\end{defn}

We now state a fundamental result from \cite{KZ} which provides a sufficient condition for $t\in [0,h(\cS)]$ to belong in $DS(\cS)$.

\begin{thm}
\label{t3fs3} Let $\cS=\{\f_e\}_{e \in E}$ be a conformal IFS. If $t \in [0, h(\cS)]$ and if for every nonempty and finite subset $F \subset E$
$$P_F(t)>0 \implies P_{F_\infty^+}(t) \geq 0,$$
then $t \in DS(\cS)$.
\end{thm}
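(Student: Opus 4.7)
Since $\cS$ is a conformal IFS (so every subset of $E$ gives an IFS subsystem), by Theorem~\ref{t1j97} one has $\dim_{\cH}(J_A)=h_A$ for every nonempty $A\subset E$. Observing that $P_A(t)=0$ forces $h_A=t$ (via strict monotonicity of $s\mapsto P_A(s)$ on its finiteness range), it suffices to produce $A\subset E$ with $P_A(t)=0$. The endpoints $t=0$ (take $A=\emptyset$) and $t=h(\cS)$ (take $A=E$) are trivial, and $t\in(0,\theta]$ is already covered by Remark~\ref{thetains}. So I will assume $t\in(\theta,h(\cS))$; in this range $0<P_E(t)<\infty$, $\sum_{e\in E}\|D\f_e\|_\infty^t<\infty$, and in particular $\|D\f_{e_n}\|_\infty^t\to 0$ as $n\to\infty$ along the given enumeration of $E$.

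\textbf{Greedy construction.} Set $A_0=\emptyset$ and define inductively
$$
A_n=\begin{cases} A_{n-1}\cup\{e_n\} & \text{if } P_{A_{n-1}\cup\{e_n\}}(t)\leq 0,\\ A_{n-1} & \text{otherwise,}\end{cases}
$$
and put $A:=\bigcup_{n\geq 1} A_n$. Each $A_n$ satisfies $P_{A_n}(t)\leq 0$ by construction. Since for any countable IFS the pressure equals the supremum over finite subsystems, $P_A(t)=\sup_n P_{A_n}(t)\leq 0$. The delicate step is to prove the reverse inequality $P_A(t)\geq 0$.

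\textbf{Lower bound via the hypothesis.} Suppose for contradiction that $P_A(t)<0$. Enumerate $E\stm A=\{e_{i_j}\}_{j\geq 1}$ with $i_1<i_2<\cdots$; this set is nonempty since $P_A(t)<0<P_E(t)$. For each $j$, the greedy rejection of $e_{i_j}$ means that the finite set $F_j:=A_{i_j-1}\cup\{e_{i_j}\}$ satisfies $P_{F_j}(t)>0$; its maximum is $e_{i_j}$, so the hypothesis gives
$$
P_{(F_j)_\infty^+}(t)\geq 0,\qquad (F_j)_\infty^+=A_{i_j-1}\cup E_{i_j+1}.
$$
Because $e_{i_j}\notin A$, every element of $A\stm A_{i_j-1}$ has index strictly greater than $i_j$, so $A\subset (F_j)_\infty^+$ and $(F_j)_\infty^+\stm A\subset\{e_{i_{j+1}},e_{i_{j+2}},\ldots\}$. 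If $E\stm A$ is finite, choosing $j$ maximal yields $(F_j)_\infty^+\subset A$, hence $0\leq P_{(F_j)_\infty^+}(t)\leq P_A(t)<0$, a contradiction. Otherwise, iterate Proposition~\ref{ifspe} by inserting $e_{i_{j+1}},e_{i_{j+2}},\ldots$ into $A$ one element at a time and telescoping the resulting bounds, obtaining
$$
1\leq e^{P_{(F_j)_\infty^+}(t)}\leq e^{P_A(t)}+K^t\sum_{k>j}\|D\f_{e_{i_k}}\|_\infty^t.
$$
Letting $j\to\infty$, the tail sum vanishes (since $\sum_{e\in E}\|D\f_e\|_\infty^t<\infty$), forcing $1\leq e^{P_A(t)}$, contrary to $P_A(t)<0$. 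Hence $P_A(t)=0$ and $\dim_{\cH}(J_A)=t$.

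\textbf{Main obstacle.} The upper bound $P_A(t)\leq 0$ is essentially automatic from the greedy rule; the genuine difficulty is the lower bound, which requires the structural hypothesis to rule out a persistent pressure deficit caused by the rejected elements $\{e_{i_j}\}$. The hypothesis is exactly what is needed: it guarantees that each skipped $e_{i_j}$ can be absorbed together with the entire tail $E_{i_j+1}$ while keeping pressure non-negative, and Proposition~\ref{ifspe} then converts this qualitative fact, via the summability $\sum\|D\f_e\|_\infty^t<\infty$ (available precisely because $t>\theta$), into the quantitative inequality above that pins $P_A(t)$ to $0$.
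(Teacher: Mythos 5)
Your proof is correct and reproduces the greedy-construction argument of Kesseb\"ohmer--Zhu \cite[Theorem 2.2]{KZ}, which is what the paper cites rather than reproving. The key steps — reducing to $t\in(\theta,h(\cS))$ via Remark~\ref{thetains}, the greedy selection making $P_A(t)\le 0$ automatic, and the contradiction from $P_A(t)<0$ via the hypothesis applied to each rejected letter together with the telescoped bound from Proposition~\ref{ifspe} and the summability $\sum_e\|D\f_e\|_\infty^t<\infty$ — are all sound, including the split into the finite and infinite $E\setminus A$ cases.
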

For the proof see \cite[Theorem 2.2]{KZ}.  We now provide the first technical consequence of Theorem \ref{t3fs3}. 

\begin{thm}
\label{t1fs3}
Let $\cS=\{\f_e\}_{e \in E}$ be a conformal IFS and let $t \in [0, h(\cS)]$. Assume that there exists some $d \geq 1$ such that $P_{I(d)}(t) \leq 0$. Assume also that there are two nonnegative sequences $(\alpha_k(t))_{k=d+1}^\infty$ and $(\beta_k(t) )_{k=d+1}^\infty$ with $\alpha_{d+1}(t)=0$ and the following properties:
\begin{enumerate}[label=(\roman*)]
\item \label{consa}If $F$ is a finite subset of $E$ and $e_k \notin F \cup I(d)$, then 
$$\alpha_k(t) \leq \exp (P_{F \cup \{e_k\}}(t))-\exp(P_F(t)) \leq \beta_{k}(t).$$
\item If $k \geq d+1$, then 
$$ \label{consb}\sum_{n=k+1}^\infty \alpha_n(t) \geq \beta_k(t).$$
\end{enumerate}
Then $t \in DS(\cS)$.
\end{thm}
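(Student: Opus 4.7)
The plan is to deduce the statement as an application of Theorem \ref{t3fs3}. Fix $t$ as in the hypothesis; it suffices to verify that for every nonempty finite $F \subset E$ with $P_F(t) > 0$ one has $P_{F_\infty^+}(t) \geq 0$. So fix such an $F$ and set $m := |\max(F)|$. Since the pressure is monotone in the alphabet and $P_F(t) > 0 \geq P_{I(d)}(t)$, $F$ cannot be contained in $I(d)$; hence $m > d$. By Definition \ref{d2fs3}, $F_\infty^+ = (F \setminus \{e_m\}) \cup E_{m+1}$, and we approximate it from below by the increasing sequence of finite sets
\[
F_m := F \setminus \{e_m\}, \qquad F_N := F_m \cup \{e_{m+1}, \ldots, e_N\} \quad (N \geq m+1).
\]

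The core of the argument is a telescoping estimate for $\exp(P_{F_N}(t))$. First, applying hypothesis \ref{consa} to the pair $(F_m, e_m)$---which is legal because $m > d$ so $e_m \notin F_m \cup I(d)$---we obtain
\[
\exp(P_F(t)) - \exp(P_{F_m}(t)) \leq \beta_m(t).
\]
Next, for each $k$ with $m < k \leq N$, the pair $(F_{k-1}, e_k)$ satisfies $e_k \notin F_{k-1} \cup I(d)$ (since $k > m > d$ and $e_k \notin F_{k-1}$ by construction), so hypothesis \ref{consa} gives
\[
\exp(P_{F_k}(t)) - \exp(P_{F_{k-1}}(t)) \geq \alpha_k(t).
\]
Summing the second inequality telescopically from $k = m+1$ to $N$ and combining with the first yields
\[
\exp(P_{F_N}(t)) \geq \exp(P_F(t)) - \beta_m(t) + \sum_{k=m+1}^{N} \alpha_k(t).
\]

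For the limiting step, since $F_N \uparrow F_\infty^+$, monotonicity of the pressure in the alphabet gives $\exp(P_{F_\infty^+}(t)) \geq \lim_{N \to \infty} \exp(P_{F_N}(t))$. Letting $N \to \infty$ in the estimate above and invoking hypothesis (ii) with $k = m$ (valid since $m \geq d+1$) to get $\sum_{k=m+1}^\infty \alpha_k(t) \geq \beta_m(t)$, we conclude
\[
\exp(P_{F_\infty^+}(t)) \geq \exp(P_F(t)) > 1,
\]
so $P_{F_\infty^+}(t) > 0$, and Theorem \ref{t3fs3} delivers $t \in DS(\cS)$.

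The only real obstacle is a mild edge case: if $F = \{e_m\}$ then $F_m = \emptyset$, and one must adopt the natural convention $\exp(P_\emptyset(t)) = 0$ (as $Z_n(\emptyset,t) = 0$) in applying hypothesis \ref{consa}; with this convention the above telescoping and the final bound go through unchanged.
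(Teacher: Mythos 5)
Your argument is correct and follows essentially the same route as the paper's: reduce via Theorem~\ref{t3fs3}, set $F^-=F\setminus\max(F)$ with $m=|\max(F)|\geq d+1$, telescope the additions of $e_{m+1},e_{m+2},\dots$ to $F^-$ using hypothesis~(i), and close with hypothesis~(ii), the only cosmetic difference being that you write out the telescoping and limiting step explicitly via the finite truncations $F_N$ whereas the paper leaves this implicit. (The edge case $F^-=\emptyset$ you flag at the end cannot actually occur: for any single letter $e$ and $t\geq 0$ one has $P_{\{e\}}(t)=t\log\|D\phi_e\|_\infty\leq 0$, so $P_F(t)>0$ already forces $|F|\geq 2$.)
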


\begin{proof} We want to apply Theorem \ref{t3fs3} and to this end we assume that 
\begin{equation}
\label{1fs4}
	P_F(t)>0.
\end{equation}Let
$$F^-=F \stm \max(F).$$
Since $P_F(t)>0$ the set $F$ must contain at least one element more than $I(d)$. Therefore $$|\max (F)|+1 \geq d+2.$$ 
Now it follows from \ref{consa} that 
$$e^{P_{\fpli}(t)} \geq e^{P_{F^{-}}(t)}+\sum_{n=|\max(F)|+1}^\infty \alpha_n(t)$$
and
$$e^{P_F(t)} \leq e^{P_{F^-}(t)}+ \beta_{|\max(F)|}(t).$$
Therefore
$$e^{P_{\fpli}(t)} \geq e^{P_F(t)}+\sum_{n=|\max(F)|+1}^\infty \alpha_n(t)- \beta_{|\max(F)|}(t).$$
Hence it follows from \ref{consb} and \eqref{1fs4} that
$$e^{P_{\fpli}(t)} \geq e^{P_F(t)}>1.$$
Equivalently $P_{\fpli}(t)>0$, and the proof follows by Theorem \ref{t3fs3}.
\end{proof}

The two following theorems are consequences of Theorem \ref{t1fs3}. 
\begin{thm}
\label{t1fs4}
Let $\cS=\{\f_e\}_{e \in E}$ be a conformal IFS. Assume that for every $t \in (0,h(\cS))$ there exist two nonnegative sequences $(\alpha_k(t))_{k=2}^\infty$ and $(\beta_k(t) )_{k=2}^\infty$ with $\alpha_{2}(t)=0$ and the following properties:
\begin{enumerate}[label=(\roman*)]
\item \label{consa4}If $F$ is a finite subset of $E$ and $e_k \notin F \cup \{e_1\}$, then 
$$\alpha_k(t) \leq \exp (P_{F \cup \{e_k\}}(t))-\exp(P_F(t)) \leq \beta_{k}(t).$$
\item If $k \geq 2$, then 
$$ \label{consb4}\sum_{n=k+1}^\infty \alpha_n(t) \geq \beta_k(t).$$
\end{enumerate}
Then $\cS$ is of strong full spectrum.
\end{thm}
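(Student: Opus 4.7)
The plan is to show that for every $n \geq 1$ the tail subsystem $\cS_{E_n} = \{\f_{e_i}\}_{i \geq n}$, endowed with the induced enumeration $e'_j := e_{n+j-1}$, is of full spectrum; by the definition of strong full spectrum this would give the conclusion. To this end I would invoke Theorem \ref{t1fs3} for $\cS_{E_n}$ with $d=1$ at each $t \in (0, h(\cS_{E_n}))$; the endpoints $0$ and $h(\cS_{E_n})$ lie in $DS(\cS_{E_n})$ trivially (any singleton subsystem has a one-point limit set, and the whole tail system realizes its own dimension).

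For the hypothesis $P_{\{e'_1\}}(t) \leq 0$ of Theorem \ref{t1fs3}, contractivity gives $\|D\f_{e'_1}\|_\infty \leq s < 1$ and hence $P_{\{e'_1\}}(t) \leq t\log s \leq 0$. For the two required sequences on $\cS_{E_n}$ I would shift the sequences $(\alpha_k(t))_{k \ge 2}, (\beta_k(t))_{k \ge 2}$ furnished by the hypothesis of Theorem \ref{t1fs4} (which are available since $h(\cS_{E_n}) \leq h(\cS)$), setting
\begin{equation*}
\alpha'_2(t) := 0, \qquad \alpha'_k(t) := \alpha_{n+k-1}(t) \text{ for } k \geq 3, \qquad \beta'_k(t) := \beta_{n+k-1}(t) \text{ for } k \geq 2.
\end{equation*}
The verification of the two conditions of Theorem \ref{t1fs3} is then bookkeeping: for $F \subset E_n$ finite with $e'_k \notin F \cup \{e'_1\}$ and $k \geq 3$, condition (i) is a direct instance of condition (i) of Theorem \ref{t1fs4} at index $n+k-1$, the exclusion $e_{n+k-1} \notin F \cup \{e_1\}$ being automatic from $e_1 \notin E_n \supseteq F$ and $n+k-1 \neq 1$. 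Condition (ii) unpacks, after reindexing the sum, to $\sum_{m \geq n+k} \alpha_m(t) \geq \beta_{n+k-1}(t)$, exactly condition (ii) of the hypothesis at the shifted index.

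The one genuinely delicate case is $k=2$, where Theorem \ref{t1fs3} forces $\alpha'_2(t) = 0$ while the naive shift $\alpha_{n+1}(t)$ need not vanish; this is why I set $\alpha'_2(t) := 0$ by hand above. The lower bound in condition (i) for $k=2$ then reduces to the trivial inequality $e^{P_{F \cup \{e'_2\}}(t)} \geq e^{P_F(t)}$ (monotonicity of the pressure under alphabet enlargement), while the upper bound $\beta'_2(t) = \beta_{n+1}(t)$ remains a direct instance of condition (i) of the hypothesis at index $n+1$. This is precisely the point at which the two-sided form of condition (i) in Theorem \ref{t1fs4}, rather than a one-sided estimate, is essential. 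With the sequences constructed in this way, Theorem \ref{t1fs3} yields $t \in DS(\cS_{E_n})$ for every $t \in (0, h(\cS_{E_n}))$, so $\cS_{E_n}$ is of full spectrum; since $n \geq 1$ is arbitrary, $\cS$ is of strong full spectrum.
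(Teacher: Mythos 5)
Your proof is correct and follows exactly the paper's intended route: apply Theorem~\ref{t1fs3} with $d=1$ to each tail system $\cS_{E_n}$ (using $P_{\{e_n\}}(t)\leq 0$), shifting the given sequences accordingly. Your explicit treatment of the index $k=2$ — forcing $\alpha'_2:=0$ and invoking monotonicity of pressure for the lower bound — is a detail the paper's one-line proof leaves implicit, but it is the same argument.
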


Theorem \ref{t1fs4} follows immediately from Theorem \ref{t1fs3} after observing that $P_{\{e_n\}}(t) \leq 0$ for every $t \in (0, h(\cS))$ and every $n \in \N$.

\begin{thm}
\label{t1fs5}
Let $\cS=\{\f_e\}_{e \in E}$ be a strongly regular conformal IFS. Assume that there exist $q \geq 1$ and $\eta>0$ such that for every $t \in (0,\theta(\cS)+\eta)$ there are two nonnegative sequences $(\alpha_k(t))_{k=q+1}^\infty$ and $(\beta_k(t) )_{k=q}^\infty$ with $\alpha_q(t)=0$ and the following properties:
\begin{enumerate}[label=(\roman*)]
\item \label{consa5}If $F$ is a finite subset of $E_q$ and $e_k \notin F \cup \{e_q\}$, then 
$$\alpha_k(t) \leq \exp (P_{F \cup \{e_k\}}(t))-\exp(P_F(t)) \leq \beta_{k}(t).$$
\item If $k \geq q$, then 
$$ \label{consb5}\sum_{n=k+1}^\infty \alpha_n(t) \geq \beta_k(t).$$
\end{enumerate}
Then $\cS$ is of strong cofinite full spectrum.
\end{thm}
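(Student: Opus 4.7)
The plan is to apply Theorem \ref{t1fs3} to each sufficiently deep tail subsystem $\cS_{E_m}$, after trimming off a finite initial block of $E$ to serve as the set $G$ in Definition \ref{d1fs2}\ref{SCoFS}. Without loss of generality identify $E$ with $\N$ via its given enumeration, so that $E_m=\{e_m,e_{m+1},\ldots\}$.

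First I would show that $h(\cS_{E_m})\to\theta(\cS)$ as $m\to\infty$. Since $E_m$ is cofinite, Lemma \ref{433mu} yields $\theta(\cS_{E_m})=\theta(\cS)$. For any $t>\theta(\cS)$ the same lemma gives $Z_1(t)<\infty$, hence $\sum_{e\in E_m}\|D\f_e\|_\infty^t\to 0$, so $Z_1(E_m,t)<1$ for $m$ large, forcing $P_{E_m}(t)<0$ and $h(\cS_{E_m})\le t$. Combined with strong regularity (so $\theta(\cS)<h(\cS)$ by Remark \ref{f1j87}), I fix $m_0\ge q+1$ with $h(\cS_{E_m})<\theta(\cS)+\eta$ for all $m\ge m_0$ and set $G:=I(m_0-1)$. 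By Definition \ref{d1fs2}\ref{SCoFS} it then suffices to prove that each $\cS_{E_m}$, $m\ge m_0$, has full spectrum.

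Fix such an $m$ and $t\in[0,h(\cS_{E_m})]$. Since $\theta(\cS_{E_m})=\theta(\cS)$, Remark \ref{thetains} (applied to $\cS_{E_m}$, which is itself an infinite conformal IFS) covers $t\in[0,\theta(\cS)]$, and the endpoint $t=h(\cS_{E_m})$ is trivial by taking $F=E_m$; so I may assume $t\in(\theta(\cS),h(\cS_{E_m}))\subset(0,\theta(\cS)+\eta)$ and the hypothesis of Theorem \ref{t1fs5} is in force. Now apply Theorem \ref{t1fs3} to $\cS_{E_m}$ endowed with the inherited enumeration $e_m,e_{m+1},\ldots$, choosing $d=1$ so that $I(1)=\{e_m\}$ in the internal labeling; the requirement $P_{\{e_m\}}(t)\le 0$ is immediate from $\|D\f_{e_m}\|_\infty<1$. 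Define the shifted sequences
$$\tilde\alpha_k(t):=\alpha_{m+k-1}(t),\qquad \tilde\beta_k(t):=\beta_{m+k-1}(t)\qquad (k\ge 2).$$
For any finite $F\subset E_m$ and any internal index $k\ge 2$ with $e_{m+k-1}\notin F$, the inequality $m\ge q+1$ gives $m+k-1\ge q+2>q$, hence $F\subset E_m\subset E_q$ and $e_{m+k-1}\notin F\cup\{e_q\}$; thus hypothesis (i) of Theorem \ref{t1fs5} translates directly into condition (i) of Theorem \ref{t1fs3} for $\tilde\alpha,\tilde\beta$, while hypothesis (ii) becomes condition (ii) of Theorem \ref{t1fs3} via the same index shift. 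Theorem \ref{t1fs3} then delivers $t\in DS(\cS_{E_m})$, as required.

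The main obstacle is the soft-analytic limit $h(\cS_{E_m})\to\theta(\cS)$: it is what allows us to trap the remaining "hard" interval $(\theta(\cS),h(\cS_{E_m}))$ inside the window $(0,\theta(\cS)+\eta)$ supplied by the hypothesis, and without strong regularity the whole scheme collapses since the window would be degenerate. Once this is in place the argument is a reduction to Theorem \ref{t1fs3} by an index shift. A minor subtle point worth flagging is that the normalization $\tilde\alpha_2(t)=0$ required in Theorem \ref{t1fs3} is not actually used in its proof, so we are not obliged to arrange $\alpha_{m+1}(t)=0$ from the hypothesis.
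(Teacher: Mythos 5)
Your proof is correct and follows essentially the same route the paper indicates in the remark after the theorem: use $h(\cS_{E_m})\to\theta(\cS)$ (which the paper simply cites from \cite[Theorem 7.23]{CTU}, and which you rederive directly via $\theta(\cS_{E_m})=\theta(\cS)$ and $Z_1(E_m,t)\to 0$ for $t>\theta(\cS)$) to trap each tail's range $(0,h(\cS_{E_m}))$ inside the hypothesis window $(0,\theta(\cS)+\eta)$, and then invoke Theorem \ref{t1fs3} on $\cS_{E_m}$ with $d=1$ and index-shifted sequences, exactly as in the passage from Theorem \ref{t1fs3} to Theorem \ref{t1fs4}. Your observation that the normalization $\alpha_{d+1}(t)=0$ is never used in the proof of Theorem \ref{t1fs3} is also correct (the proof only uses condition (i) for indices $\ge |\max(F)|\ge d+1$ and condition (ii) for $k=|\max(F)|\ge d+1$), so your application of that theorem without verifying the normalization is legitimate.
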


Concerning Theorem \ref{t1fs5} we note that it suffices to assume that $t \in (0, \theta(\cS)+\eta)$, rather than $t \in (0, h(\cS))$, because by \cite[Theorem 7.23]{CTU} we have that
$$\lim_{n \ra \infty} h(\cS_{E_n})=\theta_{\cS}.$$
 
As an immediate consequence of Proposition \ref{presest} and Theorems \ref{t1fs3}, \ref{t1fs4} and \ref{t1fs5} we get the following three corollaries.
\begin{corollary}
\label{c1fs6}
Let $\cS=\{\f_e\}_{e \in E}$ be a conformal IFS. If there exists some $d \geq 1$ such that $P_{I(d)}(t) \leq 0$ and for every integer $k \geq d+1$,
\begin{equation}
\label{1fs6}
\sum_{n=k+1}^\infty \|D \f_{e_n}\|^t_\infty \geq K^{2t} \|D \f_{e_k}\|^t_\infty,	
\end{equation}
then $t \in DS(\cS)$.
\end{corollary}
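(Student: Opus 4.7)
The plan is to apply Theorem \ref{t1fs3} directly, extracting the required sequences $(\alpha_k(t))$ and $(\beta_k(t))$ from the two-sided pressure bounds of Proposition \ref{ifspe}. Concretely, I would set
$$\alpha_{d+1}(t) := 0, \qquad \alpha_k(t) := K^{-t}\|D\phi_{e_k}\|_\infty^t \ \text{ for } k \geq d+2,$$
and $\beta_k(t) := K^{t}\|D\phi_{e_k}\|_\infty^t$ for every $k \geq d+1$. All the work has already been done in Proposition \ref{ifspe}, so the argument reduces to checking that these choices fulfill the two hypotheses of Theorem \ref{t1fs3}.

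For hypothesis \ref{consa}, take any finite $F\subset E$ and any $e_k \notin F \cup I(d)$. Proposition \ref{ifspe} gives
$$K^{-t}\|D\phi_{e_k}\|_\infty^t \ \leq\ e^{P_{F\cup\{e_k\}}(t)} - e^{P_F(t)} \ \leq\ K^{t}\|D\phi_{e_k}\|_\infty^t.$$
The upper bound is exactly $\beta_k(t)$. The lower bound is exactly $\alpha_k(t)$ when $k \geq d+2$, and for the boundary index $k=d+1$ it is nonnegative, so it dominates the imposed value $\alpha_{d+1}(t)=0$. Thus \ref{consa} holds.

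For hypothesis \ref{consb}, I need $\sum_{n=k+1}^\infty \alpha_n(t) \geq \beta_k(t)$ whenever $k \geq d+1$. Since $k+1 \geq d+2$, the vanishing entry $\alpha_{d+1}(t)=0$ never appears in these tails, so the inequality becomes
$$K^{-t}\sum_{n=k+1}^\infty \|D\phi_{e_n}\|_\infty^t \ \geq\ K^{t}\|D\phi_{e_k}\|_\infty^t,$$
which, after multiplying through by $K^{t}$, is precisely the assumed inequality \eqref{1fs6}. Combining this with the standing assumption $P_{I(d)}(t) \leq 0$, all hypotheses of Theorem \ref{t1fs3} are met, and the conclusion $t \in DS(\cS)$ follows.

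The argument presents no real obstacle; the only point requiring any care is the boundary convention $\alpha_{d+1}(t)=0$ mandated by Theorem \ref{t1fs3}, which is accommodated simply by weakening the lower bound from Proposition \ref{ifspe} at the single index $k = d+1$. Everything else is a routine bookkeeping of the factor $K^{2t}$ that arises when the lower and upper distortion constants are combined.
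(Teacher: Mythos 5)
Your proof is correct and is precisely the argument the paper intends: the paper states that Corollary \ref{c1fs6} is an immediate consequence of Proposition \ref{ifspe} (inequality \eqref{presest}) together with Theorem \ref{t1fs3}, which is exactly the combination you use, with the same natural choices $\alpha_k(t)=K^{-t}\|D\phi_{e_k}\|_\infty^t$ (for $k\ge d+2$), $\alpha_{d+1}(t)=0$, and $\beta_k(t)=K^{t}\|D\phi_{e_k}\|_\infty^t$. The bookkeeping of the factor $K^{2t}$ and the handling of the boundary index $k=d+1$ are both handled correctly.
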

 
\begin{corollary}
\label{c2fs6}
Let $\cS=\{\f_e\}_{e \in E}$ be a conformal IFS. If for every $t \in (0,h(\cS))$ and every $k \geq 2$
\begin{equation*}
\sum_{n=k+1}^\infty \|D \f_{e_n}\|^t_\infty \geq K^{2t} \|D \f_{e_k}\|^t_\infty,	
\end{equation*}
then $\cS$ is of strong full spectrum.
\end{corollary}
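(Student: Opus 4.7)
My plan is to derive Corollary \ref{c2fs6} directly from Theorem \ref{t1fs4}, using Proposition \ref{presest} to produce the required sequences $(\alpha_k(t))$ and $(\beta_k(t))$. The strategy is essentially a bookkeeping translation: Proposition \ref{presest} provides sharp two-sided bounds of the form $e^{P_F(t)} + K^{-t}\|D\f_e\|_\infty^t \le e^{P_{F\cup\{e\}}(t)} \le e^{P_F(t)} + K^t\|D\f_e\|_\infty^t$, and the $K^{2t}$ factor in the hypothesis of the corollary is precisely what is needed to absorb the ratio between the upper and lower constants so that Theorem \ref{t1fs4}(ii) holds.

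More concretely, I would fix $t \in (0,h(\cS))$ and define
\begin{equation*}
\alpha_k(t) := \begin{cases} 0, & k=2, \\ K^{-t}\|D\f_{e_k}\|_\infty^t, & k\ge 3, \end{cases}
\qquad
\beta_k(t) := K^{t}\|D\f_{e_k}\|_\infty^t, \quad k \ge 2.
\end{equation*}
First I would verify condition \ref{consa4} of Theorem \ref{t1fs4}. For a finite $F\subset E$ and $e_k\notin F\cup\{e_1\}$, the upper bound $e^{P_{F\cup\{e_k\}}(t)}-e^{P_F(t)} \le \beta_k(t)$ is immediate from the right-hand inequality of Proposition \ref{presest}. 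For $k\ge 3$, the lower bound $\alpha_k(t) \le e^{P_{F\cup\{e_k\}}(t)}-e^{P_F(t)}$ is the left-hand inequality of Proposition \ref{presest}; for $k=2$, since $\alpha_2(t)=0$ by choice, the lower bound reduces to the monotonicity of topological pressure in the alphabet, which is trivial.

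Next I would verify condition (ii). For $k\ge 3$,
\begin{equation*}
\sum_{n=k+1}^\infty \alpha_n(t) = K^{-t}\sum_{n=k+1}^\infty \|D\f_{e_n}\|_\infty^t \ge K^{-t}\cdot K^{2t}\|D\f_{e_k}\|_\infty^t = \beta_k(t),
\end{equation*}
using the standing hypothesis of the corollary. For $k=2$ the same computation works because the sum starts at $n=3$, which is exactly where $\alpha_n$ becomes nonzero. Hence Theorem \ref{t1fs4} applies for every $t\in(0,h(\cS))$, and $\cS$ is of strong full spectrum.

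I do not expect any real obstacle here; the only subtlety worth flagging is that the index shift ($\alpha_2=0$) is harmless precisely because the summability assumption of the corollary is stated for $k\ge 2$, so the tail $\sum_{n\ge 3}\alpha_n$ already dominates $\beta_2$. All quantities are finite provided $\sum_n \|D\f_{e_n}\|_\infty^t<\infty$, which follows from the hypothesis for $k=2$ (the tail is bounded) together with $t\in(0,h(\cS))$ and Lemma \ref{433mu}.
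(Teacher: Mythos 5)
Your proof is correct and matches the paper's intended argument exactly: the paper states Corollary \ref{c2fs6} as an immediate consequence of Proposition \ref{ifspe} and Theorem \ref{t1fs4}, which is precisely the deduction you have fleshed out. The only inaccuracy is your closing remark --- the standing hypothesis gives a \emph{lower} bound on the tail sums, not finiteness of $\sum_n \|D\f_{e_n}\|_\infty^t$ (which in fact diverges when $t<\theta(\cS)$), but this does not affect the argument since Theorem \ref{t1fs4} nowhere requires $\sum_n\alpha_n(t)$ to be finite.
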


\begin{corollary}
\label{c3fs6}
Let $\cS=\{\f_e\}_{e \in E}$ be strongly regular conformal IFS. If there exist  $q \in \N$ and $\eta>0$ such that for every $t \in [0,\theta(\cS)+\eta)$ and every integer $k \geq q$,
\begin{equation*}
\sum_{n=k+1}^\infty \|D \f_{e_n}\|^t_\infty \geq K^{2t} \|D \f_{e_k}\|^t_\infty,	
\end{equation*}
then $\cS$ is of strong cofinite full spectrum.
\end{corollary}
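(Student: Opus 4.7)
The plan is to derive Corollary \ref{c3fs6} as an immediate application of Theorem \ref{t1fs5}, using Proposition \ref{ifspe} to manufacture the comparison sequences $(\alpha_k(t))$ and $(\beta_k(t))$ that Theorem \ref{t1fs5} requires. The argument parallels exactly how Corollary \ref{c1fs6} was extracted from Theorem \ref{t1fs3}, only now in the cofinite setting. Throughout we rely crucially on the standing assumption that $\cS$ is strongly regular, which is precisely what Theorem \ref{t1fs5} demands.

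First I would invoke Proposition \ref{ifspe}, which supplies, for every finite $F\subset E$, every $e_k\in E\setminus F$, and every $t>0$, the sharp two--sided bound
\begin{equation*}
K^{-t}\|D\f_{e_k}\|^t_\infty \le e^{P_{F\cup\{e_k\}}(t)} - e^{P_{F}(t)} \le K^{t}\|D\f_{e_k}\|^t_\infty.
\end{equation*}
This motivates the natural choice
\begin{equation*}
\alpha_{q+1}(t):=0,\qquad \alpha_k(t):=K^{-t}\|D\f_{e_k}\|^t_\infty\ \text{for } k\ge q+2,
\end{equation*}
\begin{equation*}
\beta_k(t):=K^{t}\|D\f_{e_k}\|^t_\infty\ \text{for } k\ge q+1,
\end{equation*}
with $\beta_q(t):=0$ taken as a convenient convention. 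With these definitions, hypothesis (i) of Theorem \ref{t1fs5} is read off directly from Proposition \ref{ifspe}. Hypothesis (ii), after multiplying through by $K^{t}$, reduces for every $k\ge q+1$ to
\begin{equation*}
\sum_{n=k+1}^\infty \|D\f_{e_n}\|^t_\infty \ge K^{2t}\|D\f_{e_k}\|^t_\infty,
\end{equation*}
which is exactly the standing hypothesis of Corollary \ref{c3fs6} on the interval $[0,\theta(\cS)+\eta)$.

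I do not anticipate a substantive obstacle. The only subtlety is purely combinatorial bookkeeping: the asymmetric index ranges forced by $\alpha_{q+1}(t)=0$, and the boundary instance $k=q$ of condition (ii) in Theorem \ref{t1fs5}. The latter is resolved by noting that condition (i) of Theorem \ref{t1fs5} excludes $e_k=e_q$, so $\beta_q(t)$ is left unconstrained by (i); with the choice $\beta_q(t)=0$, condition (ii) at $k=q$ holds trivially. With both hypotheses of Theorem \ref{t1fs5} verified, its conclusion that $\cS$ is of strong cofinite full spectrum follows at once, completing the proof.
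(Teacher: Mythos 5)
Your proposal is correct and takes the same route the paper intends: Corollary \ref{c3fs6} is stated there as an immediate consequence of Proposition \ref{ifspe} and Theorem \ref{t1fs5}, and your choice of $\alpha_k(t)=K^{-t}\|D\f_{e_k}\|^t_\infty$ and $\beta_k(t)=K^{t}\|D\f_{e_k}\|^t_\infty$, together with reading the stated $\alpha_q(t)=0$ as $\alpha_{q+1}(t)=0$, is exactly what that phrase is pointing at. The bookkeeping around $\beta_q$ is handled sensibly by observing that hypothesis (i) leaves it unconstrained, so taking $\beta_q(t)=0$ makes the $k=q$ instance of (ii) vacuous.
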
 
Using the last corollary we can prove the following quite general theorem.

\begin{thm}
\label{t4fs6}
Let $\cS$ be a strongly regular conformal IFS.
\begin{enumerate}[label=(\roman*)]
\item \label{thenonzero} If $\theta(\cS) \neq 0$ and 
\begin{equation}
\label{liminfder}
\liminf_{n \ra \infty} \frac{\|D \f_{e_{n+1}}\|_\infty}{\|D \f_{e_n}\|_\infty}> \frac{K^2}{(1+K^{2 \theta(\cS)})^{1/\theta(\cS)}},
\end{equation}
then $\cS$ is of strong cofinite full spectrum. 
\item \label{thezero} If $\theta(\cS) = 0$ and $$\liminf_{n \ra \infty} \frac{\|D \f_{e_{n+1}}\|_\infty}{\|D \f_{e_n}\|_\infty} > 0,$$
then $\cS$ is of strong cofinite full spectrum.
\end{enumerate}
\end{thm}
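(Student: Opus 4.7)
The plan is to reduce both parts to Corollary~\ref{c3fs6}. Writing $r_n := \|D\f_{e_n}\|_\infty$ and $\rho := \liminf_{n \to \infty} r_{n+1}/r_n$, I need to exhibit $q \in \N$ and $\eta > 0$ so that $\sum_{n=k+1}^\infty r_n^t \geq K^{2t} r_k^t$ for every $k \geq q$ and every $t \in [0, \theta(\cS) + \eta)$. The starting observation is that if some $\rho' \in (0,1)$ satisfies $r_{n+1}/r_n > \rho'$ for all $n \geq q$, then telescoping gives $r_{k+m} > (\rho')^m r_k$ for $k \geq q$, and the tail sum dominates the geometric series $r_k^t (\rho')^t/(1-(\rho')^t)$; a direct rearrangement shows this lower bound exceeds $K^{2t} r_k^t$ precisely when $\rho' \geq g(t)$, where
$$g(t) := \frac{K^2}{(1+K^{2t})^{1/t}}, \qquad t > 0.$$

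The next step is the analytic lemma that $g$ is strictly increasing and continuous on $(0,\infty)$ with $\lim_{t \to 0^+} g(t) = 0$. For $K = 1$ this is immediate from $g(t) = 2^{-1/t}$; for $K > 1$ one takes logarithms, writing $\log g(t) = 2\log K - \log(1+K^{2t})/t$, and reduces monotonicity to the elementary fact that $s \mapsto \log(1+e^{-s})/s$ is decreasing on $(0,\infty)$. The limit at $0$ follows because $\log(1+K^{2t})/t \to +\infty$ as $t \to 0^+$. This is the only step with any substantive computation, and is the principal (though minor) obstacle.

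With the lemma in hand, case \ref{thenonzero} is quick. By continuity of $g$ I choose $\eta > 0$ with $g(\theta(\cS)+\eta) < \rho$. Strong regularity forces $\sum r_n^t < \infty$ for some $t > 0$ by Lemma~\ref{433mu}, hence $r_n \to 0$ and so $\rho \leq 1$; I may therefore pick $\rho' \in (g(\theta(\cS)+\eta), \rho) \cap (0,1)$. The definition of liminf yields $q \in \N$ with $r_{n+1}/r_n > \rho'$ for all $n \geq q$, and monotonicity of $g$ then gives $\rho' > g(t)$ for every $t \in (0, \theta(\cS)+\eta)$. The opening observation now verifies the hypothesis of Corollary~\ref{c3fs6} (the case $t = 0$ being trivial as the sum is infinite), giving strong cofinite full spectrum. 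Case \ref{thezero} is identical after the preparatory observation that, since $\theta(\cS) = 0$ and $g(0^+) = 0$, one can pick $\eta > 0$ with $g(\eta) < \rho$ and then repeat the preceding argument verbatim.
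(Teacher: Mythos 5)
Your proof is correct, and it follows the same essential strategy as the paper's: bound the tail $\sum_{n>k} r_n^t$ from below by a geometric series using a uniform lower ratio $\rho'$, and reduce the condition of Corollary~\ref{c3fs6} to the inequality $\rho' \geq g(t)$ with $g(t) = K^2/(1+K^{2t})^{1/t}$. The packaging differs slightly in a way that is worth noting. The paper does not name $g$; instead it raises the liminf hypothesis to the $\theta$-th power, extracts a $\delta$-margin
$\frac{r_{n+1}^\theta}{r_n^\theta} > \frac{K^{2\theta}}{1+K^{2\theta}}+\delta$
for $n\geq q$, and then invokes continuity at $t=\theta$ to obtain $\eta>0$ with $\frac{r_{n+1}^t}{r_n^t} > \frac{K^{2t}}{1+K^{2t}}$ for $t\in[\theta,\theta+\eta)$. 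As written, this only verifies the hypothesis of Corollary~\ref{c3fs6} on $[\theta,\theta+\eta)$ rather than the full required interval $[0,\theta+\eta)$; the extension to $t<\theta$ is left implicit (it follows, e.g., from the monotonicity recorded in Lemma~\ref{lmc1fs9}, or simply because the margin only improves as $t$ decreases). Your version makes this step transparent: by proving directly that $g$ is continuous, strictly increasing on $(0,\infty)$, and tends to $0$ at $0^+$, a single choice of $\rho'\in(g(\theta+\eta),\rho)\cap(0,1)$ covers the whole interval $(0,\theta+\eta)$ at once, and the same analytic lemma handles case~\ref{thezero} with no extra work. In short, your argument buys a small amount of extra rigor and uniformity at the cost of one elementary monotonicity computation for $g$; it is not a different route in spirit, but it is cleaner.
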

\begin{proof} 
We are only going to prove \ref{thenonzero}, the proof of \ref{thezero} is similar although simpler. For simplicity of notation we let $\theta:=\theta(\cS)$. By \eqref{liminfder} there exist $q \in \N$ and $\delta>0$ such that for all $n \geq q$,
\begin{equation}
\label{liminfcons}
\frac{\|D \f_{e_{n+1}}\|^\theta_\infty}{\|D \f_{e_n}\|^\theta_\infty}>\frac{K^{2 \theta}}{1+K^{2 \theta}}+\delta.
\end{equation}
Note that by continuity there exists $\eta>0$ such that for every $t \in [\theta,\theta+\eta)$
$$\left(\frac{K^{2 \theta}}{1+ K^{2 \theta}}+\delta \right)^{t/\theta}>\frac{K^{2t}}{1+K^{2t}}.$$
Hence by \eqref{liminfcons}, for every $n \geq q$
\begin{equation}
\label{ratiodev}
\frac{\|D \f_{e_{n+1}}\|^t_\infty}{\|D \f_{e_n}\|^t_\infty}=\left(\frac{\|D \f_{e_{n+1}}\|^\theta_\infty}{\|D \f_{e_n}\|^\theta_\infty}\right)^{t/\theta}>\frac{K^{2 t}}{1+K^{2 t}}.
\end{equation}
Therefore using \eqref{ratiodev}, we get that for every $k \geq q$ and every $t \in [\theta,\theta+\eta)$
\begin{equation*}
\begin{split}
\sum_{n=k+1}^\infty \|D \f_{e_n}\|^t_\infty &\geq \|D \f_{e_k}\|^t_\infty	 \sum_{n=k+1}^\infty \left( \frac{K^{2t}}{1+K^{2t}} \right)^{n-k} \\
&= \|D \f_{e_k}\|^t_\infty	   \frac{K^{2t}}{1+K^{2t}} \left(1- \frac{K^{2t}}{1+K^{2t}}\right)^{-1} \\
&=\|D \f_{e_k}\|^t_\infty \frac{K^{2t}}{1+K^{2t}} (1+K^{2t})=K^{2t} \|D \f_{e_k}\|^t_\infty.
\end{split}
\end{equation*}
Now the proof follows by an application of Corollary \ref{c3fs6}.
\end{proof}
Theorem \ref{t4fs6} has the following two immediate consequences. First of all note that  $K^2 \leq (1+K^{2 \theta(\cS)})^{1/\theta(\cS)}$ when $\theta(\cS) \neq 0$. Therefore:
\begin{corollary}
\label{c1fs7}
Let $\cS$ be a strongly regular conformal IFS. If 
$$\liminf_{n \ra \infty} \frac{\|D \f_{e_{n+1}}\|_\infty}{\|D \f_{e_n}\|_\infty} \geq 1,$$
then $\cS$ is of strong cofinite full spectrum.
\end{corollary}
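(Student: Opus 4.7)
The plan is to deduce Corollary \ref{c1fs7} directly from Theorem \ref{t4fs6} by splitting on whether $\theta(\cS)=0$ or $\theta(\cS)\neq 0$, and in the latter case exploiting the elementary comparison $K^{2}<(1+K^{2\theta(\cS)})^{1/\theta(\cS)}$ flagged right before the corollary. There is no real obstacle here; the whole content is that the hypothesis $\liminf\geq 1$ is strong enough to feed either branch of Theorem \ref{t4fs6}.

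First, suppose $\theta(\cS)=0$. Then part \ref{thezero} of Theorem \ref{t4fs6} only demands $\liminf_{n\to\infty}\|D\f_{e_{n+1}}\|_\infty/\|D\f_{e_n}\|_\infty>0$, and our assumption $\liminf\geq 1$ trivially ensures this. Hence $\cS$ is of strong cofinite full spectrum.

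Next, suppose $\theta(\cS)\neq 0$, so $\theta(\cS)>0$. Because $K\geq 1$ we have $K^{2\theta(\cS)}\geq 1>0$, and in particular
\[
K^{2\theta(\cS)}<1+K^{2\theta(\cS)}.
\]
Since $\theta(\cS)>0$, taking $\theta(\cS)$-th roots (which is order preserving) gives the strict inequality
\[
K^{2}<\bigl(1+K^{2\theta(\cS)}\bigr)^{1/\theta(\cS)},
\]
equivalently $K^{2}/(1+K^{2\theta(\cS)})^{1/\theta(\cS)}<1$. Combining with the standing hypothesis $\liminf_{n\to\infty}\|D\f_{e_{n+1}}\|_\infty/\|D\f_{e_n}\|_\infty\geq 1$ we obtain
\[
\liminf_{n\to\infty}\frac{\|D\f_{e_{n+1}}\|_\infty}{\|D\f_{e_n}\|_\infty}\;\geq\;1\;>\;\frac{K^{2}}{(1+K^{2\theta(\cS)})^{1/\theta(\cS)}},
\]
which is exactly the hypothesis of Theorem \ref{t4fs6} \ref{thenonzero}. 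The conclusion of that theorem is that $\cS$ is of strong cofinite full spectrum.

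In both cases the desired conclusion follows, completing the proof. The only subtle point worth double-checking is the monotonicity step that converts $K^{2\theta(\cS)}<1+K^{2\theta(\cS)}$ into $K^{2}<(1+K^{2\theta(\cS)})^{1/\theta(\cS)}$; this uses only that $\theta(\cS)>0$ and the strict monotonicity of $x\mapsto x^{1/\theta(\cS)}$ on $(0,\infty)$, so it is immediate.
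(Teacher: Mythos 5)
Your proof is correct and follows essentially the same route the paper intends: reduce to Theorem \ref{t4fs6} after noting that when $\theta(\cS)>0$ one has $K^{2}<(1+K^{2\theta(\cS)})^{1/\theta(\cS)}$. One small improvement you make is worth flagging: the paper records this comparison as a non-strict inequality $K^2 \leq (1+K^{2\theta(\cS)})^{1/\theta(\cS)}$, which on its face would not suffice since Theorem \ref{t4fs6}\ref{thenonzero} requires the $\liminf$ to be \emph{strictly} greater than $K^{2}/(1+K^{2\theta(\cS)})^{1/\theta(\cS)}$; you correctly observe that the inequality is in fact strict (because $K^{2\theta(\cS)}<1+K^{2\theta(\cS)}$ and $x\mapsto x^{1/\theta(\cS)}$ is strictly increasing), and also handle the $\theta(\cS)=0$ branch explicitly via Theorem \ref{t4fs6}\ref{thezero}, which the paper leaves implicit.
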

Recall also that when $\cS$ consists of similarities $K=1$. Therefore:
\begin{corollary}
\label{c1fs8}
Let $\cS$ be a strongly regular conformal IFS consisting of similarities. If 
$$\liminf_{n \ra \infty} \frac{\|D \f_{e_{n+1}}\|_\infty}{\|D \f_{e_n}\|_\infty} > 2^{-1/\theta(\cS)},$$
then $\cS$ is of strong cofinite full spectrum.
\end{corollary}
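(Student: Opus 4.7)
The plan is to deduce this corollary directly from Theorem \ref{t4fs6}\ref{thenonzero} by specializing to the case of similarities. The key observation is that when $\cS$ consists of similarities, every map $\f_e$ has constant derivative norm on its domain, so the bounded distortion inequality in Lemma \ref{bdp} holds with distortion constant $K=1$ (each ratio $\|D\f_\om(p)\|/\|D\f_\om(q)\|$ equals $1$ identically).

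With $K=1$ substituted into the right-hand side of the hypothesis \eqref{liminfder} of Theorem \ref{t4fs6}\ref{thenonzero}, the threshold becomes
\[
\frac{K^2}{(1+K^{2\theta(\cS)})^{1/\theta(\cS)}} = \frac{1}{(1+1)^{1/\theta(\cS)}} = 2^{-1/\theta(\cS)}.
\]
Hence the assumption $\liminf_{n\to\infty}\|D\f_{e_{n+1}}\|_\infty/\|D\f_{e_n}\|_\infty > 2^{-1/\theta(\cS)}$ is exactly the hypothesis \eqref{liminfder} of Theorem \ref{t4fs6}\ref{thenonzero} in the similarity case. Note that since $\cS$ is strongly regular, Remark \ref{f1j87} guarantees $\theta(\cS) < h(\cS)$, and since we are in the regime where the threshold $2^{-1/\theta(\cS)}$ is meaningful we may assume $\theta(\cS) \neq 0$ (if $\theta(\cS)=0$ the statement is vacuous as $2^{-1/\theta(\cS)}=0$ in the appropriate limit, and one appeals instead to Theorem \ref{t4fs6}\ref{thezero}).

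No additional work is required beyond verifying $K=1$ for similarities, so there is no genuine obstacle; the corollary follows by direct invocation of Theorem \ref{t4fs6}\ref{thenonzero}.
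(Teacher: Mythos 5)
Your proposal is correct and matches the paper's own argument: the paper introduces Corollary \ref{c1fs8} precisely by observing that $K=1$ for similarities and substituting into the threshold of Theorem \ref{t4fs6}\ref{thenonzero}, which then reads $2^{-1/\theta(\cS)}$. Your extra remark about the $\theta(\cS)=0$ case via Theorem \ref{t4fs6}\ref{thezero} is a harmless clarification not made explicit in the paper but does not change the substance.
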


We will now introduce a special order on $E$ which will have pleasant theoretical properties and it is well suited for numerical calculations.
\begin{defn}
\label{d1fs8}
Let $E$ be an infinite countable set and let $\cS=\{\f_e\}_{e \in E}$ be a conformal IFS. We say that a well ordering $\prec$ which induces an order type of $\N$ on $E$ is {\em natural} (with respect to $\cS$) if
$$\|D \f_a\|_\infty \geq \|D \f_b\|_\infty$$
whenever $a \prec b$.
\end{defn}
If $\cS$ is as in Definition \ref{d1fs8} it follows, see e.g. \cite[Lemma 4.18]{CTU}, that there are at most finitely many letters $e \in E$ with mutually equal derivative norms $\|D \f_e\|_\infty$. Note also that the system $\cS$ admits always at least one natural ordering and it admits exactly one if and only if the function
$$E \ni e \mapsto \|D \f_e\|_\infty \in (0,1)$$
is injective.
We record the following consequences of imposing a natural order on the alphabet. In the following for any $e \in E$ in the alphabet we will denote by $e^*$ the successor of $e$ with respect to the natural order $\prec$, i.e. $e^*$ is a the least element of $E$ larger than $e$.  We will also denote by $\1$ be the first element of this order.
\begin{lm}
\label{lmc1fs9}
Let $\cS=\{\f_e\}_{e \in E}$ be a conformal IFS with its alphabet $E$ endowed with a natural order $\prec$. If there exist $t \geq 0$ and $k \in E$ such that
\begin{equation}
\label{pre1fs9}
\sum_{n \succ k^\ast} \|D \f_n\|_\infty^t \geq K^{2t} \|D \f_k\|^t_\infty,
\end{equation}
then for every $0 \leq s \leq t$
\begin{equation}
\label{pre1fs9s}
\sum_{n \succ k^\ast} \|D \f_n\|_\infty^s \geq K^{2s} \|D \f_k\|^s_\infty.
\end{equation}
\end{lm}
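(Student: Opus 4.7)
The plan is to reduce the inequality \eqref{pre1fs9s} to \eqref{pre1fs9} by a one-line exponent comparison, exploiting two monotonicities. The pivotal observation is that the naturality of $\prec$ forces a uniform bound $\|D\f_n\|_\infty\le \|D\f_k\|_\infty$ for every $n\succ k^\ast$. Indeed, by definition $k^\ast$ is the immediate successor of $k$ in $\prec$, so $k\prec k^\ast\prec n$, and naturality of the order then yields $\|D\f_n\|_\infty\le \|D\f_{k^\ast}\|_\infty\le \|D\f_k\|_\infty$. In particular, the normalized quantities
\[
r_n:=\frac{\|D\f_n\|_\infty}{\|D\f_k\|_\infty},\qquad n\succ k^\ast,
\]
all lie in $(0,1]$.

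With this in hand, I would divide both \eqref{pre1fs9} and the target \eqref{pre1fs9s} by $\|D\f_k\|_\infty^{t}$ and $\|D\f_k\|_\infty^{s}$ respectively. The hypothesis becomes $\sum_{n\succ k^\ast} r_n^{t}\ge K^{2t}$ and the goal becomes $\sum_{n\succ k^\ast} r_n^{s}\ge K^{2s}$. Since $0\le s\le t$ and $r_n\in(0,1]$, the map $\lambda\mapsto r_n^{\lambda}$ is non-increasing on $[0,\infty)$, so termwise $r_n^{s}\ge r_n^{t}$; summing this over $n\succ k^\ast$ gives $\sum_{n\succ k^\ast} r_n^{s}\ge \sum_{n\succ k^\ast} r_n^{t}\ge K^{2t}$.

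Finally, because the bounded distortion constant from Lemma~\ref{bdp} satisfies $K\ge 1$, the exponential $\lambda\mapsto K^{2\lambda}$ is non-decreasing, whence $K^{2t}\ge K^{2s}$. Chaining these estimates,
\[
\sum_{n\succ k^\ast} r_n^{s}\;\ge\;\sum_{n\succ k^\ast} r_n^{t}\;\ge\;K^{2t}\;\ge\;K^{2s},
\]
and multiplying through by $\|D\f_k\|_\infty^{s}$ yields \eqref{pre1fs9s}. There is no genuine obstacle: the entire argument is a comparison of powers, and the only subtlety is the bookkeeping check that $n\succ k^\ast$ actually forces $\|D\f_n\|_\infty\le\|D\f_k\|_\infty$, which is precisely where the hypothesis of natural ordering is used.
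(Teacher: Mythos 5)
Your proof is correct and is essentially identical to the paper's argument: both normalize by $\|D\f_k\|_\infty$, observe that each ratio $\|D\f_n\|_\infty/\|D\f_k\|_\infty$ lies in $(0,1]$ by naturality of $\prec$, and then combine the monotone decrease of $s\mapsto r^s$ for $r\in(0,1]$ with the monotone increase of $s\mapsto K^{2s}$ (as $K\ge 1$) to chain the inequalities. No differences worth noting.
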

\begin{proof}
Since for all $n,k \in E$ such that $n \succ k$ it holds that
$$\frac{\|D\f_n\|_\infty}{\|D \f_k\|_\infty} \leq 1,$$
the function 
$$[0,+\infty)\ni s \mapsto \left( \frac{\|D\f_n\|_\infty}{\|D \f_k\|_\infty}\right)^s $$
is monotone decreasing. Therefore the function
$$[0,+\infty)\ni s \mapsto \sum_{n \succ k^\ast}\left( \frac{\|D\f_n\|_\infty}{\|D \f_k\|_\infty}\right)^s $$
is also monotone decreasing. Since on the other hand the function $$[0,+\infty)\ni s \mapsto K^s$$ is monotone increasing it follows from \eqref{pre1fs9} that for every $s \in [0,t]$,
$$\sum_{n \succ k^\ast}\left( \frac{\|D\f_n\|_\infty}{\|D \f_k\|_\infty}\right)^s \geq \sum_{n \succ k^\ast}\left( \frac{\|D\f_n\|_\infty}{\|D \f_k\|_\infty}\right)^t \geq K^{2t} \geq K^{2s}.$$
\end{proof}
\begin{corollary}
\label{c1fs9}
Let $\cS=\{\f_e\}_{e \in E}$ be a conformal IFS with its alphabet $E$ endowed with a natural order $\prec$. Let $t \geq 0$. If  for every $k \succ \1^{\ast}$
\begin{equation}
\label{1fs9}
\sum_{n \succ k^\ast} \|D \f_n\|_\infty^t \geq K^{2t} \|D \f_k\|^t_\infty,
\end{equation}
then $$[0, \min\{t,h(\cS)\}] \subset DS(\cS).$$
\end{corollary}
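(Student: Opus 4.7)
The plan is to reduce Corollary \ref{c1fs9} directly to Corollary \ref{c1fs6}. Fix $s\in[0,\min\{t,h(\cS)\}]$; the goal is to show $s\in DS(\cS)$, from which the asserted inclusion follows upon letting $s$ vary.

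The first step is to invoke Lemma \ref{lmc1fs9}: since $s\leq t$, the hypothesized inequality \eqref{1fs9} transfers from exponent $t$ to exponent $s$, giving
$$\sum_{n\succ k^{\ast}}\|D\f_n\|_\infty^s \;\geq\; K^{2s}\|D\f_k\|_\infty^s \qquad \text{for every } k\succ\1^{\ast}.$$
The second step is to recast this in the integer indexing used by Corollary \ref{c1fs6}. Writing $k=e_j$, so that $k^{\ast}=e_{j+1}$, the left-hand side equals $\sum_{m\geq j+2}\|D\f_{e_m}\|_\infty^s$. Adding the nonnegative term $\|D\f_{e_{j+1}}\|_\infty^s$ on the left yields the a priori weaker inequality
$$\sum_{m\geq j+1}\|D\f_{e_m}\|_\infty^s \;\geq\; K^{2s}\|D\f_{e_j}\|_\infty^s,$$
which is precisely condition \eqref{1fs6} of Corollary \ref{c1fs6} at the integer $k=j$.

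The third step is to apply Corollary \ref{c1fs6} with $d=1$. Its first hypothesis, $P_{I(1)}(s)\leq 0$, is automatic: $I(1)=\{\1\}$ and $\f_\1$ is a contraction, hence $P_{I(1)}(s)=s\log\|D\f_\1\|_\infty\leq 0$ for every $s\geq 0$. Its second hypothesis, namely \eqref{1fs6} at every integer $k\geq 2$, is exactly the inequality derived in the preceding step. Corollary \ref{c1fs6} then yields $s\in DS(\cS)$.

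I do not anticipate a real obstacle. The only delicate point is the bookkeeping between the natural order $\prec$ on $E$ and the integer indexing $\{e_n\}_{n\in\N}$: because \eqref{1fs9} sums from $k^{\ast}$ onward rather than from $k$ onward, the hypothesis of Corollary \ref{c1fs9} is strictly stronger than what Corollary \ref{c1fs6} requires, and the reduction goes through immediately.
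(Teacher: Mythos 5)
Your reduction follows exactly the paper's one-line proof: transfer the inequality from exponent $t$ to $s$ via Lemma~\ref{lmc1fs9}, then apply Corollary~\ref{c1fs6} with $d=1$, noting $P_{I(1)}(s)\le 0$. The one place where you gloss over a genuine indexing point is the third step. The hypothesis of Corollary~\ref{c1fs9} ranges over $k\succ\1^\ast$, i.e.\ $k=e_j$ with $j\ge 3$ in the integer enumeration, so the inequality $\sum_{m\ge j+1}\|D\f_{e_m}\|_\infty^s\ge K^{2s}\|D\f_{e_j}\|_\infty^s$ that you derive is obtained only for integers $j\ge 3$, not ``at every integer $k\ge 2$'' as you assert. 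Corollary~\ref{c1fs6} with $d=1$ needs the case $k=2$ as well, and that case is not an immediate consequence of the stated hypothesis: the inequality at $k=e_3$ bounds $\sum_{m\ge 5}$ from below by a multiple of $\|D\f_{e_3}\|_\infty^s$, which does not in general dominate $K^{2s}\|D\f_{e_2}\|_\infty^s$. Either the paper's ``$k\succ\1^\ast$'' is to be read as $k\succeq\1^\ast$ (equivalently $k\succ\1$), in which case your reduction is complete, or one has to add an argument for $k=2$; your write-up silently assumes the former without saying so, and since the paper's own proof is equally terse you have reproduced its intent faithfully. A second, purely cosmetic slip: the identity $P_{I(1)}(s)=s\log\|D\f_\1\|_\infty$ is not exact for a general conformal (non-similarity) map --- one has $P_{I(1)}(s)=s\log\|D\f_\1(p)\|$, $p$ the fixed point --- but the needed consequence $P_{I(1)}(s)\le 0$ is correct since $\f_\1$ is a uniform contraction, and the paper's own justification ($\dim_\cH(J_{\{\1\}})=0$) is the cleaner one.
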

\begin{proof}
The proof follows by Lemma \ref{lmc1fs9} and Corollary \ref{c1fs6} after noting that $P_{\{\1\}}(s) \leq 0$ for all $s \geq 0$ because trivially $h=\dim_{\cH} (J_{\{\1\}})=0$.
\end{proof}
We also record the following corollary.
\begin{corollary}
\label{c1fs9.1}
Let $\cS=\{\f_e\}_{e \in E}$ be a conformal IFS with its alphabet $E$ endowed with a natural order $\prec$. If there exists $t \geq h(\cS)$ such that for every $k \succ 1^{\ast}$
\begin{equation*}
\sum_{n \succ k^\ast} \|D \f_n\|_\infty^t \geq K^{2t} \|D \f_k\|^t_\infty,
\end{equation*}
then $\cS$ is of strong full spectrum.
\end{corollary}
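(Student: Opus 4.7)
The plan is to derive Corollary \ref{c1fs9.1} as a direct consequence of Corollary \ref{c1fs9} applied to every tail subsystem of $\cS$ with respect to the natural order $\prec$. Since strong full spectrum (with $G=\emptyset$) asks that the enumeration $E=\{e_i\}_{i=1}^\infty$ witnesses full spectrum for every subsystem $\{\f_{e_i}\}_{i\ge n}$, my target is precisely to show $DS(\cS_{E_n})=[0,h(\cS_{E_n})]$ for every $n\ge 1$, where $E_n=\{e_n,e_{n+1},\ldots\}$ and the order on $E_n$ is the restriction of $\prec$.

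First I would fix $n\ge 1$ and verify that the hypothesis of Corollary \ref{c1fs9} is inherited by $\cS_{E_n}$ with the same exponent $t$. The first element of $E_n$ is $\1_{E_n}=e_n$, so the condition to check becomes
\[
\sum_{\substack{m\in E_n\\ m\succ k^{\ast}}} \|D\f_m\|_\infty^{t}\ \geq\ K^{2t}\,\|D\f_k\|_\infty^{t}\quad\text{for every } k\succ e_n^{\ast}=e_{n+1}.
\]
The key observation is that for such $k$ one has $k^{\ast}\succ e_n$, so every $m\succ k^{\ast}$ automatically lies in $E_n$; consequently the sum on the left equals $\sum_{m\succ k^{\ast}}\|D\f_m\|_\infty^{t}$ taken over all of $E$, which by the standing hypothesis of the corollary is bounded below by $K^{2t}\|D\f_k\|_\infty^{t}$. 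Thus the hypothesis of Corollary \ref{c1fs9} is satisfied for $\cS_{E_n}$ with the same exponent $t$.

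Applying Corollary \ref{c1fs9} to $\cS_{E_n}$ yields
\[
[0,\min\{t,h(\cS_{E_n})\}]\subset DS(\cS_{E_n}).
\]
Now I would invoke monotonicity $h(\cS_{E_n})\le h(\cS)\le t$, which gives $\min\{t,h(\cS_{E_n})\}=h(\cS_{E_n})$ and therefore $[0,h(\cS_{E_n})]\subset DS(\cS_{E_n})$. The reverse inclusion $DS(\cS_{E_n})\subset[0,h(\cS_{E_n})]$ is immediate from the definition of Bowen's parameter together with Theorem \ref{t1j97}, since any further subsystem of $\cS_{E_n}$ has Hausdorff dimension no larger than $h(\cS_{E_n})$. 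Hence $DS(\cS_{E_n})=[0,h(\cS_{E_n})]$, i.e.\ $\cS_{E_n}$ is of full spectrum.

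Since this holds for every $n\ge 1$, the natural-order enumeration of $E$ certifies strong full spectrum with $G=\emptyset$. There is no substantive obstacle: the only subtlety is the bookkeeping that makes the sum-condition for $\cS_{E_n}$ coincide with the one assumed for $\cS$, and this is handled by the observation that $m\succ k^{\ast}\succ e_n$ forces $m\in E_n$.
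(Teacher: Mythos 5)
Your proof is correct. Since the paper states Corollary~\ref{c1fs9.1} without an explicit proof, there is nothing to compare against directly, but your argument---applying Corollary~\ref{c1fs9} to each tail $\cS_{E_n}$ with the inherited natural order, observing that for $k\succ e_{n+1}$ the sum $\sum_{m\in E_n,\,m\succ k^\ast}\|D\f_m\|_\infty^t$ coincides with the full sum over $E$ so the hypothesis transfers verbatim, and then closing with $h(\cS_{E_n})\le h(\cS)\le t$ to collapse the $\min$---is precisely the intended derivation from the preceding lemmas.
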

We conclude this section by noting that Corollary \ref{c1fs6} and Lemma \ref{lmc1fs9} imply the following proposition.
\begin{propo}
\label{keypropoint}
Let $\cS=\{\f_e\}_{e \in E}$ be a conformal IFS. Let $E=\{e_n\}_{n \in \N}$ be an enumeration of $E$ according to a natural order. Suppose that there exist $t \leq s \leq h(\cS)$ and $d \in \N$ such that 
\begin{enumerate}[label=(\roman*)]
\item $P_{I(d)}(t) \leq 0$,
\item $\sum_{n \geq k+1} \|D \f_{e_n}\|_\infty^s \geq K^{2s} \|D \f_{e_k}\|^s_\infty$ for all $k \geq d+1$.
\end{enumerate}
Then $[t,s] \subset DS(\cS)$.
\end{propo}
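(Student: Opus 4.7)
The plan is to show that every $u \in [t,s]$ lies in $DS(\cS)$ by reducing, at each such $u$, to a direct application of Corollary \ref{c1fs6} (the IFS specialization already stated in the excerpt). So the whole proposition collapses to checking that both hypotheses of Corollary \ref{c1fs6} propagate from the exponent pair $(t,s)$ supplied by the assumptions to an arbitrary exponent $u \in [t,s]$.

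First I would fix $u \in [t,s]$. For the pressure hypothesis, I would invoke Proposition \ref{p2j85}\ref{presscontconv}: the topological pressure of the finite subsystem $\cS_{I(d)}$ is strictly decreasing on $[0,+\infty)$, so from assumption (i),
\[
P_{I(d)}(u) \leq P_{I(d)}(t) \leq 0,
\]
which is exactly the first hypothesis of Corollary \ref{c1fs6} at exponent $u$. For the summability hypothesis, I would fix $k \geq d+1$ and transport assumption (ii) from exponent $s$ down to exponent $u$ using Lemma \ref{lmc1fs9}: since $\{e_n\}_{n\in\N}$ is an enumeration according to a natural order, we have $e_k^\ast = e_{k+1}$ and $\{n : n \succ e_k^\ast\} = \{e_{k+1}, e_{k+2},\dots\}$, so assumption (ii) is precisely the inequality (\ref{pre1fs9}) of Lemma \ref{lmc1fs9} with the letter $k$ there taken to be $e_k$ and the exponent $t$ there taken to be our $s$. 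Since $0 \leq u \leq s$, the lemma gives
\[
\sum_{n = k+1}^{\infty} \|D\f_{e_n}\|_\infty^{\,u} \;\geq\; K^{2u}\,\|D\f_{e_k}\|_\infty^{\,u}
\]
for every $k \geq d+1$. This is the second hypothesis of Corollary \ref{c1fs6} at exponent $u$. Finally, $u \leq s \leq h(\cS)$ ensures $u$ lies in the range where the corollary concludes $u \in DS(\cS)$. Since $u \in [t,s]$ was arbitrary, $[t,s] \subset DS(\cS)$.

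There is really no obstacle here beyond a bookkeeping check: the only subtlety is making sure the enumeration used in hypothesis (ii) matches the natural order used in Lemma \ref{lmc1fs9}, so that ``$n \geq k+1$'' and ``$n \succ e_k^\ast$'' describe the same set of letters. Once that observation is made, the argument is simply: monotonicity of pressure takes care of assumption (i) for all $u \geq t$, Lemma \ref{lmc1fs9} takes care of assumption (ii) for all $u \leq s$, and Corollary \ref{c1fs6} then supplies the conclusion on the intersection $[t,s]$.
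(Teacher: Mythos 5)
Your argument is correct and is exactly the paper's intended (unwritten) proof: the paper simply remarks that Proposition~\ref{keypropoint} follows from Corollary~\ref{c1fs6} and Lemma~\ref{lmc1fs9}, and you supply the missing bookkeeping — monotonicity of the (finite-subsystem) pressure from Proposition~\ref{p2j85} handles hypothesis (i) for every $u\ge t$, Lemma~\ref{lmc1fs9} handles hypothesis (ii) for every $u\le s$, and Corollary~\ref{c1fs6} closes the argument on $[t,s]\subset[0,h(\cS)]$. The only minor slip is the identification $\{n:n\succ e_k^\ast\}=\{e_{k+1},e_{k+2},\dots\}$ (strictly, that set is $\{e_{k+2},\dots\}$; the intended set is $\{n:n\succ e_k\}$), but this matches the paper's own loose use of $k^\ast$ in Lemma~\ref{lmc1fs9} and does not affect the monotonicity argument, which only uses $\|D\f_n\|_\infty\le\|D\f_{e_k}\|_\infty$ for all $n$ in the range of summation.
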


\section{Dimension spectrum of complex continued fractions}\label{sec:fullcf}



In this section we study the dimension spectrum of the conformal iterated function system generated by complex continued fractions. Let
\begin{enumerate}[label=(\roman*)]
\item $E=\{m+ni:(m,n) \in \N \times \Z \},$
\item $X=\bar{B}(1/2,1/2)$,
\item $W=B(1/2,3/4)$.
\end{enumerate}
For $e \in E$ we define the maps $\f_e: W \ra W$ by 
\begin{equation*}
\f_e(z)=\frac{1}{e+z}.
\end{equation*}
It was proved in \cite{MU1} that for every $e \in E$
\begin{enumerate}[label=(\roman*)]
\item $\f_e(W) \subset B(0, 4 |e|^{-1})$,
\item $4^{-1} |e|^{-2} \leq |\f'_e(z)|\leq 4 |e|^{-2},$ for all $z \in W$,
\item \label{derestcf}$4^{-1} |e|^{-2} \leq \diam (\f_e (W)) \leq 4 |e|^{-2}$.
\end{enumerate}
We record that the best distortion constant is  $K=4$, see \cite[Remark 6.7]{MU1}. Note that formally $\{\f_e \}_{e \in E}$ is {\em not} a conformal IFS because $\f'_1(0)=1$. Nevertheless the family $\{ \f_e \circ \f_j:(e,j) \in E \times E\}$ is indeed a conformal IFS. Slightly abusing notation we will treat $\cf:=\{\f_e\}_{e \in E}$ as a conformal IFS and we will call it, the {\em complex continued fractions} IFS. Notice that \ref{derestcf} and Proposition \ref{p2j85} imply that $\theta(\cf)=1$.

It was proved in \cite[Theorem 6.6]{MU1} that for all $e=m+ni \in E$,
\begin{equation}
\label{1fs10}
\| \f'_e\|_{\infty}=\sup_{z \in X} |\f'_e(z)|=\frac{1}{(|e+1/2|-1/2)^2}=\frac{1}{\left(((m+1/2)^2+n^2)^{1/2}-1/2\right)^2}.
\end{equation}
In fact, this formula can be seen quite easily geometrically. Indeed, we have 
$$
|\f_e'(z)|=\frac1{|e+z|^2}.
$$
To find $\|\f'_e\|_{\infty}$ thus reduces to find the infimum of $|e+z|$ if $z$ ranges over $\overline B(1/2,1/2)$. This is of course the same as the 
infimum of $|z|$ if $z$ ranges over $\overline B(e+1/2,1/2)$. And this of course is $|e+1/2|-1/2$.

We record that  \eqref{1fs10} implies that
$
\| \f'_e\|_{\infty}=\| \f'_{\bar{e}}\|_{\infty}.
$ 
This also can be seen geometrically by noticing that 
$$
\f_{\bar e}=J\circ\f_e\circ J,
$$
where $J:\C\to\C$ is the complex conjugacy map ($J(z)=\bar z$), thus an isometry.

We endow the alphabet $E$ a with natural ordering $``\prec"$ (recall Definition \ref{d1fs8}) and we denote by $\1$ be the first element of this order. By \eqref{1fs10} it is immediate that $\1=1$. Recall that for any $e \in E$  we will denote by $e^*$ the successor of $e$.
\begin{rem} 
\label{o1fs10}
If the alphabet $E$ is endowed with the natural ordering $``\prec"$, then for every $e=m+ni \in E$ and every $k \in \N$
$$e^{\ast} \prec \min\{e+k,\bar{e}+k,m+|n| i+ki,m-|n|i-ki\}.$$
\end{rem}
We now state and prove our first theorem regarding the dimension spectrum of complex continued fractions.
\begin{thm}
\label{ccfscofs}The complex continued fractions IFS $\cf$ is of strong cofinite full spectrum.
\end{thm}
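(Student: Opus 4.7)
The plan is to invoke Theorem~\ref{t4fs6}\ref{thenonzero}. Three items must be verified: that $\cf$ is strongly regular, that the threshold $K^2/(1+K^{2\theta})^{1/\theta}$ simplifies to a manageable number, and that the liminf of successive ratios of derivative norms beats that threshold. Strong regularity is already in hand since $\theta(\cf)=1$ (noted just before formula \eqref{1fs10}) while the lower bound $\dim_{\cH}(J_{\cf})\geq 1.825$ recorded in the introduction gives $h(\cf)>\theta(\cf)$, and Theorem~\ref{4310mu} then supplies strong regularity. Plugging $K=4$ and $\theta(\cf)=1$ into the threshold gives
$$\frac{K^2}{(1+K^{2\theta(\cf)})^{1/\theta(\cf)}}=\frac{16}{17}.$$

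It remains to check that, for any enumeration $E=\{e_n\}_{n\in\N}$ according to a natural ordering (Definition~\ref{d1fs8}),
$$\liminf_{n\to\infty}\frac{\|\f'_{e_{n+1}}\|_\infty}{\|\f'_{e_n}\|_\infty}>\frac{16}{17}.$$
In fact I would show the stronger statement that this liminf equals $1$. Set $a_n:=|e_n+1/2|-1/2$; by \eqref{1fs10} the natural ordering is exactly the statement that $(a_n)$ is nondecreasing, and clearly $a_n\to\infty$ because $E$ is infinite. The ratio above equals $(a_n/a_{n+1})^2\leq 1$, so it suffices to rule out the existence of $\delta>0$ and a subsequence $(n_k)$ with $a_{n_k+1}/a_{n_k}\geq 1+\delta$. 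If such data existed, then the annulus
$$A_k=\{z\in\C:a_{n_k}+1/2<|z|\leq (1+\delta)a_{n_k}+1/2\}$$
would have to be disjoint from the shifted set $E+1/2=\{m+1/2+ni:m\geq 1,\,n\in\Z\}$. The part of $A_k$ lying in the half-plane $\{\Rea(z)\geq 3/2\}$ has area of order $\delta a_{n_k}^2$ and boundary of length $O(a_{n_k})$, so a Gauss-circle-type lattice counting argument forces at least order $\delta a_{n_k}^2$ points of $E+1/2$ to lie in $A_k$ for $k$ sufficiently large, a contradiction.

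The only genuine content in the above is this elementary lattice estimate; all deeper thermodynamic input has been packaged into Theorem~\ref{t4fs6}. Once the liminf is shown to equal $1>16/17$, the theorem applies and yields strong cofinite full spectrum for $\cf$.
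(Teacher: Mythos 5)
Your proposal is correct, and it arrives at the same essential target as the paper---showing that the successive derivative-norm ratios tend to $1$ so that a packaged theorem (your Theorem~\ref{t4fs6}\ref{thenonzero}, the paper's Corollary~\ref{c1fs7}) applies---but the route to that limit is genuinely different. The paper uses the explicit formula \eqref{1fs10} together with the observation $e^* \prec e+1$ (Remark~\ref{o1fs10}): this gives the squeeze $1 \geq \|\f'_{e^*}\|_\infty/\|\f'_e\|_\infty \geq \|\f'_{e+1}\|_\infty/\|\f'_e\|_\infty$, and the right-hand side visibly tends to $1$ as $|e| \to \infty$, so the whole argument is a two-line computation. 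You instead argue by contradiction via a Gauss-circle-type lattice count: if a subsequence of ratios $a_{n_k+1}/a_{n_k}$ stayed bounded away from $1$, the resulting empty annuli would have area growing like $\delta a_{n_k}^2$ while only $O(a_{n_k})$ boundary, which eventually must trap points of the shifted lattice $E+1/2$. This is valid (the region is a piecewise-smooth intersection of an annulus with a half-plane, so the ``area plus $O(\mathrm{perimeter})$'' lattice estimate applies after a routine decomposition), and it has the virtue of being robust---it would apply to any lattice-like alphabet without needing an explicit successor bound like $e^* \prec e+1$---but it is heavier machinery than what the specific formula \eqref{1fs10} offers. Two further minor remarks: for strong regularity you invoke Priyadarshi's numerical bound $h \geq 1.825$ together with Theorem~\ref{4310mu}, whereas the paper cites the much older and cheaper fact from \cite{MU1} that $\cf$ is strongly regular directly; and once you know the liminf is exactly $1$ you could simply cite Corollary~\ref{c1fs7} (which only asks for $\liminf \geq 1$) and skip the computation of the $16/17$ threshold altogether, as the paper does.
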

\begin{proof} Observe that by \eqref{1fs10} it follows easily that for any $e \in E$,
$$e^{\ast} \prec e+1.$$
Therefore
$$\|\f'_{e}\|_\infty \geq \|\f'_{e^\ast}\|_\infty \geq \|\f'_{e+1}\|_\infty,$$
and consequently
\begin{equation}
\label{ratioccf}
1 \geq \frac{\|\f'_{e^\ast}\|_\infty}{\|\f'_{e}\|_\infty} \geq \frac{\|\f'_{e+1}\|_\infty}{\|\f'_{e}\|_\infty}.
\end{equation}
Let $(e_n)_{n \in \N}$ be an enumeration of $E$ respecting the natural order $\prec$, i.e. $(e_n)^{\ast}=e_{n+1}$  for all $n \in \N$. It then follows by \eqref{ratioccf} and \eqref{1fs10} that
$$\lim_{n \ra \infty} \frac{\|\f'_{e_{n+1}}\|_\infty}{\|\f'_{e_{n}}\|_\infty}=1.$$
By \cite[Theorem 3.20 and Proposition 6.1]{MU1} we know that $\cf$ is strongly regular, hence Corollary \ref{c1fs7} implies that $\cf$ is of strong cofinite full spectrum.
\end{proof}
We will now state and prove two technical lemmas.
\begin{lm}
\label{amonoton}
Given $m \geq 0$ and $t>0$, the function
$$\N \cup \{0\} \ni n \ra \frac{\sum_{k=m+1}^\infty \left(((k+1/2)^2+n^2)^{1/2}-1/2 \right)^{-2t}}{\left( ((m+1/2)^2+n^2)^{1/2}-1/2\right)^{-2t}}$$
is increasing.
\end{lm}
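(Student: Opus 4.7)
The plan is to peel off the sum into individual terms and prove monotonicity term by term. First, distributing the denominator and simplifying, one obtains
\[
\frac{\sum_{k=m+1}^\infty \bigl(((k+1/2)^2+n^2)^{1/2}-1/2\bigr)^{-2t}}{\bigl(((m+1/2)^2+n^2)^{1/2}-1/2\bigr)^{-2t}} \;=\; \sum_{k=m+1}^\infty q_k(n)^{2t},
\]
where
\[
q_k(n) := \frac{((m+1/2)^2+n^2)^{1/2}-1/2}{((k+1/2)^2+n^2)^{1/2}-1/2}, \qquad k\ge m+1.
\]
Since $2t>0$ and a sum of nonnegative non-decreasing functions is non-decreasing, it suffices to show that each $q_k$ is non-decreasing on $\N\cup\{0\}$.

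Next, I would fix $k>m\ge 0$, abbreviate $a=m+\tfrac12$, $b=k+\tfrac12$, and treat $n$ as a real variable in $[0,\infty)$. Setting $u(n)=\sqrt{a^2+n^2}$ and $v(n)=\sqrt{b^2+n^2}$, the key algebraic maneuver is to write
\[
q_k(n) \;=\; 1-\frac{v(n)-u(n)}{v(n)-1/2} \;=\; 1-\frac{b^2-a^2}{\bigl(u(n)+v(n)\bigr)\bigl(v(n)-1/2\bigr)},
\]
where the second equality uses the conjugate identity $v-u=(b^2-a^2)/(u+v)$. Thus $q_k$ is non-decreasing in $n$ exactly when $(u+v)(v-1/2)$ is a positive non-decreasing function of $n$.

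Both factors are manifestly positive and non-decreasing on $[0,\infty)$: $u$ and $v$ are non-decreasing (hence so is their sum), and since $k\ge 1$ one has $v(n)-\tfrac12\ge b-\tfrac12=k\ge 1>0$, while $v-\tfrac12$ inherits monotonicity from $v$. The product of two positive non-decreasing functions is non-decreasing, so the subtracted fraction in the displayed equation is non-increasing, which forces $q_k$ to be non-decreasing. Raising to the $2t$ power and summing over $k$ then yields the claim. The only nontrivial step is the algebraic rewriting via the conjugate; everything afterward reduces to elementary monotonicity of a product of positive increasing functions, so I expect no real obstacle beyond writing down the identity cleanly.
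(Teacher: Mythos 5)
Your proof is correct, and it takes a genuinely different route from the paper. The paper treats $n$ as a continuous variable, differentiates each ratio
\[
A_k(n)=\frac{((m+1/2)^2+n^2)^{1/2}-1/2}{((k+1/2)^2+n^2)^{1/2}-1/2},
\]
and isolates the sign of the derivative as $\alpha_k(n)=\bigl(b_k(b_k-1/2)-a(a-1/2)\bigr)/(ab_k)$ with $a=((m+1/2)^2+n^2)^{1/2}$, $b_k=((k+1/2)^2+n^2)^{1/2}$, concluding from $b_k>a\ge 1/2$. You instead avoid calculus entirely via the conjugate identity $v-u=(b^2-a^2)/(u+v)$, writing $q_k(n)=1-\dfrac{b^2-a^2}{(u+v)(v-1/2)}$ and observing that the denominator is a product of two positive non-decreasing functions of $n$. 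Both are sound; your version is more elementary and sidesteps the (harmless but fussy) step of treating integer $n$ as a real variable. One small point worth stating explicitly: you need $q_k(n)\ge 0$, which holds since $u=\sqrt{a^2+n^2}\ge a=m+1/2\ge 1/2$, so that $x\mapsto x^{2t}$ (for $t>0$ not necessarily an integer) is indeed applied on $[0,\infty)$ where it is increasing; you gesture at this with ``nonnegative'' but the reason deserves one line.
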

\begin{proof} Let 
$$A(n) =\sum_{k=m+1}^\infty \left( \frac{ ((m+1/2)^2+n^2)^{1/2}-1/2}{((k+1/2)^2+n^2)^{1/2}-1/2 }\right)^{2t}.$$
It suffices to show that for every $k \geq m+1$ the functions
$$n \ra A_k(n):= \frac{ ((m+1/2)^2+n^2)^{1/2}-1/2}{((k+1/2)^2+n^2)^{1/2}-1/2 }$$
are increasing. Treating $n$ as a real (continuous) variable, and taking derivatives we have that
\begin{equation*}
\begin{split}
\frac{d A_k(n)}{d\,n}&=\frac{n\left((m+1/2)^2+n^2\right)^{-1/2}\left(((k+1/2)^2+n^2)^{1/2} -1/2\right)}{\left(((k+1/2)^2+n^2)^{1/2}-1/2 \right)^2}\\
&\quad-\frac{n\left((k+1/2)^2+n^2\right)^{-1/2}\left(((m+1/2)^2+n^2)^{1/2} -1/2\right)}{\left(((k+1/2)^2+n^2)^{1/2}-1/2 \right)^2}.
\end{split}
\end{equation*}
Therefore the sign of $\frac{d A_k(n)}{d\,n}$ coincides with the sign of 
$$\a_k(n):=\frac{((k+1/2)^2+n^2)^{1/2} -1/2}{\left((m+1/2)^2+n^2\right)^{1/2}}-\frac{((m+1/2)^2+n^2)^{1/2} -1/2}{\left((k+1/2)^2+n^2\right)^{1/2}}.$$
Let
$$b_k:=((k+1/2)^2+n^2)^{1/2} \mbox{ and }a=((m+1/2)^2+n^2)^{1/2}.$$  
Then 
$$\a_k(n)=\frac{b_k-1/2}{a}-\frac{a-1/2}{b_k}=\frac{b_k(b_k-1/2)-a(a-1/2)}{ab_k}.$$
Since $b_k>a \geq 1/2$ for all $k \geq m+1$ we deduce that $\a_k(n) \geq 0$ for such $k$ and the proof is complete. 
\end{proof}

\begin{lm}
\label{bmonoton}
Given $n \geq 0$ and $t>0$, the function
$$\N \cup \{0\} \ni m \ra \frac{\sum_{l=n+1}^\infty \left(((m+1/2)^2+l^2)^{1/2}-1/2 \right)^{-2t}}{\left( ((m+1/2)^2+n^2)^{1/2}-1/2\right)^{-2t}}$$
is increasing.
\end{lm}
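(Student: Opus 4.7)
The proof should mirror the strategy used for Lemma \ref{amonoton}, with the roles of the real and imaginary coordinates swapped. The plan is to factor the ratio as a sum over $l \geq n+1$ of the quantities
\[
B_l(m) := \left(\frac{((m+1/2)^2 + n^2)^{1/2} - 1/2}{((m+1/2)^2 + l^2)^{1/2} - 1/2}\right)^{2t},
\]
so that the whole function equals $\sum_{l \geq n+1} B_l(m)$. Since the exponent $2t > 0$ is fixed, it is enough to show that for each $l \geq n+1$ the base ratio
\[
\beta_l(m) := \frac{u(m) - 1/2}{v_l(m) - 1/2}, \qquad u(m) := ((m+1/2)^2 + n^2)^{1/2}, \quad v_l(m) := ((m+1/2)^2 + l^2)^{1/2},
\]
is increasing in $m$ as a function of the real variable $m \geq 0$. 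I will verify this by a direct derivative computation.

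Using $u'(m) = (m+1/2)/u(m)$ and $v_l'(m) = (m+1/2)/v_l(m)$, the quotient rule gives
\[
\frac{d\beta_l}{dm} = \frac{(m+1/2)}{(v_l-1/2)^2 \, u\, v_l}\Bigl[v_l(v_l - 1/2) - u(u - 1/2)\Bigr].
\]
Expanding the bracket yields $(v_l - u)(v_l + u - 1/2)$. Since $l \geq n+1 > n \geq 0$, we have $v_l(m) > u(m)$, and since $m \geq 0$, $n \geq 0$ we have $u(m) \geq 1/2$ and $v_l(m) \geq 1/2$, hence $v_l + u - 1/2 \geq 1/2 > 0$. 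The factor $m+1/2$ is strictly positive for $m \geq 0$, so $d\beta_l/dm > 0$. Therefore $\beta_l$ is strictly increasing on $[0,\infty)$, and consequently so is $B_l(m) = \beta_l(m)^{2t}$. Summing over $l \geq n+1$ preserves monotonicity, which gives the claim.

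The proof is essentially a routine computation; the only thing to keep track of is the algebraic identity $v_l(v_l - 1/2) - u(u - 1/2) = (v_l - u)(v_l + u - 1/2)$, and the observation that both $u$ and $v_l$ are bounded below by $1/2$ for $m \geq 0$, which ensures that the quantity $v_l + u - 1/2$ has a definite (positive) sign. No interchange-of-sum-and-limit issues arise because the monotonicity is established termwise before summation. I do not anticipate any substantive obstacle; this lemma plays a symmetric role to Lemma \ref{amonoton} in the analysis of sums of $\|\f'_{m+ni}\|_\infty^t$ over vertical versus horizontal slices of the grid $E$.
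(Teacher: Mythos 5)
Your proof is correct and follows the paper's argument essentially line by line: reduce to termwise monotonicity of the ratio $\beta_l(m)$, apply the quotient rule with $u' = (m+1/2)/u$ and $v_l' = (m+1/2)/v_l$, and identify the sign of the derivative with the sign of $v_l(v_l-1/2) - u(u-1/2)$. The one small difference is cosmetic: you make the positivity transparent via the factorization $v_l(v_l-1/2)-u(u-1/2) = (v_l-u)(v_l+u-1/2)$, while the paper leaves the inequality $c_l(c_l-1/2) > a(a-1/2)$ to the reader after noting $c_l > a \geq 1/2$.
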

\begin{proof} Let 
$$B(m) =\sum_{l=n+1}^\infty \left( \frac{ ((m+1/2)^2+n^2)^{1/2}-1/2}{((m+1/2)^2+l^2)^{1/2}-1/2 }\right)^{2t}.$$
It suffices to show that for every $l \geq n+1$ the functions
$$m \ra B_l(m):= \frac{ ((m+1/2)^2+n^2)^{1/2}-1/2}{((m+1/2)^2+l^2)^{1/2}-1/2 }$$
are increasing. Treating $m$ as a real (continuous) variable, and taking derivatives we have that
\begin{equation*}
\begin{split}
\frac{d B_l(m)}{d\,m}&=\frac{\left((m+1/2)^2+n^2\right)^{-1/2}\left(((m+1/2)^2+l^2)^{1/2} -1/2\right)(m+1/2)}{\left(((m+1/2)^2+l^2)^{1/2}-1/2 \right)^2}\\
&\quad-\frac{\left((m+1/2)^2+l^2\right)^{-1/2}\left(((m+1/2)^2+n^2)^{1/2} -1/2\right)(m+1/2)}{\left(((m+1/2)^2+l^2)^{1/2}-1/2 \right)^2}.
\end{split}
\end{equation*}
Therefore the sign of $\frac{d B_l(m)}{d\,m}$ coincides with the sign of 
$$\beta_l(m):=\frac{((m+1/2)^2+l^2)^{1/2} -1/2}{\left((m+1/2)^2+n^2\right)^{1/2}}-\frac{((m+1/2)^2+n^2)^{1/2} -1/2}{\left((m+1/2)^2+l^2\right)^{1/2}}.$$
Let
$$c_l:=((m+1/2)^2+l^2)^{1/2} \mbox{ and }a=((m+1/2)^2+n^2)^{1/2}.$$  
Then 
$$\beta_l(m)=\frac{c_l-1/2}{a}-\frac{a-1/2}{c_l}=\frac{c_l(c_l-1/2)-a(a-1/2)}{ac_l}.$$
Since $c_l>a \geq 1/2$ for all $l \geq n+1$ we deduce that $\beta_l(m) \geq 0$ for such $l$ and the proof is complete. 
\end{proof}
We now make the following technical observation which incorporates the previous monotonicity lemmas.

\begin{lm}
\label{4rays}
Let $e=m+ni \in E$ and $ t >0$. Then
$$\sum_{j \succ e^{\ast}} \frac{ \|\f'_j\|^t_\infty}{\|\f'_e\|^t_\infty} \geq 2\frac{\sum_{k=m+1}^\infty k^{-2t}}{m^{-2t}}+2 \frac{\sum_{l=n+1}^\infty ((l^2+1/4)^{1/2}-1/2)^{-2t}}{((n^2+1/4)^{1/2}-1/2)^{-2t}}.$$
\end{lm}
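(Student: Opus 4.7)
The plan is to bound the sum on the left below by extracting four specific families of Gaussian integers all lying past $e^\ast$ in the natural order (by Remark \ref{o1fs10}), and then to reduce the resulting two-parameter ratios to the elementary ones on the right via the monotonicity Lemmas \ref{amonoton} and \ref{bmonoton}.

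Set $\psi(x,y):=((x+1/2)^2+y^2)^{1/2}-1/2$, so that formula \eqref{1fs10} reads $\|\f'_{x+yi}\|_\infty=\psi(x,y)^{-2}$, and $\psi$ is even in its second argument (equivalently $\|\f'_{\bar e}\|_\infty=\|\f'_e\|_\infty$). Assuming first that $n\neq 0$, the four rays $R_1:=\{e+k:k\ge 1\}$, $R_2:=\{\bar e+k:k\ge 1\}$, $R_3:=\{m+(|n|+k)i:k\ge 1\}$ and $R_4:=\{m-(|n|+k)i:k\ge 1\}$ are by Remark \ref{o1fs10} all contained in $\{j\succ e^\ast\}$, and are pairwise disjoint because their imaginary parts are respectively $n$, $-n$, and $\pm(|n|+k)$ with $k\ge 1$.

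By the conjugate symmetry of $\psi$, $R_1$ and $R_2$ contribute equally and yield together $2\sum_{k=m+1}^\infty\psi(k,n)^{-2t}$, and similarly $R_3, R_4$ yield together $2\sum_{l=|n|+1}^\infty\psi(m,l)^{-2t}$. Dividing through by $\|\f'_e\|^t_\infty=\psi(m,n)^{-2t}=\psi(m,|n|)^{-2t}$, Lemma \ref{amonoton} then bounds the first of the resulting ratios below by its value at $n=0$, namely $\sum_{k=m+1}^\infty k^{-2t}/m^{-2t}$, while Lemma \ref{bmonoton} bounds the second ratio below by its value at $m=0$, namely $\sum_{l=|n|+1}^\infty((l^2+1/4)^{1/2}-1/2)^{-2t}/((n^2+1/4)^{1/2}-1/2)^{-2t}$. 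This is exactly the claimed inequality when $n\ge 1$; the case $n\le -1$ is symmetric after interchanging $R_1$ and $R_2$.

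For the degenerate case $n=0$, $R_1=R_2$ and the denominator $((n^2+1/4)^{1/2}-1/2)^{-2t}$ equals $+\infty$, so the second summand on the right-hand side vanishes and the claim reduces to $\sum_{j\succ e^\ast}\|\f'_j\|^t_\infty/\|\f'_e\|^t_\infty\ge 2\sum_{k=m+1}^\infty k^{-2t}/m^{-2t}$. This follows from $R_1$, which provides one copy of the sum, together with $R_3\cup R_4$: using the triangle inequality $\psi(m,l)\le m+l$ one has $\sum_{l\ge 1}\psi(m,l)^{-2t}\ge\sum_{l\ge 1}(m+l)^{-2t}=\sum_{k\ge m+1}k^{-2t}$, which supplies the missing copy. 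The only real obstacle in the whole argument is the bookkeeping: verifying that the four rays lie past $e^\ast$ (which is precisely Remark \ref{o1fs10}) and are mutually disjoint, together with handling this $n=0$ degeneracy.
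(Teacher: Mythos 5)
Your proof follows exactly the paper's approach: extract the same four rays $e+k$, $\bar e+k$, $m\pm(|n|+k)i$ via Remark~\ref{o1fs10}, pair them up using $\|\f'_{\bar z}\|_\infty=\|\f'_z\|_\infty$, and push the two resulting ratios down to the extreme values $n=0$ and $m=0$ with Lemmas~\ref{amonoton} and~\ref{bmonoton}. The one genuine refinement you add is the explicit treatment of the degenerate case $n=0$, where the two horizontal rays coincide and the paper's four-term lower bound would double-count; your fix via $\psi(m,l)\le m+l$ is correct and closes a small gap that the paper's proof passes over silently.
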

\begin{proof}
Recalling  Remark \ref{o1fs10} we get that if $k \geq m+1$ then
$$k+ni \succ e^{\ast}\mbox{ and }k-ni \succ e^{\ast},$$
and if $l \geq |n|+1$,
$$m+li \succ e^{\ast} \mbox{ and }m-li\succ e^{\ast}.$$
Therefore
\begin{equation*}
\begin{split}
\sum_{j \succ e^{\ast}}  \|\f'_j\|^t_\infty &\geq \sum_{k=m+1}^\infty \|\f'_{k+ni}\|^t+ \sum_{k=m+1}^\infty \|\f'_{k-ni}\|^t+\sum_{l=|n|+1}^\infty \|\f'_{m+li}\|^t_\infty+\sum_{l=|n|+1}^\infty \|\f'_{m-li}\|^t_\infty \\
&=2\sum_{k=m+1}^\infty \|\f'_{k+ni}\|^t+2\sum_{l=|n|+1}^\infty \|\f'_{m+li}\|^t_\infty.
\end{split}
\end{equation*}
Now by Lemmas \ref{amonoton} and \ref{bmonoton} we get that
\begin{equation*}
\begin{split}
\sum_{j \succ e^{\ast}} \frac{ \|\f'_j\|^t_\infty}{\|\f'_e\|^t_\infty} &\geq 2\sum_{k=m+1}^\infty \frac{(((k+1/2)^2+n^2)^{1/2}-1/2)^{-2t}}{(((m+1/2)^2+n^2)^{1/2}-1/2)^{-2t}} \\
&\quad\quad+2\sum_{l=|n|+1}^\infty \frac{(((m+1/2)^2+l^2)^{1/2}-1/2)^{-2t}}{(((m+1/2)^2+n^2)^{1/2}-1/2)^{-2t}}\\
&\geq 2 \frac{\sum_{k=m+1}^\infty k^{-2t}}{m^{-2t}}+2 \frac{\sum_{l=|n|+1}^\infty ((l^2+1/4)^{1/2}-1/2)^{-2t}}{((n^2+1/4)^{1/2}-1/2)^{-2t}}.
\end{split}
\end{equation*}
\end{proof}
\begin{propo}
\label{outofnumbox}
Let $e=m+ni \in E$ such that $m \geq 2^9$ or $|n| \geq 2^{12}$ then 
\begin{equation}
\label{condoutbox}
\sum_{j \succ e^{\ast}}^\infty \| \f'_j\|^h_\infty \geq 16^h \| \f'_e\|^h_\infty
\end{equation}
where $h=\dim_{\cH} (J_{\cf})$.
\end{propo}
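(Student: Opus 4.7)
The plan is to apply Lemma \ref{4rays} at $t=h$ and verify that, under either of the two hypotheses on $e=m+ni$, at least one of the two summands on the right already exceeds $16^h$; since both summands are nonnegative, this is enough. Throughout I would use the rigorous upper bound $h\leq 1.886$ from \cite{MU1} to pin down the numerical constants (even the crude bound $h<2$ would work, with slightly looser thresholds), noting that this yields $16^h\leq 16^{1.886}<187$ and $2h-1\leq 2.772$.

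For the regime $m\geq 2^9$, I would estimate the first summand of Lemma \ref{4rays} from below using the standard integral comparison $\sum_{k\geq m+1} k^{-2h}\geq (m+1)^{1-2h}/(2h-1)$, obtaining a lower bound of
\[
\frac{2m^{2h}}{(2h-1)(m+1)^{2h-1}} \;=\; \frac{2m}{(2h-1)(1+1/m)^{2h-1}}.
\]
The perturbative factor $(1+1/m)^{2h-1}$ is controlled by $e^{(2h-1)/m}$, which for $m\geq 512$ is at most $1.01$, so the first summand is at least $2\cdot 512/(2.772\cdot 1.01)>365>187\geq 16^h$, and the bound only strengthens as $m$ grows.

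For the regime $|n|\geq 2^{12}$, I would first apply the elementary pointwise inequalities $\sqrt{l^2+1/4}-1/2\leq l$ (immediate from $(l+1/2)^2\geq l^2+1/4$) and $\sqrt{n^2+1/4}-1/2\geq |n|-1/2$ to reduce the second summand to the lower bound $2(|n|-1/2)^{2h}\sum_{l>|n|} l^{-2h}$; the same integral comparison as before then produces a lower bound of the form $2(|n|-1/2)^{2h}/\bigl((2h-1)(|n|+1)^{2h-1}\bigr)$, which at $|n|=4096$ is of order $2|n|/(2h-1)$ and comfortably exceeds $16^h$. The hard part will be nothing substantive: the analysis is purely bookkeeping of small perturbative factors, and the thresholds $2^9$ and $2^{12}$ have been chosen with ample slack, in particular accommodating the degenerate limit in which $n=0$ (where the second summand of Lemma \ref{4rays} vanishes and the entire burden falls on the first summand). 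Finally, via Lemma \ref{lmc1fs9}, the inequality established at $t=h$ will automatically extend to every $s\in[0,h]$, which is what later applications through Proposition \ref{keypropoint} will require.
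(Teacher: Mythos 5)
Your proposal is correct and takes essentially the paper's route: apply Lemma \ref{4rays}, bound whichever summand is active by an integral comparison, and verify the resulting numerics in the two regimes $m\ge 2^9$ and $|n|\ge 2^{12}$. The only difference is cosmetic: the paper checks the inequality at the integer exponent $t=2$ (so the power sums become $\sum k^{-4}$ and the arithmetic reduces to powers of two, e.g.\ $2m^4\sum_{k>m}k^{-4}\ge 2^8$) and then descends to $t=h$ via Lemma \ref{lmc1fs9} together with $h\le 1.9$, whereas you carry $t=h$ directly through the estimates using $h\le 1.886$ --- equivalent in substance, slightly heavier in bookkeeping.
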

\begin{proof} 
Assume first that $m \geq 2^9$. By the integral test,
\begin{equation}
\label{mbox}
\begin{split}
m^{4} \sum_{k=m+1}^\infty k^{-4} &\geq m^{4} \int_{m+1}^\infty x^{-4} dx\\
&= m^{4}\frac{(m+1)^{-3}}{3} \geq \frac{1}{3} \left( \frac{m}{m+1}\right)^{4} m.
\end{split}
\end{equation}
Therefore by \eqref{mbox},
$$2 \,m^{4} \sum_{k=m+1}^\infty k^{-4} \geq \frac{2}{3} \left( \frac{2^9}{2^9+1} \right)^4 2^9 \geq \frac{1}{3} \frac{2^{46}}{2^{36}+2^{32}}.$$
Since $2^{6} \geq 3 (2^{4}+1)$ we get that
$$\frac{1}{3} \frac{2^{46}}{2^{36}+2^{32}} \geq 2^8.$$
Hence we have shown that
$$2 \,m^{4} \sum_{k=m+1}^\infty k^{-4} \geq 2^8,$$ 
which by Lemma \ref{4rays} implies that
$$\sum_{j \succ e^{\ast}} \frac{ \|\f'_j\|^2_\infty}{\|\f'_e\|^2_\infty} \geq 16^2.$$
By \cite[Theorem 6.6]{MU1} we know that 
\begin{equation}
\label{hausccf}
h=\dim_{\cH} (J_{\cf}) \leq 1.9,
\end{equation}
therefore Lemma \ref{lmc1fs9} implies that \eqref{condoutbox} holds.

Now assume that $n \geq 2^{12}$. By the integral test,
$$\sum_{l=n+1}^\infty ( (l^2+1/4)^{1/2}-1/2)^{-4} \geq \sum_{l=n+1}^\infty l^{-4} \geq  \frac{(n+1)^{-3}}{3} .$$
On the other hand it is easy to check that
$$(n^2+1/4)^{1/2}-1/2 \geq \frac{\sqrt{5}-1}{2}n.$$
Therefore,
$$2 \frac{\sum_{l=n+1}^\infty ((l^2+1/4)^{1/2}-1/2)^{-4}}{((n^2+1/4)^{1/2}-1/2)^{-4}} \geq 2 \left( \frac{\sqrt{5}-1}{2}\right)^{4} \left(\frac{n}{n+1} \right)^{4} \frac{n+1}{3}.$$
As in the previous case, recalling Lemma \ref{lmc1fs9} and Lemma \ref{4rays}, it suffices to show that for $n\geq 2^{12}$, 
$$\frac{2}{3} \left( \frac{\sqrt{5}-1}{2}\right)^{4} \left(\frac{n}{n+1} \right)^{4} n \geq 16^{2}. $$
This reduces to checking the condition
$$\frac{2}{3} \left( \frac{\sqrt{5}-1}{2}\right)^{4}\frac{2^{11}}{2^7+1}  \geq  1,$$
which is easily verified to be valid. 

The only remaining case is when $n \leq -2^{12}$. The proof follows as in the previous case using that $\| \f'_z\|=\|\f'_{\bar{z}}\|$ and Lemma \ref{4rays}. The proof is complete.
\end{proof}
We are now ready to prove Theorem \ref{fullspec}, which we restate for convenience.
\begin{thm} 
\label{full7}
The conformal iterated function system associated to the complex continued fractions has full spectrum.
\end{thm}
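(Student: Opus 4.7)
My approach combines two ingredients. Firstly, the improved inclusion $[0, \theta(\cf)] \subset DS(\cf)$ from Remark \ref{thetains} gives $[0, 1] \subset DS(\cf)$, since $\theta(\cf) = 1$ follows from \eqref{1fs10}. Secondly, an iterative application of Proposition \ref{keypropoint} along a finite ``staircase'' sequence in $[1, h]$ gives $[1, h] \subset DS(\cf)$. As $h \in DS(\cf)$ trivially (take $F = E$), these two ingredients together yield $DS(\cf) = [0, h]$.

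For the second ingredient, the natural ordering $\prec$ of Definition \ref{d1fs8}, together with Lemma \ref{4rays} and Proposition \ref{outofnumbox}, are pivotal. Given $s \in (1, h]$, let $B(s) \subset E$ denote the set of those $e \in E$ for which the inequality
\[
\sum_{j \succ e^\ast} \|\f'_j\|_\infty^s \;\geq\; K^{2s}\,\|\f'_e\|_\infty^s
\]
can fail. By Proposition \ref{outofnumbox}, $B(h)$ is contained in the finite rectangle $\{m+ni : 1 \leq m \leq 2^9-1,\; |n| \leq 2^{12}-1\}$, and since every ratio $\|\f'_j\|_\infty / \|\f'_e\|_\infty$ with $j \succ e^\ast$ is at most one, the argument of Lemma \ref{lmc1fs9} shows $B(s) \subset B(h)$ for $s \leq h$, with $B(s)$ shrinking as $s$ decreases. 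Let $d(s)$ be the least integer such that the initial segment $I(d(s)) = \{e_1, \ldots, e_{d(s)}\}$ of $E$ in the natural order contains $B(s)$; then condition (ii) of Proposition \ref{keypropoint} at parameter $s$ is automatic for every $k \geq d(s)+1$.

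The bootstrap now proceeds as follows. I construct an increasing finite sequence $1 = t_0 < t_1 < \cdots < t_N = h$ with the property that $h_{I(d(t_k))} \leq t_{k-1}$ for every $k = 1, \ldots, N$. Granting such a chain, Proposition \ref{keypropoint} applied with $t = t_{k-1}$, $s = t_k$ and $d = d(t_k)$ has its hypothesis (i), $P_{I(d(t_k))}(t_{k-1}) \leq 0$, equivalent to $t_{k-1} \geq h_{I(d(t_k))}$, which is exactly our assumption; hypothesis (ii) was arranged in the previous paragraph. Hence $[t_{k-1}, t_k] \subset DS(\cf)$ for each $k$, and concatenation gives $[1, h] \subset DS(\cf)$. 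The possibility of constructing such a finite chain rests on the fact, arising from the monotonicity Lemmas \ref{amonoton} and \ref{bmonoton} underlying Lemma \ref{4rays}, that the ``size'' of $B(s)$ (roughly $16^s/(2s-1)$ in radius) shrinks as $s \to 1^+$, so near $t = 1$ the set $I(d(s))$ is small enough that $h_{I(d(s))}$ is well below $1$, seeding the bootstrap.

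The main obstacle, and the reason the argument is heavily computer assisted, is the rigorous numerical verification of the inequalities $h_{I(d(t_k))} \leq t_{k-1}$ at each step. Sharp upper bounds for the Hausdorff dimension of the finite conformal iterated function systems $\cf_{I(d(t_k))}$ are required, especially when $t_k$ is close to $h$, since then $I(d(t_k))$ is large and $h_{I(d(t_k))}$ is itself close to $h$. For this one invokes the rigorous dimension-estimation algorithms of \cite{nuss, nusscf, jp1, jp2} with interval arithmetic applied to these finite subsystems. A sufficiently fine subdivision $\{t_k\}$ of $[1, h]$, combined with sufficiently tight numerical dimension estimates for the corresponding $\cf_{I(d(t_k))}$, is needed to close the bootstrap chain all the way up to $h$; this is where Theorem \ref{hein} can also be used to bound the gap $h - h_{I(d(t_k))}$ in terms of the tail sum of $\|\f'_e\|_\infty^{h_{I(d(t_k))}}$.
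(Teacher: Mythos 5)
Your outline captures the key skeleton of the actual argument: cover $[0,\theta]$ by general theory, then run a bootstrap along a finite staircase via Proposition \ref{keypropoint} together with Proposition \ref{outofnumbox} and Lemma \ref{lmc1fs9}. However, there is a concrete gap that the paper has to work around and that your proposal ignores: the direct chain $1 = t_0 < t_1 < \dots < t_N = h$ with $h_{I(d(t_k))} \le t_{k-1}$, carried out \emph{entirely inside the full system} $\cf$, does not obviously close. The paper's bootstrap includes an essential intermediate step in which one passes to the cofinite subsystem $\cS_2 = \{\f_e\}_{e \in E_2}$, $E_2 = E \setminus \{1,1+i,1-i\}$, uses a rigorously certified \emph{lower} bound $\dim_\cH(J_{\cS_2}) \geq 1.5$, and applies the analogue of Proposition \ref{keypropoint} to the ordered alphabet $E_2$ (whose initial segments $\tilde I_9 \setminus \tilde I_2$ do not contain the largest maps). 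This yields the bridge $[1.11, 1.5] \subset DS(\cS_2) \subset DS(\cf)$; without it, one would need $h_{\tilde I_9} \leq 1.12$ (the pressure of the initial segment that includes $1, 1\pm i$), which is not established and is plausibly false. So the assertion ``I construct an increasing finite sequence...'' is not self-evident: your argument for the existence of the chain only controls the bottom of the staircase (where $B(s)$ shrinks as $s \downarrow 1$) and does not rule out a ``dead zone'' in the middle where $h_{I(d(t))} \ge t$. The subsystem trick is exactly what the paper invokes to bridge that zone, and it is the genuinely new feature you omit.

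Two smaller remarks. First, your invocation of Theorem \ref{hein} is misdirected: it provides an \emph{upper} bound on the gap $h - h_F$, i.e.\ a \emph{lower} bound on $h_F$, whereas the bootstrap requires certified \emph{upper} bounds on $h_{I(d(t_k))}$; those are obtained in the paper from explicit estimates $Z_2(F,t) < 1 \Rightarrow h_F < t$ using the recursive formula for $\|\f'_\omega\|_\infty$. Second, using $[0,1] \subset DS(\cf)$ from Remark \ref{thetains} rather than $[0,1)$ from \cite[Theorem 6.2]{MU2} is fine and is a minor simplification at the bottom endpoint.
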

\begin{proof}
By Proposition \ref{outofnumbox} we know that \eqref{condoutbox} holds for any $e=m+ni \in E$ such that $m \geq 2^9$ or $|n| \geq 2^{12}$. We use \texttt{Matlab} to check condition \eqref{condoutbox} for the finitely many remaining points in the grid $E$. Let 
$$E=\{e_1,e_2, e_3,\dots\}$$ be an enumeration of the grid $E$ according to the natural order $\prec$ introduced in Definition \ref{d1fs8}. Moreover for $k \in \N$ we let
$$D_k=\{p_1,\dots, p_k\}$$
be an enumeration of the first $k$ points in the nonnegative grid $\N \times \Z^+$ according to the natural order $\prec$. Here as usual $\Z^+$ denotes the set of nonnegative integers. For any $A \subset \C$ we let 
$$\tilde{A}=\{z \in \C: z \in A \mbox{ or } \overline{z} \in A\}.$$
For $k \in \N$ we are going to denote
$$\tilde{I}_k=\tilde{D_k}$$
Recalling Section \ref{sec:dimspect}, since $\| \f'_z\|=\|\f'_{\bar{z}}\|$ for all $z \in E$, it follows that for every $k \in \N$ there exists some $n(k) \in \N$ such that
$$\tilde{D_k}=I_{n(k)},$$
this simply means that $\tilde{I}_k$ is an initial block of $E$.
\begin{figure} 
\centering
\includegraphics[scale=0.265]{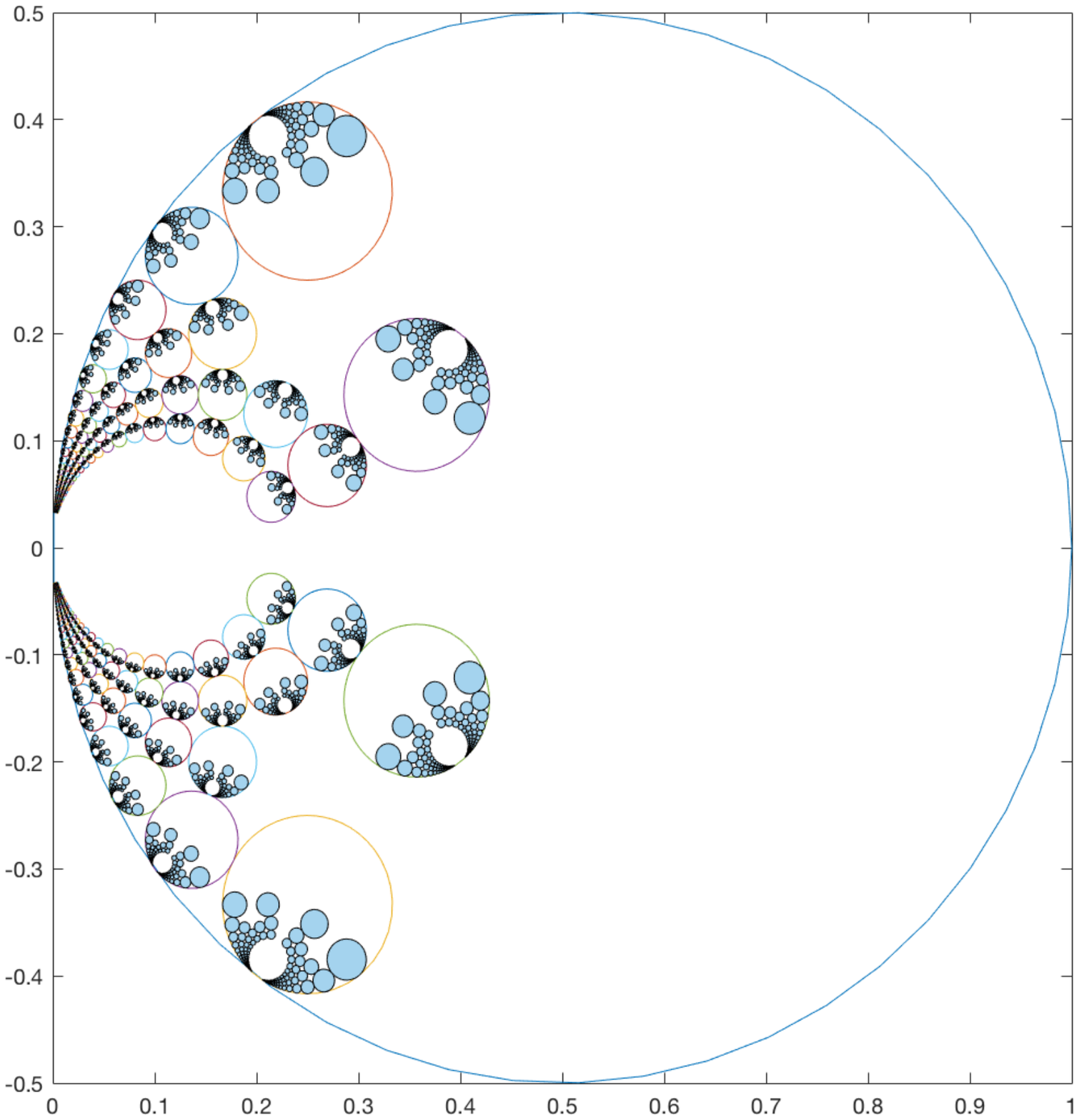}
\includegraphics[scale=0.265]{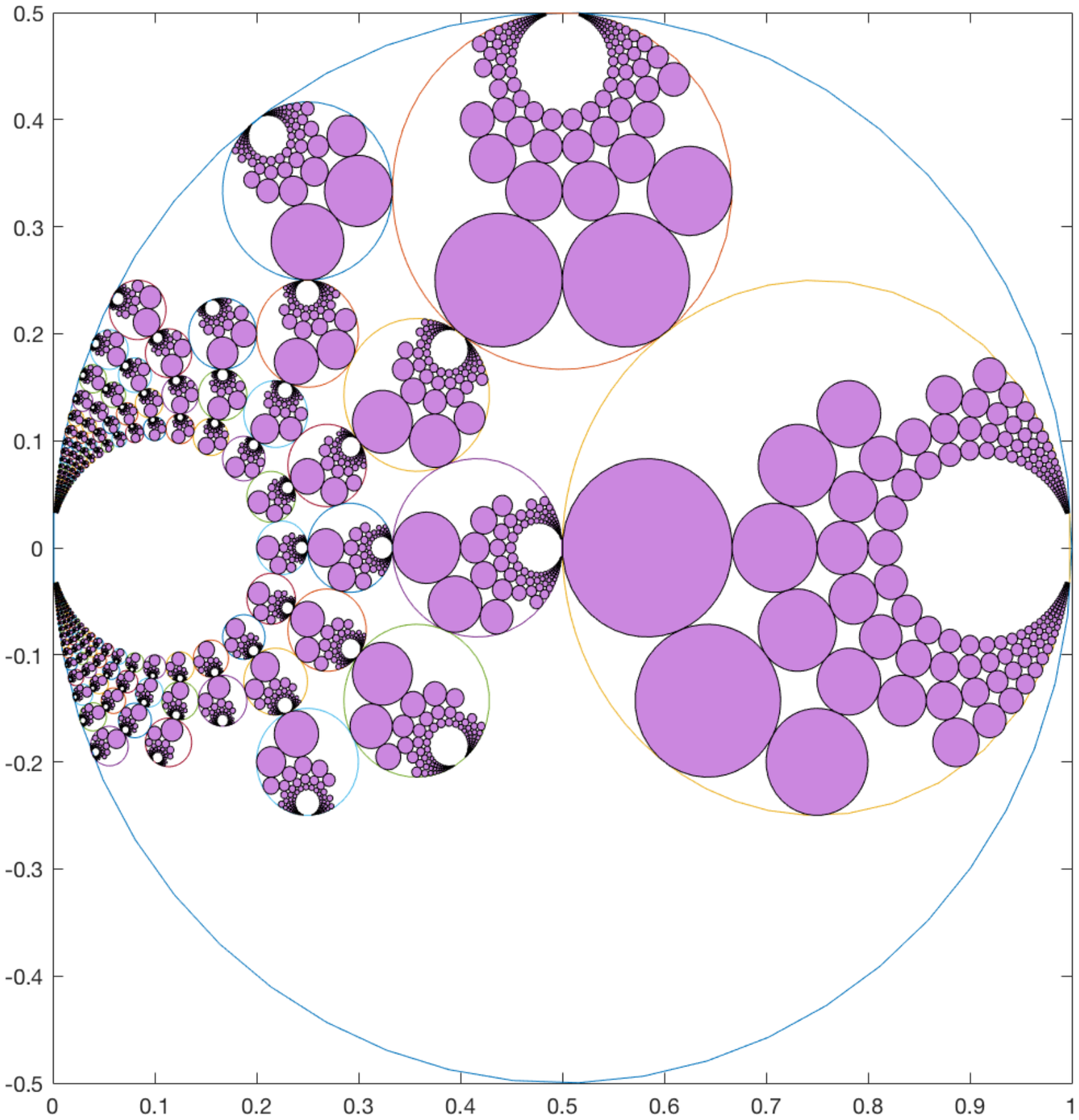}
\includegraphics[scale=0.265]{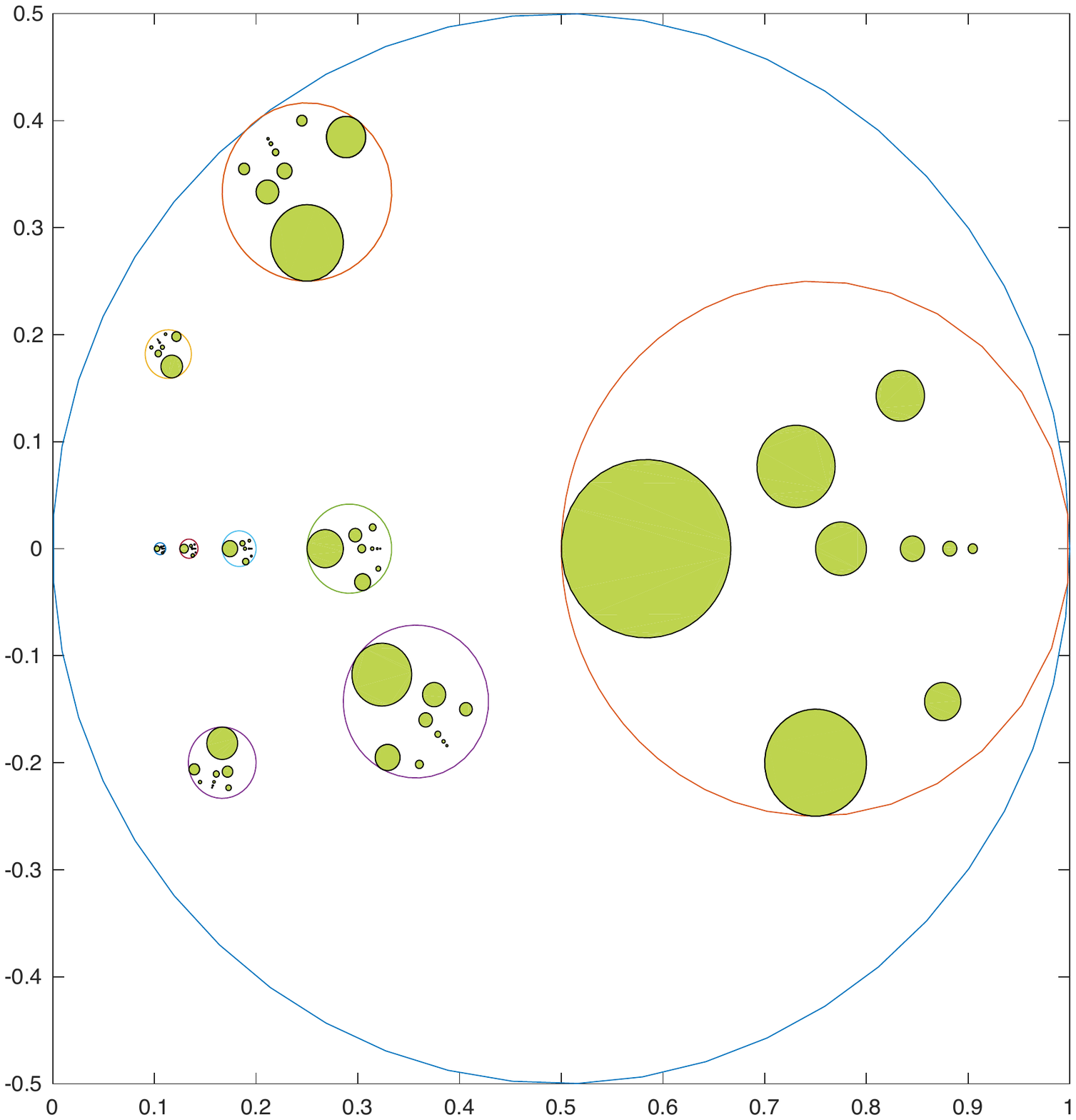}
\caption{Approximation of the limit sets of the three subsystems of $\cf$ after two iterations. See also Figure \ref{figccf} for an approximation of $J_{\cf}$.}
\end{figure}
Consider the following function $f: \N \times \R^{+} \ra \R^{+}$
$$f(k,t):=\frac{\sum_{n=k+1}^{\infty} \|\f_{e_n}'\|^t_\infty}{\|\f_{e_k}'\|^t_\infty}.$$
Notice that
$$f(k,t)=\frac{\|\f_{e_{k+1}}'\|^t_\infty}{\|\f_{e_k}'\|^t_\infty}(1+ f(k+1,t)),$$
and consequently we obtain the following recursive formula
\begin{equation}
\label{recurs}
f(k+1,t)=\alpha(k)^{-t} f(k,t)-1,
\end{equation}
where $\alpha(k)=\frac{\|\f_{e_{k+1}}'\|_\infty}{\|\f_{e_k}'\|_\infty}$. 

Let 
$$B =\{ z \in \C: \Rea (z) \in [1,2^9] \mbox{ and  }\Imm(z) \in [0, 2^{12}] \}.$$ Given $t>0$ we would like to check $k \in \N$
\begin{equation}
\label{reformcondoutbox}
e_k \in B \cap E \mbox { and }f(k,t) \geq 16^t.
\end{equation}
Observe that if $\tilde{f}(k,t)$ is an approximation of $f(k,t)$ such that $\tilde{f}(k,t) \leq f(k,t)$ then 
$$\tilde{f}(k+1,t) :=\alpha(k)^{-t} \tilde{f}(k,t)-1 \leq \alpha(k)^{-t} f(k,t)-1 = f(k+1,t).$$
Now let 
$$B_0=\{ z \in \C: \Rea (z) \in [1,40000] \mbox{ and  }\Imm(z) \in [0, 40000] \}$$
and set 
\begin{equation}
\label{apr38}
\tilde{f}(1,t):=\frac{\sum_{\{n \geq 2 :e_n \in B_0\}}  \|\f_{e_n}'\|^t_\infty}{\|\f_{e_1}'\|^t_\infty}.
\end{equation}
Obviously $\tilde{f}(1,t) \leq f(1,t)$. We employ \texttt{Matlab} to estimate explicitly the quantity $\tilde{f}(1,t)$, and using the recursive formula \eqref{recurs} we derive approximations $\tilde{f}(k,t) \leq f(k,t)$ for all $e_k \in B$. We remark that the choice of $40000$ in the definition of $B_0$ does not have any particular significance; it was simply sufficient for our calculations.

As a starting exponent $t$ we will choose an upper bound for $\dim_{\cH} (J_{\cf})$. It was proved in \cite{MU1} that
\begin{equation}
h:=\dim_{\cH} (J_{\cf}) \leq 1.885.
\end{equation}
We also record that by a recent result of Priyadarshi \cite{pri},
\begin{equation}
\label{prifull}
h \geq 1.825.
\end{equation}
In particular for $\bar{h}:=1.885$ we obtain that 
\begin{equation*}
f(k,\bar{h}) \geq 16^{\bar{h}} \mbox{ for all }e_k \in B \stm \tilde{I}_{16}.
\end{equation*}
Hence by Lemma \ref{lmc1fs9} and Proposition \ref{outofnumbox} we deduce that 
\begin{equation}
\label{firstintsum}
\frac{\sum_{n=k+1}^{\infty} \|\f_{e_n}'\|^h_\infty}{\|\f_{e_k}'\|^h_\infty} \geq 16^h \mbox{ for all }e_k \in E \stm \tilde{I}_{16}.
\end{equation}
\begin{figure}
\label{fig:3grid}
\centering
\includegraphics[scale = 0.17]{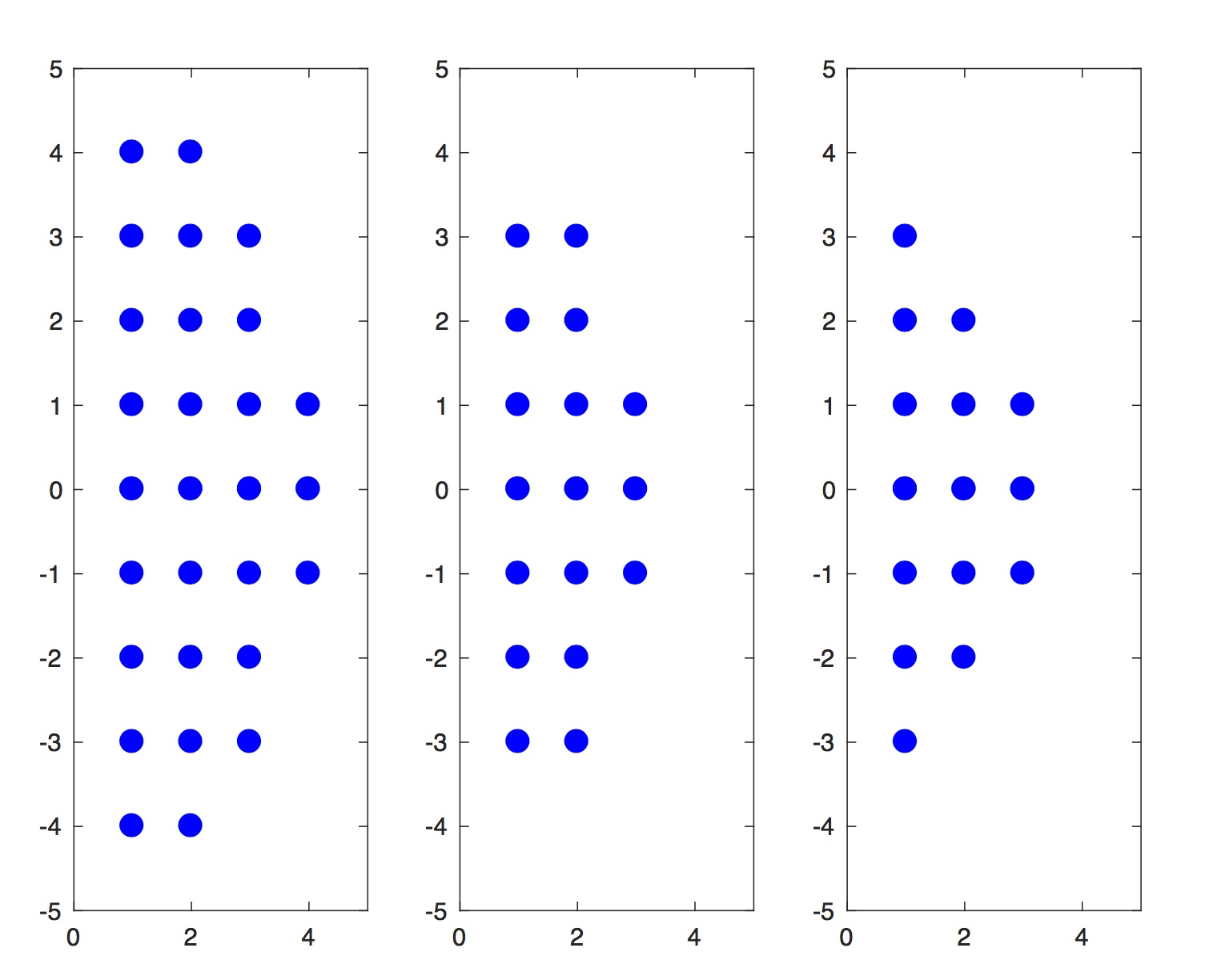}
\caption{The sets $\tilde{I}_{16},\tilde{I}_{10}$ and $\tilde{I}_{9}$.}
\end{figure}
Now we consider the finite conformal iterated function system 
$$\mathcal{F}_{16}=\{\f_e: e \in \tilde{I}_{16}\}.$$
Then
$$Z_2(\tilde{I}_{16}, t)=\sum_{\om \in \tilde{I}_{16}^2} \|\f'_\om\|^t_\infty.$$
We now recall the following recursive formula for $\|\f'_\om\|_\infty, \om \in E^n$, appearing in \cite[Theorem 6.6]{MU1}. For any $\om=(\om_1,\dots,\om_n) \in E^n$, set 
$$q_0(\om)=1, q_1(\om)=\om_1,$$
and
$$q_k(\om)=\om_k q_{k-1}(\om)+q_{k-2}(\om) \mbox{ for }2 \leq k \leq n.$$
Then
\begin{equation}
\label{recurder}
\|\f'_{\om}\|_\infty=\frac{4}{|q_n(\om)|^2 \left(\left|2+\frac{q_{n-1}(\om)}{q_n(\om)}\right| +\left|\frac{q_{n-1}(\om)}{q_n(\om)} \right|\right)^2}.
\end{equation}
Using the previous recursive formula and employing \texttt{Matlab} we can estimate $Z_2(\tilde{I}_{16}, t)$ for different values of $t$. In particular we obtain that
$$Z_2(\tilde{I}_{16}, 1.544)<0.997.$$
Recalling that $P_{\tilde{I}_{16}}(t)=\inf_{n \in \N} \frac{\log Z_n(\tilde{I}_{16},t)}{n}$ we deduce that 
\begin{equation}
\label{pres23}
P_{\tilde{I}_{16}}(1.544)<0.
\end{equation}
As we remarked earlier $\tilde{I}_{16}$ is an initial segment of $E$ (in particular $\tilde{I}_{16}=I(28)$), therefore \eqref{firstintsum}, \eqref{pres23} and Proposition \ref{keypropoint} imply that
\begin{equation}
\label{firstint}
[1.544, h] \subset DS (\cf).
\end{equation}

Now let $t_1:=1.545$. As before we use \texttt{Matlab} to calculate the quantity $\tilde{f}(1,t_1)$ and using the recursive formula \eqref{recurs} we derive approximations $\tilde{f}(k,t_1) \leq f(k_1,t)$ for all $k \in \N$ such that $e_k \in B$. In particular we obtain that
\begin{equation*}
f(k,t_1) \geq 16^{t_1} \mbox{ for all }e_k \in B \stm \tilde{I}_{10}.
\end{equation*}
Hence by Lemma \ref{lmc1fs9} and Proposition \ref{outofnumbox} we deduce that 
\begin{equation}
\label{firstintsum}
\frac{\sum_{n=k+1}^{\infty} \|\f_{e_n}'\|^{t_1}_\infty}{\|\f_{e_k}'\|^{t_1}_\infty} \geq 16^{t_1} \mbox{ for all }e_k \in E \stm \tilde{I}_{10}.
\end{equation}
We consider the finite conformal iterated function system 
$$\mathcal{F}_{10}=\{\f_e: e \in \tilde{I}_{10}\}.$$
Using \texttt{Matlab} as before
$$Z_2(\tilde{I}_{10}, 1.467)<0.989.$$
Recalling that $P_{\tilde{I}_{10}}(t)=\inf_{n \in \N} \frac{\log Z_n(\tilde{I}_{10},t)}{n}$ we deduce that 
\begin{equation}
\label{pres23}
P_{\tilde{I}_{10}}(1.467)<0.
\end{equation}
As we remarked earlier $\tilde{I}_{10}$ is an initial segment of $E$ (in particular $\tilde{I}_{10}=I(17)$), therefore \eqref{firstintsum}, \eqref{pres23} and Proposition \ref{keypropoint} imply that
\begin{equation}
\label{firstint2}
[1.467, 1.545] \subset DS (\cf).
\end{equation}
By direct computation  we  check that $D_2=\{1, 1+i\}$ therefore $$\tilde{I}_2=\{1, 1+i, 1-i\}.$$
We let $E_2:=E \stm \tilde{I}_2$ and we denote the conformal iterated function system associated to the complex continued fractions with entries in $E_2$ by
$$\cS_2=\{\f_e\}_{e \in E_2}.$$
We endow $E_2$ with the natural order inherited from $E$. Using a modification of the code developed by Priyadarshi \cite{pri},  for $E_2$ we obtain a lower bound for the Hausdorff  dimension of $J_{\cS_2}$:
\begin{equation}
\label{prirem}
\dim_{\cH}(J_{\cS_2}) \geq 1.5.
\end{equation}

Let $t_2=1.5$. Arguing as previously, using \texttt{Matlab} to estimate $\tilde{f}(1,t_2)$, the recursive formula \eqref{recurs} and the approximation sum \eqref{apr38}, we get that
\begin{equation*}
f(k,t_2) \geq 16^{t_2} \mbox{ for all }e_k \in B \stm \tilde{I}_{9},
\end{equation*}
which combined with Proposition \ref{outofnumbox} implies that
\begin{equation}
\label{secintsum}
\frac{\sum_{n=k+1}^{\infty} \|\f_{e_n}'\|^{t_2}_\infty}{\|\f_{e_k}'\|^{t_2}_\infty} \geq 16^{t_2} \mbox{ for all }e_k \in E \stm \tilde{I}_{9}. 
\end{equation}

We consider the finite conformal iterated function system 
$$\mathcal{F}^2_{9}=\{\f_e: e \in \tilde{I}_{9}\stm \tilde{I}_2\}.$$
Using \texttt{Matlab} as before
$$Z_2(\tilde{I}_{9} \stm \tilde{I}_{2} , 1.11)<0.97,$$
hence 
\begin{equation}
\label{pres16}
P_{\tilde{I}_{9} \stm \tilde{I}_{2}}(1.11)<0.
\end{equation}
Notice that \eqref{secintsum} implies that
\begin{equation}
\label{secintsum2}
\frac{\sum_{n=k+1}^{\infty} \|\f_{e_n}'\|^{t_2}_\infty}{\|\f_{e_k}'\|^{t_2}_\infty} \geq 16^{t_2} \mbox{ for all }e_k \in E_2 \stm \tilde{I}_{9}. 
\end{equation}
On the other hand $E_2 \cap \tilde{I}_{9}=\tilde{I}_{9} \stm \tilde{I}_{2}$ is an initial block of $E_2$, therefore \eqref{secintsum2}, \eqref{pres16} and Proposition \ref{keypropoint} imply that
\begin{equation}
\label{secint}
[1.11, 1.5] \subset DS(\cS_2) \subset DS (\cf).
\end{equation}

Finally let $t_3=1.2$. Using \texttt{Matlab} we estimate $\tilde{f}(1,t_3)$ and  the recursive formula \eqref{recurs} allows us to derive approximations $\tilde{f}(k,t_1) \leq f(k_1,t)$ for all $k \in \N$ such that $e_k \in B$. In particular we obtain that 
\begin{equation*}
f(k,t_3) \geq 16^{t_3} \mbox{ for all }e_k \in B \stm \tilde{I}_{2},
\end{equation*}
which combined with Proposition \ref{outofnumbox} implies that
\begin{equation}
\label{thirdintsum}
\frac{\sum_{n=k+1}^{\infty} \|\f_{e_n}'\|^{t_2}_\infty}{\|\f_{e_k}'\|^{t_2}_\infty} \geq 16^{t_2} \mbox{ for all }e_k \in E \stm \tilde{I}_{2}. 
\end{equation}
We consider the finite conformal iterated function system 
$$\mathcal{F}_{2}=\{\f_e: e \in \tilde{I}_{2}\}.$$
Using \texttt{Matlab} as before we obtain that $Z_2(\tilde{I}_{2}, 0.9)<0.93,$ hence we deduce that 
\begin{equation}
\label{pres23n}
P_{\tilde{I}_{2}}(0,9)<0.
\end{equation}
Since $\tilde{I}_2=I_3$, \eqref{thirdintsum}, \eqref{pres23n} and Proposition \ref{keypropoint} imply that
\begin{equation}
\label{thirdint}
[0.9, 1.12] \subset DS (\cf).
\end{equation}

By \cite[Theorem 6.2]{MU2} and the fact that $\theta(\cf)=1$ we deduce that
\begin{equation}
\label{oldint}
[0, 1) \subset DS(\cf).
\end{equation}
Now the theorem follows by combining \eqref{firstint}, \eqref{firstint2}, \eqref{secint}, \eqref{thirdint} and \eqref{oldint}.
\end{proof}

\begin{remark}
\label{intervarith}
In order to obtain the estimate \eqref{prirem} we used a modified version of a code developed by Priyadarshi \cite{pri}. The modified code guarantees the lower bound $1.5$ for the Hausdorff  dimension of $J_{\cS_2}$ as long as the the spectral radius of the restricted Perron-Frobenius operator is greater than one.  Already for the value of $p=7$ in the modified Priyadarshi's code, we obtain the spectral radius of at least $1.14082450939...$. This value is validated by the Matlab Toolbox for Reliable Computing, \texttt{INTLAB} \cite{Rump}, to be accurate to twelve digits (see also \cite{Rump_acta}).  We also verified Priyadarshi's estimate \eqref{prifull}. Actually using the Power Method (Power Algorithm) and  Priyadarshi's approach we were able to obtain $h\geq 1.84$. The \texttt{Matlab} codes used in the proof of Theorem \ref{full7} and for obtaining lower bounds for Hausdorff dimensions of subsystems of $\cf$ using the Power Method are available upon request.  
\end{remark}

\begin{remark} Recalling the definition of $\cf=\{\f_e\}_{e \in E}$ in the beginning of the section, it is easy to see that $\f_e (X)$ is a closed ball for every $e \in E$, where $X=\bar{B}(1/2,1/2)$. A geometric argument very similar to the one used in the proof of \eqref{1fs10} gives that 
$$r_e:=\diam (\f_e(X))=\frac{1}{m^2+n^2+m},$$
for $e=m+ni \in E$. Using this observation we define the {\em linearized continued fractions} IFS $\widehat{\cf}=\{\psi_e\}_{e \in E}$, where
$$\psi_e(z)=r_ez+\tau_e,$$
and $\tau_e$ is the center of $\f_e(X)$. Notice that $\widehat{\cf}$ consists of rotation-free similarities. Moreover
$$\psi_e(X)=\f_e (X) \mbox{ and }\|\psi_e'\|_\infty=r_e$$
for every $e \in E$. 

We now briefly describe how the method developed in the last two sections yields that $\widehat{\cf}$ is of full spectrum as well. We endow $E$ with a natural order as before, i.e. $e_1 \prec e_2$ if and only if $\|\psi_{e_1}'\|_\infty \geq \|\psi_{e_2}'\|_\infty$. Arguing as in Proposition \ref{outofnumbox}, although using much simpler elementary arguments, we deduce that
\begin{equation}
\label{condoutbox2}
\sum_{j \succ e^{\ast}}^\infty \| \psi'_j\|^2_\infty \geq  \| \psi'_e\|^2_\infty
\end{equation}
for all $e=m+ni \in E$ such that $m \geq 8$ or $|n| \geq 9$. Notice that since the maps $\psi_e$ are similarities the distortion constant $K$ is equal to $1$. We then use recursive approximation as in \eqref{apr38} and \texttt{Matlab} to verify that \eqref{condoutbox} also holds for the remaining $e \in E \cap ([1,8] \times [-9,9])$. Finally applying Corollary \ref{c2fs6} and Lemma \ref{lmc1fs9} we deduce that $\widehat{\cf}$ is of full spectrum. We stress that the main reason why the proof is so simpler in the case of linearized complex continued fractions is the fact that the distortion constant is the smallest possible, i.e. equal to $1$.
\end{remark}


\begin{bibdiv}
\begin{biblist}

\bib{atnip}{article}{
 AUTHOR = {Atnip, J.},
       TITLE = {Non-autonomous conformal graph directed Markov systems},
   JOURNAL = {ArXiv 1706.09978}
     }

 \bib{BLU}{book}{
	Author = {Bonfiglioli, A.}, 
	Author={Lanconelli, E.},
	Author={Uguzzoni, F.},
	Publisher = {Springer},
	Series = {Springer Monographs in Mathematics},
	Title = {Stratified {L}ie groups and potential theory for their sub-{L}aplacians},
	Year = {2007}}
	
	\bib{bowen}{article}{
     Author = {Bowen, R.},
	Coden = {PMIHA6},
	Fjournal = {Institut des Hautes \'Etudes Scientifiques. Publications Math\'ematiques},
	Issn = {0073-8301},
	Journal = {Inst.\ Hautes \'Etudes Sci.\ Publ.\ Math.},
	Mrclass = {57S05 (30C20 32G15 58F11)},
	Mrnumber = {556580},
	Mrreviewer = {L. A. Bunimovich},
	Number = {50},
	Pages = {11--25},
	Title = {Hausdorff dimension of quasicircles},
	Url = {http://www.numdam.org/item?id=PMIHES_1979__50__11_0},
	Year = {1979},
	Bdsk-Url-1 = {http://www.numdam.org/item?id=PMIHES_1979__50__11_0}}
	
	\bib{CDPT}{book}{
	Address = {Basel},
	Author={Capogna, L.},
	Author = {Danielli, D.},
	Author= {Pauls, S.},
	Author={Tyson, J.},
	Publisher = {Birkh\"auser Verlag},
	Series = {Progress in Mathematics},
	Title = {An introduction to the {H}eisenberg group and the sub-{R}iemannian isoperimetric problem},
	Volume = {259},
	Year = {2007}}

       \bib{CTU}{article}{
 AUTHOR = {Chousionis, V.},
   AUTHOR = {Tyson, J. T.},
      AUTHOR = {Urba\'nski, M.}
       TITLE = {Conformal graph directed Markov systems on Carnot groups},
   JOURNAL = {Memoirs of the AMS}
     }
     
     	\bib{cu}{article}{
    AUTHOR = {Cusick, T. W.},
     TITLE = {Hausdorff dimension of sets of continued fractions},
   JOURNAL = {Quart. J. Math. Oxford Ser. (2)},
  FJOURNAL = {The Quarterly Journal of Mathematics. Oxford. Second Series},
    VOLUME = {41},
      YEAR = {1990},
    NUMBER = {163},
     PAGES = {277--286},
      ISSN = {0033-5606},
   MRCLASS = {11K55 (11K50)},
  MRNUMBER = {1067484},
MRREVIEWER = {G. Ramharter},
       URL = {https://doi.org/10.1093/qmath/41.3.277},
}
     
     \bib{daj}{article}{
     AUTHOR = {Dajani, K.},
     AUTHOR = {Hensley, D.}, 
     AUTHOR = {Kraaikamp, C.},
     AUTHOR = {Masarotto, V.},
     TITLE = {Arithmetic and ergodic properties of `flipped' continued
              fraction algorithms},
   JOURNAL = {Acta Arith.},
  FJOURNAL = {Acta Arithmetica},
    VOLUME = {153},
      YEAR = {2012},
    NUMBER = {1},
     PAGES = {51--79},
      ISSN = {0065-1036},
   MRCLASS = {11K50 (37A45)},
  MRNUMBER = {2899816},
MRREVIEWER = {Alan Haynes},
       URL = {https://doi.org/10.4064/aa153-1-4},
}
     
      \bib{edm}{article}{
		Author = {Edgar, G. A.},
		Author={Mauldin, D.}	
	Title = {Multifractal decompositions of digraph recursive fractals},
	Journal = {Proc. Lond. Math. Soc.},
	Number = {65},
	Pages = {604--628},
	Volume = {3},
	Year = {1992}
	}

     \bib{nuss}{article}{
		Author = {Falk, R. S.},
  AUTHOR = {Nussbaum, R. D.},	
	Title = {A New Approach to Numerical Computation of Hausdorff Dimension of Iterated Function Systems: Applications to Complex Continued Fractions},
	Journal = {ArXiv 1612.00869},
	}
	
	\bib{nusscf}{article}{
		Author = {Falk, R. S.},
  AUTHOR = {Nussbaum, R. D.},	
	Title = {$C^m$ eigenfunctions of Perron-Frobenius operators and a new approach to numerical computation of Hausdorff dimension: applications in $\R$},
	Journal = {ArXiv 1612.00870},
	}
	
		\bib{good}{article}{
	AUTHOR = {Good, I. J.},
     TITLE = {The fractional dimensional theory of continued fractions},
   JOURNAL = {Proc. Cambridge Philos. Soc.},
    VOLUME = {37},
      YEAR = {1941},
     PAGES = {199--228},
   MRCLASS = {27.2X},
  MRNUMBER = {0004878},
MRREVIEWER = {P. Erd\"os},
}

\bib{gmr}{article}{
AUTHOR = {Ghenciu, A.},
AUTHOR = {Munday, S.},
AUTHOR = {Roy, M.},
     TITLE = {The {H}ausdorff dimension spectrum of conformal graph directed
              {M}arkov systems and applications to nearest integer continued
              fractions},
   JOURNAL = {J. Number Theory},
  FJOURNAL = {Journal of Number Theory},
    VOLUME = {175},
      YEAR = {2017},
     PAGES = {223--249},
      ISSN = {0022-314X},
   MRCLASS = {11J70 (11K50)},
  MRNUMBER = {3608189},
MRREVIEWER = {Daniel Garrett Glasscock},
       URL = {https://doi.org/10.1016/j.jnt.2016.09.002},
}
     
     \bib{HU}{article}{
 AUTHOR = {Heinemann, S.},
      AUTHOR = {Urba\'nski, M.}
       TITLE = {Hausdorff Dimension Estimates for Infinite Conformal Iterated Function Systems},
   JOURNAL = {Nonlinearity}
   Number = {15},
	Pages = {727--734},
	Year = {2002}
     }

\bib{hen1}{article}{
    AUTHOR = {Hensley, D.},
     TITLE = {Continued fraction {C}antor sets, {H}ausdorff dimension, and
              functional analysis},
   JOURNAL = {J. Number Theory},
  FJOURNAL = {Journal of Number Theory},
    VOLUME = {40},
      YEAR = {1992},
    NUMBER = {3},
     PAGES = {336--358},
      ISSN = {0022-314X},
   MRCLASS = {11K50 (28A78 46E99 47B38)},
  MRNUMBER = {1154044},
MRREVIEWER = {F. Schweiger},
       URL = {https://doi.org/10.1016/0022-314X(92)90006-B},
}

 \bib{henbook}{book}{
    AUTHOR = {Hensley, D.},
     TITLE = {Continued fractions},
 PUBLISHER = {World Scientific Publishing Co. Pte. Ltd., Hackensack, NJ},
      YEAR = {2006},
     PAGES = {xiv+245},
      ISBN = {981-256-477-2},
   MRCLASS = {11A55 (11J70 28D05 30B70 37A45 37F10 40A15)},
  MRNUMBER = {2351741},
MRREVIEWER = {Oto Strauch},
       URL = {https://doi.org/10.1142/9789812774682},
}

\bib{henrev}{article}{
    AUTHOR = {Hensley, D.},
     TITLE = {Continued fractions, {C}antor sets, {H}ausdorff dimension, and
              transfer operators and their analytic extension},
   JOURNAL = {Discrete Contin. Dyn. Syst.},
  FJOURNAL = {Discrete and Continuous Dynamical Systems. Series A},
    VOLUME = {32},
      YEAR = {2012},
    NUMBER = {7},
     PAGES = {2417--2436},
      ISSN = {1078-0947},
   MRCLASS = {11K55 (11J70 11K50 11Y60 37A45)},
  MRNUMBER = {2900553},
MRREVIEWER = {Radhakrishnan Nair},
       URL = {https://doi.org/10.3934/dcds.2012.32.2417},
}
     
     \bib{ahur}{article}{
      AUTHOR = {Hurwitz, A.} 
      TITLE = {\"Uber die Entwicklung complexer Gr\"ossen in Kettenbr\"uche.},
       JOURNAL = {Acta Math.}, 
       Number = {XI}
       Pages = {187--200},
     Year = {1888}
     }
     
     \bib{jhur}{article}{
      AUTHOR = {Hurwitz, J.} 
      TITLE = {\"Uber eine besondere Art der Kettenbruch- Entwicklung complexer Gr\"ossen},
       JOURNAL = {Dissertation, University of Halle}, 
     Year = {1895}
     }

     \bib{jp1}{article}{
		Author = {Jenkinson, O.},
  AUTHOR = {Pollicott, M.},	
	Title = {Computing the dimension of dynamically defined sets : $E_2$ and bounded
continued fractions},
	Journal = {Ergod. Th. and Dynam. Sys.},
	Number = {21},
	Pages = {1429--1445},
	Year = {2001}
	}

     \bib{jp2}{article}{
		Author = {Jenkinson, O.},
  AUTHOR = {Pollicott, M.},	
	Title = {Rigorous effective bounds on the Hausdorff dimension of continued fraction Cantor sets: a hundred decimal digits for the dimension of $E_2$},
	Journal = {Adv. Math, to appear},
	}

        \bib{KZ}{article}{
	Author = {Kesseb\"ohmer, M.},
  AUTHOR = {Zhu, S.},
	Journal = {J. Number Theory},
	Number = {116},
	Pages = {230--246},
	Title = {Dimension sets for infinite IFSs: the Texan conjecture.},
	Volume = {1},
	Year = {2006}}
	
	 \bib{kes1}{article}{
	 AUTHOR = {Kesseb\"ohmer, M.}
	 Author = {Kombrink, S.},
     TITLE = {Minkowski content and fractal {E}uler characteristic for
              conformal graph directed systems},
   JOURNAL = {J. Fractal Geom.},
  FJOURNAL = {Journal of Fractal Geometry. Mathematics of Fractals and
              Related Topics},
    VOLUME = {2},
      YEAR = {2015},
    NUMBER = {2},
     PAGES = {171--227},
      ISSN = {2308-1309},
   MRCLASS = {28A80 (28A75 60K05)},
  MRNUMBER = {3353091},
MRREVIEWER = {Steffen Winter},
       URL = {https://doi.org/10.4171/JFG/19},
}

 \bib{kes2}{article}{
	 AUTHOR = {Kesseb\"ohmer, M.}
	 Author = {Kombrink, S.},
    TITLE = {Minkowski measurability of infinite conformal graph directed systems and application to Apollonian packings}
	Journal = {ArXiv  1702.02854},
	}

     \bib{MU1}{article}{
	Author = {Mauldin, D.},
  AUTHOR = {Urba\'nski, M.},
	Journal = {Proc.\ London Math.\ Soc.\ (3)},
	Number = {1},
	Pages = {105--154},
	Title = {Dimensions and measures in infinite iterated function systems},
	Volume = {73},
	Year = {1996}}
	
	\bib{MU2}{article}{
	Author = {Mauldin, D.},
  AUTHOR = {Urba\'nski, M.},
	Journal = {Trans. Amer. Math. Soc.},
	Pages = {4995--5025},
	Title = {Conformal iterated function systems with applications
to the geometry of conformal iterated function systems},
	Volume = {351},
	Year = {1999}}

	\bib{MUbook}{book}{
		Author = {Mauldin, D.},
  AUTHOR = {Urba\'nski, M.},
	Publisher = {Cambridge University Press},
	Series = {Cambridge Tracts in Mathematics},
	Title = {Graph directed {M}arkov systems: Geometry and dynamics of limit sets},
	Volume = {148},
	Year = {2003}}
	
	\bib{MW}{article}{
	Author = {Mauldin, D.},
  AUTHOR = {Williams, S. C.},
	Journal = {Trans. Amer. Math. Soc.},
	Pages = {811--829},
	Title = {Hausdorff dimension in graph directed constructions},
	Volume = {309},
	Year = {1988}}

	\bib{mcmullen}{article}{
		Author = {McMullen, C. T.},
	Title = {Hausdorff dimension and conformal dynamics. III. Computation of
dimension},
	Journal = {Amer. J. Math.},
	Volume = {120},
	Number = {4},
	Pages = {691--721},
	Year = {1998}}

  \bib{polur}{article}{
	Author = {Pollicott, M.},
  AUTHOR = {Urba\'nski, M.},
	Journal = {ArXiv 1704.06896},
	Title = {Asymptotic Counting in Conformal Dynamical Systems},
	}

 \bib{pri}{article}{
		Author = {Priyadarshi, A.},
	Title = {Lower bound on the Hausdorff dimension of a set of complex continued fractions},
	Journal = {J. Math. Anal. Appl.},
	Volume = {449},
	Number = {1},
	Pages = {91--95},
	Year = {2017}}
	
	 \bib{roy}{article}{
	  AUTHOR = {Roy, M.},
     TITLE = {A new variation of {B}owen's formula for graph directed
              {M}arkov systems},
   JOURNAL = {Discrete Contin. Dyn. Syst.},
  FJOURNAL = {Discrete and Continuous Dynamical Systems. Series A},
    VOLUME = {32},
      YEAR = {2012},
    NUMBER = {7},
     PAGES = {2533--2551},
      ISSN = {1078-0947},
   MRCLASS = {37D35},
  MRNUMBER = {2900559},
MRREVIEWER = {Heber Enrich},
       URL = {https://doi.org/10.3934/dcds.2012.32.2533},
}

	\bib{Rump}{incollection}{
   Author = {Rump, {S. M.}},
   Title = {{INTLAB - INTerval LABoratory}},
   Editor = {Tibor Csendes},
   Booktitle = {{Developments~in~Reliable Computing}},
   Publisher = {Kluwer Academic Publishers},
   Address = {Dordrecht},
   Pages = {77--104},
   Year = {1999},
   url = {http://www.ti3.tuhh.de/rump/}}

     \bib{Rump_acta}{article}{
    AUTHOR = {Rump, S. M.},
     TITLE = {Verification methods: rigorous results using floating-point
              arithmetic},
   JOURNAL = {Acta Numer.},
  FJOURNAL = {Acta Numerica},
    VOLUME = {19},
      YEAR = {2010},
     PAGES = {287--449},
      ISSN = {0962-4929},
   MRCLASS = {65G20 (03B35 68T05)},
  MRNUMBER = {2652784},
MRREVIEWER = {Ljiljana Petkovi\'c},
       URL = {https://doi.org/10.1017/S096249291000005X},
}

 \bib{schm}{article}{
		Author = {Schmidt, A.},
	Title = {Diophantine approximation of complex numbers},
	Journal = {Acta Math.},
	Volume = {134},
	Pages = {1-85},
	Year = {1975}}

     \end{biblist}
\end{bibdiv}
\end{document}